\newtheorem{theo}{Theorem}[section]
\newtheorem{lem}[theo]{Lemma}
\newtheorem{propo}[theo]{Proposition}
\theoremstyle{definition}
\newtheorem{defi}[theo]{Definition}
\theoremstyle{remark}
\newtheorem{rem}[theo]{Remark}
\newtheorem{ex}[theo]{Example}
\def\R{\mathbb{R}}
\def\Z{\mathbb{Z}}
\def\C{\mathbb{C}}
\def\N{\mathbb{N}}
\def\Q{\mathbb{Q}}
\def\z{\zeta}
\def\n{\eta}
\def\r{\rho}
\def\s{\sigma}
\def\a{\alpha}
\def\e{\varepsilon}
\def\d{\delta}
\def\b{\beta}
\def\n'{\nu}
\def\d{\delta}
\def\l{\lambda}
\def\g{\gamma}
\def\k{\kappa}
\def\D{\Delta}
\def\G{\Gamma}
\def\L{\Lambda}
\def\T{\Theta}
\def\P{\Phi}
\def\dq {\delta_{q}}
\def\sq {\sigma_{q}}
\begin{document}
\sloppy
\title{Confluence of meromorphic solutions of~$q$-difference equations.}
\author{Thomas Dreyfus}
\address{Université Paris Diderot - Institut de Mathématiques de Jussieu,}
\curraddr{4, place Jussieu 75005 Paris.}
\email{tdreyfus@math.jussieu.fr.}
\thanks{Work partially supported by ANR, contract ANR-06-JCJC-0028 and ECOS Nord France-Colombia No~C12M01.}

\subjclass[2010]{39A13,34M40}


\date{\today}


\begin{abstract}
In this paper, we consider a~$q$-analogue of the Borel-Laplace summation where~$q>1$ is a real parameter. In particular, we show that the Borel-Laplace summation of a divergent power series solution of a linear differential equation can be uniformly approximated on a convenient sector, by a meromorphic solution of a corresponding family of linear~$q$-difference equations. We perform the computations for the basic hypergeometric series. Following Sauloy, we prove how a basis of solutions of a linear differential equation can be uniformly approximated on a convenient domain by a basis of solutions of a corresponding family of linear~$q$-difference equations. This leads us to the approximations of Stokes matrices and monodromy matrices of the linear differential equation by matrices with entries that are invariants by the multiplication by~$q$.
\end{abstract} 

\maketitle

\tableofcontents
\pagebreak[4]
\section*{Introduction}

When~$q$ tends to~$1$, the~$q$-difference operator~$d_{q}:=f\mapsto \frac{f(qz)-f(z)}{(q-1)z}$  ``tends'' to the usual derivation. Hence every differential equation may be discretized by a~$q$-difference equation. Given a linear differential equation~$\widetilde{\D}$ and a family of linear~$q$-difference equations~$\D_{q}$ that discretize~$\widetilde{\D}$, we wonder if there exists a basis of solutions of~$\D_{q}$, that converges as~$q$ goes to~$1$ to a given basis of solutions of~$\widetilde{\D}$. This question has been studied in the Fuchsian case (see \cite{S00}) and the main goal of this paper is to consider the general situation. The problem is that for non-Fuchsian linear differential equations, the fundamental solution, i.e., the invertible solution matrix, given by the Hukuhara-Turrittin theorem involves divergent formal power series. However, we may apply to them a Borel-Laplace summation process in order to obtain a fundamental solution that is analytic on a convenient sector. To extend the work of Sauloy to the non-Fuchsian case, we have to approximate the Borel-Laplace summation of a given formal power series solution of a linear differential equation, by a~$q$-analogue of the Borel-Laplace summation applied to a formal power series solution of a corresponding family of linear~$q$-difference equations. Our main result,  Theorem~\ref{4theo1}, gives a confluence\footnote{Throughout the paper, we will use the word ``confluence'' to describe the $q$-degeneracy when $q\rightarrow 1$.} result of this nature.  Then, we use our main result to prove that under convenient assumptions, a basis of meromorphic solutions of a linear differential equation, not necessarily Fuchsian, can be uniformly approximated on a convenient domain by a basis of solutions of a corresponding family of linear~$q$-difference equations. This leads us to the approximations of Stokes matrices and monodromy matrices of the linear differential equation by matrices with entries that are invariants by the multiplication by~$q$. We also perform the computations for the basic hypergeometric series.\\
\begin{center}
$\ast\ast\ast$
\end{center}
\par
Let~$q>1$ be a real parameter, and let us define the dilatation operator~$\sq$ 
$$\sq \big(f(z)\big):=f(qz).$$
See Remark \ref{4rem1} for the reason why we consider $q$ real, and not $q$ complex number such that $|q|>1$, like others papers present in the literature. We define~$\dq:=\frac{\sq-\mathrm{Id}}{q-1}$, which converges formally to~$\d:=z\frac{d}{dz}$ when~$q\to 1$.
Let us consider \\
$$\left\{\begin{array}{lll}
\dq Y(z,q)&=&B(z)Y(z,q)\\\\
\d \widetilde{Y}(z)&=&B(z)\widetilde{Y}(z),
\end{array}\right.
$$
where~$B(z)\in \mathrm{M}_{m}\Big(\C(z)\Big)$, that is a $m$ by $m$ square matrix with coefficients in~$\C(z)$. We are going to recall the main result of \cite{S00} in the particular case where the above matrix $B(z)$ does not depend upon $q$ and $q>1$ is real. Notice that a part of what follows now is purely local at $z=0$, which means that we could consider systems that have coefficients in the field of germs of meromorphic functions in the neighborhood of~$z=0$, but for the simplicity of exposition, we have assumed that the coefficients are rational. In \cite{S00}, Sauloy assumes that the systems are Fuchsian at~$0$ and the linear differential system has exponents at~$0$ which are non resonant (see \cite{S00},~$\S 1$, for a precise definition). 
The Frobenius algorithm provides a local fundamental solution at~$z=0$,~$\widetilde{\P}_{0}(z)$, of the linear differential system~${\d \widetilde{Y}(z)=B(z)\widetilde{Y}(z)}$. This solution can be analytically continued into an analytic solution on~$\C^{*}$, minus a finite number of lines and half lines of the form~${\R_{>0}\a :=\Big\{ x\a \Big| x\in ]0,\infty[ \Big\}}$ and~${\R_{\geq 1}\b :=\Big\{ x\b \Big| x\in [1,\infty[ \Big\}}$, with~$\a,\b\in \C^{*}$. Notice that in Sauloy's paper, the lines and half lines are in fact respectively $q$-spirals and $q$-half-spirals since the author considers the case where $q$ is a complex number such that $|q|>1$.\par
 In \cite{S00},~$\S 1$, the author uses a~$q$-analogue of the 
Frobenius algorithm to construct a local fundamental matrix solution at~$z=0$,~$\P_{0}(z,q)$, of the family of linear~$q$-difference systems ${\dq Y(z,q)=B(z)Y(z,q)}$, which is for a fixed~$q$, meromorphic on~$\C^{*}$
and has its poles contained in a finite number of~$q$-spirals of the form~${ q^{\Z}\a:=\left\{q^{n}\a , n\in \Z\right\}}$ and~${q^{\N^{*}}\b :=\left\{  q^{n}\b, n\in \N^{*} \right\}}$, with~$\a,\b\in \C^{*}$.  Sauloy proves that~$\P_{0}(z,q)$ converges uniformly to~$\widetilde{\P}_{0}(z)$ when~$q\to 1$, in every compact subset of its domain of definition. \par 
Let us assume that the systems are Fuchsian at~$\infty$ and the linear differential system has exponents at~$\infty$ which are non resonant. Let us consider~$\P_{\infty}(z,q)$ and~$\widetilde{\P}_{\infty}(z)$, the corresponding fundamental solutions at infinity of the linear~$\d$ and~$\dq$-systems. 
 Sauloy shows that the Birkhoff connection matrix~${P(z,q):=\Big(\P_{\infty}(z,q)\Big)^{-1}\P_{0}(z,q)}$, which is invariant under the action of~$\sq$, converges to~$\widetilde{P}(z):=\left(\widetilde{\P}_{\infty}(z)\right)^{-1}\widetilde{\P}_{0}(z)$ when~$q\to 1$. 
The matrix~$\widetilde{P}(z)$ is locally constant and the monodromy matrices at the intermediates singularities (those different from $0$ and $\infty$) of the linear differential system can be expressed with the values of~$\widetilde{P}(z)$. \\ \par 

The goal of this paper is to prove similar results in the non-Fuchsian case. The question implies difficulties of very different nature than in the Fuchsian case, since divergent formal power series may appear as solutions. The prototypical example is the  Euler equation and one possible~$q$-deformation:\\
$$\left \{ \begin{array}{lllll} 
z\dq y(z,q)&+&y(z,q)&=&z\\\\
z\d \widetilde{y}(z)&+&\widetilde{y}(z)&=&z,
\end{array}\right.
$$
which admits respectively the formal divergent solutions: 
$$\displaystyle \sum_{n=0}^{\infty}(-1)^{n}[n]_{q}^{!}z^{n+1},\hbox{ and } \displaystyle \sum_{n=0}^{\infty}(-1)^{n}n!z^{n+1},$$
where~${[n]_{q}^{!}:=\prod_{l=0}^{n} [l]_{q}}$, ${[l]_{q}:=\left(1+...+q^{l-1}\right)}$ if $l\in \N^{*}$, and $[0]_{q}:=1$. In this example, the first formal power series converges coefficientwise to the second when~$q\to 1$. However, there exist also analytic solutions of the linear differential equation. For example, if $d\not \equiv \pi[2\pi]$ the following functions are solutions: 
 $$\int_{0}^{\infty e^{id}} \frac{e^{-\z/z}}{1+\z}d\z.$$ 
 More generally, given a formal power series solution of a linear differential equation in coefficients that are germs of meromorphic functions, it is well known (see~$\S \ref{4sec1}$) that we may apply to it several Borel and Laplace transformations to obtain a germ of analytic solution on a sector of the form $$\overline{S}(a,b):=\left\{z\in \widetilde{\C} \Big|  \arg(z)\in ]a,b[\right\},$$ where~$\widetilde{\C}$ denotes the Riemann surface of the logarithm.\par 
The situation is similar in the $q$-difference case. Consider a linear $q$-difference system with coefficients that are germs of meromorphic functions, and assume that the slopes belongs to $\Z$ (see \cite{RSZ} for the definition). Like in the differential case, formal power series appear as solutions. The authors of \cite{RSZ} show how to transform a formal fundamental solution into fundamental solutions which entries are meromorphic on a punctured neighborhood of $0$ in $\C^{*}$. Then, it is shown how the meromorphic fundamental solutions are linked with the local meromorphic classification of $q$-difference equations.  It is natural to study the behavior, as $q$ goes to $1$ of their meromorphic fundamental solutions. Unfortunately, there are two difficulties for this approach:\\
\begin{itemize}
\item In \cite{RSZ} it is used the Birkhoff-Guenter normal form which has no known analogous in the differential case. Study the behavior of the normal form as $q$ goes to~$1$ seems to be very complicated.\\
\item  Although there are several $q$-analogues of the Borel and Laplace transformations, see \cite{DVZ,MZ,R92,RZ,Z99,Z00,Z01,Z02,Z03}, we do not know how to express the meromorphic fundamental solutions using a $q$-analogue of the Borel-Laplace summation.\\
\end{itemize}
\begin{center}
$\ast\ast\ast$
\end{center} \par 
 Let us state now our main result, Theorem \ref{4theo1}, in a particular case.  Let~$z\mapsto\hat{h}(z,q),\widetilde{h}$ be formal power series solutions of\\ 
$$
\left \{ \begin{array}{llll} 
b_{m}(z)\d_{q}^{m}\hat{h}(z,q)&+\dots+b_{0}(z)\hat{h}(z,q)&=&0\\\\
b_{m}(z)\d^{m}\widetilde{h}(z)&+\dots+b_{0}(z)\widetilde{h}(z)&=&0,
\end{array}\right.$$
where~$b_{0},\dots,b_{m}\in \C[z]$. We assume that~$\hat{h}$ converges coefficientwise to~$\widetilde{h}$ when~$q\to 1$. We prove that for~$q>1$ sufficiently close to~$1$, we may apply to~$\hat{h}$ several~$q$-analogues of the Borel and Laplace transformation and obtain~$S_{q}\left(\hat{h}\right)$, solution of the family of linear {$q$-difference} equations that is for~$q$ fixed meromorphic on~$\C^{*}$. Moreover,~$S_{q}\left(\hat{h}\right)$ converges uniformly on a convenient domain to the Borel-Laplace summation of~$\widetilde{h}$ when~$q\to 1$. Notice that although this theorem deal with a problem which is purely local at $z=0$, we have assumed that the equations have coefficients in $\C[z]$, instead of the ring of germs of analytic functions, since we need this assumption to prove the theorem. Another result of same nature can be found in \cite{DVZ}, Theorem~2.6. See Remark~\ref{4rem8} for the comparison of the setting of this result and our theorem.\par 
In the appendix, we introduce another~$q$-Laplace transformation and prove an analogous result for the associated~$q$-Borel-Laplace summation. See Theorem~\ref{4theo9}. \\ \par
In~$\S \ref{4sec7}$, we consider the basic hypergeometric series~$_{r}\varphi_{s}$. Let us choose~${r,s\in \N}$ with~${r>s+1}$,~$\a_{1},\dots,\a_{r},\b_{1},\dots,\b_{s}\in \C\setminus \left(-\N\right)$  with different images in~$\C/\Z$, let~${\underline{p}:=q^{-1/(r-s-1)}}$,  and consider, see \cite{GR},
$$\begin{array}{l}
 _{r}\varphi_{s}\left(\begin{array}{ll}
\underline{p}^{\a_{1}},\dots,\underline{p}^{\a_{r}}&\\
&;\underline{p},\left(1-\underline{p}\right)^{1+s-r}z\\
\underline{p}^{\b_{1}},\dots,\underline{p}^{\b_{s}}&
\end{array} \right)\\\\
:=\displaystyle \sum_{n=0}^{\infty}\dfrac{(\underline{p}^{\a_{1}};\underline{p})_{n}\dots(\underline{p}^{\a_{r}};\underline{p})_{n}\left(1-\underline{p}\right)^{(1+s-r)n}}
{(\underline{p};\underline{p})_{n}(\underline{p}^{\b_{1}};\underline{p})_{n}\dots(\underline{p}^{\b_{s}};\underline{p})_{n}}
\underline{p}^{-n(n-1)/2}(-1)^{n(1+s-r)}z^{n},
\end{array}
$$
where~$(a;\underline{p})_{n+1}:=(1-a\underline{p}^{n})(a;\underline{p})_{n}$ and~$(a;\underline{p})_{0}:=1$, for~$a\in \C$. The above series converge coefficientwise when $q\to 1$ to $$ 
_{r}F_{s}\left(
\begin{array}{ll}
\a_{1},\dots,\a_{r}&\\
&;(-1)^{1+s-r}z\\
\b_{1},\dots,\b_{s}&
\end{array} 
\right):=\displaystyle \sum_{n=0}^{\infty}\dfrac{(\a_{1})_{n}\dots(\a_{r})_{n}}{n!(\b_{1})_{n}\dots(\b_{s})_{n}}(-1)^{n(1+s-r)}z^{n}$$ 
where,~$(\a)_{n+1}:=(\a+n)(\a)_{n}$ and~$(\a)_{0}:=1$ for~$\a\in \C^{*}$.
We prove that the series $_{r}\varphi_{s}$ and $_{r}F_{s}$ do not satisfy the assumptions of our main result, Theorem \ref{4theo1}. However, we perform explicitly the computation of a~$q$-Borel-Laplace summation of $_{r}\varphi_{s}$, using others $q$-analogues of the Borel and Laplace transformations, and prove the convergence when $q\to 1$ to the classical Borel-Laplace summation of $_{r}F_{s}$.
See Theorem~\ref{4theo8}. See also \cite{Z02},~$\S 2$, for the case~$r=2,s=0$. \\ \par 

In~$\S\ref{4sec8}$, we apply our main result to prove that we can uniformly approximate on a convenient domain a basis of solutions of a linear differential equation by a basis of solutions of a corresponding family of linear~$q$-difference equations. Our theorem holds in the non-Fuchsian case but does not recover Sauloy's result in the Fuchsian case. In other words, the two results are complementary.\par 
In~$\S \ref{4sec82}$, we are interested in the case where the linear~$\dq$ and ~$\d$-equations have formal coefficients  and we want to prove the convergence, in a sense we specify later, of a basis of formal solutions of a family of linear~$\dq$-equations, to the Hukuhara-Turrittin solution of a linear~$\d$-equation. 
A problem is the size of the field of constants. A fundamental solution of a linear differential system is defined modulo an invertible matrix with complex entries, while a fundamental solution of a linear $q$-difference system is defined modulo a matrix with entries in~$\mathcal{M}_{\mathbb{E}}$, the field of functions invariant under the action of~$\sq$, i.e., the field of meromorphic functions over the torus $\C^{*}\setminus q^{\Z}$. This field can be identified with the field of elliptic functions. The consequence of this is that we have to choose very carefully our basis of solutions of the family of linear~$\dq$-equations in order to have the convergence. For example, if we consider\\
$$\left \{ \begin{array}{lll} 
\dq y(z,q)&=&(z^{-1}+1)y(z,q)\\\\
\d \widetilde{y}(z)&=&(z^{-1}+1)\widetilde{y}(z),
\end{array}\right.
$$
the solutions of the linear~$\d$-equation are of the form~$\widetilde{y}(z)=a\left(e^{-z^{-1}}+z\right)$ with~$a\in \C$. 
Let us introduce the Jacobi theta function
$$\T_{q} (z):=\displaystyle \sum_{n \in \Z} q^{\frac{-n(n+1)}{2}}z^{n}=\displaystyle \prod_{n=0}^{\infty}\left(1-q^{-n-1}\right)\left(1+q^{-n-1}z\right)\left(1+q^{-n}z^{-1}\right),$$ 
  which is analytic on~$\C^{*}$, vanishes on the discrete~$q$-spiral~$-q^{\Z}$, with simple zeros, and satisfies: $$\s_{q}\T_{q}(z)=z\T_{q}(z)=\T_{q}\left(z^{-1}\right).$$
The following function is solution of the~$\dq$-equation
${y(z,q)=\dfrac{1}{\T_{q}(z)}\displaystyle\sum_{n=0}^{\infty}\frac{ q^{n}z^{n}}{\prod_{k=0}^{n}(q^{k}-q+1)}}$, but the behavior as~$q$ goes to~$1$ is unclear. If we want to construct a solution of the family of linear~$\dq$-equations that converges to a solution of the linear~$\d$-equation, 
we need to introduce the~$q$-exponential:
$$e_{q}(z):=\displaystyle \sum_{n=0}^{\infty}\dfrac{z^{n}}{[n]_{q}^{!}}=\displaystyle \prod_{n=0}^{\infty}(1+(q-1)q^{-n-1}z).$$
 It is analytic on~$\C$, with simple zeros on the discrete~$q$-spiral~$\frac{q^{\N^{*}}}{1-q}$ and satisfies~${\dq e_{q}(z)=ze_{q}(z)}$.
The function,~$e_{q}\left(qz^{-1}\right)^{-1}+z$ is solution of the family of linear~$\dq$-equations and converges uniformly on the compacts of~$\C^{*}$ to~$e^{-z^{-1}}+z$ when~$q\to 1$.
More generally, we will multiply a fundamental solution of the family of linear~$\dq$-equations by a convenient matrix with entries in~$\mathcal{M}_{\mathbb{E}}$, in order to have a confluence result. See Theorem~\ref{4theo2} for a precise statement. \par
In~$\S \ref{4sec83}$, we are interested in the case where the linear~$\dq$ and ~$\d$-equations have coefficients in~$\C(z)$. We combine our main result, Theorem \ref{4theo1}, and what we have just mentioned above, to prove that under reasonable assumptions, we have the uniform convergence on a convenient domain of a basis of solutions of a family of linear~$\dq$-equations to a basis of solutions of the corresponding linear~$\d$-equation when~$q\to 1$. This leads us to the convergence of the~$q$-Stokes matrices, that do not correspond to the~$q$-Stokes matrices present in \cite{RSZ}, to the Stokes matrices. See Theorem~\ref{4theo4}. \par
In~$\S \ref{4sec84}$, following \cite{S00}, we construct a locally constant matrix, and his values allow us to obtain the monodromy matrices at the intermediate singularities of the linear differential system. This result is an analogue of \cite{S00},~$\S 4$, in the irregular singular case. See Theorem~\ref{4theo3}. The results of~$\S \ref{4sec83}$ and $\S\ref{4sec84}$ could be the first step to a numerical algorithm of approximation of the Stokes and monodromy matrices. See \cite{FRT,FRRT,VdH,LRR,Re} for results  of numerical approximation of the Stokes matrices and \cite{MS10,Mez} for results of numerical approximation of the monodromy matrices.\\
\begin{center}
$\ast\ast\ast$
\end{center} \par 
The paper is organized as follows. In~$\S\ref{4sec1}$, we make a short overview of the Stokes phenomenon of the linear differential equations. In particular, we recall the definition of the Stokes matrices.
In~$\S\ref{4sec2}$, we recall some results that can be found in \cite{RSZ} on the local formal study of linear~$q$-difference equations.  In~$\S \ref{4sec3}$, we introduce the~$q$-Borel and the~$q$-Laplace transformations.  \par
 The~$\S\ref{4sec4}$, is devoted to the statement of our main result, Theorem~\ref{4theo1}, while~$\S \ref{4sec5}$ and~$\S \ref{4sec6}$ are devoted to the proof of Theorem~\ref{4theo1}. In~$\S \ref{4sec5}$, we prove a proposition that deals with the confluence of meromorphic solutions.
In~$\S \ref{4sec61}$, we study the confluence of the~$q$-Laplace transformation. In~$\S \ref{4sec62}$, we show Theorem~\ref{4theo1} in a particular case, and in~$\S \ref{4sec63}$, we prove Theorem~\ref{4theo1} in the general case. \par
As told above, in~$\S \ref{4sec7}$, we study basic hypergeometric series, and in~$\S \ref{4sec8}$, we apply our main result to obtain the uniform convergence on a convenient domain of a basis of solutions of a family of linear~$\dq$-equations to a basis of solutions of the corresponding linear~$\d$-equation when~$q\to 1$.\\ \par

\textbf{Acknowledgments.}
This paper was prepared during my thesis, supported by the region Ile de France. I want to thank my advisor, Lucia Di Vizio, for the interesting discussions we had during the redaction of the paper. I also want to thank Jean-Pierre Ramis, Jacques Sauloy and Changgui Zhang for accepting to answer to my numerous questions about their work. Lastly, I heartily thank the anonymous referee who spent a great lot of time and effort to help me make the present paper more readable.
\pagebreak[3]
\section{Local analytic study of linear differential equations}\label{4sec1}

In this section, we make a short overview of the Stokes phenomenon of linear differential equations. See \cite{B,VdPS} for more details. See also \cite{Ber,LR90,LR95,M95,MR,R93,RM1,S09}.\par 
Let~$\C[[z]]$ be the ring of formal power series and~$\C((z)):=\C[[z]][z^{-1}]$ be its fraction field. Let~$K$ be an intermediate differential field extension:~$\C(z)\subset K\subset\displaystyle\bigcup_{\nu\in \N^{*}}\C\left(\left(z^{1/\nu}\right)\right)$. We recall that~$\d=z\frac{d}{dz}$.  Let us consider the linear differential operator with coefficients in~$K$
$$\widetilde{P}=\widetilde{b}_{m}\d^{m}+\widetilde{b}_{m-1}\d^{m-1}+\dots+\widetilde{b}_{0}.$$
The Newton polygon of~$\widetilde{P}$ is the convex hull of
$$\displaystyle\bigcup_{k=0}^{m}\Big\{ (i,j)\in \N^{*}\times\Q \Big|i\leq k, j\geq v_{0}\left(\widetilde{b}_{k}\right)\Big\},$$ where~$v_{0}$ denotes the~$z$-adic valuation of~$K$.
Let~$\big\{(d_{1},n_{1}),\dots ,(d_{r},n_{r})\big\}$
be a minimal subset of~$\mathbb Z^2$ for the inclusion, with~$d_{1}<\dots<d_{r}$,
such that the Newton polygon is the convex hull of
$$\displaystyle\bigcup_{k=0}^{r}\Big\{ (i,j)\in \N^{*}\times\Q \Big|i\leq d_{k}, j\geq n_{k}\Big\}.$$
 We call slopes of the linear~$\d$-equation the positive rational numbers~${\frac{n_{i+1}-n_{i}}{d_{i+1}-d_{i}}}$, and multiplicity of the slope~$\frac{n_{i+1}-n_{i}}{d_{i+1}-d_{i}}$, the integer~$d_{i+1}-d_{i}$. \par 
Let $\widetilde{b}_{0},\dots,\widetilde{b}_{m-1}\in K$ and~$\widetilde{B}:=\begin{pmatrix}
0 &1&\dots&0\\
\vdots&\ddots&\ddots&\vdots\\
\vdots&\ddots&\ddots&1\\
-\widetilde{b}_{0}&\dots&\dots&-\widetilde{b}_{m-1}
\end{pmatrix}\in \mathrm{M}_{m}(K)$ be a companion matrix. The linear differential system~$\d\widetilde{Y}=\widetilde{B}\widetilde{Y}$ is equivalent to the linear differential equation~$\d^{m}\widetilde{y}+\widetilde{b}_{m-1}\d^{m-1}\widetilde{y}+\dots+\widetilde{b}_{0}\widetilde{y}=0$. Let~${\widetilde{P}:=\d^{m}+\widetilde{b}_{m-1}\d^{m-1}+\dots+\widetilde{b}_{0}}$. We define the Newton polygon of~$\d \widetilde{Y}=\widetilde{B}\widetilde{Y}$, as the Newton polygon of~$\widetilde{P}$. We also define the slopes and the multiplicities of the slopes of~$\d \widetilde{Y}=\widetilde{B}\widetilde{Y}$ as the slopes and the multiplicities of the slopes of~$\widetilde{P}$. Notice that if $\widetilde{B}\in \mathrm{M}_{m}\Big(\C((z))\Big)$ is not a companion matrix, we can still define the Newton polygon of $\d \widetilde{Y}=\widetilde{B}\widetilde{Y}$, but we will not need this in this paper.\\\par
The linear differential equations~$\d \widetilde{Y}=\widetilde{A}\widetilde{Y}$ and~$\d \widetilde{Y}=\widetilde{B}\widetilde{Y}$, with~$\widetilde{A},\widetilde{B}\in \mathrm{M}_{m}(K)$  are said to be equivalent over~$K$ if there exists~$\widetilde{H}\in \mathrm{GL}_{m}(K)$, that is an invertible matrix with coefficients in~$K$, such that $$\widetilde{A}=\widetilde{H}\left[ \widetilde{B}\right]_{\displaystyle\d}:=\widetilde{H}\widetilde{B}\widetilde{H}^{-1}+\d \widetilde{H}\widetilde{H}^{-1}.$$
 Notice that in this case: $$\d \widetilde{Y}=\widetilde{B}\widetilde{Y}\Longleftrightarrow \d \left(\widetilde{H}\widetilde{Y}\right)=\widetilde{A}\widetilde{H}\widetilde{Y}.$$ 
Conversely, if there exist $\widetilde{A},\widetilde{B}\in \mathrm{M}_{m}(K)$  and~$\widetilde{H}\in \mathrm{GL}_{m}(K)$, such that
$\d \widetilde{Y}=\widetilde{B}\widetilde{Y}$, $ \d \widetilde{Z}=\widetilde{A}\widetilde{Z}$ and $\widetilde{Z}=\widetilde{H}\widetilde{Y}$, then 
$$\widetilde{A}=\widetilde{H}\left[ \widetilde{B}\right]_{\displaystyle\d}.$$
\par 
One can prove that if the above matrices~$\widetilde{A},\widetilde{B}\in \mathrm{M}_{m}(K)$ are companion matrices, then they have the same Newton polygon.\par
Let us consider~$\d \widetilde{Y}=\widetilde{B}\widetilde{Y}$, where~$\widetilde{B}\in \mathrm{M}_{m}\Big(\C((z))\Big)$ is a companion matrix, having slopes~$k_{1}<\dots<k_{r-1}$ with multiplicity~$m_{1},\dots,m_{r-1}$, and let~$\nu\in \N^{*}$ be minimal such that all the~$\nu k_{i}$ belongs to~$\N$. The Hukuhara-Turrittin theorem (see Theorem 3.1 in \cite{VdPS} for a statement that is trivially equivalent to the following) says that there exist 
\begin{itemize}
\item~$\widetilde{H}\in \mathrm{GL}_{m}\Big(\C\left(\left(z^{1/\nu}\right)\right)\Big)$,
\item~$\widetilde{L}_{i}\in \mathrm{M}_{m_{i}}(\C)$,
\item~$\widetilde{\l}_{i}\in  z^{-1/\nu}\C\left[z^{-1/\nu}\right]$,
\end{itemize}
such that $\widetilde{B}=\widetilde{H}\left[\mathrm{Diag}_{i}  \left(\widetilde{L}_{i}+\d\widetilde{\l}_{i}\times\mathrm{Id}_{m_{i}}\right)\right]_{\displaystyle\d}$,  where 
$$ \mathrm{Diag}_{i} \left(\widetilde{L}_{i}+\d\widetilde{\l}_{i}\times\mathrm{Id}_{m_{i}}\right):=\begin{pmatrix}
\widetilde{L}_{1}+\d\widetilde{\l}_{1}\times\mathrm{Id}_{m_{1}}&&\\
&\ddots&\\
&&\widetilde{L}_{k}+\d\widetilde{\l}_{k}\times\mathrm{Id}_{m_{k}} 
\end{pmatrix}\footnote{If no confusions is likely to arise we will write~$\mathrm{Diag} \left(\widetilde{L}_{i}+\d\widetilde{\l}_{i}\times\mathrm{Id}_{m_{i}}\right)$ instead of~$\mathrm{Diag}_{i} \left(\widetilde{L}_{i}+\d\widetilde{\l}_{i}\times\mathrm{Id}_{m_{i}}\right)$. Notice that altough the index $i$ seems here to be useless, he will be later helpfull when we will consider diagonal bloc matrices with diagonal bloc having several indexes.}.$$ 
Roughly speaking, this means that if~$\widetilde{B}\in \mathrm{M}_{m}\Big(\C((z))\Big)$ is a companion matrix, there exists a formal fundamental solution of~$\d \widetilde{Y}=\widetilde{B}\widetilde{Y}$, of the form 
$$\widetilde{H}(z)\mathrm{Diag}\left(z^{\widetilde{L}_{i}}e^{\widetilde{\l}_{i}(z)\times\mathrm{Id}_{m_{i}}}\right).$$ 
 Of course, written like this, this statement is not rigorous, since matrices~$\widetilde{H}(z)$ and~$\mathrm{Diag}\left(z^{\widetilde{L}_{i}}e^{\widetilde{\l}_{i}(z)\times\mathrm{Id}_{m_{i}}}\right)$ can not be multiplied.\par 
 Remark that for all~$n\in \Z$, we have also
$$ \widetilde{B}=\left(z^{n}\widetilde{H} \right)\left[\mathrm{Diag} \left(\widetilde{L}_{i}-n\times\mathrm{Id}+\d\widetilde{\l}_{i}\times\mathrm{Id}_{m_{i}}\right)\right]_{\displaystyle\d},$$
 which allows us to reduce to the case where the entries of~$\widetilde{H}$ belongs to $\C\left[\left[z^{1/\nu}\right]\right]$.
\\ \par 
We recall that~$\widetilde{\C}$ is the Riemann surface of the logarithm. If~$a,b\in \R$ with~$a<b$, we define~$\mathcal{A}(a,b)$
as the ring of functions that are analytic in some punctured neighborhood of~$0$ in $$ \overline{S}(a,b):=\left\{z\in \widetilde{\C} \Big|  \arg(z)\in ]a,b[\right\}.$$ Let~$\C\{z\}$ be the ring of germs of analytic functions in the neighborhood of~$z=0$, and~$\C(\{z\})$ be its fraction field, that is the field of germs of meromorphic functions in the neighborhood of~$z=0$. Let~$\widetilde{B}\in \mathrm{M}_{m}\Big(\C(\{z\})\Big)$ be a companion matrix. We are now interested in the existence of a fundamental solution of the system~$\d \widetilde{Y}=\widetilde{B}\widetilde{Y}$, we will see as an equation, that has coefficients in~$\mathcal{A}(a,b)$, for some~$a<b$.\par
Once for all, we fix a determination of the complex logarithm over $\widetilde{\C}$ we call~$\log$.  We define the family of continuous map~$\left(\r_{a}\right)_{a\in \C}$, from the Riemann surface of the logarithm to itself, that sends~$z$ to~$e^{a\log (z)}$. One has~$\r_{b}\circ \r_{c}=\r_{bc}$ for any~$b,c\in \C$. For ${\widetilde{f}:=\sum f_{n}z^{n}\in \displaystyle\bigcup_{\nu\in \N^{*}} \C\left(\left(z^{1/\nu}\right)\right)}$ and~$c\in \Q_{>0}$, we set~${\rho_{c} \left(\widetilde{f}\right):=\sum f_{n}z^{nc}\in \displaystyle\bigcup_{\nu\in \N^{*}} \C\left(\left(z^{1/\nu}\right)\right)}$. 
For~$f\in \mathcal{A}(a,b)$ and~$c\in \Q_{>0}$, we define~$\rho_{c}\left(f\right):=f(z^{c})$.  Of course, the definitions of~$\rho_{c}$ coincide on~$\C(\{z\})$.\par

\pagebreak[3]
\begin{defi}\label{4defi3}
\begin{trivlist}
\item (1) Let~$k\in \Q_{>0}$. We define the formal Borel transform of order $k$,~$\hat{\mathcal{B}}_{k}$ as follows:
$$
\begin{array}{llll}
\hat{\mathcal{B}}_{k}:&\C[[z]]&\longrightarrow&\C[[\z]]\\
&\displaystyle\sum_{n\in \N} a_{n}z^{n}&\longmapsto&\displaystyle\sum_{n\in \N} \frac{a_{n}}{\G\left(1+\frac{n}{k}\right)}\z^{n},
\end{array}
$$
where~$\G$ is the Gamma function.
We remark that we have for all~$k\in \Q_{>0}$:
$$\hat{\mathcal{B}}_{k}=\r_{k}\circ \hat{\mathcal{B}}_{1}\circ \r_{1/k}.$$
\item (2) Let~$d\in \R$ and~$k\in \Q_{>0}$.
Let~$f$ be a function such that there exists~$\e>0$, such that~${f\in \mathcal{A}(d-\e,d+\e)}$. We say that~$f$ belongs to~$\widetilde{\mathbb{H}}_{k}^{d}$, if~$f$ admits an analytic continuation defined on~$\overline{S}(d-\e,d+\e)$ that we will still call~$f$, with exponential growth of order~$k$ at infinity. This means that there exist constants~$J,L>0$, such that for~$\z\in\overline{S}(d-\e,d+\e)$:
$$|f(\z)|<J\exp\left(L|\z|^{k}\right).$$
\item (3)
 Let~$d\in \R$ and~$k\in \Q_{>0}$.  We define the Laplace transformations of order~$1$ and $k$ in the direction~$d$  as follow (see \cite{B}, Page 13 for a justification that the maps are defined)
$$
\begin{array}{llll}
\mathcal{L}_{1}^{d}:&\widetilde{\mathbb{H}}_{1}^{d}&\longrightarrow&\mathcal{A}\left(d-\frac{\pi}{2},d+\frac{\pi}{2}\right)\\
&f&\longmapsto& \displaystyle \int_{0}^{\infty e^{id}}z^{-1}f(\z)e^{-\left(\frac{\z}{z}\right)}d\z ,\\\\
\mathcal{L}_{k}^{d}:&\widetilde{\mathbb{H}}_{k}^{d}&\longrightarrow&\mathcal{A}\left(d-\frac{\pi}{2k},d+\frac{\pi}{2k}\right)\\
&g&\longmapsto& \r_{k}\circ  \mathcal{L}_{1}^{d}\circ \r_{1/k}\left(g\right).
\end{array}
$$
\end{trivlist}
\end{defi}

The following proposition will be needed for the proof of our main result, Theorem~\ref{4theo1}.

\pagebreak[3]
\begin{propo}\label{4propo4}
Let~$\widetilde{f}\in \C[[z]]$, let~$d\in \R$ and let~$\widetilde{g}\in \widetilde{\mathbb{H}}_{1}^{d}$. Then:
\begin{itemize}
\item~$\hat{\mathcal{B}}_{1}\left(\d \widetilde{f}\;\right)=\d\hat{\mathcal{B}}_{1}\left(\widetilde{f}\;\right)$.
\item~ $\d\hat{\mathcal{B}}_{1}\left(z \widetilde{f}\;\right)=\z \hat{\mathcal{B}}_{1}\left(\widetilde{f}\;\right)$, where $\d:=\z\frac{d}{d \z}$.
\item~$\mathcal{L}_{1}^{d}\Big(\d \widetilde{g}\Big)=\d\mathcal{L}_{1}^{d}\Big(\widetilde{g}\Big).$
\item ~$z\mathcal{L}_{1}^{d}\Big(\d \widetilde{g}\Big)=  \mathcal{L}_{1}^{d}\Big(\z \widetilde{g}\Big)-z\mathcal{L}_{1}^{d}\Big(\widetilde{g}\Big)$.
\end{itemize}
\end{propo}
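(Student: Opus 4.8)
The plan is to handle the two Borel identities by a direct computation on power-series coefficients, and the two Laplace identities by differentiating under the integral sign together with a single integration by parts. For the first two items, write $\widetilde{f}=\sum_{n\in\N}a_{n}z^{n}$. Since $\d=z\frac{d}{dz}$ acts by $\d\widetilde{f}=\sum_{n}na_{n}z^{n}$, applying $\hat{\mathcal{B}}_{1}$ gives $\sum_{n}\frac{na_{n}}{n!}\z^{n}$; on the other hand, with $\d=\z\frac{d}{d\z}$ on the Borel side, $\d\hat{\mathcal{B}}_{1}(\widetilde{f}\,)=\z\frac{d}{d\z}\sum_{n}\frac{a_{n}}{n!}\z^{n}=\sum_{n}\frac{na_{n}}{n!}\z^{n}$, which proves the first identity. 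For the second, $\hat{\mathcal{B}}_{1}(z\widetilde{f}\,)=\sum_{n}\frac{a_{n}}{(n+1)!}\z^{n+1}$, so $\d\hat{\mathcal{B}}_{1}(z\widetilde{f}\,)=\sum_{n}\frac{(n+1)a_{n}}{(n+1)!}\z^{n+1}=\sum_{n}\frac{a_{n}}{n!}\z^{n+1}=\z\,\hat{\mathcal{B}}_{1}(\widetilde{f}\,)$. Both identities are purely formal and need no analytic input.

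For the last two items I would work directly with $\mathcal{L}_{1}^{d}(\widetilde{g}\,)(z)=\int_{0}^{\infty e^{id}}z^{-1}\widetilde{g}(\z)e^{-\z/z}\,d\z$. A direct computation gives $z\frac{d}{dz}\big(z^{-1}e^{-\z/z}\big)=-z^{-1}e^{-\z/z}+\z z^{-2}e^{-\z/z}$, so differentiating under the integral sign yields $\d\mathcal{L}_{1}^{d}(\widetilde{g}\,)(z)=\int_{0}^{\infty e^{id}}\widetilde{g}(\z)\big(-z^{-1}+\z z^{-2}\big)e^{-\z/z}\,d\z$. On the other side, $\mathcal{L}_{1}^{d}(\d\widetilde{g}\,)=\int_{0}^{\infty e^{id}}z^{-1}\z\,\widetilde{g}'(\z)e^{-\z/z}\,d\z$; integrating by parts with primitive $\widetilde{g}$ and using $\frac{d}{d\z}\big(z^{-1}\z e^{-\z/z}\big)=z^{-1}e^{-\z/z}-z^{-2}\z e^{-\z/z}$ produces exactly the same expression once the boundary terms are discarded, giving the third identity. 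The fourth identity is read off from the very same integration by parts before multiplying through: the computation yields $\mathcal{L}_{1}^{d}(\d\widetilde{g}\,)=z^{-1}\mathcal{L}_{1}^{d}(\z\widetilde{g}\,)-\mathcal{L}_{1}^{d}(\widetilde{g}\,)$, upon recognizing $\int z^{-2}\z\,\widetilde{g}\,e^{-\z/z}\,d\z=z^{-1}\mathcal{L}_{1}^{d}(\z\widetilde{g}\,)$ and $\int z^{-1}\widetilde{g}\,e^{-\z/z}\,d\z=\mathcal{L}_{1}^{d}(\widetilde{g}\,)$; multiplying by $z$ gives the stated form.

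The main obstacle is purely analytic: justifying the differentiation under the integral sign and the vanishing of the boundary terms in the integration by parts. Both are controlled by the hypothesis $\widetilde{g}\in\widetilde{\mathbb{H}}_{1}^{d}$, which provides an analytic continuation on $\overline{S}(d-\e,d+\e)$ with exponential growth of order $1$, together with the restriction of $z$ to the sector $\mathcal{A}\left(d-\frac{\pi}{2},d+\frac{\pi}{2}\right)$ on which the Laplace integral converges. On the ray $\arg(\z)=d$ the factor $e^{-\z/z}$ decays exponentially and dominates both the linear factor $\z$ and the growth of $\widetilde{g}$, so the boundary term at $\infty e^{id}$ vanishes, while the factor $\z$ kills the boundary term at the origin; the same domination legitimizes exchanging $\frac{d}{dz}$ with the integral. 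I would phrase these estimates uniformly on compact subsets of the sector, in the spirit of the justification cited after Definition~\ref{4defi3}.
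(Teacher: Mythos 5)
Your proof is correct, and for the first, second and fourth identities it coincides in substance with the paper's: the Borel identities are the same coefficientwise computation that the paper dismisses as ``straightforward'', and the fourth identity is obtained by exactly the same integration by parts, with the same use of the growth hypothesis $\widetilde{g}\in\widetilde{\mathbb{H}}_{1}^{d}$ to kill the boundary terms. The genuine difference is in the third identity. You prove $\mathcal{L}_{1}^{d}\big(\d\widetilde{g}\big)=\d\mathcal{L}_{1}^{d}\big(\widetilde{g}\big)$ directly, by differentiating under the integral sign and then running the integration by parts once more to match the two sides. The paper instead observes that the substitution $\z\mapsto q\z$ shows $\mathcal{L}_{1}^{d}$ commutes with $\sq$ for every $q>1$, hence with $\dq$, and then lets $q\to 1$ via dominated convergence to recover commutation with $\d$. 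Your route is the more classical one and has the advantage of being self-contained at the level of the single operator $\d$; the paper's route is slightly slicker (no differentiation under the integral to justify, only a change of variables plus a limit) and fits the $q$-deformation philosophy of the article, where the differential identity is literally the confluence of its $q$-analogue in Proposition~\ref{4propo2}. Both require the same analytic input from $\widetilde{g}\in\widetilde{\mathbb{H}}_{1}^{d}$ — domination of the integrand uniformly for $z$ in compacts of the sector with $|z|$ small — which you state explicitly, so there is no gap.
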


\begin{proof}
The two first points are straightforward computations. Let us prove the third point. Making the variable change~$\z \mapsto q\z$ in the integral, we find that for all~$q>1$,~$\mathcal{L}_{1}^{d}$ commutes with~$\sq$. Then, for all~$q>1$, we find
$$\mathcal{L}_{1}^{d}\Big(\dq \widetilde{g}\Big)=\dq\mathcal{L}_{1}^{d}\Big(\widetilde{g}\Big).$$
Since~$\widetilde{g}\in \widetilde{\mathbb{H}}_{1}^{d}$, the dominated convergence theorem allow us to take the limit as~$q$ goes to~$1$
$$\mathcal{L}_{1}^{d}\Big(\d \widetilde{g}\Big)=\lim\limits_{q \to 1}\mathcal{L}_{1}^{d}\Big(\dq \widetilde{g}\Big)=\lim\limits_{q \to 1} \dq\mathcal{L}_{1}^{d}\Big( \widetilde{g}\Big)=\d\mathcal{L}_{1}^{d}\Big(\widetilde{g}\Big).$$
Let us prove the last equality. Since~$\widetilde{g}\in \widetilde{\mathbb{H}}_{1}^{d}$, we may perform an integration by part (let~$\widetilde{g}'$ be the derivation of~$\widetilde{g}$), and we obtain:
\begin{align*}
z\mathcal{L}_{1}^{d}\Big(\d \widetilde{g}\Big)=&\displaystyle \int_{0}^{\infty e^{id}} \z \widetilde{g}'(\z)e^{-\left(\frac{\z}{z}\right)}d\z\\
=&\displaystyle \int_{0}^{\infty e^{id}}  \widetilde{g}(\z)e^{-\left(\frac{\z}{z}\right)}\left(-1+\frac{\z}{z}\right)d\z \\
=&  \mathcal{L}_{1}^{d}\Big(\z \widetilde{g}\Big)-z\mathcal{L}_{1}^{d}\Big(\widetilde{g}\Big).
\qedhere\end{align*}
\end{proof}

\begin{rem}\label{4rem6}
Let~$k\in \Q_{>0}$, let~$\widetilde{d}_{0},\dots,\widetilde{d}_{r}\in \C\left[z^{k}\right]$ and let us consider~$\widetilde{f}\in \C\left[\left[z^{k}\right]\right]$, that satisfies
\begin{equation}\label{4eq25}
\displaystyle\sum_{i=0}^{r}\widetilde{d}_{i}(z)\d^{i}\widetilde{f}=0.
\end{equation}
 From Proposition~\ref{4propo4}, there exist~$\widetilde{c}_{0},\dots,\widetilde{c}_{s}\in \C\left[z^{k}\right]$ with degree less or equal that the maximum of the degrees of the~$\widetilde{d}_{i}$, such that
$$\displaystyle \sum_{i=0}^{s}\widetilde{c}_{i}(z)\d^{i}\hat{\mathcal{B}}_{k}\left(\widetilde{f}\right)=0.$$
Furthermore, if there exists~$d\in \R$ such that ~$\hat{\mathcal{B}}_{k}\left(\widetilde{f}\right)\in \widetilde{\mathbb{H}}_{k}^{d}$, then we have:
$$\d\mathcal{L}_{k}^{d}\circ\hat{\mathcal{B}}_{k}\left( \widetilde{f}\right)=\mathcal{L}_{k}^{d}\circ\hat{\mathcal{B}}_{k}\left(\d \widetilde{f}\right) 
\hbox{ and }
\d \left(z^{k}\mathcal{L}_{k}^{d}\circ\hat{\mathcal{B}}_{k}\left( \widetilde{f}\right)\right)=\mathcal{L}_{k}^{d}\circ\hat{\mathcal{B}}_{k}\left(\d \left(z^{k}\widetilde{f}\right)\right) .
$$
Hence,~$\mathcal{L}_{k}^{d}\circ\hat{\mathcal{B}}_{k}\left( \widetilde{f}\right)$ is solution of (\ref{4eq25}). 
But in general, if~${\widetilde{f}\in \C\left[\left[z\right]\right]}$ is solution of a linear~$\d$-equation with coefficients in~$\C\left[z\right]$, then, for all $(d,k)\in \R\times \Q_{>0}$, we have~${\hat{\mathcal{B}}_{k}\left(\widetilde{f}\right)\notin \widetilde{\mathbb{H}}_{k}^{d}}$, and we must apply successively several Borel and Laplace transformations to compute an analytic  solution of the same equation. See Proposition~\ref{4propo5}.
\end{rem} 
Let us consider~$\d \widetilde{Y}=\widetilde{B}\widetilde{Y}$, where~$\widetilde{B}\in \mathrm{M}_{m}\Big(\C(\{z\})\Big)$ is a companion matrix and let~$\widetilde{H}$ be a formal matrix obtained with the Hukuhara-Turrittin theorem. We have seen that we may assume that~$\widetilde{H}$ has no poles at~$0$. Let~$\widetilde{h}\in \C\left[\left[z^{1/\nu}\right]\right]$ be an entry of~$\widetilde{H}$ and let us consider a linear~$\d$-equation satisfied by~$\widetilde{h}$:
\begin{equation}\label{4eq2}
\widetilde{b}_{m}\d^{m}\widetilde{h}+\widetilde{b}_{m-1}\d^{m-1}\widetilde{h}+\dots+\widetilde{b}_{0}\widetilde{h}=0,
\end{equation}
 with ~$\widetilde{b}_{m}\neq 0$ and~$\widetilde{b}_{i}\in \C\left(\left\{z^{1/\nu}\right\}\right)$. Assume that (\ref{4eq2}) has at least one slope different from~$0$. Let~$d_{0}:=\max\left(2,\deg \left(\widetilde{b}_{0}\right),\dots, \deg \left(\widetilde{b}_{m}\right)\right)$, where~$\deg$ denotes the degree.
Let~$k_{1}<\dots<k_{r-1}$ be the slopes of (\ref{4eq2}) different from $0$, let~$k_{r}$ be an integer strictly bigger than~$k_{r-1}$ and~$d_{0}$, and set~$k_{r+1}:=+\infty$. 
Let~$(\k_{1},\dots,\k_{r})$ be defined by:
$$\k_{i}^{-1}:=k_{i}^{-1}-k_{i+1}^{-1}.$$  
We define the rational numbers ~$(\widetilde{\k_{1}},\dots,\widetilde{\k_{s}})$ as follows:
We take~$(\k_{1},\dots,\k_{r})$ and for~${i=1,...,i=r}$, replace successively~$\k_{i}$ by~$\a_{i}$ terms~$\a_{i}\k_{i}$, where~$\a_{i}$ is the smallest integer such that~$\a_{i}\k_{i}$ is greater or equal than~$d_{0}$. Therefore, by construction, all the~$\widetilde{\k_{i}}$ are greater than~$d_{0}\geq 2$, $\widetilde{\k_{s}}$ belongs to $\N$, and~$\widetilde{\k_{s}}= \k_{r}=k_{r}> k_{r-1}$. \par 
\pagebreak[3]
\begin{ex}\label{4ex1}
Assume that $\widetilde{h}\in \C[[z]]$ is solution of 
$$\left(z^{4}+z^{3}\right)\d^{3}\widetilde{h}+z\d^{2}\widetilde{h}+\d\widetilde{h}-\widetilde{h}=0.$$
We have~$d_{0}=4$, $r=3$ and $(k_{1},k_{2},k_{3},k_{4})=(1,2,5,\infty)$. Then, we find that ${(\k_{1},\k_{2},\k_{3})=(2,10/3,5)}$,~${s=5}$, and we obtain ${(\widetilde{\k_{1}},\dots,\widetilde{\k_{5}})=(4,4,20/3,20/3,5)}$.
\end{ex}

We recall that $\widetilde{h}\in \C\left[\left[z^{1/\nu}\right]\right]$. Let us write~$\widetilde{h}=:\displaystyle\sum_{\substack{n=0\\n\in \N/\nu}}^{\infty}\widetilde{h}_{n}z^{n}$. Let~$\b\in \N^{*}$ be minimal such that~$\b/\widetilde{\k_{1}},\dots,\b/\widetilde{\k_{s}}$ belong to~$\N^{*}$ and for~~$l\in\{0,\dots,\b\nu-1\}$, let~$\widetilde{h}^{(l)}:=\displaystyle\sum_{n=0}^{\infty}\widetilde{h}_{l/\nu+n\b}z^{n\b}$. 

\pagebreak[3]
\begin{propo}\label{4propo5}
Let us keep the same notations as above. There exists~$\widetilde{\Sigma}_{\widetilde{h}}\subset \R$, finite modulo~$2\pi\Z$, such that for all~$l\in\{0,\dots,\b\nu-1\}$, if~$d\in \R\setminus \widetilde{\Sigma}_{\widetilde{h}}$, 
the series~${\widetilde{f}_{1,l}:=\hat{\mathcal{B}}_{\widetilde{\k_{1}}}\circ\dots\circ\hat{\mathcal{B}}_{\widetilde{\k_{s}}}\left(\widetilde{h}^{(l)}\right)}$ converges and belongs to~$\widetilde{\mathbb{H}}_{\widetilde{\k_{1}}}^{d}$.\par 
Moreover, for~$j=2$ (resp.~$j=3$,~$\dots$, resp.~$j=s$),~${\widetilde{f}_{j,l}:=\mathcal{L}_{\widetilde{\k_{j-1}}}^{d}\left(\widetilde{f}_{j-1,l}\right)}$ belongs to~$\widetilde{\mathbb{H}}_{\widetilde{\k_{j}}}^{d}$.  Let~${\widetilde{S}^{d}\left(\hat{h}^{(l)}\right):=\mathcal{L}_{\widetilde{\k_{s}}}^{d}\left(\widetilde{f}_{s,l}\right)}$. 
The function $$\widetilde{S}^{d}\left(\widetilde{h}\right):=\sum_{l=0}^{\b\nu-1} z^{l/\nu}\widetilde{S}^{d}\left(\widetilde{h}^{(l)}\right)
\in\mathcal{A}\left(d-\frac{\pi}{2\widetilde{\k_{s}}},d+\frac{\pi}{2\widetilde{\k_{s}}}\right)=
\mathcal{A}\left(d-\frac{\pi}{2k_{r}},d+\frac{\pi}{2k_{r}}\right),$$  is solution of the same linear~$\d$-equation than~$\widetilde{h}$.
\end{propo}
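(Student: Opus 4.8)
The plan is to read the statement as a concrete realization of the multisummability of $\widetilde{h}$, the levels being the nonzero slopes $k_{1}<\dots<k_{r-1}$ of~(\ref{4eq2}). Since every operation involved is $\C$-linear, the decomposition $\widetilde{h}=\sum_{l=0}^{\b\nu-1}z^{l/\nu}\widetilde{h}^{(l)}$ reduces everything to a single $\widetilde{h}^{(l)}\in\C\left[\left[z^{\b}\right]\right]$. The point of $\b$ is that $\b/\widetilde{\k_{j}}\in\N^{*}$ for every $j$, so that $\hat{\mathcal{B}}_{\widetilde{\k_{j}}}$ divides the coefficient of $z^{n\b}$ by $\G\left(1+n\b/\widetilde{\k_{j}}\right)$, an integer argument, and keeps $\widetilde{h}^{(l)}$ inside $\C\left[\left[z^{\b}\right]\right]$; this is the setting in which Proposition~\ref{4propo4} and Remark~\ref{4rem6} apply directly. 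The engine throughout is Remark~\ref{4rem6}: each Borel or Laplace step sends a solution of a linear $\d$-equation with polynomial coefficients of degree at most $d_{0}$ to a solution of a transformed linear $\d$-equation of the same type, which is exactly why one imposes $\widetilde{\k_{j}}\geq d_{0}$ (and, at the level of Theorem~\ref{4theo1}, why the coefficients must be taken polynomial).

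First I would establish that $\widetilde{f}_{1,l}$ converges. The Borel cascade is calibrated so that the total order removed is
$$\sum_{j}\widetilde{\k_{j}}^{-1}=\sum_{i}\k_{i}^{-1}=\sum_{i}\left(k_{i}^{-1}-k_{i+1}^{-1}\right)=k_{1}^{-1},$$
the first equality because subdividing $\k_{i}$ into $\a_{i}$ copies of $\a_{i}\k_{i}$ preserves the sum of reciprocals, and the last because the sum telescopes with $k_{r+1}=\infty$. As a formal solution of~(\ref{4eq2}) with smallest positive slope $k_{1}$, the series $\widetilde{h}$ is Gevrey of order $1/k_{1}$, and $\hat{\mathcal{B}}_{a}\circ\hat{\mathcal{B}}_{b}$ lowers the Gevrey order by $a^{-1}+b^{-1}$ (Stirling's formula applied to $\G\left(1+\frac{n}{a}\right)\G\left(1+\frac{n}{b}\right)$); hence $\widetilde{f}_{1,l}$ has Gevrey order $0$, i.e. positive radius of convergence. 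Equivalently, by Remark~\ref{4rem6} the composite $\hat{\mathcal{B}}_{\widetilde{\k_{1}}}\circ\dots\circ\hat{\mathcal{B}}_{\widetilde{\k_{s}}}$ solves a polynomial $\d$-equation whose smallest slope has been brought down to $0$, so its formal solution converges.

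The growth estimates are the technical heart. By Remark~\ref{4rem6} each $\widetilde{f}_{j,l}$ solves a linear $\d$-equation with polynomial coefficients of degree at most $d_{0}$, and I would prove as a lemma that a germ of solution of such an equation continues analytically along every ray $\arg\z=d$ outside the finitely many directions carrying its singularities — the zeros of the leading coefficient and the directions determined by the exponential parts $\widetilde{\l}_{i}$ — with exponential growth of order equal to the largest slope of that equation. Tracking the Newton polygon through the cascade, which is calibrated precisely through the $\k_{i}$ and their subdivisions $\widetilde{\k_{j}}$, shows that the equation satisfied by $\widetilde{f}_{1,l}$ has convergent solutions and largest slope $\widetilde{\k_{1}}$, giving $\widetilde{f}_{1,l}\in\widetilde{\mathbb{H}}_{\widetilde{\k_{1}}}^{d}$; each Laplace step $\mathcal{L}_{\widetilde{\k_{j-1}}}^{d}$ then raises the exponential order from $\widetilde{\k_{j-1}}$ to the top slope $\widetilde{\k_{j}}$ of the next equation, so that $\widetilde{f}_{j,l}\in\widetilde{\mathbb{H}}_{\widetilde{\k_{j}}}^{d}$ follows by induction on $j$. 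Taking $\widetilde{\Sigma}_{\widetilde{h}}$ to be the union over all $j$ and $l$ of these forbidden directions, which is finite modulo $2\pi\Z$, makes the whole chain well defined for $d\notin\widetilde{\Sigma}_{\widetilde{h}}$; the last transform $\mathcal{L}_{\widetilde{\k_{s}}}^{d}$ then lands in $\mathcal{A}\left(d-\frac{\pi}{2\widetilde{\k_{s}}},d+\frac{\pi}{2\widetilde{\k_{s}}}\right)$ by Definition~\ref{4defi3}, with $\widetilde{\k_{s}}=k_{r}$ giving the stated opening.

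Finally, reading the composite $\mathcal{L}_{\widetilde{\k_{s}}}^{d}\circ\dots\circ\mathcal{L}_{\widetilde{\k_{1}}}^{d}\circ\hat{\mathcal{B}}_{\widetilde{\k_{1}}}\circ\dots\circ\hat{\mathcal{B}}_{\widetilde{\k_{s}}}$ from the inside out, the function $\widetilde{f}_{j,l}$ solves the $\widetilde{\k_{j}}$-Borel transform of the equation solved by $\hat{\mathcal{B}}_{\widetilde{\k_{j+1}}}\circ\dots\circ\hat{\mathcal{B}}_{\widetilde{\k_{s}}}\left(\widetilde{h}^{(l)}\right)$; since $\widetilde{f}_{j,l}\in\widetilde{\mathbb{H}}_{\widetilde{\k_{j}}}^{d}$, Remark~\ref{4rem6} lets the matched pair $\mathcal{L}_{\widetilde{\k_{j}}}^{d}\circ\hat{\mathcal{B}}_{\widetilde{\k_{j}}}$ descend one level, so that $\widetilde{S}^{d}\left(\widetilde{h}^{(l)}\right)$ solves the equation of $\widetilde{h}^{(l)}$. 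That $\widetilde{S}^{d}\left(\widetilde{h}\right)=\sum_{l}z^{l/\nu}\widetilde{S}^{d}\left(\widetilde{h}^{(l)}\right)$ solves~(\ref{4eq2}) then follows from the intertwining relations of Proposition~\ref{4propo4}: the cascade commutes with $\d$ and with multiplication by a convergent germ, hence with the operator of~(\ref{4eq2}), so applying it to~(\ref{4eq2}) yields the equation for $\widetilde{S}^{d}\left(\widetilde{h}\right)$, which is moreover Gevrey asymptotic to $\widetilde{h}$ on the sector by construction. The main obstacle is the growth bookkeeping of the third paragraph: showing that the solutions of the successive polynomial $\d$-equations have exactly the exponential order prescribed by the top slope and continue off a finite set of directions, and that each Laplace transform increments that order correctly. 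By comparison the Gevrey convergence, the matched-pair descent, and the linear reassembly are formal once Proposition~\ref{4propo4} and Remark~\ref{4rem6} are available.
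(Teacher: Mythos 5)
Your overall route coincides with the paper's: reduce to the components $\widetilde{h}^{(l)}$, obtain convergence of the iterated Borel transform from the bookkeeping $\sum_{j}\widetilde{\k_{j}}^{-1}=\sum_{i}\k_{i}^{-1}=k_{1}^{-1}$ together with the Gevrey order $1/k_{1}$ of $\widetilde{h}$, and obtain equation-preservation from the intertwining relations of Proposition~\ref{4propo4} and Remark~\ref{4rem6}. The difference is in how the analytic heart is discharged. The paper does it entirely by citation: Theorem 7.51 of \cite{VdPS} and \cite{B},~$\S 7.2$, give the membership $\widetilde{f}'_{1}\in\widetilde{\mathbb{H}}_{\k'_{1}}^{d}$ and the chain of Laplace transforms for the \emph{unmodified} levels $\k'_{i}$ (with $\k'_{r-1}=k_{r-1}$); Lemma 2 of \cite{B},~$\S 6.2$, transfers this to the subdivided levels $\widetilde{\k_{i}}$; and Theorem 2 of \cite{B},~$\S 6.4$, gives that the summation operator preserves the equation. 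You propose instead to prove the heart yourself as a lemma, and that lemma is where the gap lies.

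Concretely, you assert that a germ of solution of a polynomial-coefficient $\d$-equation continues along every non-singular ray with exponential growth of order equal to ``the largest slope of that equation,'' and that tracking the Newton polygon through the cascade identifies this order with $\widetilde{\k_{j}}$ at each stage. This is not a proof, and as stated it is not even the right invariant: the growth at infinity along a ray is governed by the Newton polygon \emph{at infinity} of the transformed equation, not by its slopes at the origin, and a generic solution of the Borel-transformed equation picks up the fastest exponential at infinity, whose order is in general strictly larger than $\widetilde{\k_{1}}$. That the particular solution $\widetilde{f}_{1,l}$ --- the Borel transform of a formal solution of the original equation, continued along a direction avoiding the singular set --- has growth only of order $\widetilde{\k_{1}}$, and that each Laplace step then lands exactly in $\widetilde{\mathbb{H}}_{\widetilde{\k_{j}}}^{d}$, is precisely the nontrivial content of the multisummability theorem for linear differential equations; it does not follow from Newton-polygon bookkeeping alone. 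You also never justify that subdividing $\k_{i}$ into $\a_{i}$ copies of $\a_{i}\k_{i}$ changes neither the final sum nor the intermediate memberships, which is needed because the statement concerns the refined cascade and the literature is formulated for the levels $\k'_{i}$; this is exactly what Lemma 2 of \cite{B},~$\S 6.2$, supplies. Either carry out these estimates on the successive Borel-transformed equations (essentially re-proving Balser's results) or cite them as the paper does; your ``main obstacle'' paragraph correctly locates the difficulty but leaves it open.
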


\pagebreak[3]
\begin{rem}
We make a priori an abuse of notations, since  $\widetilde{S}^{d}\left(\widetilde{h}\right)$ may depend on the choice of the linear differential equation satisfied by $\widetilde{h}$.
However, we can directly deduce from Lemma 2 in \cite{B},~$\S 6.2$, that $\widetilde{S}^{d}\left(\widetilde{h}\right)$ is independent upon the choice of the linear differential equation satisfied by $\widetilde{h}$. Notice that we will not use this fact.
\end{rem}
\pagebreak[3]
\begin{rem}
As we can see in Theorem 7.51 in \cite{VdPS}, the function $\widetilde{S}^{d}\left(\widetilde{h}\right)$ is~$\widetilde{\k_{s}}$-Gevrey asymptotic to~$\widetilde{h}$ on $\overline{S}\left(d-\dfrac{\pi}{2\widetilde{\k_{s}}},d+\dfrac{\pi}{2\widetilde{\k_{s}}}\right)$:
for every closed subsector~$W$ of~$\overline{S}\left(d-\dfrac{\pi}{2\widetilde{\k_{s}}},d+\dfrac{\pi}{2\widetilde{\k_{s}}}\right)$, there exist~${A_{W} \in \R}$,~${\e >0}$ such that for all~$N\in \N^{*}$ and all~$z\in W$ with~$|z|<\e$,
$$\left|\widetilde{S}^{d}\left(\widetilde{h}\right)(z)-\sum_{n= 0}^{N-1} \widetilde{h}_{n}z^{n}\right|\leq (A_{W})^{N}\G\left(1+\frac{N}{\widetilde{\k_{s}}}\right) |z|^{N}.$$ 
\end{rem}
\begin{proof}[Proof of Proposition \ref{4propo5}]
Let~$\widetilde{g}:=\rho_{\nu}\widetilde{h}\in \C[[z]]$. For all~$l\in\{0,\dots,\b\nu-1\}$, we have $$z^{l/\nu}\widetilde{h}^{(l)}(z,q)=\rho_{1/\nu}\displaystyle\sum_{j=0}^{\b\nu-1}\frac{\widetilde{g}\left(e^{2i\pi lj/\b\nu}z\right)}{e^{2i\pi lj/\b\nu}\b\nu}.$$
It follows that there exists~$\widetilde{\Sigma}_{\widetilde{h}}\subset \R$, finite modulo~$2\pi\Z$, such that for all~$l\in\{0,\dots,\b\nu-1\}$, if~$d\in \R\setminus \widetilde{\Sigma}_{\widetilde{h}}$, then 
\begin{itemize}
\item $\widetilde{f}_{1}:=\hat{\mathcal{B}}_{\widetilde{\k_{1}}}\circ\dots\circ\hat{\mathcal{B}}_{\widetilde{\k_{s}}}\left(\widetilde{h}\right)\in\widetilde{\mathbb{H}}_{\widetilde{\k_{1}}}^{d}~$ if and only if for all integers ${l\in\{0,\dots,\b\nu-1\}}$, we have~${\widetilde{f}_{1,l}:=\hat{\mathcal{B}}_{\widetilde{\k_{1}}}\circ\dots\circ\hat{\mathcal{B}}_{\widetilde{\k_{s}}}\left(\widetilde{h}^{(l)}\right)
\in\widetilde{\mathbb{H}}_{\widetilde{\k_{1}}}^{d}}$.
\item For~$j=2$ (resp.~$j=3$,~$\dots$, resp.~$j=s$),~$\widetilde{f}_{j}:=\mathcal{L}_{\widetilde{\k_{j-1}}}^{d}\left(\widetilde{f}_{j-1}\right)\in\widetilde{\mathbb{H}}_{\widetilde{\k_{j}}}^{d}$
if and only if for all~${l\in\{0,\dots,\b\nu-1\}}$,~$\widetilde{f}_{j,l}:=\mathcal{L}_{\widetilde{\k_{j-1}}}^{d}\left(\widetilde{f}_{j-1,l}\right)\in\widetilde{\mathbb{H}}_{\widetilde{\k_{j}}}^{d}$.
\end{itemize}
Let~$d\in  \R\setminus\widetilde{\Sigma}_{\widetilde{h}}$ and
let~$(\k'_{1},\dots,\k'_{r-1})$ defined as:
$$\k'_{r-1}:=k_{r-1} \hbox{ and for }i<r-1, \frac{1}{\k'_{i}}:=\frac{1}{k_{i}}-\frac{1}{k_{i+1}}.$$ 
Due to Theorem 7.51 in \cite{VdPS} and \cite{B},~$\S 7.2$,  
~$\widetilde{f}'_{1}:=\hat{\mathcal{B}}_{\k'_{1}}\circ\dots\circ\hat{\mathcal{B}}_{\k'_{r}}\left(\widetilde{h}\right)
\in\widetilde{\mathbb{H}}_{\k'_{1}}^{d}$, and for~$j=2$ (resp.~$j=3$,~$\dots$, resp.~$j=r-1$),~$\widetilde{f}'_{j}:=\mathcal{L}_{\k'_{j-1}}^{d}\left(\widetilde{f}'_{j-1}\right)\in\widetilde{\mathbb{H}}_{\k'_{j}}^{d}$. With Lemma 2 in \cite{B},~$\S 6.2$, this implies that~${\widetilde{f}_{1}:=
\hat{\mathcal{B}}_{\widetilde{\k_{1}}}\circ\dots\circ\hat{\mathcal{B}}_{\widetilde{\k_{s}}}\left(\widetilde{h}^{(l)}\right)\in\widetilde{\mathbb{H}}_{\widetilde{\k_{1}}}^{d}}$
and for~$j=2$ (resp.~$j=3$,~$\dots$, resp.~$j=s$),~$\widetilde{f}_{j}:=\mathcal{L}_{\widetilde{\k_{j-1}}}^{d}\left(\widetilde{f}_{j-1,l}\right)\in\widetilde{\mathbb{H}}_{\widetilde{\k_{j}}}^{d}$. With the equivalence we have written in the beginning of the proof, we may apply successively the Borel and Laplace transformations of the required order to each series~$\hat{h}^{(l)}$. \par 
To finish, we have to prove that~$\widetilde{S}^{d}\left(\widetilde{h}\right)$ is solution of the same linear~$\d$-equation than~$\widetilde{h}$. This is a direct consequence of Theorem 2 in \cite{B},~$\S 6.4$.
\end{proof}

 As a matter of fact, as we can see in Page $239$ of \cite{VdPS}, $\widetilde{S}^{d}\left(\widetilde{h}\right)$ belongs to $\mathcal{A}\left(d_{l}-\frac{\pi}{2k_{r}},d_{l+1}+\frac{\pi}{2k_{r}}\right)$, where ~${d_{l},d_{l+1}\in \widetilde{\Sigma}_{\widetilde{h}}}$ are chosen such that~$\big]d_{l},d_{l+1}\big[ \bigcap \widetilde{\Sigma}_{\widetilde{h}}=\varnothing$. \par
If (\ref{4eq2}) has only slope~$0$, then~$\widetilde{h}\in \C\left\{z^{1/\nu}\right\}$. In this case we set~$\widetilde{\Sigma}_{\widetilde{h}}:=\varnothing$, and for all~$d\in \R$ we set $$\widetilde{S}^{d}\left(\widetilde{h}\right):=\widetilde{h}.$$ \par 
We recall that we consider the equation~$\d \widetilde{Y}=\widetilde{B}\widetilde{Y}$, where~$\widetilde{B}\in \mathrm{M}_{m}\Big(\C(\{z\})\Big)$ is a companion matrix and~${\widetilde{H}:=\left(\widetilde{h}_{i,j}\right)\in \mathrm{M}_{m}\Big(\C\left[\left[z^{1/\nu}\right]\right]\Big)}$ is a formal matrix given by the Hukuhara-Turrittin theorem. The entries of~$\widetilde{H}$ satisfy linear~$\d$-equations with coefficients in~$\C\left[z^{1/\nu}\right]$ for some~$\nu$. We may assume that for a given entry, the coefficients of the~$\d$-equation are relatively prime. Let~$d_{0}$ be the maximum among~$2$ and the degrees of the coefficients of the equations.
Let~$\widetilde{\Sigma}_{\widetilde{H}}$ be the union of the~$\widetilde{\Sigma}_{\widetilde{h}_{i,j}}$, where~$\widetilde{\Sigma}_{\widetilde{h}_{i,j}}$ has been defined in Proposition~\ref{4propo5};~$k_{i,j}\in \Q$ be the biggest slope of the equation satisfied by~$\widetilde{h}_{i,j}$;~$k'$ be the maximum of the ~$k_{i,j}$; and~$k$ be an integer strictly bigger than~$k'$ and~$d_{0}$. Let~$d,d^{\pm}\in  \R\setminus \widetilde{\Sigma}_{\widetilde{H}}$, with $$d-\frac{\pi}{2k}<d^{-}<d<d^{+}<d+\frac{\pi}{2k},$$ and such that~$\displaystyle\left(\big[d^{-},d\big[\bigcup \big]d,d^{+}\big]\right)\bigcap \widetilde{\Sigma}_{\widetilde{H}}=\varnothing$. Let~$\widetilde{S}^{d^{\pm}}\left(\widetilde{H}\right):=\widetilde{S}^{d^{\pm}}\left(\widetilde{h}_{i,j}\right)$. 
We get two analytic solutions, $$\widetilde{S}^{d^{-}}\left(\widetilde{H}\right)\mathrm{Diag} \left(e^{\widetilde{L}_{i} \log(z)}e^{\widetilde{\l}_{i}\times\mathrm{Id}_{m_{i}}}\right)\in \mathrm{GL}_{m}\left(\mathcal{A}\left(d^{-}-\dfrac{\pi}{2k},d+\dfrac{\pi}{2k}\right)\right),$$
and
 $$\widetilde{S}^{d^{+}}\left(\widetilde{H}\right)\mathrm{Diag} \left(e^{\widetilde{L}_{i} \log(z)}e^{\widetilde{\l}_{i}\times\mathrm{Id}_{m_{i}}}\right)\in \mathrm{GL}_{m}\left( \mathcal{A}\left(d-\dfrac{\pi}{2k},d^{+}+\dfrac{\pi}{2k}\right)\right).$$
Note that by definition, the analyticity holds on a subset of~$\widetilde{\C}$.
 A computation shows that there exists a matrix~$\widetilde{ST}_{d}\in \mathrm{GL}_{m}(\C)$, we call the Stokes matrix in the direction~$d$, such that: 
$$
\widetilde{S}^{d^{+}}\left(\widetilde{H}\right)\mathrm{Diag} \left(e^{\widetilde{L}_{i} \log(z)}e^{\widetilde{\l}_{i}\times\mathrm{Id}_{m_{i}}}\right)
= \widetilde{S}^{d^{-}}\left(\widetilde{H}\right)\mathrm{Diag} \left(e^{\widetilde{L}_{i} \log(z)}e^{\widetilde{\l}_{i}\times\mathrm{Id}_{m_{i}}}\right)\widetilde{ST}_{d}.
$$
 The Stokes matrices belong to the differential Galois group, see Chapter 8 of \cite{VdPS}. 

\pagebreak[3]
\section{Local formal study of~$q$-difference equations}\label{4sec2}
In this section, we summarize results about formal classification of linear~$q$-difference equations. See in particular \cite{RSZ} for more details. Let~$q>1$ be fixed. We extend the action of~$\sq$ to~$\displaystyle\bigcup_{\nu\in \N^{*}}\C\left(\left(z^{1/\nu}\right)\right)$ by~${\sq z^{1/\nu}=e^{\log(q)/\nu}z^{1/\nu}}$, for~$\nu\in\N^{*}$.
Let~$K$ be an intermediate field extension:~$\C(z)\subset K\subset\displaystyle\bigcup_{\nu\in \N^{*}}\C\left(\left(z^{1/\nu}\right)\right)$, stable by~$\sq$. \par
Let us consider the~$q$-difference operator:
$$P=\displaystyle \sum_{i=l}^{m} b_{i}\sq^{i},$$
where~$b_{i}\in K$,~$l,m\in \Z$ and~$l<m$.
The Newton polygon of~$P$ is the convex hull of 
$$\displaystyle \bigcup_{k=l}^{m}\Big\{ (i,j)\in \Z\times\Q \Big| j\geq v_{0}\left(b_{k}\right)\Big\},$$ where~$v_{0}$ denotes the~$z$-adic valuation of~$K$.
Let~$\big\{(d_{1},n_{1}),\dots ,(d_{r},n_{r})\big\}$
be a minimal subset of~$\mathbb Z^{2}$ for the inclusion, with~$d_{1}<\dots<d_{r}$,
such that the Newton polygon is the convex hull of
$$\displaystyle \bigcup_{k=1}^{r}\Big\{ (d_{k},j)\in \Z\times\Q \Big| j\geq n_{k}\Big\}.$$
 We call slopes of the linear~$q$-difference equation the rational numbers~${\frac{n_{i+1}-n_{i}}{d_{i+1}-d_{i}}}$, and multiplicity of the slope~$\frac{n_{i+1}-n_{i}}{d_{i+1}-d_{i}}$, the integer~$d_{i+1}-d_{i}$. \par 
Like in $\S \ref{4sec1}$, let~$B\in \mathrm{GL}_{m}(K)$ be a companion matrix. As in the differential case, we can naturally associate to the linear~$\sq$-equation~${\sq Y=BY}$ a unitary~$q$-difference operator~${P=\sq^{m}+b_{m-1}\sq^{m-1}+\dots+b_{0}}$ with coefficients in~$K$. We define the Newton polygon of~$\sq Y=BY$, as the Newton polygon of~$P$. We also define the slopes and the multiplicities of the slopes of~$\sq Y=BY$ as the slopes and the multiplicities of the slopes of~$P$. Notice that if $B\in \mathrm{GL}_{m}\Big(\C((z))\Big)$ is not a companion matrix, we can still define the Newton polygon of $\sq Y=BY$, but we will not need this in this paper. \par
Let~$A,B\in \mathrm{GL}_{m}(K)$. The two~$q$-difference systems,~$\sq Y=AY$ and~$\sq Y=BY$ are equivalent over~$K$, if there exists~$P\in \mathrm{GL}_{m}(K)$, called gauge transformation, such that
$$A=P[B]_{\displaystyle\sq}:=(\sq P)BP^{-1}.$$ In particular,
$$\sq Y=BY\Longleftrightarrow\sq \left(PY\right)=APY.$$ \par
Conversely, if there exist $A,B,P\in \mathrm{GL}_{m}(K)$ such that
$\sq Y=BY$, $ \sq Z=AZ$ and $Z=PY$, then 
$$A=P\left[ B\right]_{\displaystyle\sq}.$$
If the above matrices~$A,B\in \mathrm{GL}_{m}(K)$ are companion matrices, then, see \cite{RSZ}, Theorem~2.2.1, they have the same Newton polygon. 

\pagebreak[3]
\begin{theo}[\cite{RSZ},~$\S 2.2$]\label{4theo6}
Let~$B\in \mathrm{GL}_{m}\Big(\C((z))\Big)$ be a companion matrix and let us consider~${\sq Y=BY}$. Let~$\mu_{1},\dots,\mu_{k}$ be the slopes of the~$q$-difference equation, let~$m_{1}\dots,m_{k}$ be their multiplicities and assume that the~$\mu_{i}$ belong to~$\Z$. Then, we have existence of ~$B_{i}\in \mathrm{GL}_{m_{i}}(\C)$,~$\hat{H}\in \mathrm{GL}_{m}\Big(\C((z))\Big)$, such that: 
$$B=\hat{H}\Big[\mathrm{Diag} \left(z^{-\mu_{i}}B_{i}\right)\Big]_{\displaystyle\sq}.$$ 
\end{theo}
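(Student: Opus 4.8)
The plan is to establish the formal reduction by induction on the number of distinct slopes, reducing in each step to a block-diagonal form where one block carries the smallest slope and the other carries the remaining ones. Since this is a statement taken from \cite{RSZ}, the proof I would write is essentially an adaptation of the classical slope-filtration argument (the $q$-difference analogue of the Hukuhara--Turrittin reduction), so I would first set up the correct invariants and then run the splitting.

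First I would recall that, by the hypothesis that the slopes $\mu_1<\dots<\mu_k$ are integers, the Newton polygon of $\sq Y=BY$ has integer vertices, and that the slopes and their multiplicities are invariants of the equivalence class under gauge transformation (this was stated just before the theorem, citing \cite{RSZ}, Theorem~2.2.1). The goal matrix $\mathrm{Diag}\left(z^{-\mu_i}B_i\right)$ is the ``pure'' model: each diagonal block $z^{-\mu_i}B_i$ is a system homogeneous of slope $\mu_i$, with $B_i\in\mathrm{GL}_{m_i}(\C)$ encoding the formal monodromy/constant data attached to that slope. So the theorem asserts that over $\C((z))$ every such $B$ is gauge-equivalent to the direct sum of its pure pieces; there is no extra ``off-diagonal'' formal obstruction once the slopes are integral (hence there is no ramification, which is why we need $\mu_i\in\Z$ rather than $\mu_i\in\Q$).

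The key steps, in order: (1) \emph{Slope splitting.} Show that one can find $\hat H_1\in\mathrm{GL}_m\bigl(\C((z))\bigr)$ putting $B$ into block upper-triangular form with two diagonal blocks, one with the single smallest slope $\mu_1$ of size $m_1$ and one with slopes $\mu_2,\dots,\mu_k$. This is the Newton-polygon factorization: it follows from a $q$-analogue of the Hensel/factorization lemma for the operator $P$, lifting the factorization of the ``boundary polynomial'' attached to the lowest edge of the polygon. (2) \emph{Removing the off-diagonal block.} Given the block upper-triangular form $\begin{pmatrix} z^{-\mu_1}B_1 & C\\ 0 & B'\end{pmatrix}$, solve the $q$-difference Sylvester-type cohomological equation for a gauge transformation $\begin{pmatrix} \mathrm{Id} & X\\ 0 & \mathrm{Id}\end{pmatrix}$ that kills $C$; solvability is guaranteed precisely because the two diagonal blocks have \emph{disjoint} slopes, which makes the relevant $\sq$-linear operator on formal matrices bijective. (3) \emph{Normalizing a pure block.} Reduce each homogeneous block of a single integer slope $\mu_i$ to the constant model $z^{-\mu_i}B_i$ with $B_i\in\mathrm{GL}_{m_i}(\C)$; after the shift $Y\mapsto z^{\mu_i}Y$ (legitimate since $\mu_i\in\Z$) this is the classical statement that a $q$-difference system of slope $0$ over $\C((z))$ is formally equivalent to one with constant matrix. (4) Conclude by induction on $k$, feeding $B'$ back into the procedure and assembling the gauge transformations into a single $\hat H\in\mathrm{GL}_m\bigl(\C((z))\bigr)$.

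\textbf{The main obstacle} I expect is step~(2), the vanishing of the off-diagonal block: one must prove that the operator $X\mapsto (\sq X)B' - \bigl(z^{-\mu_1}B_1\bigr)X$ (the precise shape depends on conventions) is invertible on formal matrix power series, and this is exactly where the integrality and the separation of the slopes enter. Concretely one examines the action on each homogeneous component $z^n$ and checks that the scalar/eigenvalue factors $q^{\,n}$ combined with the slope gap never vanish simultaneously, so the formal series solving the cohomological equation exists and is unique. Steps~(1) and~(3) are the $q$-analogues of well-documented classical reductions and I would cite \cite{RSZ},~$\S 2.2$, for the technical lemmas rather than reprove them; the whole argument is formal, so no growth estimates or summation are needed here, which is what makes the statement clean at the level of $\C((z))$.
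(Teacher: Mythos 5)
The paper does not prove this statement: Theorem~\ref{4theo6} is imported verbatim from \cite{RSZ},~$\S 2.2$, and the author gives no argument of his own, so there is no in-paper proof to compare against. Your outline is nevertheless a faithful sketch of the standard slope-filtration argument that the cited source uses (Newton-polygon factorization, killing the off-diagonal block by inverting the $\sq$-Sylvester operator on each homogeneous component using the slope gap, twisting a pure integral-slope block by $z^{\mu_i}$ to reduce to the constant slope-zero model, and induction on the number of slopes), and you correctly identify step~(2) as the place where integrality and slope separation are actually used; no gap is apparent at the level of detail appropriate for a cited result.
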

 See \cite{VdPR} for a more general result that works for~$q$-difference equation with arbitrary slopes. Notice that for all~$n\in \Z$, we have also
$$ B=\left(z^{n}\hat{H}\right)\Big[\mathrm{Diag} \left(B_{i}\times q^{-n}z^{-\mu_{i}}\right)\Big]_{\displaystyle\sq},$$
 which allow us to reduce to the case where~$\hat{H}$ has entries in $\C[[z]]$.\par 
We want to determine the eigenvalues of the~$B_{i}$ and the~$z$-valuation of the entries of~$\hat{H}$. Let~$b_{0},\dots,b_{m-1}\in  \C((z))$, and let us consider the~$q$-difference equation:
\begin{equation}\label{4eq18}
\sq^{m}y+b_{m-1}\sq^{m-1}y+\dots+b_{0}y=0.
\end{equation}
 Let~$\big\{(d_{1},n_{1}),\dots ,(d_{r},n_{r})\big\}$
be a minimal subset of~$\mathbb Z^{2}$ for the inclusion, with~${d_{1}<\dots<d_{r}}$,
such that the Newton polygon is the convex hull of
~$\displaystyle \bigcup_{k=1}^{r}\Big\{ (d_{k},j)\in \Z\times\Z \Big| j\geq n_{k}\Big\}.$ Let~$\mu_{1},\dots,\mu_{k}$ be the slopes of the~$q$-difference equation,~$m_{1}\dots,m_{k}$ be their multiplicities and assume that the slopes~$\mu_{i}=\frac{n_{i+1}-n_{i}}{d_{i+1}-d_{i}}$ belongs to~$\Z$.\par 
For~$d_{i}\leq j\leq d_{i+1}$, let~$a_{j}$ be the value at~$z=0$ of~$b_{j}(z)z^{-n_{i}-\mu_{i}(j-d_{i})}$. We define the characteristic polynomial associated to the slope~$\mu_{i}$ as follows: 
$$
P^{(\mu_{i})}(X):=\left(a_{d_{i+1}}q^{\mu_{i}d_{i+1}(d_{i+1}-1)/2}X^{d_{i+1}-d_{i}}+\dots+a_{d_{i}}q^{\mu_{i}d_{i}(d_{i}-1)/2}\right).$$
From \cite{MZ}, Theorem 3.2.3, we deduce directly the following:
\pagebreak[3]
\begin{theo}\label{4theo5}
Let~$B\in \mathrm{GL}_{m}\Big(\C((z))\Big)$ be a companion matrix, such that~$\sq Y=BY$ is the linear~$\sq$-system equivalent to (\ref{4eq18}). There exist
\begin{itemize}
\item~$B_{i}\in \mathrm{GL}_{m_{i}}(\C)$, which are of the form~$\mathrm{Diag}_{l}\left(T_{i,l}\right)$, where~$T_{i,l}$ are upper triangular matrices with diagonal terms that are equal to the roots of the characteristic polynomial associated to the slope~$\mu_{i}$,
\item~$\hat{H}\in \mathrm{GL}_{m}\Big(\C((z))\Big)$, whose entries of the first row of~$\hat{H}$ have~$z$-valuation equal to~$0$,
\end{itemize}
such that
$$B=\hat{H}\Big[\mathrm{Diag} \left(z^{-\mu_{i}}B_{i}\right)\Big]_{\displaystyle\sq}.$$
\end{theo}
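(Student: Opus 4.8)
The plan is to deduce the statement directly from the reduction of Theorem~\ref{4theo6} together with \cite{MZ}, Theorem 3.2.3, the only extra ingredients being a constant triangularization and a bookkeeping of the freedom~$\hat{H}\mapsto z^{n}\hat{H}$. First I would apply Theorem~\ref{4theo6} to~$B$: since the slopes~$\mu_{i}$ of~$\sq Y=BY$ are assumed to lie in~$\Z$, there exist~$B_{i}\in \mathrm{GL}_{m_{i}}(\C)$ and~$\hat{H}\in \mathrm{GL}_{m}\big(\C((z))\big)$ with
$$B=\hat{H}\Big[\mathrm{Diag}\left(z^{-\mu_{i}}B_{i}\right)\Big]_{\displaystyle\sq}.$$
At this stage the~$B_{i}$ are arbitrary invertible complex matrices; what remains is to control their spectrum, to bring them to block upper triangular form, and to fix the valuation of the first row of~$\hat{H}$.

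For the triangular shape I would use a block-diagonal \emph{constant} gauge transformation. Since~$\sq$ fixes constant matrices, for any~$P=\mathrm{Diag}(C_{i})$ with~$C_{i}\in \mathrm{GL}_{m_{i}}(\C)$ one has
$$P\Big[\mathrm{Diag}\left(z^{-\mu_{i}}B_{i}\right)\Big]_{\displaystyle\sq}=\mathrm{Diag}\left(z^{-\mu_{i}}C_{i}B_{i}C_{i}^{-1}\right),$$
and replacing~$\hat{H}$ by~$\hat{H}P^{-1}$ preserves both the reduced shape and the condition~$\hat{H}\in \mathrm{GL}_{m}\big(\C((z))\big)$. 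Choosing each~$C_{i}$ to be a Schur (or Jordan) transition matrix for~$B_{i}$, and grouping the eigenvalues according to their class in~$\C^{*}/q^{\Z}$ as is natural for~$q$-difference data, I can arrange that each~$C_{i}B_{i}C_{i}^{-1}=\mathrm{Diag}_{l}(T_{i,l})$ with the~$T_{i,l}$ upper triangular and their diagonals listing the eigenvalues of~$B_{i}$.

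It then remains to identify the diagonal entries of the~$T_{i,l}$ with the roots of~$P^{(\mu_{i})}$ and to normalize the first row of~$\hat{H}$, which is exactly what \cite{MZ}, Theorem 3.2.3 provides once read off the Newton polygon of~\eqref{4eq18}: the characteristic polynomial attached to the edge of slope~$\mu_{i}$ governs the leading behavior of the solutions associated to that slope, so the eigenvalues of~$B_{i}$ are its roots, while the unitary normalization of~\eqref{4eq18} together with the companion structure forces the first-row entries of the transition matrix to have nonnegative valuation, at least one of them being~$0$. The main obstacle, and the point needing care, is the reconciliation of the~$q^{\Z}$-ambiguity of the spectrum with this valuation normalization: the remark following Theorem~\ref{4theo6} shows that~$\hat{H}\mapsto z^{n}\hat{H}$ forces~$B_{i}\mapsto q^{-n}B_{i}$, so the eigenvalues are only defined modulo~$q^{\Z}$, and it is precisely the choice of the representative making the first row of~$\hat{H}$ of valuation~$0$ that pins them down to the actual roots of~$P^{(\mu_{i})}$. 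Checking that these two normalizations can be met simultaneously is where \cite{MZ}, Theorem 3.2.3 does the substantive work, after which the triangularization above only reshapes the~$B_{i}$ without altering their spectra.
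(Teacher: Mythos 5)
The paper offers no proof of this statement beyond the assertion that it is deduced directly from \cite{MZ}, Theorem 3.2.3, and that citation is exactly where your argument also places the two substantive points (the identification of the spectrum of the~$B_{i}$ with the roots of~$P^{(\mu_{i})}$, and the compatibility of that identification with the valuation-$0$ normalization of the first row of~$\hat{H}$). Your additional scaffolding --- invoking Theorem~\ref{4theo6}, the constant block-diagonal conjugation (which, for constant~$P$, indeed gives $P[M]_{\displaystyle\sq}=PMP^{-1}$), and the $\hat{H}\mapsto z^{n}\hat{H}$, $B_{i}\mapsto q^{-n}B_{i}$ bookkeeping --- is correct and consistent with the paper, so this is essentially the same approach.
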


\pagebreak[3]
\section{Definition of~$q$-Borel and~$q$-Laplace transformations.}\label{4sec3}

The goal of this section is to define~$q$-analogues of the Borel and Laplace transformations. We will study their behavior as~$q$ goes to~$1$ in~$\S  \ref{4sec42}$. Remark that there are several possible definitions of~$q$-analogues of Borel and Laplace transformations. See \cite{DVZ,MZ,R92,RZ,Z99,Z00,Z01,Z02,Z03} for example. Following \cite{DVZ}, we begin by defining a~$q$-Borel transformation we are going to study. In this section,~$q>1$ is fixed. Let us recall that for all~$n\in \N$,~$[n]_{q}^{!}=\displaystyle\prod_{l=1}^{n}\frac{q^{l}-1}{q-1}$.
\pagebreak[3]
\begin{defi}\label{4defi4}
Let~$k\in \Q_{>0}$ and let~$\nu\in \N^{*}$ minimal such that~$\nu k \in \N^{*}$. We define~$\hat{\mathcal{B}}_{q,k}$ as follows:
$$
\begin{array}{llll}
\hat{\mathcal{B}}_{q,k}:& \C\left[\left[z^{\nu k}\right]\right]&\longrightarrow&\C\left[\left[\z^{\nu k}\right]\right]\\
&\displaystyle\sum_{l\in \N} a_{l}z^{l}&\longmapsto&\displaystyle\sum_{l\in \N} \frac{a_{l}}{[l/k]_{q}^{!}}\z^{l},
\end{array}
$$
Let~$k\in \Q_{>0}$, let~$\nu\in \N^{*}$ minimal such that~$\nu k \in \N^{*}$ and let~$\r_{k},\r_{1/k}$ be the maps defined in~$\S \ref{4sec1}$. We remark that we have:
$$\hat{\mathcal{B}}_{q,k}=\r_{k}\circ \hat{\mathcal{B}}_{q,1}\circ \r_{1/k}.$$
\end{defi}

\pagebreak[3]
\begin{defi}
Let~$d\in \R$ and let~$k\in \Q_{>0}$.
Let~$f$ be a function such that there exists~$\e>0$, such that~$f\in \mathcal{A}(d-\e,d+\e)$. We say that~$f$ belongs to~$\mathbb{H}_{q,k}^{d}$, if~$f$ admits an analytic continuation defined on~$\overline{S}(d-\e,d+\e)$, that we will still call~$f$, such that there exist constants~$J,L>0$, such that for~$\z\in\overline{S}(d-\e,d+\e)$ (see the introduction for the definition of $e_{q}$):
$$|f(\z)|<Je_{q}\left(L|\z|^{k}\right).$$
\end{defi}

For all~$d\in \R$, we write~$[d]:=q^{\Z}e^{id}$ the discrete logarithmic~$q$-spiral through the point~$e^{id}\in \C^{*}$. For~$d\in \R$ we set the Jackson integral: 
$$\displaystyle \int_{[d]} f(\z)d_{q}\z:=(q-1) \displaystyle \sum_{l\in \Z}f\left(q^{l}e^{id}\right)q^{l}e^{id},$$
whenever the right hand side converges. Roughly speaking, Jackson integral degenerates into classical integral when $q$ goes to $1$, which means that for a convenient choice of function~$f$, we have on a convenient domain $$\displaystyle \int_{[d]} f(\z)d_{q}\z \underset{q \to 1}{\longrightarrow} \displaystyle \int_{0}^{\infty e^{id}} f(\z)d\z.$$ \par
From now, let~$p:=1/q\in ]0,1[$. Let~$\mathcal{M}(\C^{*},0)$ be the field of functions that are meromorphic on some punctured neighborhood of~$0$ in~$\C^{*}$. We define now the~$q$-Laplace transformation. 

\pagebreak[3]
\begin{defi}\label{4defi5} Let~$k\in \Q_{>0}$ and let~$\rho_{k}\left(\mathcal{M}(\C^{*},0)\right):=\{\rho_{k}(f)|f\in\mathcal{M}(\C^{*},0)\}$. Let~$d\in \R$. As we can see in \cite{DVZ},~$\S 4.2$, the following maps are well defined and we call them the~$q$-Laplace transformation of order~$1$ and~$k$ respectively:
$$
\begin{array}{llll}
\mathcal{L}_{q,1}^{[d]}:& \mathbb{H}_{q,1}^{d}&\longrightarrow&\mathcal{M}(\C^{*},0)\\
&f&\longmapsto&\displaystyle \int_{[d]}\frac{f(\z)}{ze_{q}\left(\frac{q\z}{z} \right)}d_{q}\z,\\\\
\mathcal{L}_{q,k}^{[d]}:& \mathbb{H}_{q,k}^{d}&\longrightarrow&\rho_{k}\left(\mathcal{M}(\C^{*},0)\right)\\
&g&\longmapsto&\r_{k}\circ \mathcal{L}^{[d]}_{q,1}\circ \r_{1/k}(g).
\end{array}
$$

\end{defi} 
For~$|z|$ small, the function~$\mathcal{L}_{q,1}^{[d]}(f)(z)$ has poles of order at most~$1$ that are contained on the~$q$-spiral~${(q-1)[d+\pi]:=q^{\Z}(1-q)e^{id}}$.
The following proposition is the~$q$-analogue of Proposition~\ref{4propo4}. 

\pagebreak[3]
\begin{propo}\label{4propo2}
Let~$\hat{f}\in \C[[z]]$, let~$d\in \R$, and let~$g\in  \mathbb{H}_{q,1}^{d}$. Then 
\begin{itemize}
\item~$\hat{\mathcal{B}}_{q,1}\left(\dq \hat{f}\right)= \dq\hat{\mathcal{B}}_{q,1}\left( \hat{f}\right)$ .
\item~$\dq\hat{\mathcal{B}}_{q,1}\left( z\hat{f}\right)= \z\hat{\mathcal{B}}_{q,1}\left( \hat{f}\right)$.
\item~$ \mathcal{L}_{q,1}^{[d]}\Big(\dq g\Big)= \dq\mathcal{L}_{q,1}^{[d]}\Big( g\Big)$. 
\item~$z\mathcal{L}_{q,1}^{[d]}\Big( \dq g\Big)= p\mathcal{L}_{q,1}^{[d]}\Big( \z g\Big)-pz\mathcal{L}_{q,1}^{[d]}\Big(g\Big)$.
\end{itemize}
\end{propo}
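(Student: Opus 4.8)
The plan is to prove the four identities in the same spirit as Proposition~\ref{4propo4}, exploiting the fact that the $q$-Borel and $q$-Laplace transforms are built from the operators $\hat{\mathcal{B}}_{q,1}$ and $\mathcal{L}_{q,1}^{[d]}$ whose defining series and Jackson integrals interact nicely with $\sq$ and hence with $\dq=\frac{\sq-\mathrm{Id}}{q-1}$. The two first points concerning $\hat{\mathcal{B}}_{q,1}$ are purely formal term-by-term computations on power series. Writing $\hat{f}=\sum_{n}a_{n}z^{n}$, one has $\dq z^{n}=[n]_{q}z^{n}$, so $\hat{\mathcal{B}}_{q,1}(\dq\hat{f})=\sum_{n}[n]_{q}a_{n}\z^{n}/[n]_{q}^{!}$, and comparing with $\dq\hat{\mathcal{B}}_{q,1}(\hat{f})=\sum_{n}[n]_{q}a_{n}\z^{n}/[n]_{q}^{!}$ gives the first identity immediately. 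For the second, $z\hat{f}=\sum_{n}a_{n}z^{n+1}$ yields $\dq\hat{\mathcal{B}}_{q,1}(z\hat{f})=\sum_{n}[n+1]_{q}a_{n}\z^{n+1}/[n+1]_{q}^{!}=\sum_{n}a_{n}\z^{n+1}/[n]_{q}^{!}=\z\hat{\mathcal{B}}_{q,1}(\hat{f})$, using $[n+1]_{q}^{!}=[n+1]_{q}[n]_{q}^{!}$.

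For the third point, I would argue exactly as in the proof of Proposition~\ref{4propo4}: perform the change of variable $\z\mapsto q\z$ inside the Jackson integral defining $\mathcal{L}_{q,1}^{[d]}$ to show that this transform intertwines $\sq$ on the source with $\sq$ on the target, i.e.\ $\mathcal{L}_{q,1}^{[d]}(\sq g)=\sq\mathcal{L}_{q,1}^{[d]}(g)$. Since $\dq$ is an affine combination of $\sq$ and $\mathrm{Id}$ with constant coefficients (and since $\mathcal{L}_{q,1}^{[d]}$ is clearly $\C$-linear and commutes with the identity), the commutation with $\sq$ upgrades directly to $\mathcal{L}_{q,1}^{[d]}(\dq g)=\dq\mathcal{L}_{q,1}^{[d]}(g)$. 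Here, unlike in the differential case, no limiting argument in $q$ is needed, which actually simplifies matters relative to Proposition~\ref{4propo4}.

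The fourth point is the genuinely computational one and I expect it to be the main obstacle, since it is the $q$-analogue of the integration-by-parts identity. The classical proof used an honest integration by parts, but for the Jackson integral one must instead use the discrete $q$-analogue, namely summation by parts together with the functional equation $\dq e_{q}(w)=w\,e_{q}(w)$ satisfied by the $q$-exponential (stated in the introduction). The plan is to start from $z\mathcal{L}_{q,1}^{[d]}(\dq g)=z\int_{[d]}\frac{\dq g(\z)}{z\,e_{q}(q\z/z)}d_{q}\z$, expand the Jackson sum, and reindex the shifted term $g(q\z)$ so as to transfer the $q$-difference from $g$ onto the kernel $e_{q}(q\z/z)^{-1}$. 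Evaluating $\dq$ applied to $\z\mapsto e_{q}(q\z/z)^{-1}$ via the functional equation for $e_{q}$ produces precisely the two terms $p\,\z g$ and $-pz\,g$ after accounting for the factors $p=1/q$ coming from the shift; bookkeeping the powers of $q$ and the $(q-1)$ prefactors carefully is where the real work lies. Collecting the reindexed sums back into Jackson integrals then yields $z\mathcal{L}_{q,1}^{[d]}(\dq g)=p\,\mathcal{L}_{q,1}^{[d]}(\z g)-pz\,\mathcal{L}_{q,1}^{[d]}(g)$, which is the desired relation and correctly degenerates, as $q\to1$ with $p\to1$, to the last identity of Proposition~\ref{4propo4}.
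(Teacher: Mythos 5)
Your proposal is correct and follows essentially the same route as the paper: the first two identities by termwise computation, the third by the reindexing $\z\mapsto q\z$ in the Jackson sum, and the fourth by summation by parts transferring the $q$-difference onto the kernel and evaluating it via the functional equation of the $q$-exponential. The only cosmetic difference is that the paper first records $e_{q}(w)e_{p}(-w)=1$ so as to rewrite the reciprocal kernel as $e_{p}\left(-q\z/z\right)$ before reindexing, which makes the bookkeeping of the $p$ and $(q-1)$ factors you flag come out cleanly.
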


\begin{proof}
The three first points are straightforward computations. Let us prove the last equality. Let~$z\in \C^{*}$. It is a well known fact and easy to verify that~${\sq\Big(e_{q}(z)e_{p}(-z)\Big)=e_{q}(z)e_{p}(-z)}$. Since~$e_{q}(z)e_{p}(-z)$ is a formal power series with constant term equals to~$1$,~$e_{q}(z)e_{p}(-z)=1$.
We have the equalities:
$$\begin{array}{llrll}
z\mathcal{L}_{q,1}^{[d]}\Big(\dq g\Big)&=&(q-1)e^{id} &\displaystyle \sum_{l\in \Z}&\dfrac{\dq g\left(q^{l}e^{id}\right)}{e_{q}\left(\frac{q^{l+1}e^{id}}{z}\right)}q^{l}\\
&=&(q-1)e^{id} &\displaystyle \sum_{l\in \Z}&\dq g\left(q^{l}e^{id}\right)e_{p}\left(\frac{-q^{l+1}e^{id}}{z}\right)q^{l}\\
&=& e^{id} &\displaystyle \sum_{l\in \Z}& g\left(q^{l+1}e^{id}\right)e_{p}\left(\frac{-q^{l+1}e^{id}}{z}\right)-  g\left(q^{l}e^{id}\right)e_{p}\left(\frac{-q^{l+1}e^{id}}{z}\right)q^{l}\\
&=& e^{id} &\displaystyle \sum_{l\in \Z}& g\left(q^{l}e^{id}\right)\left(e_{p}\left(\frac{-q^{l+1}e^{id}}{qz}\right)p-  e_{p}\left(\frac{-q^{l+1}e^{id}}{z}\right)\right)q^{l}\\
&=& (p-1)e^{id} &\displaystyle \sum_{l\in \Z}& g\left(q^{l}e^{id}\right)e_{p}\left(\frac{-q^{l+1}e^{id}}{z}\right)\left( \frac{-q^{l}e^{id}}{z}+1\right)q^{l}\\
&=& p(q-1)e^{id} &\displaystyle \sum_{l\in \Z}& \dfrac{g\left(q^{l}e^{id}\right)}{e_{q}\left(\frac{q^{l+1}e^{id}}{z}\right)}\left( \frac{q^{l}e^{id}}{z}-1\right)q^{l}\\
&=&p\mathcal{L}_{q,1}^{[d]}\left(\z g\Big(\z\Big)\right)&-&pz\mathcal{L}_{q,1}^{[d]}\Big(g(\z)\Big).
\end{array}$$
\end{proof}

\begin{rem}\label{4rem3}
Let~$k\in \N^{*}$ and let~$d\in \R$. If we consider~$\hat{f}\in \C\left[\left[z^{ k}\right]\right]$, solution of a linear~$\dq$-equation with coefficients in~$\C\left[z^{ k}\right]$ with~${\hat{\mathcal{B}}_{q,k}\left(\hat{f}\right)\in \mathbb{H}_{q,k}^{d}}$, then we have:
$$\dq\left(\mathcal{L}_{q,k}^{[d]}\circ\hat{\mathcal{B}}_{q,k}\left( \hat{f}\right)\right)=\mathcal{L}_{q,k}^{[d]}\circ\hat{\mathcal{B}}_{q,k}\left(\dq \hat{f}\right) 
\hbox{ and }
\dq \left(z^{k}\mathcal{L}_{q,k}^{[d]}\circ\hat{\mathcal{B}}_{q,k}\left( \hat{f}\right)\right)=\mathcal{L}_{q,k}^{[d]}\circ\hat{\mathcal{B}}_{q,k}\left(\dq\left( z^{k}\hat{f}\right)\right) .
$$
Hence,~$\mathcal{L}_{q,k}^{[d]}\circ\hat{\mathcal{B}}_{q,k}\left(\hat{f}\right)$ is solution of the same linear~$\dq$-equation than~$\hat{f}$. But in general, if~$\hat{f}\in \C\left[\left[z\right]\right]$ is solution of a linear~$\dq$-equation with coefficients in~$\C\left[z\right]$, we will have to apply successively several~$q$-Borel and~$q$-Laplace transformations in order to compute an analytic solution of the same equation than~$\hat{f}$. See Theorem~\ref{4theo1}. We are not going to give explicit examples of such sitation since the proof that we need strictly more than one~$q$-Borel and~$q$-Laplace transformation should be very technical. 
\end{rem}

In~$\S \ref{4sec7}$, we will use other~$q$-analogue of the Borel (resp. Laplace) transformation that has been originally introduced by Ramis (resp. Zhang). See \cite{Z02},~$\S 1$ for the justification of the convergence of the~$q$-Laplace transformation.

\pagebreak[3]
\begin{defi}\label{4defi6}
\begin{trivlist}
\item (1) We define~$\hat{B}_{q}$ as follows: 
$$
\begin{array}{llll}
\hat{B}_{q}:& \C[[z]]&\longrightarrow&\C[[\z]]\\
&\displaystyle\sum_{l\in \N} a_{l}z^{l}&\longmapsto&\displaystyle\sum_{l\in \N} \frac{a_{l}}{q^{l(l-1)/2}}\z^{l}.
\end{array}
$$
\item (2) Let~$d\in \R$. We define the map~$L_{q}^{[d]}$ as follows:
$$
\begin{array}{llll}
L_{q}^{[d]}:& \mathbb{H}_{q,1}^{d}&\longrightarrow&\mathcal{M}(\C^{*},0)\\
&f&\longmapsto&\displaystyle \sum_{n\in \Z}\frac{f\left(q^{n}(q-1)e^{id}\right)}{\T_{q}\left(\frac{q^{n+1}(q-1)e^{id}}{z}\right)}.
\end{array}
$$
For~$|z|$ small, the function~$L_{q}^{[d]}(f)(z)$ admits a spiral of poles of order at most~$1$ that are contained in the~$q$-spiral~${(q-1)[d+\pi]}$.
\end{trivlist}
\end{defi}

\pagebreak[3]
\begin{rem}\label{4rem5}
Let~$d\in \R$. The maps~$\hat{\mathcal{B}}_{q,1}$,~$\mathcal{L}_{q,1}^{[d]}$,~$\hat{B}_{q}$ and~$L_{q}^{[d]}$ are very similar to the ~$q$-Borel and the ``discrete''~$q$-Laplace transformations introduced in \cite{DVZ},~$\S 4.2$. Let~$\hat{f}\in \C[[z]]$ such that there exists~$d\in\R$ with~$g:=\hat{\mathcal{B}}_{q,1}\left(\hat{f}\right) \in  \mathbb{H}_{q,1}^{d}$ (resp.~$h:=\hat{B}_{q}\left(\hat{f}\right) \in  \mathbb{H}_{q,1}^{d}$). By a straightforward computation, we find that~$\mathcal{L}_{q,1}^{[d]}(g)$ and~$L_{q}^{[d]}(h)$ are respectively equal to the two ``discrete''~$q$-Borel-Laplace summation defined in \cite{DVZ}, Definition 4.12, (1).
\end{rem}

We can compare the two~$q$-Borel-Laplace summation processes for formal power series solutions of a linear~$\sq$-equation with coefficients in~$\C(\{z\})$ with only slope~$1$. From \cite{DVZ}, Theorem 4.14,  and Remark~\ref{4rem5}, we deduce directly the following: 

\pagebreak[3]
\begin{theo}\label{4theo7}
Let~$\hat{h}(z)\in \C[[z]]$ be a formal power series solution of a linear~$\sq$-equation with coefficients in~$\C(\{z\})$ with only slope~$1$ and let~$d\in \R$. Then, the series ~$\hat{B}_{q}\left(\hat{h}\right)$  converges and admits an analytic continuation~$f\in \mathbb{H}_{q,1}^{d}$ if and only if~$\hat{\mathcal{B}}_{q,1}\left(\hat{h}\right)$ converges and admits an analytic continuation~$g\in \mathbb{H}_{q,1}^{d}$. Moreover for such a~$d\in \R$,~${L_{q}^{[d]}(f)=\mathcal{L}_{q,1}^{[d]}(g)}$ on a convenient domain.
\end{theo}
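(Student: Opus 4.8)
The plan is to deduce the statement entirely from the corresponding result of \cite{DVZ}, using the dictionary provided by Remark~\ref{4rem5}. First I would make the identification in that remark completely explicit: writing $\hat{h}=\sum_{l} a_{l}z^{l}$, one checks by direct term-by-term comparison that $\hat{\mathcal{B}}_{q,1}(\hat{h})$ and $\hat{B}_{q}(\hat{h})$ are, up to the stated normalizations ($[l]_{q}^{!}$ versus $q^{l(l-1)/2}$), precisely the two $q$-Borel transformations of \cite{DVZ}, $\S 4.2$, and that $\mathcal{L}_{q,1}^{[d]}$ and $L_{q}^{[d]}$ agree with the two discrete $q$-Laplace kernels of \cite{DVZ} (the one built from $e_{q}$ together with the Jackson integral, respectively the one built from $\T_{q}$ together with the discrete sum over $q^{\Z}$). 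Consequently the two composites $\mathcal{L}_{q,1}^{[d]}\circ\hat{\mathcal{B}}_{q,1}(\hat{h})$ and $L_{q}^{[d]}\circ\hat{B}_{q}(\hat{h})$ are exactly the two discrete $q$-Borel--Laplace summations of \cite{DVZ}, Definition 4.12, (1).

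Once this identification is in place, both assertions of the theorem follow at once from \cite{DVZ}, Theorem 4.14. That theorem is stated for formal power series solutions of a linear $\sq$-equation of slope~$1$ with coefficients in $\C(\{z\})$, which is exactly the hypothesis on $\hat{h}$; it asserts that one of the two $q$-Borel transforms converges and extends analytically with the required $q$-exponential growth if and only if the other does, and that on the common domain of definition the two summations coincide. Translating back through the dictionary gives precisely the equivalence that $\hat{B}_{q}(\hat{h})$ converges and admits an analytic continuation $f\in\mathbb{H}_{q,1}^{d}$ if and only if $\hat{\mathcal{B}}_{q,1}(\hat{h})$ converges and admits an analytic continuation $g\in\mathbb{H}_{q,1}^{d}$, together with the identity $L_{q}^{[d]}(f)=\mathcal{L}_{q,1}^{[d]}(g)$.

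The main obstacle is the careful bookkeeping hidden in the word ``straightforward'' of Remark~\ref{4rem5}: one must track the multiplicative constants and spiral shifts (the factor $(q-1)$, the argument $q\z/z$ inside $e_{q}$, the point $q^{n}(q-1)e^{id}$ at which $f$ is sampled, and so on), and confirm that the growth space $\mathbb{H}_{q,1}^{d}$ defined here, namely growth governed by $e_{q}(L|\z|)$, coincides with the summability condition imposed in \cite{DVZ}. One must also verify that the slope-$1$ hypothesis is used exactly as in \cite{DVZ}, and that the ``convenient domain'' on which the final equality holds is a punctured neighborhood of $0$ from which the polar $q$-spiral $(q-1)[d+\pi]$ common to both transforms has been removed. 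None of these verifications is deep, but they are where the genuine content of the proof lies, the analytic heavy lifting having already been carried out in \cite{DVZ}.
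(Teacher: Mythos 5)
Your proposal matches the paper's own argument: the theorem is stated there as a direct consequence of \cite{DVZ}, Theorem 4.14, combined with the identification of Remark~\ref{4rem5}, which is precisely the dictionary you set up. The extra bookkeeping you flag is exactly what the paper compresses into the word ``directly,'' so there is nothing further to add.
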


\newpage
\section{Statement of the main result.}\label{4sec4}
From now, we see~$q$ as a parameter in~$]1,\infty[$. We recall that when we say that~$q$ is close to~$1$, we mean that~$q$ will be in the neighborhood of~$1$ in~$]1,\infty[$.  In~$\S \ref{4sec41}$, we prove two preliminaries lemmas that deal with the confluence of formal solutions of family of linear~$\sq$-equations. In~$\S \ref{4sec42}$, we state our main result.  We consider ~$\left(\hat{h}(z,q)\right)_{q>1}$ (resp.~$\widetilde{h}(z)$), formal power series solutions of a family of linear~$\dq$-equations (resp.~$\d$-equation) with coefficients in~$\C[z]$. We assume that~$\hat{h}(z,q)$ converges coefficientwise to~$\widetilde{h}(z)$ when~$q\to 1$. We state that under reasonable assumptions, for~$q$ close to~$1$, we may apply several~$q$-Borel and~$q$-Laplace transformations to~$\hat{h}(z,q)$, and obtain a solution of the family of linear $\dq$-equations, that is for~$q$ fixed, meromorphic on some punctured neighborhood of~$0$ in~$\C^{*}$. Moreover, the latter converges as~$q$ goes to~$1$, to the solution of the linear~$\d$-equation, computed with the classical Borel and Laplace transformations.

\pagebreak[3]
\subsection{Preliminaries on confluence of formal solutions.}\label{4sec41}

\pagebreak[3]
\begin{lem}\label{4lem9}
Let us consider 
$$\left\{\begin{array}{lllllllll}
\D_{q}&:=&b_{m}(z,q)\d_{q}^{m}&+&b_{m-1}(z,q)\d_{q}^{m-1}&+&\dots&+&b_{0}(z,q)\\\\
\widetilde{\D}&:=&\widetilde{b}_{m}(z)\d^{m}&+&\widetilde{b}_{m-1}(z)\d^{m-1}&+&\dots&+&\widetilde{b}_{0}(z),
\end{array}\right.$$
 with~$z\mapsto b_{i}(z,q),\widetilde{b}_{i}(z)\in \C[[z]]$, and  the ~$b_{i}$ converge coefficientwise to the~$\widetilde{b}_{i}$ when~$q\to 1$. We assume that the~$\C$-vector subspace~$\widetilde{F}\subset\C((z))$, of solutions of~$\widetilde{\D}\left(\widetilde{F}\right)=0$ has dimension~$1$. Let~$\k\in \Z$ be the~$z$-valuation of the elements of~$\widetilde{F}\setminus \{0\}$. Let~${\hat{h}(z,q):=\displaystyle\sum_{n=\k}^{\infty} \hat{h}_{n}(q)z^{n}}$ be a solution of~$\D_{q}\left(\hat{h}\right)=0$, such that~$\lim\limits_{q \to 1}\hat{h}_{\k}(q)=\widetilde{h}_{\k}\neq 0$. Let~${\widetilde{h}(z):=\displaystyle\sum_{n=\k}^{\infty} \widetilde{h}_{n}z^{n}\in\widetilde{F}\setminus \{0\}}$, which is uniquely determined by assumption. Then, for all~$n\geq \k$,
$$\lim\limits_{q \to 1}\hat{h}_{n}(q)=\widetilde{h}_{n}.$$
\end{lem}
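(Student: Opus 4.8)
The plan is to read off the recurrence satisfied by the Taylor coefficients and run an induction on the index, the only genuine input being that the indicial polynomial of $\widetilde{\D}$ has no integer root strictly larger than $\kappa$. First I would expand everything in powers of $z$. Writing $b_{i}(z,q)=\sum_{j\geq 0}b_{i,j}(q)z^{j}$ and $\widetilde{b}_{i}(z)=\sum_{j\geq 0}\widetilde{b}_{i,j}z^{j}$, and using $\dq z^{n}=[n]_{q}z^{n}$ together with $\d z^{n}=nz^{n}$, the identity $\D_{q}(\hat{h})=0$ is equivalent to the family of scalar relations
\[
\sum_{n=\kappa}^{N}\beta_{N,n}(q)\,\hat{h}_{n}(q)=0,\qquad \beta_{N,n}(q):=\sum_{i=0}^{m}b_{i,N-n}(q)\,[n]_{q}^{i},
\]
one for each $N\geq\kappa$, and similarly $\widetilde{\D}(\widetilde{h})=0$ amounts to $\sum_{n=\kappa}^{N}\widetilde{\beta}_{N,n}\widetilde{h}_{n}=0$ with $\widetilde{\beta}_{N,n}:=\sum_{i}\widetilde{b}_{i,N-n}\,n^{i}$. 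Since $b_{i}$ tends coefficientwise to $\widetilde{b}_{i}$ and $[n]_{q}\to n$ as $q\to 1$, for every fixed pair $(N,n)$ one has $\beta_{N,n}(q)\to\widetilde{\beta}_{N,n}$. Only indices $n\leq N$ occur, so the system is triangular; after dividing $\widetilde{\D}$ by the common factor $z^{v}$ with $v:=\min_{i}v_{0}(\widetilde{b}_{i})$ I may assume $v=0$, and then the diagonal entry is $\widetilde{\beta}_{N,N}=\widetilde{\chi}(N)$, where $\widetilde{\chi}(X):=\sum_{i}\widetilde{b}_{i,0}X^{i}$ is the indicial polynomial and $\beta_{N,N}(q)=\sum_{i}b_{i,0}(q)[N]_{q}^{i}\to\widetilde{\chi}(N)$.

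The conceptual heart is to show that $\widetilde{\chi}(N)\neq 0$ for every integer $N>\kappa$, and this is exactly where the hypothesis $\dim_{\C}\widetilde{F}=1$ enters. The leading-coefficient map $\widetilde{F}\to\C$ sending $\sum_{n\geq\kappa}\widetilde{h}_{n}z^{n}$ to $\widetilde{h}_{\kappa}$ is injective, because a nonzero element of $\widetilde{F}$ has valuation exactly $\kappa$, hence it is an isomorphism. Suppose for contradiction that $\widetilde{\chi}(N_{0})=0$ for some minimal integer $N_{0}>\kappa$. For $\kappa<M<N_{0}$ the pivots $\widetilde{\chi}(M)$ are nonzero, so the recurrence expresses $\widetilde{h}_{\kappa},\dots,\widetilde{h}_{N_{0}-1}$ as fixed multiples of $\widetilde{h}_{\kappa}$, and the level-$N_{0}$ relation degenerates into a single constraint $C\,\widetilde{h}_{\kappa}=0$, leaving $\widetilde{h}_{N_{0}}$ unconstrained. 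As $\widetilde{F}$ contains an element with $\widetilde{h}_{\kappa}\neq 0$, we must have $C=0$, so $\widetilde{h}_{N_{0}}$ is a genuine free parameter; choosing it nonzero while setting $\widetilde{h}_{\kappa}=0$ produces a nonzero solution of valuation $N_{0}>\kappa$, linearly independent of $\widetilde{h}$, whence $\dim_{\C}\widetilde{F}\geq 2$, a contradiction. Therefore $\widetilde{\chi}(N)\neq 0$ for all integers $N>\kappa$, and consequently $\beta_{N,N}(q)\to\widetilde{\chi}(N)\neq 0$, so $\beta_{N,N}(q)\neq 0$ for $q$ in a neighbourhood of $1$ (possibly depending on $N$).

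With the pivots under control the confluence follows by induction on $N\geq\kappa$. The base case $N=\kappa$ is the assumption $\hat{h}_{\kappa}(q)\to\widetilde{h}_{\kappa}$. Assuming $\hat{h}_{n}(q)\to\widetilde{h}_{n}$ for all $\kappa\leq n<N$, I solve the level-$N$ relation for the diagonal term,
\[
\hat{h}_{N}(q)=-\frac{1}{\beta_{N,N}(q)}\sum_{n=\kappa}^{N-1}\beta_{N,n}(q)\,\hat{h}_{n}(q),
\]
which is legitimate for $q$ close to $1$ by the previous step. The right-hand side is a finite sum, so letting $q\to 1$ and invoking $\beta_{N,n}(q)\to\widetilde{\beta}_{N,n}$, $\beta_{N,N}(q)\to\widetilde{\chi}(N)$ and the induction hypothesis gives $\hat{h}_{N}(q)\to-\widetilde{\chi}(N)^{-1}\sum_{n=\kappa}^{N-1}\widetilde{\beta}_{N,n}\widetilde{h}_{n}=\widetilde{h}_{N}$, the last equality being the level-$N$ relation for $\widetilde{h}$. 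Because reaching a prescribed index takes only finitely many steps, only finitely many conditions on $q$ are invoked, and no uniformity in $N$ is required.

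The main obstacle is precisely the nonvanishing $\widetilde{\chi}(N)\neq 0$ for $N>\kappa$; once that is secured, everything reduces to the bookkeeping of a triangular recurrence together with the elementary limits $[n]_{q}\to n$ and $b_{i,j}(q)\to\widetilde{b}_{i,j}$. The one point deserving real care is the location of the pivot, i.e. the reduction to $v=0$. When the $\widetilde{b}_{i}$ share a positive common valuation, the coefficients $b_{i,j}(q)$ with $j<v$ on the $q$-side need not vanish but only tend to $0$, so the diagonal $\beta_{N,N}(q)$ itself tends to $0$ and the division above becomes ill-conditioned; here one must lean on the standing hypothesis that a solution $\hat{h}$ of valuation exactly $\kappa$ with $\hat{h}_{\kappa}(q)\to\widetilde{h}_{\kappa}\neq 0$ genuinely exists, as this strongly constrains the family (it rules out the ill-posed discretizations for which no such $\hat{h}$ is available). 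This compatibility check between the reduction and the family $\D_{q}$ is where I would expect to spend the most effort.
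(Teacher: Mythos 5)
Your proof is correct and takes essentially the same route as the paper's: read off the triangular recurrence on the Taylor coefficients, show the limiting pivot cannot vanish at any integer $N>\k$ because that would contradict $\dim_{\C}\widetilde{F}=1$ together with the fact that nonzero solutions have valuation $\k$, then pass to the limit by induction on the index. One small repair: in the contradiction step the free coefficient should be placed at the \emph{largest} integer root of the indicial polynomial above $\k$ rather than the smallest, since a later resonance could obstruct continuing the recurrence from your minimal $N_{0}$; the paper's own proof is just as terse here, merely asserting that $\widetilde{c}_{n}=0$ produces a solution of valuation $n$.
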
 

\begin{proof}
We will prove by an induction on~$n$ that for all~$n\geq \k$,~$\hat{h}_{n}(q)$ converges as~$q$ goes to~$1$ to~$\widetilde{h}_{n}$.
By assumption,~$\hat{h}_{\k}(q)$ converges to~$\widetilde{h}_{\k}$.\par 
 Let~$n\geq \k$. Induction hypothesis: assume that for all~$k\in \{\k,\dots,n-1\}$,~$\lim\limits_{q \to 1}\hat{h}_{k}(q)=\widetilde{h}_{k}$. Let us prove that~$\hat{h}_{n}(q)$ converges to~$\widetilde{h}_{n}$. Looking at the linear~$\sq$-equation (resp. the linear~$\d$-equation) satisfied by~$\hat{h}(z,q)$ (resp.~$\widetilde{h}(z)$), we find a relation of the form:
$$\begin{array}{lllllllll}
c_{n}(q)\hat{h}_{n}(q)&=&c_{n-1}(q)\hat{h}_{n-1}(q)&+&\dots&+&c_{\k}(q)\hat{h}_{\k}(q),\\
\widetilde{c}_{n}\widetilde{h}_{n}&=&\widetilde{c}_{n-1}\widetilde{h}_{n-1}&+&\dots&+&\widetilde{c}_{\k}\widetilde{h}_{\k}, \end{array}$$
where~$c_{i}(q),\widetilde{c}\in \C$.
Since the~$b_{i}$ converge coefficientwise to the~$\widetilde{b}_{i}$ when~$q\to 1$, we find that for all~${k\in \{\k,\dots,n\}}$,~$\lim\limits_{q \to 1}c_{k}(q)=\widetilde{c}_{k}$.\par 
If ~$\widetilde{c}_{n}=0$, then we obtain a formal solution of the same linear~$\d$-equation than~$\widetilde{h}$ with~$z$-valuation equal to~$n$. This is in contradiction with the assumptions of the lemma. Therefore,~$\widetilde{c}_{n}\neq 0$. Using the convergence of~$c_{n}(q)$ to~$\widetilde{c}_{n}$,~$c_{n}(q)$ is not vanishing in the neighborhood of~$1$. Because of the induction hypothesis and the convergence of the~$c_{i}(q)$, we obtain
$$\lim\limits_{q \to 1}\hat{h}_{n}(q)=\widetilde{h}_{n}.$$ 
By  induction, we have proved that for all~$n\geq \k$,~$\hat{h}_{n}(q)$ converges as~$q$ goes to~$1$ to~$\widetilde{h}_{n}$.
\end{proof}

If~$A$ and~$B$ are matrices with coefficients in~$\C$ and~$R\in \R_{>0}$, we say that~$|A|<|B|$ (resp.~$|A|<R$) if every entry of~$A$ has modulus bounded by the modulus of the corresponding entry of~$B$ (resp. by~$R$).\par 
Following~$\S 3.3.1$ of \cite{S00}, we prove:

\pagebreak[3]
\begin{lem}\label{4lem11}
Let us consider~$z\mapsto\hat{h}(z,q),\widetilde{h}(z)\in \C\{z\}$, solution of 
$$\left\{\begin{array}{lllllllllll}
b_{m}(z,q)\d_{q}^{m}\hat{h}(z,q)&+&b_{m-1}(z,q)\d_{q}^{m-1}\hat{h}(z,q)&+&\dots&+&b_{0}(z,q)\hat{h}(z,q)&=&0\\\\
\widetilde{b}_{m}(z)\d^{m}\widetilde{h}(z)&+&\widetilde{b}_{m-1}(z)\d^{m-1}\widetilde{h}(z)&+&\dots&+&\widetilde{b}_{0}(z)\widetilde{h}(z)&=&0,
\end{array}\right.$$
with~$z\mapsto b_{i}(z,q),\widetilde{b}_{i}(z)\in \C[z]$ and assume that
\begin{itemize}
\item The~$b_{i}$ converge coefficientwise to the~$\widetilde{b}_{i}$ when~$q\to 1$.
\item The series~$\hat{h}$ converges coefficientwise to~$\widetilde{h}$ when~$q\to 1$.
\end{itemize}
Then, we have
$$\lim \limits_{q \to 1} \hat{h}(z,q)=\widetilde{h}(z),$$
uniformly on a closed disk centered at~$0$.
\end{lem}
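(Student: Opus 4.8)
The plan is to promote the coefficientwise convergence to a $q$-uniform geometric bound on the Taylor coefficients. Write $\hat{h}(z,q)=\sum_{n}\hat{h}_{n}(q)z^{n}$. I claim it suffices to produce $\rho>0$ and $M>0$ such that $|\hat{h}_{n}(q)|\le M\rho^{-n}$ for every $n\in\N$ and every $q$ in a right-neighbourhood of $1$. Indeed, granting this, fix $\rho'<\rho$; for $|z|\le\rho'$ split $\hat{h}(z,q)-\widetilde{h}(z)$ into a head $\sum_{n<N}$ and a tail $\sum_{n\ge N}$. The tail is bounded by $2M\sum_{n\ge N}(\rho'/\rho)^{n}$, which is small uniformly in $q$ once $N$ is large, while the head tends to $0$ as $q\to 1$ for fixed $N$ by the hypothesis $\hat{h}_{n}(q)\to\widetilde{h}_{n}$. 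Letting first $N\to\infty$, then $q\to1$, gives uniform convergence on $\{|z|\le\rho'\}$. (Equivalently, the bound says the family $\{\hat{h}(\cdot,q)\}$ is locally bounded, so Vitali's theorem upgrades coefficientwise to locally uniform convergence.)

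To reach such a bound I use the recurrence carried by the coefficients. Writing $b_{i}(z,q)=\sum_{j=0}^{w}b_{i,j}(q)z^{j}$ with $w:=\max_{i}\deg b_{i}$, and using $\dq z^{n}=[n]_{q}z^{n}$, the vanishing of the coefficient of $z^{N}$ in $\D_{q}\hat{h}=0$ reads
\begin{equation*}
P_{N}(q)\,\hat{h}_{N}(q)=-\sum_{j=1}^{w}R_{N,j}(q)\,\hat{h}_{N-j}(q),\qquad P_{N}(q):=\sum_{i=0}^{m}b_{i,0}(q)[N]_{q}^{i},\quad R_{N,j}(q):=\sum_{i=0}^{m}b_{i,j}(q)[N-j]_{q}^{i},
\end{equation*}
and analogously for $\widetilde{h}$, with $[N]_{q}$ replaced by $N$ and $b_{i,j}(q)$ by $\widetilde{b}_{i,j}$. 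Only finitely many scalars are involved, $b_{i,j}(q)\to\widetilde{b}_{i,j}$, and $[N]_{q}\to N$ for each fixed $N$; in particular the finitely many initial coefficients $\hat{h}_{n}(q)$ with $n<N_{0}$ are bounded uniformly for $q$ near $1$, since they converge. The whole problem is thus to propagate, by induction on $N$, the bound $|\hat{h}_{N}(q)|\le M\rho^{-N}$ through the above recurrence, for which it would be enough to control $|R_{N,j}(q)|/|P_{N}(q)|$ uniformly.

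The hard part is precisely this uniform control, simultaneously for all large $N$ and all $q$ close to $1$, and it is genuinely delicate for two reasons. First, $[N]_{q}$ behaves like $N$ for $N\ll(q-1)^{-1}$ but like $q^{N}/(q-1)$ for $N\gg(q-1)^{-1}$, so any estimate must survive this change of regime. Second, the degree in $[N]_{q}$ of $P_{N}(q)$ is the largest $i$ with $b_{i,0}(q)\ne0$, which may exceed the degree $i_{0}$ of the limiting indicial polynomial $\widetilde{P}(x):=\sum_{i}\widetilde{b}_{i,0}x^{i}$ whenever a constant term $\widetilde{b}_{i,0}$ degenerates as $q\to1$; when other slopes are present the ratios $R_{N,j}(q)/P_{N}(q)$ need not stay bounded, so a crude term-by-term majorisation fails. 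What saves the argument is the hypothesis that $\widetilde{h}\in\C\{z\}$ is a genuinely \emph{convergent} solution: convergent solutions are attached to the slope $0$ of $\widetilde{\D}$, on which the dominant balance in the recurrence is exactly the one governed by $\widetilde{P}$, and there the ratios are bounded, yielding a $q$-uniform geometric majorant for the convergent solution $\hat{h}$. This is the step I expect to require the most care, and it is here that I would follow the majorant construction of \cite{S00},~$\S 3.3.1$.

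Concretely I would fix $\rho$ strictly smaller than the radius of convergence of $\widetilde{h}$, and split the induction into the two regimes $N\le C(q-1)^{-1}$ and $N\ge C(q-1)^{-1}$. In the first regime $[N]_{q}$ stays comparable to $N$, so $P_{N}(q)$ and $R_{N,j}(q)$ are uniformly close to their differential counterparts $\widetilde{P}(N)$ and $\widetilde{R}_{N,j}$, and the estimate reduces to the one satisfied by the convergent series $\widetilde{h}$; in the second regime the exponential growth of $[N]_{q}$ makes the top nonvanishing term of $P_{N}(q)$ dominate, again forcing $|\hat{h}_{N}(q)|\le M\rho^{-N}$. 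Choosing $M$ to absorb the finitely many initial coefficients then closes the induction with $M,\rho$ independent of $q$, and the first paragraph converts this into the asserted uniform convergence on a closed disk centred at $0$.
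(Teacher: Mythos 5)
Your overall architecture is the same as the paper's: both arguments reduce the lemma to a $q$-uniform geometric bound $|\hat{h}_{n}(q)|<M\rho^{-n}$ on the Taylor coefficients and then interchange the two limits (your head/tail splitting is exactly the dominated convergence step the paper invokes). The divergence is in how that bound is produced, and this is where your proposal has a genuine gap. The paper passes to the companion system $\dq Y(z,q)=B(z,q)Y(z,q)$, so that the coefficient recurrence takes the form $\bigl([k]_{q}\mathrm{Id}-B_{0}(q)\bigr)Y_{k}(q)=\sum_{i\neq k}B_{i}(q)Y_{k-i}(q)$ with $B_{0}(q)$ a \emph{fixed bounded matrix}; since $[k]_{q}\geq k$, the resolvent $\bigl([k]_{q}\mathrm{Id}-B_{0}(q)\bigr)^{-1}$ is bounded uniformly in $k\geq k_{0}$ and in $q$ near $1$ by a Neumann series, the polynomial coefficients give $|B_{i}(q)|<C_{0}^{i}$, and the induction closes with no regime splitting and no reference to slopes or indicial polynomials.

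In your scalar formulation the operator to invert is $P_{N}(q)=\sum_{i}b_{i,0}(q)[N]_{q}^{i}$, and, as you yourself note, the ratios $R_{N,j}(q)/P_{N}(q)$ need not be bounded when the equation has positive slopes (the degree of $R_{N,j}$ in $[N]_{q}$ can exceed that of $P_{N}$, already in the limiting differential recurrence). At exactly this point the proposal stops being a proof: the claim that convergence of $\widetilde{h}$ ``attaches'' the solution to the slope $0$ and restores boundedness is a statement about the Newton polygon factorization of the operator, not an estimate, and it is precisely the kind of cancellation that a term-by-term majorization cannot see; likewise, in your second regime the dominance of the top nonvanishing term of $P_{N}(q)$ does not by itself control $R_{N,j}(q)/P_{N}(q)$, since $R_{N,j}$ carries the same powers of $[N-j]_{q}$. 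Deferring this step to ``the majorant construction of \cite{S00}, $\S 3.3.1$'' defers the entire content of the lemma. So the inductive step that propagates $|\hat{h}_{N}(q)|\leq M\rho^{-N}$ is never established, and in the form you set it up it would fail whenever positive slopes are present. The repair is the paper's device: work with the system, where the quantity to invert is $[k]_{q}\mathrm{Id}-B_{0}(q)$ and the uniform bound is immediate.
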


\begin{proof}
Let us consider the equations as systems:
$$\dq Y(z,q)=B(z,q)Y(z,q) \hbox{ and } \d\widetilde{Y}(z)=\widetilde{B}(z)\widetilde{Y}(z).$$
Let~$\k\in \Z$ and let us write the vector solutions~$Y(z,q)=:\displaystyle\sum_{k=\k}^{\infty} Y_{k}(q)z^{k}$,~$\widetilde{Y}(z)=:\displaystyle\sum_{k=\k}^{\infty} \widetilde{Y}_{k}z^{k}$ and the matrices~$B(z,q)=:\displaystyle\sum_{k=\k}^{\infty}B_{k}(q)z^{k}$,~$\widetilde{B}(z)=:\displaystyle\sum_{k=\k}^{\infty}\widetilde{B}_{k}z^{k}$. For all~$k\geq \k$, we have the relation:
\begin{equation}\label{4eq20}
\Big([k]_{q}\times\mathrm{Id}-B_{0}(q)\Big)Y_{k}(q)=\sum_{i\neq k}B_{i}(q)Y_{k-i}(q) \hbox{ and } \left(k\times\mathrm{Id}-\widetilde{B}_{0}\right)\widetilde{Y}_{k}=\sum_{i\neq k}\widetilde{B}_{i}\widetilde{Y}_{k-i}.
\end{equation}
There exist~$k_{0}\geq \k$,~$C\in \R_{>0}$, such that for all~$k\geq k_{0}$, for all~$q$ close to~$1$, for all~$Y\in \C^{m}$,
$$\Big([k]_{q}\times\mathrm{Id}-B_{0}(q)\Big)\in \mathrm{GL}_{m}(\C) \hbox{ and } \left|\Big([k]_{q}\times\mathrm{Id}-B_{0}(q)\Big)^{-1}Y\right|
=\left|\displaystyle\sum_{l=0}^{\infty}\left([k]_{q}\right)^{-1}\left(\frac{B_{0}(q)}{[k]_{q}}\right)^{l}Y\right|<C|Y|$$ resp.
$$\left(k\times\mathrm{Id}-\widetilde{B}_{0}\right)\in \mathrm{GL}_{m}(\C)\hbox{ and }  \left|\left(k\times\mathrm{Id}-\widetilde{B}_{0}\right)^{-1}Y\right|=\left|\displaystyle\sum_{l=0}^{\infty}k^{-1}\left(\frac{\widetilde{B}_{0}}{k}\right)^{l}Y\right|<C|Y|.$$

Since the equations have coefficients in~$\C[z]$, the first assumption implies the existence of~$C_{0}>0$ such that for all~$k\geq \k$, for all~$q$ close to~$1$,~$\left|B_{k}(q)\right|<C_{0}^{k}$ and~$\left|\widetilde{B}_{k}(q)\right|<C_{0}^{k}$.
Using additionally (\ref{4eq20}), we can prove by an induction that there exists~$C_{1}>0$, such that for all~$k\geq \k$, for all~$q$ close to~~$1$, we have:
~$$\left| Y_{k}(q)\right|=\left|\Big([k]_{q}\times\mathrm{Id}-B_{0}(q)\Big)^{-1}\sum_{i\neq k}B_{i}(q)Y_{k-i}(q)\right|<C_{1}^{k}$$
and 
~$$ \left| \widetilde{Y}_{k}\right|=\left|\left(k\times\mathrm{Id}-\widetilde{B}_{0}\right)^{-1}
\sum_{i\neq k}\widetilde{B}_{i}\widetilde{Y}_{k-i}\right|<C_{1}^{k}.$$
Using the dominated convergence theorem, and the second assumption of the lemma, we obtain the result.
\end{proof}

\pagebreak[3]
\subsection{Confluence of a ``discrete''~$q$-Borel-Laplace summation.}\label{4sec42}
The goal of the subsection is to state our main result, Theorem~\ref{4theo1}. See~$\S \ref{4sec5},\S\ref{4sec6}$ for the proof. We begin with a definition.

\pagebreak[3]
\begin{defi}\label{4defi1}
Let~$d\in \R$ and let~$k\in \Q_{>0}$.
Let~$f$ be a function such that there exists~$\e>0$, such that for~$q$ close to~$1$,~$z\mapsto f(z,q)\in\mathcal{A}(d-\e,d+\e)$. We say that~$f$ belongs to~$\overline{\mathbb{H}}_{k}^{d}$, if for~$q$ close to~$1$,~$z\mapsto f(z,q)$ admits an analytic continuation defined on~$\overline{S}(d-\e,d+\e)$, that we will still call~$f$, such that there exist constants~$J,L>0$, that do not depend upon~$q$, such that for all~$z\in \R_{>0}$:
~$$\left|f\left(e^{id}z,q\right)\right|<Je_{q}\left(Lz^{k}\right).$$
\end{defi}
Let us consider~$z\mapsto \hat{h}(z,q)\in \C[[z]]$, that converges coefficientwise to~$\widetilde{h}(z)\in \C[[z]]$ when~$q\to 1$. We make the following assumptions:
\begin{trivlist}
\item \textbf{(A1)}  There exist
$$z\mapsto b_{0}(z,q),\dots,b_{m}(z,q)\in\C[z],$$ with~$z$-coefficients that converge as~$q$ goes to~$1$, such that for all~$q$ close to~$1$,~$\hat{h}(z,q)$ is solution of:
\begin{equation}\label{4eq3}
b_{m}(z,q)\d_{q}^{m}(y(z,q))+\dots+b_{0}(z,q)y(z,q)=0.
\end{equation}
Let~$\widetilde{b}_{0}(z),\dots,\widetilde{b}_{m}(z)\in\C[z]$, be the limit as~$q$ tends to~$1$ of the~$b_{0}(z,q),\dots,b_{m}(z,q)$.
\item \textbf{(A2)} For~$q$ close to~$1$, the  slopes of (\ref{4eq3}) are independent of~$q$, and the set of slopes of (\ref{4eq3}) that are positive coincides with the set of  slopes of 
\begin{equation}\label{4eq4}
\widetilde{b}_{m}(z)\d^{m}\left(\widetilde{y}(z)\right)+\dots+\widetilde{b}_{0}(z)\widetilde{y}(z)=0.
\end{equation}
Notice that the series~$\widetilde{h}(z)$ is solution of (\ref{4eq4}).
\item \textbf{(A3)} There exists~$c_{1}>0$, such that for all~$i\leq m$ and~$q$ close to~$1$:
$$\left|b_{i}(z,q)-\widetilde{b}_{i}(z)\right|<(q-1)c_{1}\left(\left|\widetilde{b}_{i}(z)\right|+1\right).$$ 
\end{trivlist}
\pagebreak[3]
\begin{rem}\label{4rem7}
\begin{trivlist}
\item (1) Conversely, given equations like (\ref{4eq3}) and (\ref{4eq4}) that satisfies the assumptions \textbf{(A2)} and \textbf{(A3)}, we would like to know if there exists~$z\mapsto \hat{h}(z,q)\in \C[[z]]$, solution of (\ref{4eq3}), which converges coefficientwise to~$\widetilde{h}(z)\in \C[[z]]$, solution of (\ref{4eq4}). The answer is in general no, but Lemma~\ref{4lem9} gives a sufficient condition. 
\item (2) If for~$q$ close to~$1$, the only slope of (\ref{4eq4}) is~$0$ then,~$z\mapsto \hat{h}(z,q),\widetilde{h}(z)\in \C\{z\}$ and we set for all~${d\in \R}$,~$S_{q}^{[d]}\left(\hat{h}\right):=\hat{h}$. Remember that we have set in~$\S \ref{4sec1}$, 
~$\widetilde{S}^{d}\left(\widetilde{h}\right):=\widetilde{h}$. In this particular case, applying Lemma~\ref{4lem11}, we obtain that 
$$
\lim\limits_{q \to 1}S_{q}^{[d]}\left(\hat{h}\right)
=\widetilde{S}^{d}\left(\widetilde{h}\right),
$$
uniformly on a closed disk centered at~$0$. 
\end{trivlist}
\end{rem}
From now, we are going to assume that (\ref{4eq4}) has at least one slope strictly bigger than~$0$.
Let~${d_{0}:=\max\left(2,\deg \widetilde{b}_{0},\dots, \deg \widetilde{b}_{m}\right)}$.
Let~${k_{1}<\dots<k_{r-1}}$ be the slopes of (\ref{4eq4}) different from $0$, let~$k_{r}$ be an integer strictly bigger than~$k_{r-1}$ and~$d_{0}$, and set~$k_{r+1}:=+\infty$. 
Let~$(\k_{1},\dots,\k_{r})$ defined as:
$$\k_{i}^{-1}:=k_{i}^{-1}-k_{i+1}^{-1}.$$  
 As in Proposition~\ref{4propo5}, we define the~$(\widetilde{\k_{1}},\dots,\widetilde{\k_{s}})$ as follows:
we take~$(\k_{1},\dots,\k_{r})$ and  for~$i=1,\dots,i=r$, replace successively~$\k_{i}$ by~$\a_{i}$ terms~$\a_{i}\k_{i}$, where~$\a_{i}$ is the smallest integer such that~$\a_{i}\k_{i}$ is greater or equal than~$d_{0}$. See Example~\ref{4ex1}. Therefore, by construction, each of the~$\widetilde{\k_{i}}$ is a rational number greater than~$d_{0}$ and $\widetilde{\k_{s}}\in \N^{*}$.\par 
 Let~$\b\in \N^{*}$ be minimal, such that for all~$i\in \{1,\dots,s\}$,~$\b/\widetilde{\k_{i}}\in \N^{*}$.
Let us write~${\hat{h}(z,q)=:\displaystyle \sum_{n=0}^{\infty}\hat{h}_{n}(q)z^{n}}$ and, for~$l\in\{0,\dots,\b-1\}$, let~$\hat{h}^{(l)}(z,q):=\displaystyle \sum_{n=0}^{\infty} \hat{h}_{l+n\b}(q)z^{n\b}$. \par 
 The main result of the paper is the following. See~$\S \ref{4sec1},\S \ref{4sec3}$ for the notations, and~$\S \ref{4sec5},\S\ref{4sec6}$ for the proof. See also Theorem~\ref{4theo9} in the appendix for a similar result with a ``continuous''~$q$-Laplace transformation. We recall that the series~$\hat{h},\widetilde{h}$ satisfies the assumptions \textbf{(A1)} to \textbf{(A3)}.

\pagebreak[3]
\begin{theo}\label{4theo1}
There exists~$\Sigma_{\widetilde{h}}\subset \R$ finite modulo~$2\pi \Z$, that contains the set of singular directions defined in Proposition~\ref{4propo5}, such that if~$d\in  \R\setminus \Sigma_{\widetilde{h}}$ and~$l\in\{0,\dots,\b-1\}$, then 
the series~${g_{1,l}:=\hat{\mathcal{B}}_{q,\widetilde{\k_{1}}}\circ\dots\circ\hat{\mathcal{B}}_{q,\widetilde{\k_{s}}}\left(\hat{h}^{(l)}\right)}$  converges and belongs to~$\overline{\mathbb{H}}_{\widetilde{\k_{1}}}^{d}$.\par 
Moreover, for~$j=2$ (resp.~$j=3$,~$\dots$, resp.~$j=s$), ~$g_{j,l}:=\mathcal{L}_{q,\widetilde{\k_{j-1}}}^{[d]}(g_{j-1,l})$ belongs to~$\overline{\mathbb{H}}_{\widetilde{\k_{j}}}^{d}$.
 Let~${S_{q}^{[d]}\left(\hat{h}^{(l)}\right):=\mathcal{L}_{q,\widetilde{\k_{s}}}^{[d]}(g_{r,l})}$. 
The function
$$S_{q}^{[d]}\left(\hat{h}\right):=\displaystyle \sum_{l=0}^{\b-1} z^{l}S_{q}^{[d]}\left(\hat{h}^{(l)}\right)
\in\mathcal{A}\left(d-\frac{\pi}{k_{r}},d+\frac{\pi}{k_{r}}\right)$$ is solution of (\ref{4eq3}). 
Furthermore, we have $$\lim\limits_{q \to 1}S_{q}^{[d]}\left(\hat{h}\right)=\widetilde{S}^{d}\left(\widetilde{h}\right),$$
uniformly on the compacts of~$\overline{S}\left(d-\frac{\pi}{2k_{r}},d+\frac{\pi}{2k_{r}}\right)\setminus \displaystyle\bigcup \R_{\geq 1}\a_{i}$, where~$\a_{i}$ are the roots of~$\widetilde{b}_{m}\in \C[z]$,  and~$\widetilde{S}^{d}\left(\widetilde{h}\right)$ is the asymptotic solution of (\ref{4eq4}) that has been defined in Proposition~\ref{4propo5}.
\end{theo}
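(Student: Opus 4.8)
The plan is to imitate, stage by stage, the classical construction of $\widetilde{S}^{d}(\widetilde{h})$ given in Proposition~\ref{4propo5}, while carrying along a uniform control in the parameter $q$. Fix $l\in\{0,\dots,\b-1\}$; by linearity it suffices to treat a single $\hat{h}^{(l)}$, since $S_{q}^{[d]}(\hat{h})=\sum_{l}z^{l}S_{q}^{[d]}(\hat{h}^{(l)})$ is a finite sum whose summands are handled identically. The backbone of the argument is the pair of commutation formulas of Proposition~\ref{4propo2} (resp. Proposition~\ref{4propo4} in the classical case): under $\hat{\mathcal{B}}_{q,1}$ the operator $\dq$ is conjugated to itself while multiplication by $z$ is intertwined with an operator involving $\z$ and $\dq$, and dually for $\mathcal{L}_{q,1}^{[d]}$. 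By the same computation as in Remark~\ref{4rem6}, these relations turn the $q$-difference equation (\ref{4eq3}) satisfied by $\hat{h}^{(l)}$ into a $q$-difference equation with polynomial coefficients of uniformly bounded degree satisfied by each $g_{j,l}$, and the analogous statement holds for the classical transforms $\widetilde{f}_{j,l}$. Assumptions \textbf{(A1)}--\textbf{(A3)} ensure that, at every stage, the coefficients of the $\dq$-equation converge, with an $O(q-1)$ rate, to those of the corresponding $\d$-equation.

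At the Borel level, coefficientwise convergence of $g_{1,l}$ to $\widetilde{f}_{1,l}$ is immediate: the coefficients of $g_{1,l}$ are those of $\hat{h}^{(l)}$ divided by products of $q$-factorials $[n/\widetilde{\k_{i}}]_{q}^{!}$, which tend to the Gamma factors $\G(1+n/\widetilde{\k_{i}})$ defining $\widetilde{f}_{1,l}$, while $\hat{h}^{(l)}\to\widetilde{h}^{(l)}$ coefficientwise by hypothesis. Since $g_{1,l}$ converges and satisfies a $\dq$-equation with polynomial coefficients that themselves converge, Lemma~\ref{4lem11} upgrades this to uniform convergence $g_{1,l}\to\widetilde{f}_{1,l}$ on a closed disk centered at $0$. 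The genuine point is the uniform membership $g_{1,l}\in\overline{\mathbb{H}}_{\widetilde{\k_{1}}}^{d}$, i.e. a bound $|g_{1,l}(e^{id}z,q)|<Je_{q}(Lz^{\widetilde{\k_{1}}})$ with $J,L$ independent of $q$. I would obtain this from a quantitative comparison of $[n]_{q}^{!}$ with $n!$ valid uniformly for $q$ in a neighborhood of $1$, which transfers the classical exponential-growth bound (given by $\widetilde{f}_{1,l}\in\widetilde{\mathbb{H}}_{\widetilde{\k_{1}}}^{d}$ from Proposition~\ref{4propo5}) into the $q$-exponential bound required by Definition~\ref{4defi1}, using that $e_{q}$ degenerates to $\exp$ as $q\to1$ with controlled growth.

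The Laplace stages I would treat through a confluence statement for the $q$-Laplace transform: on a suitable sector and away from the spiral of poles $(q-1)[d+\pi]$, the Jackson integral defining $\mathcal{L}_{q,1}^{[d]}$ degenerates to the Riemann integral defining $\mathcal{L}_{1}^{d}$, uniformly on compacts, by a dominated-convergence argument like the one used in the proof of Proposition~\ref{4propo4}. Combined with the uniform bounds of the previous paragraph, this yields both $g_{j,l}\in\overline{\mathbb{H}}_{\widetilde{\k_{j}}}^{d}$ and the uniform convergence $g_{j,l}\to\widetilde{f}_{j,l}$, propagated inductively up to $j=s$. That $S_{q}^{[d]}(\hat{h})$ is a solution of (\ref{4eq3}) then follows from the commutation relations of Proposition~\ref{4propo2}, exactly as in Remark~\ref{4rem3}, applied successively to each transform.

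The main obstacle, and the reason the proof is naturally split over two further sections, is the uniform-in-$q$ control of the \emph{iterated} $q$-Laplace transform together with the location of its poles. Each application of $\mathcal{L}_{q,\cdot}^{[d]}$ creates poles along a $q$-spiral, and to obtain uniform convergence of $S_{q}^{[d]}(\hat{h})$ one must show that those poles of the meromorphic $q$-solution which meet the relevant sector lie on $q$-spirals through the roots $\a_{i}$ of the leading coefficient $\widetilde{b}_{m}$, spirals that collapse as $q\to1$ onto the half-lines $\R_{\geq1}\a_{i}$, and that they stay at positive distance, uniformly in $q$, from every compact subset of $\overline{S}\left(d-\frac{\pi}{2k_{r}},d+\frac{\pi}{2k_{r}}\right)\setminus\bigcup\R_{\geq1}\a_{i}$. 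This is precisely the confluence of meromorphic solutions that I would establish first as a preliminary proposition. Once the poles are thus confined, the dominated-convergence estimates of the preceding step, applied on such compacts, deliver the claimed uniform convergence $S_{q}^{[d]}(\hat{h})\to\widetilde{S}^{d}(\widetilde{h})$.
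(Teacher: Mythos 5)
Your skeleton matches the paper's: commutation relations to transport the equations through each transform, a local Borel stage handled by Lemma~\ref{4lem11}, a dominated-convergence confluence of the Jackson integral to the Laplace integral (this is the paper's Lemma~\ref{4lem5}, valid only on a small disk $|z|<1/L$), iteration over $j$, and a separate ``confluence of meromorphic solutions'' proposition to control poles and push the convergence out to compacts of the full sector minus $\bigcup\R_{\geq 1}\a_{i}$ (the paper's Proposition~\ref{4propo3}, used in Step~4). But there is a genuine gap at the central technical point, the membership $g_{j,l}\in\overline{\mathbb{H}}_{\widetilde{\k_{j}}}^{d}$. You propose to derive the uniform-in-$q$ bound $|g_{1,l}(e^{id}z,q)|<Je_{q}(Lz^{\widetilde{\k_{1}}})$ from a comparison of $[n]_{q}^{!}$ with $n!$ together with the classical bound $\widetilde{f}_{1,l}\in\widetilde{\mathbb{H}}_{\widetilde{\k_{1}}}^{d}$. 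This cannot work: the comparison of $q$-factorials only controls the Taylor coefficients at the origin, hence the behaviour of $g_{1,l}$ on its disk of convergence, whereas Definition~\ref{4defi1} demands a growth estimate on the \emph{analytic continuation} of $g_{1,l}$ along the whole half-line of direction $d$. That continuation exists only by virtue of the $q$-difference equation (Lemma~\ref{4lem4}), and its growth at infinity is governed by iterating that equation along the $q$-spiral $q^{\N}w$, not by the coefficients at $0$; the classical exponential bound on $\widetilde{f}_{1,l}$, which reflects the irregular singularity at infinity of the differential equation satisfied by the Borel transform, does not transfer to the $q$-side by any coefficientwise argument.

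The paper's actual mechanism is to divide $f_{1}$ by $e_{q^{\widetilde{\k_{1}}}}\bigl(B_{1}\z^{\widetilde{\k_{1}}}\bigr)$ so that the classical counterpart becomes polynomially bounded (hypothesis~$(iii)$ of Proposition~\ref{4propo3}), and then to run the induction of Lemma~\ref{4lem10} along the $q$-spiral, which yields simultaneously the uniform convergence on $C\cap\R_{\geq 1}K$ and the explicit bound $(q-1)\d(q)e_{q^{\k}}(S_{0}(|\z|))+\e(q)|R(\z)|$; Lemma~\ref{4lem12} then converts $e_{q^{\widetilde{\k_{1}}}}(\cdot)^{2}$ into a single $e_{q}(\cdot)$ to land exactly in $\overline{\mathbb{H}}_{\widetilde{\k_{1}}}^{d}$. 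This same device is what makes the induction over $j=2,\dots,s$ close, since Lemma~\ref{4lem5} alone only gives convergence near $0$ and no growth estimate at infinity for the next stage. Two smaller omissions: you assume without proof that the iterated $q$-Borel transform has positive radius of convergence (the paper needs the $q$-Gevrey estimate $|\hat{h}_{l}(q)|<C_{1}(q)C_{2}(q)^{l}([l]_{q}^{!})^{1/k_{1}}$ from the theory of $q$-difference Newton polygons, together with $\sum_{i}\widetilde{\k_{i}}^{-1}=k_{1}^{-1}$), and $\Sigma_{\widetilde{h}}$ must be enlarged beyond the classical singular directions to include the arguments of the poles of the intermediate differential systems, so that the systems to which Proposition~\ref{4propo3} is applied are regular on the chosen half-line.
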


\pagebreak[3]
\begin{rem}\label{4rem1}
After some arrangements, we could probably state and show a similar result for~$q$ not real. As \cite{S00}, we should make~$q$ goes to~$1$ following a~$q$-spiral of the form~$\left\{q_{0}^{\l}, \l\in\R_{>0}\right\}$, for some~$q_{0}\in \C$ fixed with modulus strictly bigger than~$1$. The problem here, is that we would obtain at the limit, a solution of the differential equation that is not classic, since at the limit, we would obtain integrals of the form~$\displaystyle \int_{q_{0}^{\R}e^{id}} z^{-k}f(\z)e^{-\left(\frac{\z}{z}\right)^{k}}d\z^{k}$, instead of Laplace transformations. In order to interpret the limit as the classical Borel-Laplace summation, we have to consider~$q$ real.
\end{rem}

\pagebreak[3]
\begin{rem}\label{4rem8}
A confluence result of this nature can also be found in \cite{DVZ}, Theorem~2.6. We are going now to state \cite{DVZ}, Corollary 2.9, which is the particular case where the coefficients of the family of linear $q$-difference equations do not depend upon $q$. Let~$p=1/q$ and let~$\delta_{p}:=\frac{\sq^{-1}-\mathrm{Id}}{p-1}$, which converges formally to~$\d$ when~$p\to 1$. Let~${z\mapsto \hat{h}(z,q)\in \C\{z\}}$ that converges coefficientwise to~$\widetilde{h}(z)\in \C[[z]]$ when~$p\to 1$. Assume the existence of~$b_{0},\dots,b_{m}\in \C[z]$, such that for all~$p$ close to~$1$, we have
$$
\left \{ \begin{array}{lllll} 
b_{m}(z)\delta_{p}^{m}\hat{h}(z,q)&+&\dots+b_{0}(z)\hat{h}(z,q)&=&0\\\\
b_{m}(z)\d^{m}\widetilde{h}(z)&+&\dots+b_{0}(z)\widetilde{h}(z)&=&0.
\end{array}\right.$$
Moreover, assume that the series~$\hat{\mathcal{B}}_{1}\left(\widetilde{h}\right)$ belongs to~$\C\{z\}$ and is solution of a linear differential equation which is Fuchsian at~$0$ and infinity and has non resonant exponents at~$\infty$.\par 
Let~$\widetilde{\Sigma}_{\widetilde{h}}\subset \R$ be the set of singular directions that has been defined in Proposition \ref{4propo5}. The authors of \cite{DVZ} conclude that for all~$d\notin \widetilde{\Sigma}_{\widetilde{h}}$, the series 
~$\hat{\mathcal{B}}_{1}\left(\widetilde{h}\right)$ belongs to~$ \widetilde{\mathbb{H}}_{1}^{d}$, and $$\lim\limits_{p \to 1}\hat{h}(z,q)=\widetilde{S}^{d}\left(\widetilde{h}\right)(z),$$
uniformly on the compacts of~$\overline{S}\left(d-\frac{\pi}{2},d+\frac{\pi}{2}\right)$, where~$\widetilde{S}^{d}\left(\widetilde{h}\right)$ is the asymptotic solution of the linear differential equation that has been defined in Proposition~\ref{4propo5}. Notice that Theorem \ref{4theo1} and this theorem have not the same setting, since we consider~$\dq$-equations and not~$\delta_{p}$-equations. In particular, in our case~$z\mapsto \hat{h}(z,q)$ might be divergent and we have to replace~$\hat{h}$ by~$S_{q}^{[d]}\left(\hat{h}\right)$ in order to have the convergence.
\end{rem}


\pagebreak[3]
\section{Lemmas on meromorphic solutions.}\label{4sec5}
The goal of this section is to prove lemmas on meromorphic solutions that will be used in the proof of Theorem~\ref{4theo1} in~$\S \ref{4sec6}$. See~$\S\ref{4sec4}$ for the notations. If~$D(z)\in \mathrm{GL}_{m}\Big(\C(z)\Big)$, we define~$ \mathbf{S}^{q} (D(z))$ as the union of the~$q^{\N^{*}}x_{i}$, where the~$x_{i}$ are the poles of~$D(z)$ or~$D(z)^{-1}$. 

\pagebreak[3]
\begin{lem}\label{4lem4} 
Let~$a<b$. Let us consider~$\sq M(z)=D(z)M(z)$ with~$D(z)\in \mathrm{GL}_{m}\Big(\C(z)\Big)$ and~$M(z)$ is s solution in $\Big(\mathcal{A}(a,b)\Big)^{m}$.
Then, the entries of~$M(z)$ are meromorphic on~$\overline{S}(a,b)$, with poles contained in ~$\mathbf{S}^{q} (D(z))$.
\end{lem}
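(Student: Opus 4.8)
The plan is to read the $q$-difference relation as a recursion that propagates $M$ outward from a punctured neighborhood of $0$ to the whole sector. Rewrite $\sq M(z)=D(z)M(z)$ as $M(qz)=D(z)M(z)$, i.e. with $w=qz$,
$$M(w)=D(w/q)\,M(w/q).$$
Since $q>1$ is real and positive, the map $w\mapsto w/q$ preserves $\arg w$ and divides $|w|$ by $q$; hence it sends $\overline S(a,b)$ into itself and pushes points toward $0$. As $D\in\mathrm{GL}_m(\C(z))$ has rational entries, which are meromorphic on $\widetilde\C$, the right-hand side is meromorphic on any region where $M(w/q)$ is, and this is precisely what upgrades the local germ into a global meromorphic solution.

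First I would choose $\e>0$ with $M$ analytic on $V_\e:=\{z\in\overline S(a,b)\mid |z|<\e\}$, as provided by $M\in(\mathcal A(a,b))^m$. I would then extend $M$ step by step. On $\{\e\le|z|<q\e\}\cap\overline S(a,b)$, set $M(z):=D(z/q)M(z/q)$; this is legitimate because $|z/q|<\e$, so $M(z/q)$ is the given analytic germ and the product is meromorphic. Near the seam $|z|=\e$ this formula coincides with the original $M$, because $M$ already satisfies $M(z)=D(z/q)M(z/q)$ on $V_\e$; hence the two pieces assemble into one function that is meromorphic across $|z|=\e$. Iterating, I extend $M$ over $\{|z|<q^n\e\}\cap\overline S(a,b)$ for every $n$, and since $q^n\e\to\infty$ this produces a single meromorphic function on all of $\overline S(a,b)$ that agrees with the original germ and solves the equation everywhere.

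Finally I would track the poles by the same induction, all points being understood as their lifts to $\widetilde\C$ with argument in $]a,b[$. On $V_\e$ there are none. If the poles over $\{|z|<q^{n-1}\e\}$ lie in $\bigcup_{j=1}^{n-1}q^{j}\{\text{poles of }D\}$, then over the new annulus the identity $M(z)=D(z/q)M(z/q)$ forces any pole to come either from $z\mapsto D(z/q)$, located at $z\in q\{\text{poles of }D\}$, or from $z\mapsto M(z/q)$, located at $z\in\bigcup_{j=2}^{n}q^{j}\{\text{poles of }D\}$; the union is $\bigcup_{j=1}^{n}q^{j}\{\text{poles of }D\}$. Passing to the limit, every pole of $M$ lies in $q^{\N^*}\{\text{poles of }D\}\subseteq\mathbf S^q(D(z))$, which is what is claimed. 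I expect no serious obstacle here: the only two things demanding attention are structural, namely that multiplication by the real number $q$ fixes arguments so the extension never leaves $\overline S(a,b)$ (precisely the reason $q$ is taken real), and that the successive continuations match at the seams --- which holds because $M$ solves the equation on an open set, so no monodromy or matching problem can arise on the simply connected surface $\widetilde\C$.
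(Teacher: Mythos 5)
Your proposal is correct and is essentially the paper's own argument: the paper likewise observes that $M(qz)=D(z)M(z)$ propagates analyticity from the domain $U$ to $qU$, starts from the punctured neighborhood $\{|z|<\e\}\cap\overline S(a,b)$ where $M$ is analytic, and iterates to cover the whole sector while collecting poles only along $q^{\N^*}\{\text{poles of }D\}\subseteq\mathbf S^q(D)$. Your write-up merely makes the seam-matching and the pole induction explicit, which the paper leaves implicit.
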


\begin{proof}
 Let~$z\in\C^{*}\setminus\mathbf{S}^{q} (D(z))$. We use the fact that~$M(qz)=D(z)M(z)$ to deduce that if the entries of~$M$ are analytic on a domain $U$, then there are analytic  on the domain $qU:=\{qz,z\in U\}$. We use the existence of~$\e>0$, such that the entries of~$M(z)$ are analytic for all~$|z|<\e$ and~$z\in \overline{S}(a,b)$, to obtain that the entries of~$M(z)$ are meromorphic on~$\overline{S}(a,b)$, with poles contained in~$\mathbf{S}^{q} (D(z))$.
\end{proof}

If~$\widetilde{D}(z)\in \mathrm{M}_{m}\Big(\C(z)\Big)$, we define ~$ \mathbf{S}^{1} \left(\widetilde{D}(z)\right)$ as the union of the~$\R_{\geq 1}x_{i}$, where the~$x_{i}$ are the poles of~$\widetilde{D}(z)$. We define also~$\R_{>0}[z]$ as the set of polynomials with coefficients that are strictly positive real numbers. We recall that if~$A$ and~$B$ are matrices with coefficients in~$\C$ and~$R\in \R_{>0}$, we say that~${|A|<|B|}$ (resp.~$|A|<R$) if every entry of~$A$ has modulus bounded by the modulus of the corresponding entry of~$B$ (resp. by~$R$).

\pagebreak[3] 
\begin{propo}\label{4propo3}
Let~$a<b$,~${z\mapsto \mathrm{Id}+(q-1)D(z,q)\in\mathrm{GL}_{m}\Big(\C(z)\Big),\widetilde{D}(z)\in\mathrm{M}_{m}\Big(\C(z)\Big)}$ and let~$C$ be a convex set with non empty interior contained in~$\overline{S}(a,b)\setminus  \mathbf{S}^{1}\left( \widetilde{D}(z)\right)$ such that $0$ does not belong to its closure. Let us consider~${z\mapsto M\left(z,q\right),\widetilde{M}\left(z\right)}$,~$1\times m$ matrices with entries continuous on $C$ and analytic in the interior of~$C$, solutions of 
$$\left\{\begin{array}{lll}
\dq M(z,q)&=&D(z,q)M(z,q)\\ \\
\d \widetilde{M}(z)&=&\widetilde{D}(z)\widetilde{M}(z).
\end{array}\right.$$
We assume that:
\begin{trivlist}
\item~$(i)$ There exists~$c_{1}>0$, such that for all~$q$ close to~$1$ and for all ~$z\in C$,
$$\left|D(z,q)-\widetilde{D}(z)\right|<(q-1)c_{1}\left(\left|\widetilde{D}(z)\right|+\left|\mathbf{1}_{m}\right|\right),$$
where~$\mathbf{1}_{m}$ denotes the square matrix of size~$m$ with~$1$ entries everywhere.
Notice that this condition implies that for~$q$ close to~$1$, the entries of~$D(z,q)$ have no poles in~$C$.
\item~$(ii)$ There exists~$w_{0}\in C$, such that for all~$q$ close to~$1$,~$M\left(w_{0},q\right)=\widetilde{M}\left(w_{0}\right)$. Moreover, we have~$\lim \limits_{q \to 1} 
M(w,q)=\widetilde{M}\left(w\right)$ uniformly on a compact~$K$ contained in~$C$. 
\item~$(iii)$ There exists~$R\in \C[z]$, such that for all~$z\in C$,~$\left|\widetilde{M}\left(z\right)\right|<\left|R\left(z\right)\right|$. 
\end{trivlist}
Let~$\k$ be the maximum of the degrees of the numerators and the denominators of the entries of ~$\widetilde{D}(z)$, written as the quotient of two coprime polynomials. Let
~$S\in \R_{>0}[z]$ be a polynomial of degree~$\k$, such that for all~$z\in C$,~${S(|z|)>\left|\widetilde{D}(z)\right|+\left|\mathbf{1}_{m}\right|}$.
Under those assumptions, there exist
\begin{itemize}
\item~$\d(q)>0$ that converges to~$1$ as~$q\to 1$,
\item~$\e(q)>0$ that converges to~$0$ as~$q\to 1$,
\item~$S_{0}\in \R_{>0}[z]$ which has degree~$\k$ and satisfies~$S_{0}(|z|)>S(|z|)$ for all~$z\in C$,
\end{itemize}
 such that for all~${z\in C\cap \R_{\geq 1}K:=\left\{ xw\in C\Big|x\in [1,\infty[,w\in K\right\}}$, we have
$$
\left|M(z,q)-\widetilde{M}(z)\right|<(q-1)\d(q)e_{q^{\k}}\Big(S_{0}\left(\left|z\right|\right)\Big)
+\e(q)\left|R(z)\right|.$$
In particular,
$$\lim \limits_{q \to 1} M\left(z,q\right)=\widetilde{M}\left(z\right),$$
uniformly on the compacts of~${C\cap \R_{\geq 1}K}$. 
\end{propo}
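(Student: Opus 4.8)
The plan is to compare the two solutions by propagating them outward along the $q$-spiral and controlling the discrepancy that the discrete $\dq$-scheme accumulates with respect to the exact flow of the differential system. Writing the $\dq$-system as the one-step recursion $M(qz,q)=A(z,q)M(z,q)$ with $A(z,q):=\mathrm{Id}+(q-1)D(z,q)$, and integrating $\d\widetilde{M}=\widetilde{D}\widetilde{M}$ over the arc from $z$ to $qz$, I would first establish a local consistency estimate of the form $\widetilde{M}(qz)=\big(\mathrm{Id}+(q-1)\widetilde{D}(z)\big)\widetilde{M}(z)+\rho(z,q)$, where the truncation error satisfies $|\rho(z,q)|<(q-1)^{2}P(|z|)|R(z)|$ for some polynomial $P$, uniformly for $q$ close to $1$. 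This estimate uses only that $\widetilde{M}$ solves the differential system, together with assumption $(iii)$ to bound $\widetilde{M}$ and, through the equation, its derivative on the short arc $[z,qz]$.

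Subtracting the two one-step relations and using assumption $(i)$ then yields an inhomogeneous error recursion $E(qz,q)=A(z,q)E(z,q)+G(z,q)$ for $E:=M-\widetilde{M}$, where the source $G(z,q)=(q-1)\big(D(z,q)-\widetilde{D}(z)\big)\widetilde{M}(z)-\rho(z,q)$ is of size $O\big((q-1)^{2}\big)$ at the scale of the solution, namely $|G(z,q)|<(q-1)^{2}Q(|z|)|R(z)|$ for a suitable polynomial $Q$. Fixing $z\in C\cap\R_{\geq1}K$, I write $z=xw$ with $w\in K$ and $x\geq1$, choose $n=n(q)$ with $q^{n}$ the largest power of $q$ not exceeding $x$, and take as base point $w':=q^{-n}z$, which for $q$ close to $1$ lies in a fixed compact neighborhood of $K$ inside $C$; there assumption $(ii)$ bounds the initial discrepancy $|E(w',q)|$ by some $\e_{0}(q)\to0$. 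Iterating the recursion from $w'$ to $z=q^{n}w'$ by variation of constants gives $E(z,q)=\Phi_{n}E(w',q)+\sum_{j=0}^{n-1}\Phi_{n,j+1}G(q^{j}w',q)$, where $\Phi_{n}$ and the partial products $\Phi_{n,j+1}$ are the discrete propagators formed from the matrices $A(q^{i}w',q)$.

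The structural heart of the argument is the bound on these propagators. Setting $u_{i}:=|q^{i}w'|$, one has $u_{i+1}-u_{i}=(q-1)u_{i}$, so each factor $\mathrm{Id}+(q-1)D(q^{i}w',q)$ is dominated entrywise by $\mathrm{Id}+(q-1)S(u_{i})\mathbf{1}_{m}$, and the telescoping turns the product into a $q$-Riemann sum for the integral of $S(u)/u$; since $S$ has degree $\k$, comparison with the product formula for $e_{q}$ shows the product is dominated by the $q$-exponential $e_{q^{\k}}\big(S_{0}(|z|)\big)$ for a polynomial $S_{0}$ of degree $\k$ with $S_{0}(|z|)>S(|z|)$. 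Feeding this into the variation-of-constants formula, the accumulated source term contributes $(q-1)\d(q)\,e_{q^{\k}}\big(S_{0}(|z|)\big)$ (each $G$ is $O\big((q-1)^{2}\big)$ and there are $O\big(1/(q-1)\big)$ steps), while the propagated initial discrepancy, which rides at the scale of the solution, contributes $\e(q)|R(z)|$ with $\e(q)\to0$. This gives the stated inequality, and on a compact subset of $C\cap\R_{\geq1}K$ the quantities $S_{0}(|z|)$ and $|R(z)|$ are bounded while $(q-1)\d(q)$ and $\e(q)$ tend to $0$, yielding the uniform convergence.

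I expect the main obstacle to be twofold: obtaining the one-step consistency bound on $\rho$ with the correct power $(q-1)^{2}$ and explicit polynomial control in $|z|$ uniformly as $q\to1$, and carrying out the discrete-product estimate so that the homogeneous propagator is genuinely dominated by $e_{q^{\k}}$ of a degree-$\k$ polynomial, matching the growth of the solution. Equally delicate is the bookkeeping that separates the propagated initial error, which must be shown to stay at the solution scale $|R(z)|$ rather than at the cruder $q$-exponential scale, from the accumulated truncation error, all the while keeping every constant independent of $q$.
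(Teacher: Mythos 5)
Your overall strategy coincides with the paper's: compare the two solutions one step at a time along the $q$-spiral, split the one-step defect into a consistency error of size $(q-1)^{2}$ times a polynomial times $|R|$, a coefficient-perturbation error coming from~$(i)$, and a propagation term dominated by $(q-1)mS(|q^{n}w|)$ times the current error, and then build a degree-$\k$ polynomial $S_{0}$ so that $e_{q^{\k}}\big(S_{0}(|z|)\big)$ is a supersolution of the resulting one-step inequality. Up to presentation (you write an explicit variation-of-constants formula where the paper runs a direct induction against the supersolution; compare Steps 1--4 of the proof of Lemma~\ref{4lem10}), this is the argument of~$\S\ref{4sec5}$.

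The genuine gap is the one you flag yourself and do not close: the propagated initial discrepancy. In the additive decomposition $E(z,q)=\Phi_{n}E(w',q)+\sum_{j}\Phi_{n,j+1}G(q^{j}w',q)$, the only available control on the propagator $\Phi_{n}$ is the $q$-exponential bound $e_{q^{\k}}\big(S_{0}(|z|)\big)$, so $\left|\Phi_{n}E(w',q)\right|$ is only bounded by $\e_{0}(q)\,e_{q^{\k}}\big(S_{0}(|z|)\big)$, which grows exponentially in $|z|$ and is \emph{not} of the stated form $\e(q)|R(z)|$. No additive splitting can recover the polynomial scale, because it treats $E(w',q)$ as an arbitrary small vector rather than as a small \emph{relative} perturbation of the solution itself. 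The paper's fix is multiplicative and entrywise: it sets $N(w,q)$ equal to the entrywise quotient of $\widetilde{M}(w)$ by $M(w,q)$ and applies Lemma~\ref{4lem10} to $M_{w}(z,q):=M(z,q)\times_{h}N(w,q)$, which agrees \emph{exactly} with $\widetilde{M}$ at the base point; the lemma then yields the clean bound $(q-1)e_{q^{\k}}\big(S_{0}(|z|)\big)$ for $M_{w}-\widetilde{M}$, and undoing the rescaling via the entrywise inequality $|a-b|<|c|^{-1}|ac-b|+|c^{-1}-1|\,|b|$ converts the initial mismatch into a factor $|N(w,q)^{-1}-1|\to 0$ multiplying $|\widetilde{M}(z)|<|R(z)|$ --- this is precisely where the term $\e(q)|R(z)|$ comes from. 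Your weaker estimate would still give the ``in particular'' uniform convergence on compacts, but the displayed inequality is what is actually used later (Step~3 of~$\S\ref{4sec62}$, to establish $f_{1}\in\overline{\mathbb{H}}_{\widetilde{\k_{1}}}^{0}$), and your argument does not deliver it as stated.
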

\pagebreak[3]
\begin{rem}
The polynomial~$S_{0}$ does not depend upon~$w$ and~$q$.
\end{rem}
Before proving the proposition, we need to prove a technical lemma.
\pagebreak[3]
\begin{lem}\label{4lem10}
Let~${z\mapsto \mathrm{Id}+\left(q-1\right)D(z,q)\in\mathrm{GL}_{m}\Big(\C(z)\Big),\widetilde{D}(z)\in\mathrm{M}_{m}\Big(\C(z)\Big)}$ that satisfies assumption~$(i)$ of Proposition~\ref{4propo3}. Let~$C$ be the corresponding convex set and let~$K$ be the corresponding compact set defined in Proposition~\ref{4propo3}.
 Let~${\Big(z\mapsto M_{w}\left(z,q\right)\Big)_{w\in K},\left(\widetilde{M}_{w}(z)\right)_{w\in K}}$, be a family of~$1\times m$ matrices with entries continuous on $C$ and analytic in the interior of~$C$, solutions of 
$$\left\{\begin{array}{lll}
\dq M_{w}(z,q)&=&D(z,q)M_{w}(z,q)\\ \\
\d \widetilde{M}_{w}(z)&=&\widetilde{D}(z)\widetilde{M}_{w}(z).
\end{array}\right.$$
We assume that the matrices~$\Big(M_{w}(z,q)\Big)_{w\in K},\left(\widetilde{M}_{w}(z)\right)_{w\in K}$ satisfy:
\begin{trivlist}
\item \hspace*{1cm}(a) For all~$q$ close to~$1$, for all~$w\in K$,~$M_{w}\left(w,q\right)=\widetilde{M}_{w}\left(w\right)$.
\item \hspace*{1cm}(b) There exists~$R\in \C[z]$, such that for all~$z\in C$, for all~$w\in K$:
$$\left|\widetilde{M}_{w}\left(z\right)\right|<\left|R\left(z\right)\right|.$$ 
\end{trivlist}
 Under those assumptions, there exists a polynomial~$S_{0}$ that satisfies the same properties than the one in Proposition~\ref{4propo3}, such that for all~$w\in K$, for all~$q$ close to~$1$, for all~$N\in \N$ with~$q^{N}w\in C$:
\begin{equation}\label{4eq6}
\left|\frac{M_{w}\left(q^{N}w,q\right)-\widetilde{M}_{w}\left(q^{N}w\right)}{q-1}\right|<e_{q^{\k}}\Big(S_{0}\left(\left|q^{N}w\right|\right)\Big).
\end{equation}
\end{lem}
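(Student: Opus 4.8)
\emph{Setup and the perturbed recursion.} Since $q>1$ is real, the $q$-spiral $t\mapsto q^{t}w$ has constant argument $\arg(w)$, so the arc $\{q^{t}w\mid t\in[N,N+1]\}$ is the radial segment joining $q^{N}w$ to $q^{N+1}w$. As $C$ is convex and contains $w\in K$ and $q^{N}w$, it contains $q^{j}w$ for every $0\le j\le N$; in particular all objects below are defined on $C$ and $\widetilde{M}_{w}$ is analytic along the segment. I rewrite the $q$-difference equation as the one-step recursion $M_{w}(qz,q)=\big(\mathrm{Id}+(q-1)D(z,q)\big)M_{w}(z,q)$, set $E_{N}:=M_{w}(q^{N}w,q)-\widetilde{M}_{w}(q^{N}w)$ and $a_{N}:=|q^{N}w|=q^{N}|w|$, and substitute to obtain
\[
E_{N+1}=\big(\mathrm{Id}+(q-1)D(q^{N}w,q)\big)E_{N}+\tau_{N},\qquad \tau_{N}:=\big(\mathrm{Id}+(q-1)D(q^{N}w,q)\big)\widetilde{M}_{w}(q^{N}w)-\widetilde{M}_{w}(q^{N+1}w).
\]
By assumption $(a)$ one has $E_{0}=0$, so the whole problem is to propagate a bound on $E_{N}$ through this recursion, the error $\tau_{N}$ measuring how far the exact differential solution fails to satisfy the $q$-difference step.

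\emph{Estimate of the consistency error.} Along the radial segment the fundamental theorem of calculus together with $\d\widetilde{M}_{w}=\widetilde{D}\widetilde{M}_{w}$, i.e. $\z\widetilde{M}_{w}'(\z)=\widetilde{D}(\z)\widetilde{M}_{w}(\z)$, gives
\[
\widetilde{M}_{w}(q^{N+1}w)-\widetilde{M}_{w}(q^{N}w)=\log q\int_{N}^{N+1}\widetilde{D}(q^{t}w)\widetilde{M}_{w}(q^{t}w)\,dt.
\]
Writing $\phi(t):=\widetilde{D}(q^{t}w)\widetilde{M}_{w}(q^{t}w)$, I split $\tau_{N}=(q-1-\log q)\phi(N)+\log q\int_{N}^{N+1}\big(\phi(N)-\phi(t)\big)\,dt+(q-1)\big(D(q^{N}w,q)-\widetilde{D}(q^{N}w)\big)\widetilde{M}_{w}(q^{N}w)$. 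The first term is $O\big((q-1)^{2}\big)$ since $q-1-\log q=O((q-1)^{2})$; differentiating $\phi$ along the ray and reusing $\z\widetilde{M}_{w}'=\widetilde{D}\widetilde{M}_{w}$ shows $\phi'$ carries a factor $\log q$, hence $|\phi(N)-\phi(t)|=O(q-1)$ and the second term is again $O((q-1)^{2})$; the last term is bounded by $(q-1)^{2}c_{1}\big(|\widetilde{D}|+|\mathbf{1}_{m}|\big)|\widetilde{M}_{w}|$ by assumption $(i)$. Using $(b)$ to bound $|\widetilde{M}_{w}|$ by $|R|$ and $S(|z|)>|\widetilde{D}(z)|+|\mathbf{1}_{m}|$, every factor is a polynomial in $a_{N}$, uniform in $w\in K$, in $N$, and in $q$ near $1$. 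I conclude that $|\tau_{N}|\le (q-1)^{2}T(a_{N})$ for a fixed polynomial $T$.

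\emph{The induction.} The degree of $S$ is $\k$ and the $q$-spiral step scales moduli by $q$, so $x^{\k}$ is transported into $q^{\k}x^{\k}$, which matches exactly the dilation in the functional equation $e_{q^{\k}}(q^{\k}y)=\big(1+(q^{\k}-1)y\big)e_{q^{\k}}(y)$ obtained from $\delta_{q^{\k}}e_{q^{\k}}(y)=y\,e_{q^{\k}}(y)$. I therefore run the induction with the monomial majorant $m(x):=s\,x^{\k}$, proving $|E_{N}|<(q-1)e_{q^{\k}}\big(s\,a_{N}^{\k}\big)$ for $s$ chosen large. The base case $N=0$ is clear from $E_{0}=0$. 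For the step, $|E_{N+1}|\le\big(1+(q-1)|D(q^{N}w,q)|\big)|E_{N}|+|\tau_{N}|$; applying the induction hypothesis, the identity $m(qa_{N})=q^{\k}m(a_{N})$ and the functional equation to the right-hand side, dividing by $(q-1)e_{q^{\k}}(m(a_{N}))$, and using $q^{\k}-1\ge\k(q-1)$, the bound $|D(q^{N}w,q)|\le(1+(q-1)c_{1})S(a_{N})$, and $T(a_{N})/e_{q^{\k}}(m(a_{N}))\le C_{T}$ (a polynomial over a $q$-exponential of a positive power, uniformly bounded for $a_{N}\ge\inf_{C}|z|>0$ and $q$ near $1$), the step reduces to the single scalar inequality $(1+(q-1)c_{1})S(a_{N})+C_{T}<\k\,s\,a_{N}^{\k}$, which holds for $s$ large, uniformly on $\{\,|z|\ge\inf_{C}|z|\,\}$. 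Finally I take $S_{0}\in\R_{>0}[z]$ of degree $\k$ with $s\,x^{\k}\le S_{0}(x)$ and $S_{0}(|z|)>S(|z|)$ on $C$; since $e_{q^{\k}}$ is increasing and $m\le S_{0}$, the proved bound upgrades to (\ref{4eq6}).

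\emph{Main obstacle.} The crux is the uniform estimate $|\tau_{N}|\le(q-1)^{2}T(a_{N})$: one must gain a full extra power of $q-1$ from the mismatch between the exact $q$-difference multiplier and the exact differential flow, with a polynomial in $a_{N}$ that is \emph{independent} of $w$, $N$ and $q$. Everything else is the bookkeeping needed to feed this quadratic gain into the $q^{\k}$-geometric growth of $e_{q^{\k}}(s\,a_{N}^{\k})$ through the functional equation, the matching of the spiral step $q$ with the dilation $q^{\k}$ being exactly why the degree $\k$ and the transform $e_{q^{\k}}$ appear together.
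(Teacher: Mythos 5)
Your proof is correct and follows essentially the same strategy as the paper: rewrite the problem as the one-step recursion $E_{N+1}=\big(\mathrm{Id}+(q-1)D\big)E_{N}+\tau_{N}$, show the consistency error $\tau_{N}$ is $O\big((q-1)^{2}\big)$ times a polynomial in $|q^{N}w|$ uniformly in $w$ and $N$, and propagate the bound by induction against the majorant $e_{q^{\k}}\big(S_{0}(|z|)\big)$ via its functional equation $e_{q^{\k}}(q^{\k}y)=\big(1+(q^{\k}-1)y\big)e_{q^{\k}}(y)$. The only cosmetic difference is that you estimate the differential increment with the fundamental theorem of calculus along the radial segment plus the bound $q-1-\log q=O((q-1)^{2})$, whereas the paper invokes a generalized mean value theorem to the same effect.
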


\begin{proof}[Proof of Lemma~\ref{4lem10}]
For the reader's convenience, we will decompose the proof in four steps. 
\vspace{1 mm}\pagebreak[3]
\begin{center}
\textbf{Step 1: Find another expression of~$\frac{\widetilde{M}_{w}\left(q^{n}w\right)-M_{w}\left(q^{n}w,q\right)}{q-1}$}.\end{center}
\vspace{1 mm}
Let~$f$ be a function continuous on~$C$, that is analytic in the interior of $C$, and let~$z_{0},z_{1}\in C$. The generalized mean value theorem (see~$\S 1.4$ of \cite{KM}) says that there exists~$c\in C$ that belongs to the convex hull of 
$$
\Big\{f'\big(z_{0}+x(z_{1}-z_{0})\big)\Big|x\in \left[0,1\right] \Big\} ,
$$
such that:
$$\frac{f(z_{1})-f(z_{0})}{z_{1}-z_{0}}=c.$$
For all~$q>1,w\in K,n\in \N$ with~$q^{n}w\in C$, let us define the~$A_{w,q,n-1}$ as the convex hull of
$$
\left\{\left.\frac{\widetilde{D}\left(q^{n-1}wx\right)\widetilde{M}\left(q^{n-1}wx\right)}{q^{n-1}wx}\right|x\in \big[1,q\big] \right\} .
$$
Because of the generalized mean value theorem, for all~$n\in \N$, for all~$q>1$, for all~$w\in K$, with~$q^{n}w\in C$, there exists~$\widetilde{D}_{w,q,n-1}$ that belongs to~$A_{w,q,n-1}$, 
such that:
$$\frac{\widetilde{M}_{w}\left(q^{n}w\right)-\widetilde{M}_{w}\left(q^{n-1}w\right)}{q^{n-1}w(q-1)}=\widetilde{D}_{w,q,n-1}.$$ 
The linear~$\dq$-equation satisfied by~$M_{w}(z,q)$ gives that for all~$n\in \N$, for all~$q>1$, for all~$w\in K$, with~$q^{n}w\in C$:
$$\frac{M_{w}\left(q^{n}w,q\right)-M_{w}\left(q^{n-1}w,q\right)}{q-1}= D\left(q^{n-1}w,q\right)M_{w}\left(q^{n-1}w,q\right).$$
In particular, we have 
\begin{equation}\label{4eq23}
\begin{array}{lll}
\frac{\widetilde{M}_{w}\left(q^{n}w\right)-M_{w}\left(q^{n}w,q\right)}{q-1}&=&\frac{\widetilde{M}_{w}\left(q^{n-1}w\right)-M_{w}\left(q^{n-1}w,q\right)}{q-1}\\\\
&+&
q^{n-1}w\widetilde{D}_{w,q,n-1}-D\left(q^{n-1}w,q\right)M_{w}\left(q^{n-1}w,q\right).
\end{array}
\end{equation}

\begin{center}\pagebreak[3]
\textbf{Step 2: Bound the expression of~$\frac{\widetilde{M}_{w}\left(q^{n}w\right)-M_{w}\left(q^{n}w,q\right)}{q-1}$.}\end{center}
\vspace{1 mm}
Let~$q_{0}>1$ sufficiently close to~$1$. Let us prove the existence of~$R_{1},R_{2}\in \C[z]$, such that for all~$n\in \N$,~$q\in]1,q_{0}[$,~$w\in K$, with~$q^{n}w\in C$, 
\begin{equation}\label{4eq16}
\begin{array}{llll}
&\left|\frac{\widetilde{M}_{w}\left(q^{n}w\right)-M_{w}\left(q^{n}w,q\right)}{q-1} \right|&&\\\\
\leq&\left|\frac{\widetilde{M}_{w}\left(q^{n-1}w\right)-M_{w}\left(q^{n-1}w,q\right)}{q-1}\right| 
&+&(q-1)\Big(\left|R_{1}\left(q^{n}w\right)\right|+\left|R_{2}\left(q^{n}w\right)\right|\Big)\\\\
+&\left|\frac{\widetilde{M}_{w}\left(q^{n-1}w\right)-M_{w}\left(q^{n-1}w,q\right)}{q-1}\right|&\times&
(q-1)(1+(q-1)c_{1})mS\left(\left|q^{n}w\right|\right),
\end{array}
\end{equation}
where~$S,c_{1}>0$ are given by Proposition~\ref{4propo3}. 
Using the triangular inequality and (\ref{4eq23}), it is sufficient to prove the existence of~$R_{1},R_{2}\in \C[z]$, such that for all~$n\in \N$,~$q\in]1,q_{0}[$,~$w\in K$, with~$q^{n}w\in C$, 
\begin{equation}\label{4eq15}
\begin{array}{l}
\left|q^{n-1}w\widetilde{D}_{w,q,n-1}-D\left(q^{n-1}w,q\right)M_{w}\left(q^{n-1}w,q\right)\right|\\\\
\leq(q-1)\Big(\left|R_{1}\left(q^{n}w\right)\right|+\left|R_{2}\left(q^{n}w\right)\right|\Big)\\ \\
+\left|\frac{\widetilde{M}_{w}\left(q^{n-1}w\right)-M_{w}\left(q^{n-1}w,q\right)}{q-1}\right|\times
(q-1)(1+(q-1)c_{1})mS\left(\left|q^{n}w\right|\right).
\end{array}
\end{equation}

We have for all~$n\in \N$,~$q\in]1,q_{0}[$,~$w\in K$, with~$q^{n}w\in C$, 
$$\begin{array}{lll}
&\left|q^{n-1}w\widetilde{D}_{w,q,n-1}\right.-\left.D\left(q^{n-1}w,q\right)M_{w}\left(q^{n-1}w,q\right)\right|\\\\
\leq&\left|q^{n-1}w\widetilde{D}_{w,q,n-1}\right.-\left.\widetilde{D}\left(q^{n-1}w\right)\widetilde{M}_{w}\left(q^{n-1}w\right)\right|\\ \\
+&\left|\widetilde{D}\left(q^{n-1}w\right)\widetilde{M}_{w}\left(q^{n-1}w\right)\right.-\left.D\left(q^{n-1}w,q\right)\widetilde{M}_{w}\left(q^{n-1}w\right)\right|\\\\
+&\left|D\left(q^{n-1}w,q\right)\widetilde{M}_{w}\left(q^{n-1}w\right)\right.-D\left(q^{n-1}w,q\right)M_{w}\left(q^{n-1}w,q\right)\Big|&.
\end{array}$$
Let 
$$\begin{array}{lll}
\tau_{1}&:=&\left|q^{n-1}w\widetilde{D}_{w,q,n-1}\right.-\left.\widetilde{D}\left(q^{n-1}w\right)\widetilde{M}_{w}\left(q^{n-1}w\right)\right|,\\\\
\tau_{2}&:=&\left|\widetilde{D}\left(q^{n-1}w\right)\widetilde{M}_{w}\left(q^{n-1}w\right)\right.-\left.D\left(q^{n-1}w,q\right)\widetilde{M}_{w}\left(q^{n-1}w\right)\right|,\\\\
\tau_{3}&:=&\left|D\left(q^{n-1}w,q\right)\widetilde{M}_{w}\left(q^{n-1}w\right)\right.-D\left(q^{n-1}w,q\right)M_{w}\left(q^{n-1}w,q\right)\Big| .
\end{array}$$
Let us bound~$\tau_{1}$. The entries of~$q^{n-1}w\widetilde{D}_{w,q,n-1}$ and~$\widetilde{D}\left(q^{n-1}w\right)\widetilde{M}_{w}\left(q^{n-1}w\right)$ belong to the convex hull of~$
\left\{\left.\frac{\widetilde{D}\left(q^{n-1}wx\right)\widetilde{M}\left(q^{n-1}wx\right)}{x}\right|x\in [1,q] \right\}$. The entries of the elements of this set of matrices are bounded by a polynomial, because of the assumption~$(b)$ and the fact that the entries of~$\widetilde{D}$ are bounded by polynomials. This provides~$R_{1}\in \C[z]$, such that for all~$q\in]1,q_{0}[$, for all~$n\in \N$, for all~$w\in K$, with~$q^{n}w\in C$:
$$\tau_{1}=\left|q^{n-1}w\widetilde{D}_{w,q,n-1}-\widetilde{D}\left(q^{n-1}w\right)\widetilde{M}_{w}\left(q^{n-1}w\right)\right|<(q-1)
\left|R_{1}\left(q^{n}w\right)\right|.$$
Let us bound~$\tau_{2}$. Due to the assumptions~$(i)$ and~$(b)$, there exists~$R_{2}\in \C[z]$ such that for all~$q\in]1,q_{0}[$, for all~$n\in \N$, for all~$w\in K$, with~$q^{n}w\in C$:
$$\tau_{2}=\displaystyle \left|\left(\widetilde{D}\left(q^{n}w\right)-D\left(q^{n}w,q\right)\right)\widetilde{M}_{w}\left(q^{n}w\right)\right|<(q-1)
\left|R_{2}\left(q^{n}w\right)\right|.$$

Let us bound the quantity~$\tau_{3}$. By assumption~$(i)$ and the fact that for all~${z\in\C}$,~${\left|\widetilde{D}(z)\right|+\left|\mathbf{1}_{m}\right|<S\left(\left|z\right|\right)}$, we obtain that for all~$q\in]1,q_{0}[$,~$n\in \N$,~$w\in K$, with~$q^{n-1}w\in C$:

$$
\begin{array}{lll}
\tau_{3}&=&\left|D\left(q^{n-1}w,q\right)\left(\widetilde{M}_{w}\left(q^{n-1}w\right)-M_{w}\left(q^{n-1}w,q\right)\right)\right|\\\\
&\leq&\left(\left|\widetilde{D}\left(q^{n-1}w\right)\right|+(q-1)c_{1}\left(\left|\widetilde{D}\left(q^{n-1}w\right)\right|+\left|\mathbf{1}_{m}\right|\right)\right)\left|\widetilde{M}_{w}\left(q^{n-1}w\right)-M_{w}\left(q^{n-1}w,q\right)\right|\\\\
&\leq&(1+(q-1)c_{1})mS\left(\left|q^{n-1}w\right|\right)\left|\widetilde{M}_{w}\left(q^{n-1}w\right)-M_{w}\left(q^{n-1}w,q\right)\right|.
\end{array}
$$ 

Since the polynomial~$S$ has real positive coefficients, 
~$S\left(\left|q^{n-1}w\right|\right)\leq S\left(\left|q^{n}w\right|\right)$. In particular, for all~$q\in]1,q_{0}[$,~$n\in \N$,~$w\in K$ with~$q^{n-1}w\in C$:
$$\tau_{3}\leq (q-1)(1+(q-1)c_{1})mS\left(\left|q^{n}w\right|\right)\left|\frac{\widetilde{M}_{w}\left(q^{n-1}w\right)-M_{w}\left(q^{n-1}w,q\right)}{q-1}\right|.$$

This concludes the proof of (\ref{4eq15}) and yields (\ref{4eq16}), because of the triangular inequality.
\vspace{1 mm}\pagebreak[3]
\begin{center}
\textbf{Step 3: Construction of~$S_{0}$.}\end{center}
\vspace{1 mm}
We recall that ~$\k\in \N$ is the degree of~$S$. Before constructing~$S_{0}$, we are going to prove that for all~$b>0$ sufficiently big, for all~$z\in C\cap \R_{\geq 1}K$ and for all~$q$ close to~$1$

\begin{equation}\label{4eq10}
\begin{array}{ll}
e_{q^{\k}}\Big(b\left|z\right|^{\k}\Big)+(q-1)\Big(\left|R_{1}(qz)\right|+\left|R_{2}(qz)\right|\Big)&\\\\+ 
(q-1)(1+(q-1)c_{1})mS(q|z|)
e_{q^{\k}}\Big(b\left|z\right|^{\k}\Big)
&\leq e_{q^{\k}}\Big(b\left|qz\right|^{\k}\Big).
\end{array}\end{equation}

Using the~$q$-difference equation satisfied by the~$q$-exponential, we find that this inequality is equivalent to:
$$\begin{array}{lll}
1+(q-1)\frac{\left|R_{1}(qz)\right|+\left|R_{2}(qz)\right|}{e_{q^{\k}}\Big(b\left|z\right|^{\k}\Big)}+(q-1)(1+(q-1)c_{1})mS(q|z|)&\leq&
\frac{e_{q^{\k}}\Big(b\left|qz\right|^{\k}\Big)}{e_{q^{\k}}\Big(b\left|z\right|^{\k}\Big)}\\\\
&=&1+(q^{\k}-1)b\left|z\right|^{\k}.
\end{array}
$$
This inequality is equivalent to the following:
$$\frac{\left|R_{1}(qz)\right|+\left|R_{2}(qz)\right|}{e_{q^{\k}}\Big(b\left|z\right|^{\k}\Big)}+(1+(q-1)c_{1})mS(q|z|)\leq b[\k]_{q}\left|z\right|^{\k}.$$
Since~$R_{1},R_{2}$ are polynomials, for all~$b>0$ sufficiently big, for all~$q\in ]1,q_{0}[$ and for all~$z\in C\cap \R_{\geq 1}K$, this latter inequality is true. This proves (\ref{4eq10}).
\par 
We recall that by assumption, $0$ does not belong to the closure of~$C$. Using (\ref{4eq10}), we obtain the existence of a polynomial~$S_{0}\in \R_{>0}[z]$ of degree~$\k$, such that for all~${z\in C}$,~${S_{0}(|z|)>S(|z|)}$, and such that for all~$z\in C\cap \R_{\geq 1}K$, for all $q$ close to $1$

\begin{equation}\label{4eq17}
\begin{array}{l}
e_{q^{\k}}\Big(S_{0}(|z|)\Big)+(q-1)\Big(\left|R_{1}(qz)\right|+\left|R_{2}(qz)\right|\Big)+\\\\
(q-1)(1+(q-1)c_{1})mS(q|z|)e_{q^{\k}}\Big(S_{0}(|z|)\Big)
\leq e_{q^{\k}}\Big(S_{0}(q|z|)\Big).
\end{array}\end{equation}

\vspace{1 mm}\pagebreak[3]
\begin{center}
\textbf{Step 4 : Conclusion.}\end{center}
\vspace{1 mm}
We are going now to prove (\ref{4eq6}) with the polynomial~$S_{0}$ we have defined in Step 3. We will proceed by an induction on~$n$. The step~$n=0$ is true because of the assumption~$(a)$. \par
Induction hypothesis: let us fix~$n\in \N$, and assume that if~$q\in]1,q_{0}[$,~$w\in K$, with~$q^{n+1}w\in C$, 
$$
\left|\frac{\widetilde{M}_{w}\left(q^{n}w\right)-M_{w}\left(q^{n}w,q\right)}{q-1}\right|<e_{q^{\k}}\Big(S_{0}\left(\left|q^{n}w\right|\right)\Big).
$$
 From (\ref{4eq16}), we obtain that 
$$
\begin{array}{lll}
\left|\frac{\widetilde{M}_{w}\left(q^{n+1}w\right)-M_{w}\left(q^{n+1}w,q\right)}{q-1}\right| &
\leq& e_{q^{\k}}\Big(S_{0}\left(\left|q^{n}w\right|\right)\Big)
+(q-1)\Big(\left|R_{1}\left(q^{n+1}w\right)\right|+\left|R_{2}\left(q^{n+1}w\right)\right|\Big)\\\\
&+&e_{q^{\k}}\Big(S_{0}\left(\left|q^{n}w\right|\right)\Big)\times (q-1)(1+(q-1)c_{1})mS\left(\left|q^{n+1}w\right|\right)
.
\end{array}
$$
Using additionally (\ref{4eq17}), we find that
$$\left|\frac{\widetilde{M}_{w}\left(q^{n+1}w\right)-M_{w}\left(q^{n+1}w,q\right)}{q-1}\right|<e_{q^{\k}}\Big(S_{0}\left(\left|q^{n+1}w\right|\right)\Big).~$$
This concludes the proof of (\ref{4eq6}).
\end{proof}

\begin{proof}[Proof of Proposition~\ref{4propo3}]
Let~$K$ be the compact considered in hypothesis~$(ii)$, with~${w_{0}\in K\subset C}$, so that we have 
$$\lim \limits_{q \to 1} \Big(M(w,q)\Big)=\Big(\widetilde{M}(w)\Big) ,$$
uniformly on~$ K$. 
Let~$N(w,q)$ be the matrix, such that~$ N(w,q)$ has entries that are equal to the entrywise division of~$\widetilde{M}(w)$  by~$M(w,q)$. Due to the uniform convergence on~$K$ (assumption~~$(ii)$), the entries of~$N(w,q)$ converge uniformly on~$K$ to~$1$, as~$q$ goes to~$1$. We are going to apply Lemma~\ref{4lem10}, with 
\begin{equation}\label{4eq21}
\Big(M_{w}(z,q)\Big)_{w\in K}:=\Big(M(z,q)\times_{h}N(w,q)\Big)_{w\in K}, \hbox{ and} \left(\widetilde{M}_{w}(z)\right)_{w\in K}:=\left(\widetilde{M}(z)\right)_{w\in K},
\end{equation}
where~$\times_{h}$ denotes the Hadamard product, that is~$(a_{i})\times_{h}(b_{i}):=(a_{i}b_{i})$.
If~$a,b,c\in \C$, we have:
$$\left|a-b\right|<\left|c\right|^{-1}\left| a\times c-b\right|+\left|c^{-1}-1\right|\times\left|b\right|.$$
We are going to apply this inequality entrywise, to the entries of~$M\left(q^{n}w,q\right)$,~$\widetilde{M}\left(q^{n}w\right)$ and~$N(w,q)$. Since the entries of~$N(w,q)$ tend to~$1$, we find that there exists~$\d(q)>0$, (resp.~$\e(q)>0$) that converges to~$1$ (resp. converges to~$0$) as~$q$ goes to~$1$, such that for all~$w\in K$ and~$n\in \N$, with~$q^{n}w\in C$:
$$
\left|M\left(q^{n}w,q\right)-\widetilde{M}\left(q^{n}w\right)\right|<\d(q)\left|M\left(q^{n}w,q\right)\times_{t}N(w,q)-\widetilde{M}\left(q^{n}w\right)\right|
+\e(q)\left|\widetilde{M}(q^{n}w)\right|.$$
Using the assumption~$(iii)$, there exists~$R\in \C[z]$, such that for all~$z\in C\cap \R_{\geq 1}K$, ${\left|\widetilde{M}(z)\right|<\left|R(z)\right|}$.
Lemma~\ref{4lem10} applied to (\ref{4eq21}), gives the existence of a polynomial~$S_{0}$, that does not depend upon~$w$, such that for all~$q$ close to~$1$, for all~$w\in K$, for all~$n\in \N$, with~$q^{n}w\in C$, we obtain:
$$
\left|M\left(q^{n}w,q\right)-\widetilde{M}\left(q^{n}w\right)\right|<(q-1)\d(q)e_{q^{\k}}\Big(S_{0}\left(\left|q^{n}w\right|\right)\Big)
+\e(q)\left|R\left(q^{n}w\right)\right|.$$
In other words, for~$q$ close to~$1$ and for all~$z\in C\cap \R_{\geq 1}K$, we have
$$
\left|M(z,q)-\widetilde{M}(z)\right|<(q-1)\d(q)e_{q^{\k}}\Big(S_{0}\left(\left|z\right|\right)\Big)
+\e(q)\left|R(z)\right|.$$
The uniform convergence follows immediately.
\end{proof}

\pagebreak[3]
\section{Proof of Theorem~\ref{4theo1}.}\label{4sec6}
The goal of this section is to prove Theorem~\ref{4theo1}. In~$\S \ref{4sec61}$, we treat the confluence of the ``discrete''~$q$-Laplace transformation. In ~$\S \ref{4sec62}$ we prove Theorem~\ref{4theo1} in a particular case. In ~$\S \ref{4sec63}$, we prove Theorem~\ref{4theo1} in the general case. 

\pagebreak[3]
\subsection{Confluence of the ``discrete''~$q$-Laplace transformation.}\label{4sec61}

\pagebreak[3]
\begin{lem}\label{4lem7}
 Let~$a\in \C$ and~$k\in \Q$. Then, for any~$q>1$ and~$z\in \C^{*}$, the following inequality is true
$
{\left|e_{q}\left(az^{k}\right)\right|\leq\exp\left|az^{k}\right|}$. Moreover, we have $$\lim\limits_{q \to 1}e_{q}\left(az^{k}\right)=\exp\left(az^{k}\right),
$$
uniformly on the compacts of~$\C^{*}$.
\end{lem}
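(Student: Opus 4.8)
The plan is to reduce both assertions to elementary estimates on the single complex quantity $w:=az^{k}$, since $e_{q}(az^{k})$ and $\exp(az^{k})$ depend on $z$ only through $w$. First I would record the key arithmetic inequality $[n]_{q}^{!}\geq n!$, valid for every $q>1$: indeed $[l]_{q}=1+q+\dots+q^{l-1}\geq l$, because each summand $q^{j}$ is $\geq 1$ when $q>1$, and taking the product over $l=1,\dots,n$ gives $[n]_{q}^{!}=\prod_{l=1}^{n}[l]_{q}\geq n!$. Consequently $\frac{1}{[n]_{q}^{!}}\leq\frac{1}{n!}$ for all $n\in\N$.

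For the inequality, I would use the series definition $e_{q}(w)=\sum_{n\geq 0}\frac{w^{n}}{[n]_{q}^{!}}$ together with the triangle inequality and the bound above:
\[
\left|e_{q}\left(az^{k}\right)\right|\leq\sum_{n=0}^{\infty}\frac{\left|az^{k}\right|^{n}}{[n]_{q}^{!}}\leq\sum_{n=0}^{\infty}\frac{\left|az^{k}\right|^{n}}{n!}=\exp\left|az^{k}\right|,
\]
which is exactly the desired estimate.

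For the confluence, I would combine coefficientwise convergence with the uniform domination just obtained. For each fixed $n$ one has $\lim_{q\to 1}[n]_{q}^{!}=n!$ (since $\lim_{q\to 1}[l]_{q}=l$), hence $\frac{1}{[n]_{q}^{!}}\to\frac{1}{n!}$. On a compact $K\subset\C^{*}$ the quantity $\left|az^{k}\right|$ is bounded by some $M>0$. Given $\varepsilon>0$, I would choose $N$ with $\sum_{n\geq N}\frac{M^{n}}{n!}<\varepsilon/3$; then for all $z\in K$ the two tails $\sum_{n\geq N}\frac{|az^{k}|^{n}}{[n]_{q}^{!}}$ and $\sum_{n\geq N}\frac{|az^{k}|^{n}}{n!}$ are each $<\varepsilon/3$ thanks to $[n]_{q}^{!}\geq n!$ and $|az^{k}|\leq M$, while the remaining finite sum $\sum_{n=0}^{N-1}|az^{k}|^{n}\left|\frac{1}{[n]_{q}^{!}}-\frac{1}{n!}\right|$ is $<\varepsilon/3$ for $q$ close enough to $1$, uniformly in $z\in K$. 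Splitting $e_{q}(az^{k})-\exp(az^{k})$ accordingly yields $\left|e_{q}(az^{k})-\exp(az^{k})\right|<\varepsilon$ for $q$ near $1$, uniformly on $K$.

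I do not expect a serious obstacle here; the only point requiring a little care is organizing the uniform estimate on compacts, which is handled by the standard three-$\varepsilon$ tail splitting made possible by the domination $[n]_{q}^{!}\geq n!$. One should merely note that $z^{k}$ is understood through the fixed determination of the logarithm, so that $z\mapsto\left|az^{k}\right|$ is continuous and hence bounded on compacts of $\C^{*}$.
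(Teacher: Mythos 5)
Your proof is correct and follows essentially the same route as the paper: both rest on the domination $[n]_{q}^{!}\geq n!$ (obtained from $[l]_{q}\geq l$), deduce the bound by termwise comparison with the exponential series, and then get the limit from coefficientwise convergence of $1/[n]_{q}^{!}$ to $1/n!$ together with that domination. The only difference is cosmetic: the paper simply invokes the dominated convergence theorem for the last step, whereas you spell out the uniform three-$\varepsilon$ tail estimate on compacts, which if anything makes the uniformity claim more transparent.
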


\begin{proof}
The coefficients of  the series of function defining~$e_{q}\left(az^{k}\right)$ depend upon the parameter~$q$. By construction, we have for all $n\in \N$ and all $q>1$, $n\leq [n]_{q}$, and therefore~$n!\leq[n]_{q}^{!}$. Then, for all~$q>1$ and~$z\in \C^{*}$, we have the following inequalities:
$$\left|e_{q}\left(az^{k}\right)\right|\leq \displaystyle\sum_{n=0}^{ \infty} \left|\displaystyle  \dfrac{a^{n}z^{nk}}{[n]_{q}^{!}} \right|\leq \displaystyle\sum_{n=0}^{ \infty} \left|\displaystyle  \dfrac{a^{n}z^{nk}}{n!} \right|=\exp\left|az^{k}\right|.$$
The convergence is then a direct consequence of the dominated convergence theorem, since the series defining $e_{q}\left(az^{k}\right)$ is termwise dominated by the series defining $\exp\left|az^{k}\right|$.
\end{proof}

Let~$d\in \R$, let~$k\in \Q_{>0}$ and let~$f$ be a function that belongs to~$\overline{\mathbb{H}}_{k}^{d}$, see Definition~\ref{4defi1},
~${g:=\rho_{1/k}( f)}$,~${\widetilde{f}\in \widetilde{\mathbb{H}}_{k}^{d}}$, see Definition \ref{4defi3}, and~${\widetilde{g}:=\rho_{1/k}\left(\widetilde{f}\right)}$. For the reader's convenience, we recall the expressions of the Laplace transformations of order~$1$ and~$k$ that come from Definitions \ref{4defi3} and \ref{4defi5}: 
$$
\begin{array}{ll}
\mathcal{L}_{q,1}^{[d]}(g)(z,q)=(1-q)e^{id}\displaystyle \sum_{l\in \Z}\frac{q^{l}g\left(q^{l}e^{id},q\right)}{ze_{q}\left(\frac{q^{l+1}e^{id}}{z}\right)},&\mathcal{L}_{1}^{d}\left( \widetilde{g} \right)(z)=\displaystyle\int_{0}^{\infty e^{id}}\frac{\widetilde{g}(\z)}{z\exp\left(\frac{\z}{ z}\right)}d\z,\\
\mathcal{L}_{q,k}^{[d]}( f)=\rho_{k}\circ\mathcal{L}_{q,1}^{[d]}\circ\rho_{1/k}( f),&\mathcal{L}_{k}^{d}\left( \widetilde{f}\right)=\rho_{k}\circ\mathcal{L}_{1}^{d}\circ\rho_{1/k}\left( \widetilde{f}\right).
\end{array}
$$
Since~$f\in \overline{\mathbb{H}}_{k}^{d}$, there exist~$\e>0$, constants~$J,L>0$, such that for all~$q$ close to~$1$,~${\z\mapsto f(\z,q)}$ is analytic on~$\overline{S}(d-\e,d+\e)$, and for all~$\z\in \R_{>0}$:
\begin{equation}\label{4eq24}
\left|f\left(e^{id}\z,q\right)\right|<Je_{q}\left(L\z^{k}\right).
\end{equation}
\pagebreak[3]
\begin{lem}\label{4lem5}
In the notation introduced above, let us assume that we have 
~$\lim\limits_{q \to 1}f:=\widetilde{f}$,
uniformly on the compacts of~$\overline{S}(d-\e,d+\e)$. 
Then, we have 
$$\lim\limits_{q \to 1}\mathcal{L}_{q,k}^{[d]}\Big( f\Big)(z,q)=\mathcal{L}_{k}^{d}\left( \widetilde{f}\right)(z),$$
uniformly on the compacts of~${\Big\{z\in \overline{S}\left(d-\frac{\pi}{2k\pi},d+\frac{\pi}{2k\pi}\right)\Big||z|<1/L\Big\}}$.
\end{lem}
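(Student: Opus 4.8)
The plan is to reduce to the case $k=1$ and then to recognise the $q$-Laplace transform as a Jackson integral, i.e.\ as a Riemann-type sum, whose limit as $q\to 1$ is the classical Laplace integral. First I would use the relations $\mathcal{L}_{q,k}^{[d]}=\rho_{k}\circ\mathcal{L}_{q,1}^{[d]}\circ\rho_{1/k}$ and $\mathcal{L}_{k}^{d}=\rho_{k}\circ\mathcal{L}_{1}^{d}\circ\rho_{1/k}$ from Definitions~\ref{4defi3} and~\ref{4defi5}, together with $g=\rho_{1/k}(f)$ and $\widetilde{g}=\rho_{1/k}\bigl(\widetilde{f}\,\bigr)$. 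Since $\rho_{1/k}$ and $\rho_{k}$ are changes of variable that preserve uniform convergence on compact sets, it suffices to prove that $\mathcal{L}_{q,1}^{[d]}(g)(z,q)$ converges to $\mathcal{L}_{1}^{d}\bigl(\widetilde{g}\,\bigr)(z)$ uniformly on the compacts of $\overline{S}\bigl(d-\tfrac12,d+\tfrac12\bigr)\cap\{|z|<1/L\}$; applying $\rho_{k}$ then transports the statement back to general $k$. In this reduced setting the hypothesis $\lim_{q\to1}f=\widetilde{f}$ gives $\lim_{q\to1}g=\widetilde{g}$ uniformly on compacts, and the bound~(\ref{4eq24}) (with $k=1$) reads $\left|g\left(e^{id}\z,q\right)\right|<Je_{q}(L\z)\leq J\exp(L\z)$ for $\z>0$, the last inequality being Lemma~\ref{4lem7}.

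The key observation is that, writing $\z_{l}:=q^{l}e^{id}$, one has $\z_{l+1}-\z_{l}=(q-1)q^{l}e^{id}$, so the Jackson integral defining $\mathcal{L}_{q,1}^{[d]}$ is exactly a Riemann sum:
$$\mathcal{L}_{q,1}^{[d]}(g)(z,q)=\int_{[d]}\frac{g(\z,q)}{ze_{q}\!\left(\frac{q\z}{z}\right)}\,d_{q}\z=\sum_{l\in\Z}\frac{g(\z_{l},q)}{ze_{q}\!\left(\frac{q\z_{l}}{z}\right)}\bigl(\z_{l+1}-\z_{l}\bigr).$$
As $q\to1$ the mesh of the subdivision $\{\z_{l}\}$ tends to $0$, and the summand converges, locally uniformly in $\z$, to the integrand $\frac{\widetilde{g}(\z)}{z\exp(\z/z)}$ of $\mathcal{L}_{1}^{d}\bigl(\widetilde{g}\,\bigr)(z)$: indeed $g(\z,q)\to\widetilde{g}(\z)$ by hypothesis, while $e_{q}(q\z/z)\to\exp(\z/z)$ by Lemma~\ref{4lem7} and continuity of $\exp$. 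Thus, morally, the sum tends to $\int_{0}^{\infty e^{id}}\frac{\widetilde{g}(\z)}{z\exp(\z/z)}\,d\z=\mathcal{L}_{1}^{d}\bigl(\widetilde{g}\,\bigr)(z)$, and the whole point is to make this limit rigorous and uniform in $z$.

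To justify the interchange of limit and summation I would produce a dominating function that is summable and integrable \emph{uniformly in $q$ near $1$}, and invoke dominated convergence. The numerator is controlled by $\left|g(\z_{l},q)\right|<J\exp(Lq^{l})$, and the decisive ingredient is a lower bound for $\left|e_{q}(q\z_{l}/z)\right|$, uniform in $q$, of exponential type $\gtrsim\exp\bigl(c\,\mathrm{Re}(q^{l}e^{id}/z)\bigr)$ with $c$ arbitrarily close to $1$. For $z$ in the sector $\overline{S}\bigl(d-\tfrac12,d+\tfrac12\bigr)$ the quantity $\mathrm{Re}(e^{id}/z)$ is positive and comparable to $1/|z|$, so combining the two estimates bounds the general term by $\exp\bigl((L-\tfrac{1}{|z|}+o(1))q^{l}\bigr)\cdot(q-1)q^{l}$ up to a factor bounded on compacts; the restriction $|z|<1/L$ makes the exponent negative for large $l$, forcing rapid decay as $l\to+\infty$, while the factor $q^{l}$ sends the terms to $0$ as $l\to-\infty$. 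Both tails being dominated uniformly in $q$, the Riemann sum converges to the integral, and the estimates being locally uniform in $z$ yields uniform convergence on compacts. The main obstacle is precisely this uniform-in-$q$ lower bound on $\left|e_{q}\right|$: unlike the clean upper bound $e_{q}\leq\exp$ of Lemma~\ref{4lem7}, bounding $e_{q}$ from below on the relevant half-plane-like region and checking summability of the resulting majorant uniformly as $q\to1$ requires care, and this is exactly where the condition $|z|<1/L$ enters.
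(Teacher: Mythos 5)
Your overall strategy---reduce to $k=1$ via $\rho_{k}$, view $\mathcal{L}_{q,1}^{[d]}$ as a Riemann sum over the partition $\left(q^{l}e^{id}\right)_{l\in\Z}$, and justify the limit by dominated convergence with a majorant uniform in $q$ near $1$---is exactly the paper's. The gap is in the majorant itself. You propose to control the denominator by a lower bound of the form $\left|e_{q}\left(q\z_{l}/z\right)\right|\gtrsim\exp\bigl(c\,\mathrm{Re}\left(q^{l}e^{id}/z\right)\bigr)$ with $c$ close to $1$, uniformly for $q$ near $1$ and for all $l$. No such bound exists: for fixed $q>1$ the infinite product $e_{q}(x)=\prod_{n\geq 0}\left(1+(q-1)q^{-n-1}x\right)$ satisfies $\log e_{q}(x)\sim\left(\log x\right)^{2}/(2\log q)$ as $x\to+\infty$, so $e_{q}(x)=o\left(e^{cx}\right)$ for every $c>0$; the inequality you need therefore fails, for every fixed $q$, precisely on the tail $q^{l}\to\infty$ that you must dominate. (There is also a secondary loss: even granting such a bound, $\mathrm{Re}\left(e^{id}/z\right)=\cos(\arg z-d)/|z|$, so your exponent is negative only when $|z|<\cos(\arg z-d)/L$, which does not cover all compacts of the region $\{|z|<1/L\}$ in the sector.)

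The paper avoids any comparison of $e_{q}$ with $\exp$ from below. It uses the functional equation $e_{q}(qw)=\left(1+(q-1)w\right)e_{q}(w)$ to compare two \emph{consecutive} terms of the sum,
$$\left|\frac{q^{l+1}}{z}\frac{e_{q}\left(Lq^{l+1}\right)}{e_{q}\left(q^{l+2}/z\right)}\right|=\left|q\,\frac{1+(q-1)Lq^{l}}{1+(q-1)q^{l+1}/z}\right|\cdot\left|\frac{q^{l}}{z}\frac{e_{q}\left(Lq^{l}\right)}{e_{q}\left(q^{l+1}/z\right)}\right|,$$
and observes that the first factor tends to $|Lz|<1$ as $q^{l}\to\infty$; hence it is bounded by some $M_{1}<1$ for $q^{l}\geq R$, $q$ close to $1$ and $z$ in the compact, so the tail $l\geq l_{0}(q)$ is dominated by a geometric series whose first term is controlled by the compactness bound (\ref{4eq12}). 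This is where $|z|<1/L$ enters, and it costs nothing as $\arg z$ approaches the edges of the sector. The head $l<l_{0}(q)$ is handled essentially as you suggest, by boundedness of $e_{q}(L\,\cdot)/e_{q}(q\,\cdot/z)$ on $[0,R]$ together with $(q-1)\sum_{l<l_{0}(q)}q^{l}\leq qR$. If you replace your exponential lower bound by this ratio/geometric-series argument, your proof becomes the paper's.
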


\begin{proof}
The expressions of the Laplace transformations of order~$k$ allow us to reduce to the case~$k=1$. The variable change~$\z\mapsto \z e^{-id}$ allows us to reduce to the case~$d=0$.  Let us fix a an arbitrary compact subset~$K$ of ~${\Big\{z\in \overline{S}\left(-\frac{\pi}{2\pi},+\frac{\pi}{2\pi}\right)\Big||z|<1/L\Big\}}$, and let us prove the uniform convergence on~$K$.\par 
The~$q$-Laplace transformation can be seen as a Riemann sum with associated partition~$\left(q^{l}\right)_{l\in \Z}$. Moreover, on every compact of~$]0,\infty[$, the mesh of the partition tends to~$0$ as~$q$ goes to~$1$.  Using the dominated convergence theorem, it is sufficient to prove the existence of~$\left(h_{l}\right)\in \left(\R_{>0}\right)^{\Z}$ that satisfies~$\displaystyle \sum_{l\in \Z} h_{l}<\infty$, such that for all~$q$ close to~$1$,~$l\in \Z$ and~$z\in K$,
$$\left|\frac{(q-1) q^{l}f\left(q^{l},q\right)}{ze_{q}\left(q^{l+1} /z\right)}\right|<h_{l}.$$
By definition of the~$q$-Laplace transformation  and (\ref{4eq24}), we have for all~$z\in K$,
$$\left|\mathcal{L}_{q,k}^{[d]}(f)(z,q)\right|\leq (q-1)\displaystyle \sum_{l\in \Z} \left|\frac{q^{l}J}{z}\frac{e_{q}\left(Lq^{l}\right)}{e_{q}\left(q^{l+1}/z\right)}\right|
.$$
 For all~$l\in \Z$,~$z\in K$,~$q>1$, we have: 
\begin{equation}\label{4eq19}
\left|\frac{q^{l+1}J}{z}\frac{e_{q}\left(Lq^{l+1}\right)}{e_{q}\left(q^{l+2}/z\right)}\right|=\left|q
\frac{1+(q-1)Lq^{l}}{1+(q-1)q^{l+1}/z}\right|\left|\frac{q^{l}J}{z}\frac{e_{q}\left(Lq^{l}\right)}{e_{q}\left(q^{l+1}/z\right)}\right|.
\end{equation}
Let~$R\in \R_{>0}$,~$M_{1}<1$,~$q_{0}>1$,  such that for all~$x\geq R$, for all~$z\in K$, and for all~$q\in ]1,q_{0}[$,
\begin{equation}\label{4eq13}
\left|q
\frac{1+(q-1)Lx}{1+(q-1)qx/z}\right|<M_{1}.
\end{equation}
Let~$q\mapsto l_{0}(q)\in \Z$ be the smallest integer that satisfies 
$$q^{l_{0}(q)}\geq R.$$
We will break the series into two parts, and start by treating the convergence of~${(q-1)\displaystyle \sum_{l=l_{0}(q)}^{\infty} \frac{q^{l}}{z}\frac{f\left(q^{l},q\right)}{e_{q}\left(q^{l+1}/z\right)}}$ to~$\displaystyle \int_{R}^{\infty }z^{-1}\widetilde{f}(\z)e^{-\frac{\z}{z}}d\z$.
Because of (\ref{4eq19}) and (\ref{4eq13}), for all~$q\in ]1,q_{0}[$,~$l\geq l_{0}(q)$ and~$z\in K$, we have 
$$\left|\frac{q^{l+1}J}{z}\frac{e_{q}\left(Lq^{l+1}\right)}{e_{q}\left(q^{l+2}/z\right)}\right|<M_{1}\left|\frac{q^{l}J}{z}\frac{e_{q}\left(Lq^{l}\right)}{e_{q}\left(q^{l+1}/z\right)}\right|.$$
By iteration, we find that 
~$$
(q-1)\displaystyle \sum_{l=l_{0}(q)}^{\infty} \left|\frac{q^{l}J}{z}\frac{e_{q}\left(Lq^{l}\right)}{e_{q}\left(q^{l+1}/z\right)}\right|\leq(q-1)\displaystyle \sum_{l=0}^{\infty}\left|\dfrac{q^{l_{0}(q)}J}{z}\dfrac{e_{q}\left(Lq^{l_{0}(q)}\right)}{e_{q}\left(q^{l_{0}(q)+1}/z\right)}\right|\Big(M_{1}\Big)^{l}$$
Using Lemma~\ref{4lem7}, we obtain that~$|e_{q}(z)|$ can be bounded for~$(z,q)\in K_{0}\times ]1,q_{0}[$, where~$K_{0}$ is an arbitrary compact of~$\C$. Moreover, the fact that~$e_{q}(z)$ vanishes only on~$\frac{q^{\N^{*}}}{1-q}$, implies that~$\frac{1}{|e_{q}(z)|}$ can also be bounded for~$(z,q)\in K_{1}\times ]1,q_{0}[$, where~$K_{1}$ is an arbitrary compact of~$\C\setminus\R_{<0}$.
In particular, we find that for all~$R_{0}\in \R_{>0}$
\begin{equation}\label{4eq12}
\displaystyle\sup_{\substack{x\in [1,R_{0}],\\q\in ]1,q_{0}[,\\z\in K}}\left|\frac{e_{q}(Lx)}{e_{q}(qx/z)}\right|<\infty .
\end{equation}
Then, we obtain that for all~$q\in ]1,q_{0}[$ and for all~$l\geq l_{0}(q)$,
~$$
\begin{array}{lll}
(q-1)\displaystyle \sum_{l=0}^{\infty}\left|\dfrac{q^{l_{0}(q)}J}{z}\dfrac{e_{q}\left(Lq^{l_{0}(q)}\right)}{e_{q}\left(q^{l_{0}(q)+1}/z\right)}\right|\Big(M_{1}\Big)^{l} &\leq&\displaystyle\sup_{\substack{x\in [1,q_{0}],\\q\in ]1,q_{0}[,\\z\in K}}\left|(q-1)\dfrac{RxJ}{z}\dfrac{e_{q}(LRx)}{e_{q}(qRx/z)}\right|\sum_{l=0}^{\infty}\Big(M_{1}\Big)^{l}\\\\
&=&\dfrac{M_{2}}{1-M_{1}},\end{array}
$$ 
where~$M_{2}:=\displaystyle\sup_{\substack{x\in [1,q_{0}],\\q\in ]1,q_{0}[,\\z\in K}}\left|(q-1)\dfrac{RxJ}{z}\dfrac{e_{q}(LRx)}{e_{q}(qRx/z)}\right|$ is a real positive constant. Hence, we have 
~$$(q-1)\displaystyle \sum_{l=l_{0}(q)}^{\infty} \left|\frac{ q^{l}f\left(q^{l},q\right)}{ze_{q}\left(q^{l+1} /z\right)}\right|\leq \frac{M_{2}}{1-M_{1}}<\infty,$$ and the dominated convergence theorem gives
\begin{equation}\label{4eq22}
\lim\limits_{q \to 1} (q-1)\displaystyle \sum_{l=l_{0}(q)}^{\infty} \frac{q^{l}}{z}\frac{f\left(q^{l},q\right)}{e_{q}\left(q^{l+1}/z\right)}=\displaystyle \int_{R}^{\infty }z^{-1}\widetilde{f}(\z)e^{-\frac{\z}{z}}d\z,
\end{equation}
uniformly on~$K$.\par 
Let us now treat~$(q-1)\displaystyle \sum_{l=-\infty}^{l_{0}(q)-1} \frac{q^{l}}{z}\frac{f\left(q^{l},q\right)}{e_{q}\left(q^{l+1}/z\right)}$. 
Because of (\ref{4eq12}), we may define 
~$$M_{3}:=\sup_{\substack{x\in [0,R],\\q\in ]1,q_{0}[,\\z\in K}}\left|\frac{J}{z}\frac{e_{q}(Lx)}{e_{q}(qx/z)}\right|<\infty.$$ 
Therefore, for all~$q$ close to~$1$ and for all~$z\in K$, we have
~$$\left|(q-1)\displaystyle \sum_{l=-\infty}^{l_{0}(q)-1} \frac{q^{l}}{z}\frac{f\left(q^{l},q\right)}{e_{q}\left(q^{l+1}/z\right)}\right|\leq (q-1)\displaystyle \displaystyle \sum_{l=-\infty}^{l_{0}(q)-1}  q^{l}M_{3}\leq \frac{(q-1)RM_{3}}{1-1/q}\leq qRM_{3}.$$
Consequently, due to the dominated convergence theorem, we have
~$$\lim\limits_{q \to 1} (q-1)\displaystyle \sum_{l=-\infty}^{l_{0}(q)-1} \frac{q^{l}}{z}\frac{f\left(q^{l},q\right)}{e_{q}\left(q^{l+1}/z\right)}=\displaystyle \int_{0}^{R}z^{-1}\widetilde{f}(\z)e^{-\frac{\z}{z}}d\z,$$
uniformly on~$K$.
This limit combined with (\ref{4eq22}) yields the result.
\end{proof}

\pagebreak[3]
\subsection{Proof of Theorem~\ref{4theo1} in a particular case.}\label{4sec62}
In this subsection, we are going to prove Theorem~\ref{4theo1} in a particular case. Let us start by recalling some notations. See~$\S \ref{4sec1}$ to~$\S \ref{4sec4}$ for rest of the notations. We consider (\ref{4eq4}), that admits~$\widetilde{h}\in \C[[z]]$ as solution and~$\widetilde{b}_{0},\dots,\widetilde{b}_{m}$ as coefficients. In other words, we have 
~$$\widetilde{b}_{m}(z)\d^{m}\left(\widetilde{h}(z)\right)+\dots+\widetilde{b}_{0}(z)\widetilde{h}(z)=0.$$ Let~$d_{0}:=\max\left(2,\deg \left(\widetilde{b}_{0}\right),\dots, \deg \left(\widetilde{b}_{m}\right)\right)$.
Let~$k_{1}<\dots<k_{r-1}$ be the slopes of (\ref{4eq4}) different from $0$, let~$k_{r}$ be an integer strictly bigger than~$k_{r-1}$ and~$d_{0}$, and set~$k_{r+1}:=+\infty$. 
Let~$(\k_{1},\dots,\k_{r})$ defined as:
~$$\k_{i}^{-1}:=k_{i}^{-1}-k_{i+1}^{-1}.$$  
We define the~$(\widetilde{\k_{1}},\dots,\widetilde{\k_{s}})$ as follows:
We take~$(\k_{1},\dots,\k_{r})$ and for~$i=1,...,r$, replace successively~$\k_{i}$ by~$\a_{i}$ terms~$\a_{i}\k_{i}$, where~$\a_{i}$ is the smallest integer such that~$\a_{i}\k_{i}$ is greater or equal than~$d_{0}\geq 2$. See Example~\ref{4ex1}. Let~$\b\in \N^{*}$ be minimal, such that for all~${i\in \{1,\dots,s\}}$,~$\b/\widetilde{\k_{i}}\in \N^{*}$.\par 
In this subsection \ref{4sec62}, we are going to assume that~${z\mapsto\hat{h}(z,q),\widetilde{h}(z)\in \C\left[\left[z^{\b}\right]\right]}$. Ramification in $\S \ref{4sec63}$ will allow us to reduce to the case tackled in the present subsection. Note that in this case, we have~$\hat{h}=\hat{h}^{(0)}$. For the reader's convenience, we will decompose the proof of Theorem~\ref{4theo1} in four steps.
\vspace{2 mm}\pagebreak[3]
\begin{center}
\textbf{Step 1: Construction of~$\Sigma_{\widetilde{h}}$.}\end{center}
\vspace{2 mm}
Let us consider a general formal power series~$\hat{f}\in \C\left[\left[z^{\b}\right]\right]$ (resp.~$\widetilde{f}\in \C\left[\left[z^{\b}\right]\right]$) that satisfy  a linear~$\dq$-equation (resp.~$\d$-equation) of order~$m_{0}$ with coefficients in~$\C\left[z^{\b}\right]$.
 Then, for all~$i\in \{1,\dots,s\}$, ~$\r_{1/\widetilde{\k_{i}}}\left(\hat{f}\right)$ (resp.~$\r_{1/\widetilde{\k_{i}}}\left(\widetilde{f}\right)$) satisfies a linear~$\dq$-equation (resp.~$\d$-equation) with coefficients in~$\C[z]$. Therefore, Propositions~\ref{4propo4} and \ref{4propo2}, combined with the definition of the Borel transformations (see Definitions~\ref{4defi3} and \ref{4defi4}) imply that for all~${i\in\{1,\dots,s\}}$,~$\hat{\mathcal{B}}_{q,\widetilde{\k_{i}}}\left(\hat{f}\right)$ (resp.~$\hat{\mathcal{B}}_{\widetilde{\k_{i}}}\left(\widetilde{f}\right)$) satisfies a linear~$\dq$-equation (resp.~$\d$-equation) of order independent of~$q$ (resp. of the same order than the~$\dq$-equation satisfied by~$\hat{\mathcal{B}}_{q,\widetilde{\k_{i}}}\left(\hat{f}\right)$) with coefficients in~$\C\left[z^{\b}\right]$. \par 
In particular, for~$j\in \{1,\dots,s\}$,
~$\hat{\mathcal{B}}_{\widetilde{\k_{j}}}\circ\dots\circ\hat{\mathcal{B}}_{\widetilde{\k_{s}}}\left(\widetilde{h}\right)$
satisfies a linear~$\d$-equation that we will see as a system. We define~$\Sigma_{\widetilde{h}}$ as the union of~$\widetilde{\Sigma}_{\widetilde{h}}$, the set of its singular direction that has been defined in Proposition~\ref{4propo5}, and the argument of the poles of the differential system satisfied by the successive Borel transformations. The set~$\Sigma_{\widetilde{h}}\subset \R$ is finite modulo~$2\pi \Z$. 
\vspace{2 mm}\pagebreak[3]
\begin{center}
\textbf{Step 2: Local convergence of the~$q$-Borel transformations.}\end{center}
\vspace{2 mm}
From what is preceding,
~$\hat{\mathcal{B}}_{q,\widetilde{\k_{1}}}\circ\dots\circ\hat{\mathcal{B}}_{q,\widetilde{\k_{s}}}\left(\hat{h}\right)$ (resp.~$\hat{\mathcal{B}}_{\widetilde{\k_{1}}}\circ\dots\circ\hat{\mathcal{B}}_{\widetilde{\k_{s}}}\left(\widetilde{h}\right)$)
satisfies a linear~$\dq$-equation of order ~$m_{1}\in \N$, that we will see as a system~$\dq Y(\z,q)=E(\z,q)Y(\z,q)$ with~${\z\mapsto \mathrm{Id}+(q-1)E(\z,q)\in \mathrm{GL}_{m_{1}}\Big(\C\left(\z^{\b}\right)\Big)}$ (resp. a linear~$\d$-equation of order~$m_{1}$ we will see as a system~$\d \widetilde{Y}(\z)=\widetilde{E}(\z)\widetilde{Y}(\z)$  with~$\widetilde{E}(\z)\in \mathrm{M}_{m_{1}}\Big(\C\left(\z^{\b}\right)\Big)$).\par 
Because of Proposition~\ref{4propo5},~$\hat{\mathcal{B}}_{\widetilde{\k_{1}}}\circ\dots\circ\hat{\mathcal{B}}_{\widetilde{\k_{s}}}\left(\widetilde{h}\right)$ is convergent. Let us prove that~${\z\mapsto \hat{\mathcal{B}}_{q,\widetilde{\k_{1}}}\circ\dots\circ\hat{\mathcal{B}}_{q,\widetilde{\k_{s}}}\left(\hat{h}\right) \in\C\left\{\z^{\b}\right\}}$. Due to \textbf{(A2)}, the slopes of the~$\sq$-equation satisfied by~$\hat{h}$ are independent of~$q$, and the smallest positive slope is~$k_{1}$. As we can see in \cite{R92}, Theorem 4.8, (see also \cite{Be}), there exist~$C_{1}(q),C_{2}(q)>0$, such that for all~$l\in \N$, for all~$q>1$
~$$\Big| \hat{h}_{l}(q) \Big| <C_{1}(q)C_{2}(q)^{l}\Big([l]_{q}^{!}\Big)^{1/k_{1}},$$
where~$\hat{h}(z,q)=\sum \hat{h}_{l}(q)z^{l}$.
By construction of the~$\widetilde{\k_{i}}$, we have~$\displaystyle\sum_{i=1}^{s} \widetilde{\k_{i}}^{-1}=\sum_{i=1}^{r} \k_{i}^{-1}=k_{1}^{-1}$. Since for all~$l,k\in \N^{*}$, and for all~$q>1$,~$\Big([kl]_{q}^{!}\Big)^{1/k}\leq [l]_{q}^{!}$, we find that for all~$l\in \N$, for all~$q>1$, 
~$$\Big| \hat{h}_{l}(q) \Big| <C_{1}(q)C_{2}(q)^{l}\displaystyle \prod_{i=1}^{s}[l/\widetilde{\k_{i}}]_{q}^{!}.$$
Hence, we obtain that~${\z\mapsto \hat{\mathcal{B}}_{q,\widetilde{\k_{1}}}\circ\dots\circ\hat{\mathcal{B}}_{q,\widetilde{\k_{s}}}\left(\hat{h}\right) \in\C\left\{\z^{\b}\right\}}$.  Applying Lemma~\ref{4lem11}, we find
\begin{equation}\label{4eq5}
\lim\limits_{q \to 1}\hat{\mathcal{B}}_{q,\widetilde{\k_{1}}}\circ\dots\circ\hat{\mathcal{B}}_{q,\widetilde{\k_{s}}}\left(\hat{h}\right)
=\hat{\mathcal{B}}_{\widetilde{\k_{1}}}\circ\dots\circ\hat{\mathcal{B}}_{\widetilde{\k_{s}}}\left(\widetilde{h}\right),
\end{equation}
uniformly on a closed disk centered at~$0$.
\vspace{1 mm}\pagebreak[3]
\begin{center}
\textbf{Step 3: Local convergence of the~$q$-Borel-Laplace summation.}\end{center}
\vspace{1 mm}
Let~$d\in \R\setminus \Sigma_{\widetilde{h}}$. The variable change~$\z\mapsto \z e^{-id}$ allows us to reduce to the case~$d=0$. By construction of~$\Sigma_{\widetilde{h}}$,~$\widetilde{E}(\z)$ has no poles for~$\z\in\overline{S}(-\e,+\e)$.  Because of the assumption \textbf{(A3)}, Propositions~\ref{4propo2} and \ref{4propo4}, we deduce that~$E(\z,q)$ has no poles for~$\z\in\overline{S}(-\e,\e)$ and for~$q$ close to~$1$.
Because of Lemma~\ref{4lem4}, the series~$z\mapsto\hat{\mathcal{B}}_{q,\widetilde{\k_{1}}}\circ\dots\circ\hat{\mathcal{B}}_{q,\widetilde{\k_{s}}}\left(\hat{h}\right)(z,q)$ admits, for~$q$ close to~$1$, an analytic continuation~$f_{1}(\z,q)$ defined on~$\overline{S}(-\e,\e)$. We want now to prove that~$f_{1}(\z,q)$  converges to~$\widetilde{f}_{1}(\z)$ on a convenient domain.  \par 
 Due to Proposition~\ref{4propo5}, there exists~$B_{1}>0$ such that the functions $\dfrac{\widetilde{f}_{1}(\z)}{\exp\left(B_{1}\z^{\widetilde{\k_{1}}}\right)},\dots,
\dfrac{\d^{m_{1}-1}\widetilde{f}_{1}(\z)}{\exp\left(B_{1}\z^{\widetilde{\k_{1}}}\right)}$ tend to~$0$ as~$\z\in \overline{S}(-\e,\e)$ tends to infinity. Using
$$\dq \left(e_{q^{\widetilde{\k_{1}}}}\left(B_{1}\z^{\widetilde{\k_{1}}}\right)^{-1}\right)=
\frac{-\left[\, \widetilde{\k_{1}}\,\right]_{q}B_{1}\z^{\widetilde{\k_{1}}}}{1+(q-1)\left[\, \widetilde{\k_{1}}\,\right]_{q}B_{1}\z^{\widetilde{\k_{1}}}}e_{q}\left(B_{1}\z^{\widetilde{\k_{1}}}\right)^{-1},$$
we obtain that 
~$\dfrac{f_{1}(\z,q)}{e_{q^{\widetilde{\k_{1}}}}\left(B_{1}\z^{\widetilde{\k_{1}}}\right)}$ (resp.~$
\widetilde{f}_{1}(\z)\exp\left(-B_{1}\z^{\widetilde{\k_{1}}}\right)$) satisfies a linear~$\dq$-equation of order~$m_{1}+1$ (resp. a linear~$\d$-equation of order~$m_{1}+1$) with coefficients in~$\C(z)$. Because of (\ref{4eq5}), there exists~$\z_{0}>0$, such that~$f_{1}(\z_{0},q)$ converges to~$\widetilde{f}_{1}(\z_{0})$ as~$q$ goes to~$1$. Let
$$Y(\z,q):=\begin{pmatrix}
\dfrac{f_{1}(\z,q)}{e_{q^{\widetilde{\k_{1}}}}\left(B_{1}\z^{\widetilde{\k_{1}}}\right)} F_{0}(q)   \\ 
\vdots \\
\dfrac{\dq^{m_{1}} f_{1}(\z,q)}{e_{q^{\widetilde{\k_{1}}}}\left(B_{1}\z^{\widetilde{\k_{1}}}\right)} F_{m_{1}}(q)
\end{pmatrix},\quad
\widetilde{Y}_{B_{1}}(\z):=\begin{pmatrix}
\dfrac{\widetilde{f}_{1}(\z)}{\exp\left(B_{1}\z^{\widetilde{\k_{1}}}\right)}   \\ 
\vdots \\
 \dfrac{\d^{ m_{1}}\widetilde{f}_{1}(\z)}{\exp\left(B_{1}\z^{\widetilde{\k_{1}}}\right)}
\end{pmatrix},$$
 where the~$F_{i}(q)\in \C$ are defined by:
\begin{equation}\label{4eq14}
\left. \dfrac{\dq^{i}f_{1}(\z,q)}{e_{q^{\widetilde{\k_{1}}}}\left(B_{1}\z^{\widetilde{\k_{1}}}\right)}F_{i}(q)\right| _{\z=\z_{0}}=\left. \dfrac{\d^{i}\widetilde{f}_{1}(\z)}{\exp\left(B_{1}\z^{\widetilde{\k_{1}}}\right)}\right|_{\z=\z_{0}}.
\end{equation}
From what is preceding, there exist~$\z\mapsto \mathrm{Id}+(q-1)D(\z,q)\in \mathrm{GL}_{m_{1}+1}\Big(\C(\z)\Big)$ and ${\widetilde{D}(\z)\in \mathrm{M}_{m_{1}+1}\Big(\C(\z)\Big)}$, such that 
$$\left\{\begin{array}{lll}
\dq Y(\z,q)&=&D(\z,q)Y(\z,q)\\ \\
\d\widetilde{Y}(\z)&=&\widetilde{D}(\z)\widetilde{Y}(\z).
\end{array}\right.$$

\pagebreak[3]
\begin{lem}
Let us consider~$C$, a convex subset of~$\overline{S}(-\e,\e)$, that contains~${\Big\{\z\in \overline{S}(-\e_{1},\e_{1})\Big| |\z|>\z_{0}/2 \Big\}}$, for some~$\e_{1}\in ]0,\e[$, such that $0$ does not belong to its closure. Then, the systems 
$$\left\{\begin{array}{lll}
\dq Y(\z,q)&=&D(\z,q)Y(\z,q)\\ \\
\d \widetilde{Y}(\z)&=&\widetilde{D}(\z)\widetilde{Y}(\z),
\end{array}\right.$$
satisfy the assumptions of Proposition~\ref{4propo3}, with ${K:=\left\{\z\in  \widetilde{\C}\Big| |\z-\z_{0}|\leq\e_{0} \right\}}$ and~$\e_{0}>0$ is a real positive constant sufficiently small.
\end{lem}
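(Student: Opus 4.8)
The plan is to check, one by one, the three hypotheses $(i)$, $(ii)$, $(iii)$ of Proposition~\ref{4propo3} for the pair of systems constructed just above, together with the ambient requirements on $C$. The convexity of $C$, the fact that $0\notin\overline{C}$, and the nonemptiness of its interior are part of the hypothesis of the lemma (the set $\{\z\in\overline{S}(-\e_1,\e_1)\mid |\z|>\z_0/2\}$ that it contains is already open and nonempty). The inclusion $C\subset\overline{S}(-\e,\e)\setminus\mathbf{S}^1(\widetilde{D})$ follows from the construction of $\Sigma_{\widetilde{h}}$ in Step~1: the arguments of the poles of $\widetilde{D}$ lie in $\Sigma_{\widetilde{h}}$, and since $d=0\notin\Sigma_{\widetilde{h}}$ and $\Sigma_{\widetilde{h}}$ is finite modulo $2\pi\Z$, I shrink $\e$ so that $]-\e,\e[\,\cap\,\Sigma_{\widetilde{h}}=\varnothing$; every point of $\mathbf{S}^1(\widetilde{D})$ has argument in $\Sigma_{\widetilde{h}}$, hence lies outside $\overline{S}(-\e,\e)\supset C$.

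For hypothesis $(iii)$ I would use the choice of $B_1$ coming from Proposition~\ref{4propo5}: since $\widetilde{f}_1$ and its first $m_1-1$ derivatives divided by $\exp(B_1\z^{\widetilde{\k_1}})$ tend to $0$ at infinity in $\overline{S}(-\e,\e)$, the corresponding components of $\widetilde{M}=\widetilde{Y}_{B_1}$ are bounded on $C$; the top component $\d^{m_1}\widetilde{f}_1/\exp(B_1\z^{\widetilde{\k_1}})$ is, through the order-$m_1$ linear $\d$-equation satisfied by $\widetilde{f}_1$ (rational coefficients with no pole on $C$), a $\C(\z)$-combination of the lower ones, hence bounded by a polynomial. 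This yields $R\in\C[z]$ with $|\widetilde{M}(\z)|<|R(\z)|$ on $C$. For hypothesis $(ii)$, the natural choice is $w_0:=\z_0$: the constants $F_i(q)$ were defined in~(\ref{4eq14}) precisely so that each component of $Y(\z,q)$ agrees with the corresponding component of $\widetilde{Y}_{B_1}(\z)$ at $\z=\z_0$, so $M(\z_0,q)=\widetilde{M}(\z_0)$ for $q$ close to $1$. The uniform convergence on a small disk $K=\{\z\mid|\z-\z_0|\le\e_0\}\subset C$ is then assembled from~(\ref{4eq5}) (uniform convergence of the $q$-Borel transforms near $\z_0$, which lies in the disk of convergence), from Cauchy estimates to pass to the $\dq^i$-derivatives, from Lemma~\ref{4lem7} for $e_{q^{\widetilde{\k_1}}}(B_1\z^{\widetilde{\k_1}})\to\exp(B_1\z^{\widetilde{\k_1}})$, and from $F_i(q)\to 1$ (a consequence of the previous convergences and~(\ref{4eq14})).

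The main obstacle is hypothesis $(i)$, i.e. the bound $|D(\z,q)-\widetilde{D}(\z)|<(q-1)c_1\big(|\widetilde{D}(\z)|+|\mathbf{1}_{m_1+1}|\big)$ holding uniformly on the unbounded set $C$. Since $D$ and $\widetilde{D}$ are companion matrices, this reduces to comparing the coefficients of the order-$(m_1+1)$ linear $\dq$- and $\d$-equations satisfied respectively by $f_1/e_{q^{\widetilde{\k_1}}}(B_1\z^{\widetilde{\k_1}})$ and $\widetilde{f}_1\exp(-B_1\z^{\widetilde{\k_1}})$. These coefficients are built from two ingredients. The first is the coefficients of the order-$m_1$ equations for $f_1$ and $\widetilde{f}_1$, whose difference is $O(q-1)$ in the sense of assumption~\textbf{(A3)}: this rate is inherited from \textbf{(A3)} for the original equation~(\ref{4eq3}) and is preserved by the successive $q$-Borel transformations, because by Propositions~\ref{4propo2} and~\ref{4propo4} they intertwine $(\dq,\sq,z)$ with $(\d,\mathrm{Id},\z)$ while $[n]_q\to n$ and $p\to1$ at rate $(q-1)$. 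The second ingredient is the logarithmic $q$-derivative of the $q$-exponential; writing $a:=B_1\z^{\widetilde{\k_1}}$, one computes
\[
\frac{-[\,\widetilde{\k_1}\,]_q a}{1+(q-1)[\,\widetilde{\k_1}\,]_q a}+\widetilde{\k_1}a
=\frac{\big(\widetilde{\k_1}-[\,\widetilde{\k_1}\,]_q\big)a+\widetilde{\k_1}(q-1)[\,\widetilde{\k_1}\,]_q a^2}{1+(q-1)[\,\widetilde{\k_1}\,]_q a},
\]
and since $\widetilde{\k_1}-[\,\widetilde{\k_1}\,]_q=O(q-1)$ and $\arg(a)=\widetilde{\k_1}\arg(\z)$ stays near $0$ on $\overline{S}(-\e,\e)$ so that the denominator is bounded away from $0$, this difference is $(q-1)$ times a polynomial in $\z$. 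Assembling the two ingredients, every coefficient difference is $(q-1)$ times a function whose modulus is dominated, on $C$, by $c_1(|\widetilde{D}(\z)|+1)$; the delicate point is precisely that the polynomial growth produced here matches the degrees already present in $\widetilde{D}$, so that the right-hand side $|\widetilde{D}(\z)|+|\mathbf{1}_{m_1+1}|$ absorbs it uniformly in $\z\in C$. Once $(i)$ is established, the hypotheses of Proposition~\ref{4propo3} are met and the lemma follows.
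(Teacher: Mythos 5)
Your proposal follows essentially the same route as the paper's proof: it verifies hypotheses~$(i)$--$(iii)$ of Proposition~\ref{4propo3} separately, obtaining~$(i)$ from assumption \textbf{(A3)} transported through the successive~$q$-Borel transformations via Propositions~\ref{4propo2} and~\ref{4propo4} together with the~$q$-difference equation of~$e_{q^{\widetilde{\k_{1}}}}\left(B_{1}\z^{\widetilde{\k_{1}}}\right)$, hypothesis~$(ii)$ from the normalisation~(\ref{4eq14}) at~$\z_{0}$ combined with~(\ref{4eq5}) and Lemma~\ref{4lem7}, and hypothesis~$(iii)$ from the choice of~$B_{1}$; your checks of the ambient conditions on~$C$ and of the top component in~$(iii)$ are slightly more explicit than the paper's. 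One caution on your treatment of~$(i)$: the quantity you compute is~$(q-1)$ times a \emph{rational} function, not a polynomial, because of the denominator~$1+(q-1)\left[\,\widetilde{\k_{1}}\,\right]_{q}a$; in the regime~$\left|\z\right|^{\widetilde{\k_{1}}}\gtrsim (q-1)^{-1}$ the second term of your numerator divided by that denominator is of order~$\widetilde{\k_{1}}\left|a\right|$ with no residual factor~$(q-1)$, so the absorption into~$(q-1)c_{1}\left(\left|\widetilde{D}(\z)\right|+1\right)$ on the unbounded set~$C$ is exactly the delicate point you flag, and the paper itself asserts this bound with no more justification than you give.
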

\begin{proof}[Proof of the lemma]
We are going to check separately the three assumptions of Proposition~\ref{4propo3}. 
\begin{trivlist}
\item~$(i)$ 
Because of the assumption \textbf{(A3)}, Propositions~\ref{4propo2} and \ref{4propo4}, we obtain the existence of~$c_{1}>0$, such that for all~$\z\in C$,
$$
\left|E(\z,q)-\widetilde{E}(\z)\right|<(q-1)c_{1}\left(\left|\widetilde{E}(\z)\right|+\mathbf{1}_{m_{1}}\right).
$$
With the~$q$-difference equation satisfied by~$e_{q^{\widetilde{\k_{1}}}}\left(B_{1}\z^{\widetilde{\k_{1}}}\right)$, this implies that  we have the existence of~$c_{2}>0$, such that for~$q$ close to~$1$, for~$\z\in C$,
$$\left|D(\z,q)-\widetilde{D}(\z)\right|<(q-1)c_{2}\left(\left|\widetilde{D}(\z)\right|+\mathbf{1}_{m_{1}+1}\right).$$ 
\item~$(ii)$  Let~$i\in \{0,\dots,m_{1}\}$.  Due to (\ref{4eq5}) and Lemma~\ref{4lem7},~$F_{i}(q)$ converges to~$1$ as~$q$ goes to~$1$. Then, we have for all~$i\in \{0,\dots,m_{1}\}$
$$
\lim\limits_{q \to 1}
\dfrac{\dq^{i}f_{1}(\z,q)}{e_{q^{\widetilde{\k_{1}}}}\left(B_{1}\z^{\widetilde{\k_{1}}}\right)} F_{i}(q)=
\dfrac{\d^{i}\widetilde{f}_{1}(\z)}{\exp\left(B_{1}\z^{\widetilde{\k_{1}}}\right)},$$
uniformly on a compact set with non empty interior containing~$\z_{0}$. Let us choose $\e_{0}>0$ small enough, such that we have the uniform convergence on~${K:=\left\{\z\in  \widetilde{\C}\Big| |\z-\z_{0}|\leq\e_{0} \right\}}$ and such that $K$ is included in $C$. Because of (\ref{4eq14}), ~$
\dfrac{\dq^{i}f_{1}(\z,q)}{e_{q^{\widetilde{\k_{1}}}}\left(B_{1}\z^{\widetilde{\k_{1}}}\right)} F_{i}(q)$ and~$
\dfrac{\d^{i}\widetilde{f}_{1}(\z)}{\exp\left(B_{1}\z^{\widetilde{\k_{1}}}\right)}$ are equal at~$\z_{0}$.
\\
\item~$(iii)$ From the choice of~$B_{1}$, we have the existence of~$R\in \C[\z]$, such that for~$\z\in C$, for all~${i\in \{0,\dots, m_{1}-1\}}$:
$$\left|\d^{i}
\left(\widetilde{f}_{1}(\z)\right)\exp\left(-B_{1}\z^{\widetilde{\k_{1}}}\right)\right|
<|R(\z)|.$$
\end{trivlist}
\end{proof}
We need now the following elementary lemma. 

\pagebreak[3]
\begin{lem}\label{4lem12}
For all~$z\in \C$, for all~$q>1$, we have~$e_{q^{2}}\left(|z|\right)^{2}\leq e_{q}\left(|(1+q)z|\right)$. 
\end{lem}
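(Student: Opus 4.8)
The plan is to exploit the infinite product representations of the $q$-exponential recorded in the introduction, namely $e_q(z)=\prod_{n=0}^{\infty}\big(1+(q-1)q^{-n-1}z\big)$, and to reduce the desired inequality to a termwise comparison of two convergent products of positive real numbers. First I would set $x:=|z|\ge 0$; since $1+q>0$ is real we have $|(1+q)z|=(1+q)x$, so it suffices to prove $e_{q^2}(x)^2\le e_q\big((1+q)x\big)$ for every $x\ge 0$ and $q>1$.

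Next I would expand both sides as products. Replacing $q$ by $q^2$ in the product formula gives $e_{q^2}(x)=\prod_{n=0}^{\infty}\big(1+(q^2-1)q^{-2(n+1)}x\big)$, while substituting $(1+q)x$ for $z$ and using the identity $(q-1)(q+1)=q^2-1$ yields $e_q\big((1+q)x\big)=\prod_{n=0}^{\infty}\big(1+(q^2-1)q^{-(n+1)}x\big)$. Writing $a:=(q^2-1)x\ge 0$ and reindexing by $j=n+1$, the claim becomes $\prod_{j\ge 1}\big(1+aq^{-2j}\big)^2\le\prod_{m\ge 1}\big(1+aq^{-m}\big)$, an inequality between products of positive reals, both convergent because $\sum_{m\ge 1}q^{-m}<\infty$.

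The last step is the termwise comparison. I would split the right-hand product according to the parity of $m$: the even indices $m=2j$ contribute $\prod_{j\ge 1}\big(1+aq^{-2j}\big)$ and the odd indices $m=2j-1$ contribute $\prod_{j\ge 1}\big(1+aq^{-(2j-1)}\big)$. Cancelling one copy of the positive factor $\prod_{j\ge 1}\big(1+aq^{-2j}\big)$ from both sides reduces the claim to $\prod_{j\ge 1}\big(1+aq^{-2j}\big)\le\prod_{j\ge 1}\big(1+aq^{-(2j-1)}\big)$. Since $q>1$ and $a\ge 0$ we have $aq^{-2j}\le aq^{-(2j-1)}$ for every $j$, so the inequality holds factor by factor, and the product inequality follows at once.

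I do not expect a genuine obstacle here: the whole argument hinges on the fortunate factorization $(q-1)(q+1)=q^2-1$, which makes the two product expansions involve exactly the same ratios $aq^{-m}$, after which everything reduces to the monotonicity of $q^{-m}$ in the exponent. The only point requiring a little care is the positivity and convergence of the infinite products, which is immediate since $x=|z|\ge 0$ and $q>1$; this is also what justifies cancelling the common factor. A purely coefficientwise approach (comparing Taylor coefficients, which amounts to bounding a sum of $q^2$-binomial coefficients by $(1+q)^N/[N]_q^{!}$) would also work, but it seems messier, so I would favour the product method.
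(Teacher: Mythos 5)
Your argument is correct, and it is genuinely different from the one in the paper. You work from the infinite product representation $e_{q}(z)=\prod_{n\geq 0}\bigl(1+(q-1)q^{-n-1}z\bigr)$: after the substitution the identity $(q-1)(1+q)=q^{2}-1$ makes both sides products over the same quantities $a q^{-m}$ with $a=(q^{2}-1)|z|\geq 0$, the right-hand side splits by parity of $m$, and everything reduces to the factorwise inequality $1+aq^{-2j}\leq 1+aq^{-(2j-1)}$; the cancellation of the common (convergent, positive) factor is legitimate for exactly the reasons you give. The paper instead never opens up the product: it observes that the ratio $e_{q^{2}}(|z|)^{2}/e_{q}(|(1+q)z|)$ equals $1$ at $z=0$ and that applying $\sq^{2}$ multiplies it by an explicit rational factor which is $\leq 1$, so the ratio can only decrease along the orbit $|z|\mapsto q^{2}|z|$. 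Your route makes the mechanism behind the inequality transparent (the alignment of the two products via $(q-1)(1+q)=q^{2}-1$, which you correctly identify as the crux) and is self-contained modulo the product formula stated in the introduction; the paper's route needs only the $q$-difference equation $e_{q}(qz)=(1+(q-1)z)e_{q}(z)$ and is the same functional-equation technique used repeatedly elsewhere in the paper (e.g.\ in Step 3 of \S\ref{4sec62}), which is presumably why the author prefers it. Both proofs are complete and of comparable length.
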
 

\begin{proof}[Proof of the lemma]
Let us remark that the two functions are equal at $z=0$. The lemma is now a direct consequence of the~$q$-difference equation
$$\sq^{2}\left(\frac{e_{q^{2}}\left(|z|\right)^{2}}{e_{q}\left(|(1+q)z|\right)}\right)=\frac{1+2(q^{2}-1)|z|+(q^{2}-1)^{2}|z|^{2}}{1+(1+q)(q^{2}-1)|z|+(q^{2}-1)^{2}q|z|^{2}}
\frac{e_{q^{2}}\left(|z|\right)^{2}}{e_{q}\left(|(1+q)z|\right)},$$
since~$\frac{1+2(q^{2}-1)|z|+(q^{2}-1)^{2}|z|^{2}}{1+(1+q)(q^{2}-1)|z|+(q^{2}-1)^{2}q|z|^{2}}\leq 1$.
\end{proof}

We finish now the proof of Theorem~\ref{4theo1}, in the particular case ${z\mapsto\hat{h}(z,q),\widetilde{h}(z)\in \C\left[\left[z^{\b}\right]\right]}$.  Let us define~$\widetilde{d}$ (resp.~$\widetilde{e}$) as the maximum of the degrees of the numerators and the denominators of the entries of ~$\widetilde{D}(\z)$ (resp.~$\widetilde{E}(\z)$), written as the quotient of two coprime polynomials. Using the differential equation satisfied by~$\exp\left(-B_{1}\z^{\widetilde{\k_{1}}}\right)$, we find that~$\widetilde{d}\leq \max(\widetilde{e},\widetilde{\k_{1}})$. Because of Remark~\ref{4rem6}, and the definition of~$d_{0}$ and~$\widetilde{\k_{1}}$ (see the beginning of the subsection),~$\widetilde{e} \leq d_{0}\leq\widetilde{\k_{1}}$. Hence~$\widetilde{d}\leq \widetilde{\k_{1}}$. Proposition~\ref{4propo3} applied to the systems
$$\left\{\begin{array}{lll}
\dq Y(\z,q)&=&D(\z,q)Y(\z,q)\\ \\
\d\widetilde{Y}(\z)&=&\widetilde{D}(\z)\widetilde{Y}(\z),
\end{array}\right.$$
implies that there exist~$R,S_{0}\in \C[z]$,~$\d(q),\e(q)$ that converge respectively to~$1$ and~$0$ as~$q\to 1$, such that
$$\left|\dfrac{f_{1}(\z,q)F_{0}(q)}{e_{q^{\widetilde{\k_{1}}}}\left(B_{1}\z^{\widetilde{\k_{1}}}\right)}-\dfrac{\widetilde{f}_{1}(\z)}{\exp\left(B_{1}\z^{\widetilde{\k_{1}}}\right)}\right|<
(q-1)\d(q)e_{q^{\widetilde{d}}}\Big(S_{0}\left(|\z|\right)\Big)
+\e(q)\left|R(\z)\right|.$$

There exists a polynomial~$S_{1}$ with degree~$\widetilde{\k_{1}}$, such that for~$|\z|$ sufficiently big and for all~$q$ close to~$1$,
$$\left|e_{q^{\widetilde{\k_{1}}}}\left(B_{1}\z^{\widetilde{\k_{1}}}\right)e_{q^{\widetilde{d}}}\Big(S_{0}\left(|\z|\right)\Big)\right|\leq e_{q^{\widetilde{\k_{1}}}}\Big(\left|S_{1}(\z)\right|\Big)^{2}.$$ 
By construction,~$\widetilde{\k_{1}}\geq 2$, (see the beginning of the subsection). Using Lemma~\ref{4lem12}, we obtain that for~$|\z|$ sufficiently big,
$$e_{q^{\widetilde{\k_{1}}}}\Big(\left|S_{1}(\z)\right|\Big)^{2}\leq e_{q^{2}}\Big(\left|S_{1}(\z)\right|\Big)^{2}\leq 
e_{q}\Big((1+q)\left|S_{1}(\z)\right|\Big).$$ 
Since~$F_{0}(q)$ converges to~$1$ and the fact that~$\widetilde{f}_{1}(\z)\exp\left(-B_{1}\z^{\widetilde{\k_{1}}}\right)$ is bounded by a polynomial, the triangular inequality yields
$$f_{1} \in\overline{\mathbb{H}}_{\widetilde{\k_{1}}}^{0}.$$
Moreover, due to Proposition~\ref{4propo3}, we have~$\lim\limits_{q \to 1} f_{1}=\widetilde{f}_{1}$,
uniformly on the compacts of~${C\cap \R_{\geq 1}K:=\left\{ xw\in C\Big|x\in [1,\infty[,w\in K\right\}}$. Hence, we find that there exists~$\e_{2}\in ]0,\e_{1}[$, such that~$\lim\limits_{q \to 1} f_{1}=\widetilde{f}_{1}$,
uniformly on the compacts of~$\overline{S}(-\e_{2},\e_{2})$. 
We may now apply Lemma~\ref{4lem5} to obtain the existence of~$L_{0}>0$, such that we have
$$\lim\limits_{q \to 1}\mathcal{L}_{q,\widetilde{\k_{1}}}^{[0]}\Big(f_{1}\Big)(\z,q) =\mathcal{L}_{\widetilde{\k_{1}}}^{0}\left(\widetilde{f}_{1}\right)(\z),$$
uniformly on the compacts of
~$\left\{\z\in \overline{S}\left(-\dfrac{\pi}{2\widetilde{\k_{1}}},+\dfrac{\pi}{2\widetilde{\k_{1}}}\right)\Big| |\z|<L_{0}\right\}$. \par 
If~$s>1$, we apply for~$j=2$ (resp.~$j=3$,~$\dots$, resp.~$j=s$) the same reasoning with the analytic continuation of
$$f_{j}(\z,q)e_{q^{\widetilde{\k_{j}}}}\left(B_{j}\z^{\widetilde{\k_{j}}}\right)^{-1}:=\mathcal{L}_{q,\widetilde{\k_{j-1}}}^{[0]}\Big(f_{j-1}\Big)e_{q^{\widetilde{\k_{j}}}}\left(B_{j}\z^{\widetilde{\k_{j}}}\right)^{-1}$$
and~$$ \widetilde{f}_{j}(\z)\exp\left(-B_{j}\z^{\widetilde{\k_{j}}}\right):=\mathcal{L}_{\widetilde{\k_{j-1}}}^{0}\left(\widetilde{f}_{j-1}\right)\exp\left(-B_{j}\z^{\widetilde{\k_{j}}}\right),$$
where~$B_{j}>0$ are chosen sufficiently large. 
 We again use Propositions~\ref{4propo4} and \ref{4propo2} to prove that they satisfy linear~$\dq$ and~$\d$-equations with coefficients in~$\C(\z)$, which are the same as the linear~$\dq$ and~$\d$-equations satisfied by~$\hat{\mathcal{B}}_{q,\widetilde{\k_{j}}}\circ\dots\circ\hat{\mathcal{B}}_{q,\widetilde{\k_{s}}}\left(\hat{h}\right)e_{q^{\widetilde{\k_{j}}}}\left(B_{j}\z^{\widetilde{\k_{j}}}\right)^{-1}$ and 
~$\hat{\mathcal{B}}_{\widetilde{\k_{j}}}\circ\dots\circ\hat{\mathcal{B}}_{\widetilde{\k_{s}}}\left(\widetilde{h}\right)\exp\left(-B_{j}\z^{\widetilde{\k_{j}}}\right)$.\par
We have proved the existence of~$L_{1}>0$, such that we have
$$\lim\limits_{q \to 1}S_{q}^{[0]}\left(\hat{h}\right)=\widetilde{S}^{0}\left(\widetilde{h}\right),$$ 
uniformly on the compacts of~$\Big\{z\in \overline{S}\left(-\frac{\pi}{2\k_{r}},+\frac{\pi}{2\k_{r}}\right)\Big| |z|<L_{1}\Big \}$.
\vspace{3 mm}\pagebreak[3]
\begin{center}
\textbf{Step 4: Global convergence of the~$q$-Borel-Laplace summation.}\end{center}
\vspace{3 mm}
To finish the proof in the particular case~$z\mapsto\hat{h}(z,q),\widetilde{h}(z)\in \C\left[\left[z^{\b}\right]\right]$, we have to prove that
$$\lim\limits_{q \to 1}S_{q}^{[0]}\left(\hat{h}\right)=\widetilde{S}^{0}\left(\widetilde{h}\right),$$
uniformly on the compacts of~$\overline{S}\left(-\frac{\pi}{2k_{r}},+\frac{\pi}{2k_{r}}\right)\setminus \bigcup \R_{\geq 1}\a_{i}$,  
where~$\a_{i}$ are the roots of~${\widetilde{b}_{m}\in \C[z]}$.
Let~$K_{0}$ be an arbitrary compact of~$\overline{S}\left(-\frac{\pi}{2k_{r}},+\frac{\pi}{2k_{r}}\right)\setminus \bigcup \R_{\geq 1}\a_{i}$, and let us prove the uniform convergence on~$K_{0}$. Without loss of generality, we may assume that~$K_{0}$ is convex and has non empty intersection with the open disk of radius~$L_{1}$ (we recall that $L_{1}$ was defined in the end of Step $3$) centered at~$0$.\par 
 From Remark~\ref{4rem3} (resp. Proposition \ref{4propo5}), we deduce that~$S_{q}^{[0]}\left(\hat{h}\right)$ (resp.~$\widetilde{S}^{0}\left(\widetilde{h}\right)$) is solution of the same linear~$\dq$-equation than~$\hat{h}$ (resp. the same linear~$\d$-equation than~$\widetilde{h}$). \par 
Let~$|z_{0}|<L_{1}$ with~$z_{0}\in K_{0}$. We are going to use Proposition~\ref{4propo3} with~$C=K_{0}$ and with the systems  
$$\left\{\begin{array}{lll}
\dq Y(z,q)&=&F(z,q)Y(z,q)\\ \\
\d  \widetilde{Y}(z)&=&\widetilde{F}(z)\widetilde{Y}(z),
\end{array}\right.$$
where 
$$Y(\z,q):=\Big(\dq^{i}S_{q}^{[0]}\left(\hat{h}\right) G_{i}(q)\Big)_{i\in \{0,\dots, m-1\}},\quad  \widetilde{Y}(\z):=\Big(\d^{i}\widetilde{S}^{0}\left(\widetilde{h}\right)\Big)
_{i\in \{0,\dots,m-1\}},$$
$$z\mapsto \mathrm{Id}+(q-1)F(z,q)\in \mathrm{GL}_{m}\Big(\C(z)\Big),\widetilde{F}(z)\in \mathrm{M}_{m}\Big(\C(z)\Big),$$
and ~$G_{i}(q)\in \C$ are defined such that:
$$\left.\dq^{i}S_{q}^{[0]}\left(\hat{h}\right)G_{i}(q)\right|_{z=z_{0}}=\left. \d^{i}
\widetilde{S}^{0}\left(\widetilde{h}\right)\right|_{z=z_{0}}.$$
The assumption~$(i)$ of Proposition~\ref{4propo3} is satisfied because of the assumption \textbf{(A3)}, and the two others are trivially satisfied, since~$K_{0}$ is bounded. \par 
 This yields~${\lim\limits_{q \to 1}S_{q}^{[0]}\left(\hat{h}\right)=\widetilde{S}^{0}\left(\widetilde{h}\right)}$
uniformly on~$K_{0}$, and completes the proof in the particular case~${z\mapsto\hat{h}(z,q),\widetilde{h}(z)\in \C\left[\left[z^{\b}\right]\right]}$.

\pagebreak[3]
\subsection{Proof of Theorem~\ref{4theo1} in the general case.}\label{4sec63}
In this subsection, we are going to prove Theorem~\ref{4theo1} in the general case. See~$\S \ref{4sec1}$ to~$\S \ref{4sec4}$ for the notations. We recall that for all~$l\in \{0,\dots,\b-1\}$, we define $\hat{h}^{(l)}\in \C\big[\big[z^{\b}\big]\big]$, so that~$\hat{h}=\displaystyle \sum_{l=0}^{\b-1} z^{l}\hat{h}^{(l)}$. Let us set~$\Sigma_{\widetilde{h}}:=\displaystyle \bigcup_{l=0}^{\b-1}\Sigma_{\widetilde{h}^{(l)}}$ (see Step 1 in~$\S \ref{4sec62}$).
Let~$d\in  \R\setminus \Sigma_{\widetilde{h}}$.  After considering~$z\mapsto ze^{-id}$, we may assume that~$d=0$. \par 
  Looking at the term with~$z$-degree congruent to~$j$ modulo~$\b$ , for~$j=0,\dots,j=\b-1$, we find that the equation satisfied by~$\hat{h}$ is equivalent to the following family of~$\dq$-linear equations:
$$\left\{\begin{array}{lcl}
0&=&\displaystyle \sum_{k,l} d_{0,k,l}(z,q)\dq^{k}\hat{h}^{(l)}(z,q)\\
&\vdots&\\
0&=&\displaystyle \sum_{k,l} d_{\b-1,k,l}(z,q)\dq^{k}\hat{h}^{(l)}(z,q),
\end{array}\right.$$
where ~$z\mapsto d_{j,k,l}\in \C\left[z^{\b}\right]~$.
Let~$l\in \{0,\dots,\b-1\}$. Following the equalities
$$
z^{l}\hat{h}^{(l)}(z,q)=\displaystyle \displaystyle\sum_{j=0}^{\b-1}\frac{\hat{h}\left(e^{2i\pi lj/\b}z,q\right)}{e^{2i\pi lj/\b}\b},\quad
z^{l}\widetilde{h}^{(l)}(z)=\displaystyle\sum_{j=0}^{\b-1}\frac{\widetilde{h}\left(e^{2i\pi lj/\b}z\right)}{e^{2i\pi lj/\b}\b},
$$
we obtain that for all~$l\in \{0,\dots,\b-1\}$,~$\hat{h}^{(l)}(z,q)$ (resp.~$\widetilde{h}^{(l)}$) satisfies a linear~$q$-difference (resp. differential) equation with coefficients in~$\C\big[z^{\b}\big]$. Moreover, for all~${l\in \{0,\dots,\b-1\}}$,~$\hat{h}^{(l)}$, converges coefficientwise to~$\widetilde{h}^{(l)}$ and the equations they satisfy have coefficients that check the assumptions \textbf{(A2)} and \textbf{(A3)}. \par 
Because of the fact that~$0\in  \R\setminus \widetilde{\Sigma}_{\widetilde{h}}$, Proposition~\ref{4propo5} implies that for all~${l\in \{0,\dots,\b-1\}}$, there exists~$\widetilde{S}^{0}\left(\widetilde{h}^{(l)}\right)$, asymptotic solution of the same linear~$\d$-equation than~$\widetilde{h}^{(l)}$. These latter can be computed with Laplace and Borel transformations.\par 
Using the case~$z\mapsto\hat{h}(z,q),\widetilde{h}(z)\in \C\left[\left[z^{\b}\right]\right]$ (see~$\S \ref{4sec62}$), we can compute  for~$q$ close to~$1$, and~${l\in \{0,\dots,\b-1\}}$,~${z\mapsto S_{q}^{[0]}\left(\hat{h}^{(l)}\right)\in\mathcal{M}(\C^{*},0)}$, solution of the same family of linear~$\dq$-equations than~$\hat{h}^{(l)}$. Because of Remark~\ref{4rem3}, we find:
$$\left\{\begin{array}{lcl}
0&=&\displaystyle \sum_{k,l} d_{0,k,l}(z,q)\dq^{k}S_{q}^{[0]}\left(\hat{h}^{(l)}\right)\\
&\vdots&\\
0&=&\displaystyle \sum_{k,l} d_{\b-1,k,l}(z,q)\dq^{k}S_{q}^{[0]}\left(\hat{h}^{(l)}\right).
\end{array}\right.$$
Hence, we obtain that for~$q$ close to~$1$,~$S_{q}^{[0]}\left(\hat{h}\right):=\displaystyle \sum_{l=0}^{\b-1} z^{l}S_{q}^{[0]}\left(\hat{h}^{(l)}\right)$ satisfies the same linear~$\dq$-equation than~$\hat{h}$.
We apply now the theorem in the case~${z\mapsto\hat{h}(z,q),\widetilde{h}(z)\in \C\left[\left[z^{\b}\right]\right]}$ previously treated, to prove the existence of~$L_{2}>0$, such that we have
$$\lim\limits_{q \to 1}S_{q}^{[0]}\left(\hat{h}\right)=\widetilde{S}^{0}\left(\widetilde{h}\right):=\displaystyle \sum_{l=0}^{\b-1} z^{l}\widetilde{S}^{0}\left(\widetilde{h}^{(l)}\right),$$
uniformly on the compacts of~$\Big\{z\in \overline{S}\left(-\frac{\pi}{2\k_{r}},+\frac{\pi}{2\k_{r}}\right)\Big| |z|<L_{2}\Big \}$.
To conclude, we have to prove
$$\lim\limits_{q \to 1}S_{q}^{[0]}\left(\hat{h}\right)=\widetilde{S}^{0}\left(\widetilde{h}\right),$$
uniformly on the compacts of~$\overline{S}\left(-\frac{\pi}{2k_{r}},+\frac{\pi}{2k_{r}}\right)\setminus \bigcup \R_{\geq 1}\a_{i}$,  
where~$\a_{i}$ are the roots of ${\widetilde{b}_{m}\in \C[z]}$. This is the same reasoning than for the particular case
${z\mapsto\hat{h}(z,q),\widetilde{h}(z)\in \C\left[\left[z^{\b}\right]\right]}$ (see Step 4 in~$\S \ref{4sec62}$).
This completes the proof of our main result, Theorem~\ref{4theo1}.
\pagebreak[3]
\section{Basic hypergeometric series.}\label{4sec7}

We refer the reader to \cite{GR} for more details about basic hypergeometric series. We recall that~${p=1/q}$. In this section, we will say that two functions are equal if their analytic continuations coincide. Let~$r,s\in \N$, let~$a_{1},\dots,a_{r},b_{1},\dots,b_{s}\in \C\setminus q^{\N}$, with different images in~$\C^{*}/q^{\Z}$, and let us consider the formal power series
$$_{r}\varphi_{s}\left(\begin{array}{ll}
a_{1},\dots,a_{r}&\\
&;p,z\\
b_{1},\dots,b_{s}&
\end{array} \right):=\displaystyle \sum_{n=0}^{\infty}\dfrac{(a_{1};p)_{n}\dots(a_{r};p)_{n}}{(p;p)_{n}(b_{1};p)_{n}\dots(b_{s};p)_{n}}\left((-1)^{n}p^{n(n-1)/2}\right)^{1+s-r}z^{n},$$
where~$(a;p)_{n+1}:=(1-ap^{n})(a;p)_{n}$ and~$(a;p)_{0}:=1$, for~$a\in \C$. Assume now that~$r>s+1$ and~$\displaystyle\prod_{i=1}^{r}a_{i}\neq 0$. In this case, the formal power series is divergent. Let us put~${\underline{p}:=q^{-1/(r-s-1)}}$ and~$\underline{q}:=q^{1/(r-s-1)}$.
\pagebreak[3]
\begin{lem}\label{4lem13}
The series~$_{r}\varphi_{s}\left(\begin{array}{ll}
a_{1},\dots,a_{r}&\\
&;\underline{p},z\\
b_{1},\dots,b_{s}&
\end{array} \right)$ satisfies the linear~$\sigma_{\underline{q}}$-equation
$$\left((\sigma_{\underline{q}}-1)\displaystyle\prod_{i=1}^{s}(\sigma_{\underline{q}}-b_{i}\underline{q})
+z(-1)^{s-r}\underline{q}^{1+s}\sigma_{\underline{q}}^{2+s-r}\displaystyle\prod_{i=1}^{r}(\sigma_{\underline{q}}-a_{i})\right)
\left(_{r}\varphi_{s}\left(\begin{array}{ll}
a_{1},\dots,a_{r}&\\
&;\underline{p},z\\
b_{1},\dots,b_{s}&
\end{array} \right)\right)=0,$$ 
 which admits~$0$ and~$1$ as non negative slopes. 
\end{lem}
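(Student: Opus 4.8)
The plan is to verify the $\sigma_{\underline{q}}$-equation by a purely coefficientwise comparison, and then to read off the slopes from the Newton polygon of the operator.

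First I would expand the series as $\varphi(z):={}_{r}\varphi_{s}(\ldots;\underline{p},z)=\sum_{n\geq 0}c_{n}z^{n}$ and compute the ratio $c_{n+1}/c_{n}$. Each $q$-Pochhammer symbol obeys $(a;\underline{p})_{n+1}=(1-a\underline{p}^{\,n})(a;\underline{p})_{n}$, so the numerator contributes $\prod_{i=1}^{r}(1-a_{i}\underline{p}^{\,n})$ and the denominator $(1-\underline{p}^{\,n+1})\prod_{j=1}^{s}(1-b_{j}\underline{p}^{\,n})$, while the prefactor $\big((-1)^{n}\underline{p}^{\,n(n-1)/2}\big)^{1+s-r}$ contributes $\big((-1)\underline{p}^{\,n}\big)^{1+s-r}$. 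This yields the first-order recurrence
\[
c_{n+1}(1-\underline{p}^{\,n+1})\prod_{j=1}^{s}(1-b_{j}\underline{p}^{\,n})
=(-1)^{1+s-r}\underline{p}^{\,n(1+s-r)}\Big(\prod_{i=1}^{r}(1-a_{i}\underline{p}^{\,n})\Big)c_{n}.
\]

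Next I would translate this recurrence into the operator identity. Since $\sigma_{\underline{q}}z^{n}=\underline{q}^{\,n}z^{n}=\underline{p}^{\,-n}z^{n}$, any Laurent polynomial $P(\sigma_{\underline{q}})$ multiplies the coefficient of $z^{n}$ by $P(\underline{p}^{\,-n})$, whereas multiplication by $z$ shifts $n\mapsto n+1$. I would therefore apply the operator in the statement to $\varphi$ and extract the coefficient of $z^{n+1}$: the block $(\sigma_{\underline{q}}-1)\prod_{j}(\sigma_{\underline{q}}-b_{j}\underline{q})$ produces $c_{n+1}(\underline{q}^{\,n+1}-1)\prod_{j}(\underline{q}^{\,n+1}-b_{j}\underline{q})$, and the $z$-block a scalar multiple of $c_{n}\prod_{i}(\underline{q}^{\,n}-a_{i})$. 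Using $\underline{q}=\underline{p}^{\,-1}$ and factoring $\underline{q}^{\,m}-c=\underline{q}^{\,m}(1-c\underline{p}^{\,m})$ out of every factor, one rewrites both blocks in terms of the quantities of the recurrence above; the assertion $\,{}_{r}\varphi_{s}\,$ is annihilated is then exactly the statement that the two blocks cancel, i.e. that the displayed recurrence holds, the coefficient of $z^{0}$ vanishing separately because of the factor $(\sigma_{\underline{q}}-1)$. The main obstacle is precisely this bookkeeping: one must account for the accumulated powers of $\underline{q}$ coming from the shift by $z$, from the explicit monomial $\sigma_{\underline{q}}^{2+s-r}$, and from the scalar $(-1)^{s-r}\underline{q}^{1+s}$, and check that they conspire to reproduce both the exponent $\underline{p}^{\,n(1+s-r)}$ and the sign $(-1)^{1+s-r}$ of the recurrence.

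Finally, for the slopes I would regard the operator as $\sum_{k}\beta_{k}(z)\sigma_{\underline{q}}^{k}$ and draw its Newton polygon as defined in \S\ref{4sec2}. The block $(\sigma_{\underline{q}}-1)\prod_{j=1}^{s}(\sigma_{\underline{q}}-b_{j}\underline{q})$ contributes the monomials $\sigma_{\underline{q}}^{0},\dots,\sigma_{\underline{q}}^{s+1}$ with $z$-valuation $0$, while $z\,\sigma_{\underline{q}}^{2+s-r}\prod_{i=1}^{r}(\sigma_{\underline{q}}-a_{i})$ contributes $\sigma_{\underline{q}}^{2+s-r},\dots,\sigma_{\underline{q}}^{s+2}$ with $z$-valuation $1$ (the hypotheses $\prod_{i}a_{i}\neq0$ and $b_{j}\neq0$ guarantee that the extreme coefficients do not vanish). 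The lower boundary of the polygon then has vertices $(2+s-r,1)$, $(0,0)$, $(s+1,0)$, $(s+2,1)$, whose edges have slopes $-1/(r-s-2)$, $0$ and $1$; hence the non-negative slopes are exactly $0$ and $1$, which is the last assertion.
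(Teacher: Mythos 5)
Your overall strategy --- derive the first-order recurrence for the coefficients and convert it into the operator identity, then read the slopes off the Newton polygon --- is exactly the paper's, and your Newton-polygon computation for the operator \emph{as displayed} is correct. The genuine gap is the one step you explicitly defer: the powers of $\underline{q}$ do \emph{not} conspire. Rescale your recurrence by $\underline{q}^{(n+1)(1+s)}$, distributing $\underline{q}^{n+1}$ over the $1+s$ left-hand factors and writing $1-a_{i}\underline{p}^{\,n}=\underline{p}^{\,n}(\underline{q}^{\,n}-a_{i})$ on the right; the accumulated exponent on the right is $n(r-s-1)+(n+1)(1+s)-nr=1+s$, so you obtain
\[
c_{n+1}(\underline{q}^{\,n+1}-1)\prod_{j=1}^{s}(\underline{q}^{\,n+1}-b_{j}\underline{q})
=(-1)^{1+s-r}\,\underline{q}^{\,1+s}\prod_{i=1}^{r}(\underline{q}^{\,n}-a_{i})\,c_{n},
\]
with \emph{no} $n$-dependent power of $\underline{q}$ on the right. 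The operator in the statement, however, forces the right-hand side to carry the extra factor $\underline{q}^{\,n(2+s-r)}$ produced by $\sigma_{\underline{q}}^{2+s-r}$ acting on $z^{n}$; the two agree only when $r=s+2$. Starting from your recurrence you therefore land on the operator $(\sigma_{\underline{q}}-1)\prod_{j}(\sigma_{\underline{q}}-b_{j}\underline{q})+z(-1)^{s-r}\underline{q}^{\,1+s}\prod_{i}(\sigma_{\underline{q}}-a_{i})$, whose Newton polygon has positive slope $1/(r-s-1)$ rather than $1$, and not on the operator of the Lemma.

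The mismatch is traceable to the normalizing prefactor. The paper's proof starts from the recurrence $u_{n+1}(1-\underline{p}^{\,n+1})\prod_{i}(1-b_{i}\underline{p}^{\,n})=u_{n}(-1)^{1+s-r}\underline{q}^{\,n}\prod_{i}(1-a_{i}\underline{p}^{\,n})$, i.e.\ with ratio factor $\underline{q}^{\,n}$ instead of your $\underline{p}^{\,n(1+s-r)}=\underline{q}^{\,n(r-s-1)}$. That is the ratio produced by the prefactor $(-1)^{n(1+s-r)}\underline{q}^{\,n(n-1)/2}$ used in the introduction and tacitly throughout \S\ref{4sec7} (it is also what makes $\hat{B}_{\underline{q}}$ of the series equal to the convergent $_{r}\varphi_{r-1}$ a few lines later, and what makes the positive slope equal to $1$ so that a single order-one $\underline{q}$-Laplace transformation applies), rather than by $\bigl((-1)^{n}\underline{p}^{\,n(n-1)/2}\bigr)^{1+s-r}$ as displayed in the definition. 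With that ratio the same rescaling yields exactly $\underline{q}^{\,1+s+n(2+s-r)}$ on the right, which is the coefficient identity encoded by the stated operator. So to make your argument work you must adopt that normalization of $_{r}\varphi_{s}$; as written, the bookkeeping you appeal to fails for every $r>s+2$.
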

\begin{figure}[ht]
\begin{center}
\includegraphics[width=0.3\linewidth]{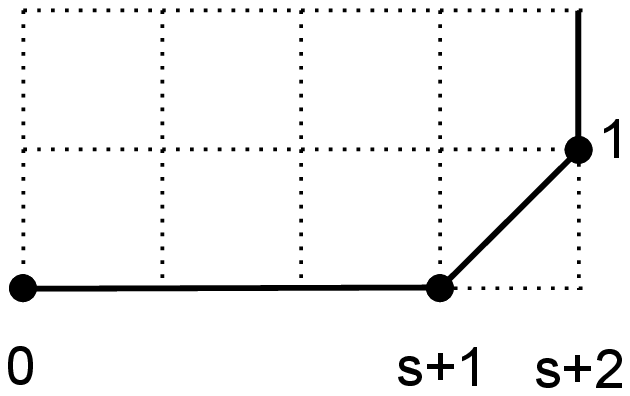}
\end{center}
\end{figure}
\begin{proof}
Let us define $(u_{n})_{n\in \N}\in \C^{\N}$ such that $_{r}\varphi_{s}\left(\begin{array}{ll}
a_{1},\dots,a_{r}&\\
&;\underline{p},z\\
b_{1},\dots,b_{s}&
\end{array} \right)=\displaystyle \sum_{n\in \N} u_{n}z^{n}$. Let us fix $n\in \N$. We find that 
$$
u_{n+1}(1-\underline{p}^{n+1})\displaystyle\prod_{i=1}^{s}(1-b_{i}\underline{p}^{n})=u_{n}(-1)^{1+s-r}\underline{q}^{n}\displaystyle\prod_{i=1}^{r}(1-a_{i}\underline{p}^{n}) $$ and then 
$$u_{n+1}(\underline{q}^{n+1}-1)\displaystyle\prod_{i=1}^{s}(\underline{q}^{n+1}-b_{i}\underline{q})=u_{n}(-1)^{1+s-r}\underline{q}^{1+s}\underline{q}^{n(2+s-r)}\displaystyle\prod_{i=1}^{r}(\underline{q}^{n}-a_{i}) .$$
Multiplying the two sides of the equality by $z^{n+1}$, we obtain the result.
\end{proof}

Let~$\a_{1},\dots,\a_{r},\b_{1},\dots,\b_{s}\in \C\setminus (-\N)$ with different images in~$\C/\Z$. If we put~${x:=z\left(1-\underline{p}\right)^{1+s-r}}$, we have the following convergence coefficientwise when~$\underline{p}$ goes to~$1$:
$$ _{r}\varphi_{s}\left(\begin{array}{ll}
\underline{p}^{\a_{1}},\dots,\underline{p}^{\a_{r}}&\\
&;\underline{p},x\\
\underline{p}^{\b_{1}},\dots,\underline{p}^{\b_{s}}&
\end{array} \right)\rightarrow \hbox{} 
_{r}F_{s}\left(
\begin{array}{ll}
\a_{1},\dots,\a_{r}&\\
&;(-1)^{{1+s-r}}z\\
\b_{1},\dots,\b_{s}&
\end{array} 
\right)
,$$
where 
~$$_{r}F_{s}\left(
\begin{array}{ll}
\a_{1},\dots,\a_{r}&\\
&;z\\
\b_{1},\dots,\b_{s}&
\end{array} 
\right):=\displaystyle \sum_{n=0}^{\infty}\dfrac{(\a_{1})_{n}\dots(\a_{r})_{n}}{n!(\b_{1})_{n}\dots(\b_{s})_{n}}z^{n} ,$$
and
~$(\a)_{0}:=1; (\a)_{n+1}:=(\a+n)(\a)_{n}$ for~$\a\in \C$. Applying Lemma \ref{4lem13}, we obtain that the first series satisfies $\D_{\underline{q}}\left(_{r}\varphi_{s}\left(\begin{array}{ll}
\underline{p}^{\a_{1}},\dots,\underline{p}^{\a_{r}}&\\
&;\underline{p},x\\
\underline{p}^{\b_{1}},\dots,\underline{p}^{\b_{s}}&
\end{array} \right)\right)=0$ where

$$\D_{\underline{q}}:=\delta_{\underline{q}}\displaystyle\prod_{i=1}^{s}\left(\delta_{\underline{q}}+\frac{1-\underline{p}^{\b_{i}-1}}{1-\underline{q}}\right)
+z(-1)^{s-r}\underline{q}^{2+2s-r}\sigma_{\underline{q}}^{2+s-r}\displaystyle\prod_{i=1}^{r}\left(\delta_{\underline{q}}+\frac{1-\underline{p}^{\a_{i}}}{1-\underline{q}}\right).$$
Using the same reasoning, one can prove that the second series satisfies  $\widetilde{\D}\left(_{r}F_{s}\left(
\begin{array}{ll}
\a_{1},\dots,\a_{r}&\\
&;(-1)^{1+s-r}z\\
\b_{1},\dots,\b_{s}&
\end{array} 
\right)\right)=0$,
where 
$$\widetilde{\D}:=\d\displaystyle\prod_{i=1}^{s}\left(\d+\b_{i}-1\right)
+z(-1)^{s-r}\displaystyle\prod_{i=1}^{r}\left(\d+\a_{i}\right). $$
\emph{The above series do not satisfy the assumptions of Theorem~\ref{4theo1}}, since the slopes of $\widetilde{\D}$, do not correspond to the slopes of $\D_{\underline{q}}$ that are positive. Notice that the assumptions of \cite{DVZ}, Theorem~2.6, are not satisfied as well.\par 
The goal of this section is to show that if~$d\not \equiv (r-s-1)\pi [2\pi]$, we may apply successively $\hat{B}_{\underline{q}}$ and~$L_{\underline{q}}^{[d]}$, see Definition \ref{4defi6}, to $_{r}\varphi_{s}
\left(
\begin{array}{ll}
\underline{p}^{\a_{1}},\dots,\underline{p}^{\a_{r}}\\
&;\underline{p},x\\
\underline{p}^{\b_{1}},\dots,\underline{p}^{\b_{s}}&
\end{array}\right)$, and prove, by making explicitly the computations, that we obtain a function that converges, as $\underline{q}$  goes to $1$, to $\widetilde{S}^{d}\left(_{r}F_{s}\left(
\begin{array}{ll}
\a_{1},\dots,\a_{r}&\\
&;(-1)^{1+s-r}z\\
\b_{1},\dots,\b_{s}&
\end{array} 
\right) \right)$.
 The case~$r=2$ and~$s=0$ has been treated in \cite{Z02},~$\S 2$. \\ \par 
First, we are going to consider~$ _{r}\varphi_{s}\left(\begin{array}{ll}
a_{1},\dots,a_{r}&\\
&;\underline{p},z\\
b_{1},\dots,b_{s}&
\end{array} \right)$, which satisfies, see Lemma~\ref{4lem13}, a linear~$\sigma_{\underline{q}}$-equation with non negative slopes $0$ and $1$.
 As we can see in \cite{Z02},~$\S 1$, if~$d\not \equiv (r-s-1)\pi [2\pi]$, we can compute a solution of the same linear~$\sigma_{\underline{q}}$-equation than~$ _{r}\varphi_{s}\left(\begin{array}{ll}
a_{1},\dots,a_{r}&\\
&;\underline{p},z\\
b_{1},\dots,b_{s}&
\end{array} \right)$ applying successively to it~$\hat{B}_{\underline{q}}$ and~$L_{\underline{q}}^{[d]}$. Applying ~$\hat{B}_{\underline{q}}$ to~$ _{r}\varphi_{s}\left(\begin{array}{ll}
a_{1},\dots,a_{r}&\\
&;\underline{p},z\\
b_{1},\dots,b_{s}&
\end{array} \right)$, we obtain for all $d \not \equiv (r-s-1)\pi [2\pi]$
$$ h(\z):=\hbox{} 
_{r}\varphi_{r-1}\left(\begin{array}{ll}
a_{1},\dots,a_{r}&\\
&;\underline{p},(-1)^{1+s-r}\z\\
b_{1},\dots,b_{s},0,\dots,0&
\end{array} \right)\in\mathbb{H}_{\underline{q},1}^{d}.$$

 For~$a,a_{1},\dots,a_{k}\in \C$, let~${(a;\underline{p})_{\infty}:=\displaystyle\prod_{n=0}^{\infty}(1-a\underline{p}^{n})}$, and~${(a_{1},\dots,a_{k};\underline{p})_{\infty}:=\displaystyle\prod_{i=1}^{k}(a_{i};\underline{p})_{\infty}}$. For all~${j\in \{1,\dots,k\}}$ and~$a_{1},\dots,a_{k}\in \C$, let~$(a_{1},\dots,\widehat{a_{j}},\dots,a_{k})$ be equals to the finite sequence~$(a_{1},\dots,a_{k})$ after the withdrawn of the element~$a_{j}$.
As we can see in Page 121 of \cite{GR}, the convergent series~$_{r}\varphi_{r-1}$ may be expressed with connection formula at infinity:
$$ \begin{array}{lll}
_{r}\varphi_{r-1}\left(\begin{array}{ll}
a_{1},\dots,a_{r}&\\
&;\underline{p},z\\
b_{1},\dots,b_{r-1}&
\end{array} \right)=\displaystyle \sum_{j=1}^{r}\dfrac{(a_{1},\dots,\widehat{a_{j}},\dots,a_{r},b_{1}/a_{j},\dots,b_{r-1}/a_{j},a_{j}z,\underline{p}/a_{j}z;\underline{p})_{\infty}}
{(b_{1},\dots,b_{r-1},a_{1}/a_{j},\dots,\widehat{a_{j}/a_{j}},\dots,a_{r}/a_{j},z,\underline{p}/z;\underline{p})_{\infty}}&&\\\\
\times \hbox{}
_{r}\varphi_{r-1}\left(
\begin{array}{ll}
a_{j},a_{j}\underline{p}/b_{1},\dots,a_{j}\underline{p}/b_{r-1}&\\
&;\underline{p},\dfrac{\underline{p}\prod_{i=1}^{r-1}b_{i}}{z\prod_{i=1}^{r}a_{i}}\\
a_{j}\underline{p}/a_{1},\dots,\widehat{a_{j}\underline{p}/a_{j}},\dots,a_{j}\underline{p}/a_{r}&
\end{array} 
\right).
\end{array}
$$
Making~$b_{s+1},\dots,b_{r-1}$ goes to~$0$, we find:
$$ \begin{array}{lll}
h(\z)=\displaystyle \sum_{j=1}^{r}\dfrac{(a_{1},\dots,\widehat{a_{j}},\dots,a_{r},b_{1}/a_{j},\dots,b_{s}/a_{j};\underline{p})_{\infty}\T_{\underline{q}}\Big((-1)^{s-r}a_{j}\z\Big)}
{(b_{1},\dots,b_{s},a_{1}/a_{j},\dots,\widehat{a_{j}/a_{j}},\dots,a_{r}/a_{j};\underline{p})_{\infty}\T_{\underline{q}}\Big((-1)^{s-r}\z\Big)}\\\\
\times \hbox{}
_{s+1}\varphi_{r-1}
\left(
\begin{array}{ll}
a_{j},a_{j}\underline{p}/b_{1},\dots,a_{j}\underline{p}/b_{s}\\
&\\
&;\underline{p},\dfrac{(-1)^{1+s-r}\underline{p}a_{j}^{r-s-1}\prod_{i=1}^{s}b_{i}}{\z \prod_{i=1}^{r}a_{i}}\\
a_{j}\underline{p}/a_{1},\dots,\widehat{a_{j}\underline{p}/a_{j}},\dots,a_{j}\underline{p}/a_{r}&
\end{array} \right).
\end{array}
$$
The next lemma gives the expression of the~$\underline{q}$-Laplace transformation of the first term of the sum of~$h$. The expression of the~$\underline{q}$-Laplace transformation of~$h$ will follows directly.
\pagebreak[3]
\begin{lem} 
Let~$d\not \equiv (r-s-1)\pi [2\pi]$,~$\l:=(\underline{q}-1)e^{id}$ and~$\a:=\dfrac{(-1)^{1+s-r}\underline{p}a_{1}^{r-s-2}b_{1}\dots b_{s}}{ a_{2}\dots a_{r}}$. Then,$$L_{\underline{q}}^{[d]}\left(\frac{\T_{\underline{q}}\Big((-1)^{s-r}a_{1}\z\Big)}{\T_{\underline{q}}\Big((-1)^{s-r}\z\Big)}\hbox{}_{s+1}\varphi_{r-1}
\left(
\begin{array}{ll}
a_{1},a_{1}\underline{p}/b_{1},\dots,a_{1}\underline{p}/b_{s}&\\
&;\underline{p},\frac{\a}{\z}\\
a_{1}\underline{p}/a_{2},\dots,a_{1}\underline{p}/a_{r}&
\end{array} \right)\right)$$
is equal to 
$$ \frac{\T_{\underline{q}}\Big((-1)^{s-r}a_{1}\l\Big)}{\T_{\underline{q}}\Big((-1)^{s-r}\l\Big)}\frac{\T_{\underline{q}}\Big(a_{1}z/\l\Big)}{\T_{\underline{q}}\Big(z/\l\Big)}\hbox{}_{s+2}\varphi_{r-1}
\left(
\begin{array}{ll}
a_{1},a_{1}\underline{p}/b_{1},\dots,a_{1}\underline{p}/b_{s},0&\\
&;\underline{p},-\frac{\a}{ a_{1}\underline{p}z}\\
a_{1}\underline{p}/a_{2},\dots,a_{1}\underline{p}/a_{r}&
\end{array}
 \right).$$
\end{lem}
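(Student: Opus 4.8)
The plan is to reduce everything to the series definition $\T_{\underline{q}}(z)=\sum_{n\in\Z}\underline{q}^{-n(n+1)/2}z^{n}$ together with the functional equation $\s_{\underline{q}}\T_{\underline{q}}(z)=z\T_{\underline{q}}(z)=\T_{\underline{q}}(z^{-1})$, which by iteration gives $\T_{\underline{q}}(\underline{q}^{m}x)=\underline{q}^{m(m-1)/2}x^{m}\T_{\underline{q}}(x)$ for every $m\in\Z$. First I would expand the inner $_{s+1}\varphi_{r-1}$ as a power series $\sum_{k\geq0}c_{k}(\a/\z)^{k}$ in the variable $1/\z$, where the $c_{k}$ are the explicit basic hypergeometric coefficients carrying the Gaussian factor $\big((-1)^{k}\underline{p}^{k(k-1)/2}\big)^{r-s-1}$, and substitute $\z=\underline{q}^{n}\l$ into the bilateral sum defining $L_{\underline{q}}^{[d]}$. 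Writing $w:=(-1)^{s-r}\l$, the summand factors as a theta ratio $\T_{\underline{q}}(a_{1}\underline{q}^{n}w)/\T_{\underline{q}}(\underline{q}^{n}w)$, the reciprocal theta $1/\T_{\underline{q}}(\underline{q}^{n+1}\l/z)$, and the monomial $(\underline{q}^{n}\l)^{-k}$.

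The heart of the proof is the evaluation, for each fixed $k$, of the resulting bilateral sum over $n$. Applying the iterated functional equation to each of the three theta factors, the Gaussian terms $\underline{q}^{n(n-1)/2}$ produced by $\T_{\underline{q}}(a_{1}\underline{q}^{n}w)$ and by $\T_{\underline{q}}(\underline{q}^{n}w)$ cancel, leaving a single Gaussian $\underline{q}^{-n(n+1)/2}$ from the denominator $\T_{\underline{q}}(\underline{q}^{n+1}\l/z)$ multiplied by a purely geometric factor in $n$. The series definition of $\T_{\underline{q}}$ then resums the sum over $n$ into the single theta value $\T_{\underline{q}}(a_{1}\underline{q}^{-k}z/\l)$, and one further use of $\T_{\underline{q}}(\underline{q}^{-k}x)=\underline{q}^{k(k+1)/2}x^{-k}\T_{\underline{q}}(x)$, together with the inversion $\T_{\underline{q}}(\l/z)=\T_{\underline{q}}(z/\l)$, extracts the closed theta quotient $\T_{\underline{q}}\big((-1)^{s-r}a_{1}\l\big)\T_{\underline{q}}(a_{1}z/\l)\big/\big(\T_{\underline{q}}\big((-1)^{s-r}\l\big)\T_{\underline{q}}(z/\l)\big)$ appearing in the statement, times a factor $\underline{q}^{k(k+1)/2}$ that is fed back into the $k$-sum.

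It then remains to recognize $\sum_{k}c_{k}\underline{q}^{k(k+1)/2}\a^{k}a_{1}^{-k}z^{-k}$ as a basic hypergeometric series. The key algebraic observation is that $\underline{q}^{k(k+1)/2}=\underline{p}^{-k(k-1)/2}\underline{p}^{-k}$, so that multiplying $c_{k}$ by this factor lowers the exponent of the Gaussian $(-1)^{k}\underline{p}^{k(k-1)/2}$ by exactly one, from $r-s-1$ to $r-s-2$, and leaves the residual monomial $\big(-\a/(a_{1}\underline{p}z)\big)^{k}$; since the drop of one Gaussian power is precisely what is encoded by adjoining an extra upper parameter $0$ (for which $(0;\underline{p})_{k}=1$), this turns the $_{s+1}\varphi_{r-1}$ into the $_{s+2}\varphi_{r-1}$ with argument $-\a/(a_{1}\underline{p}z)$ claimed in the lemma. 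I expect the main obstacle to be not any isolated identity but the bookkeeping of the nested $\underline{q}$-powers $\underline{q}^{n(n\pm1)/2}$ and $\underline{q}^{k(k\pm1)/2}$ through the two resummations, and the justification of the interchange of the sums over $n$ and $k$; the latter I would control using the growth bound on $\T_{\underline{q}}$ and the fact that $h\in\mathbb{H}_{\underline{q},1}^{d}$, so that the reindexing is legitimate on the domain where the analytic continuations of both sides agree, in the sense fixed at the beginning of this section.
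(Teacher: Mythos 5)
Your proposal is correct and follows essentially the same route as the paper's proof: expand the inner $_{s+1}\varphi_{r-1}$ in powers of $1/\z$, substitute $\z=\underline{q}^{n}\l$ into the bilateral sum, use the functional equation $\T_{\underline{q}}(\underline{q}^{m}x)=\underline{q}^{m(m-1)/2}x^{m}\T_{\underline{q}}(x)$ to factor out the constant theta ratio, interchange the sums by Fubini, resum over $n$ into $\T_{\underline{q}}(a_{1}z\underline{q}^{-l}/\l)$, and apply the functional equation once more so that the residual Gaussian $\underline{p}^{-l(l-1)/2}$ is absorbed as the extra upper parameter $0$ in the resulting $_{s+2}\varphi_{r-1}$. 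Your observation that $\underline{q}^{l(l+1)/2}=\underline{p}^{-l(l-1)/2}\underline{p}^{-l}$ drops the Gaussian exponent from $r-s-1$ to $r-s-2$ is exactly the final identification the paper makes when equating $\sum_{l}f_{l}\underline{p}^{-l(l-1)/2}a_{1}^{-l}\underline{q}^{l}z^{-l}$ with $\sum_{l}g_{l}z^{-l}$.
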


\begin{proof}
Using the expression of~$\T_{\underline{q}}$, we find that for all~$k\in \Z$,$$\T_{\underline{q}}(\underline{q}^{k}z)=\underline{q}^{k(k-1)/2}z^{k}\T_{\underline{q}}(z).$$ Let us write$$f(\z):=\hbox{}_{s+1}\varphi_{r-1}
\left(
\begin{array}{ll}
a_{1},a_{1}\underline{p}/b_{1},\dots,a_{1}\underline{p}/b_{s}&\\
&;\underline{p},\frac{\a}{\z}\\
a_{1}\underline{p}/a_{2},\dots,a_{1}\underline{p}/a_{r}&
\end{array} \right)=:\displaystyle\sum_{l=0}^{\infty}f_{l}\z^{-l}$$
 and$$g(z):=\hbox{}_{s+2}\varphi_{r-1}
\left(
\begin{array}{ll}
a_{1},a_{1}\underline{p}/b_{1},\dots,a_{1}\underline{p}/b_{s},0&\\
&;\underline{p},-\frac{\a}{ a_{1}\underline{p}z}\\
a_{1}\underline{p}/a_{2},\dots,a_{1}\underline{p}/a_{r}&
\end{array}
 \right)=:\displaystyle\sum_{l=0}^{\infty}g_{l}z^{-l}.$$ 
 Then, 
$$
\begin{array}{ll}
&L_{\underline{q}}^{[d]}\left(\dfrac{\T_{\underline{q}}\Big((-1)^{s-r}a_{1}\z\Big)}{\T_{\underline{q}}\Big((-1)^{s-r}\z\Big)}\hbox{}_{s+1}\varphi_{r-1}
\left(
\begin{array}{ll}
a_{1},a_{1}\underline{p}/b_{1},\dots,a_{1}\underline{p}/b_{s}&\\
&;\underline{p},\frac{\a}{\z}\\
a_{1}\underline{p}/a_{2},\dots,a_{1}\underline{p}/a_{r}&
\end{array} \right)\right)\\\\
=&\dfrac{\T_{\underline{q}}\Big((-1)^{s-r}a_{1}\l\Big)}{\T_{\underline{q}}\Big((-1)^{s-r}\l\Big)}
\dfrac{1}{\T_{\underline{q}}\Big(\l\underline{q}/z\Big)}\displaystyle \sum_{n\in \Z}\left(\frac{a_{1}z}{\l}\right)^{n}\underline{q}^{-n(n-1)/2}f(\underline{q}^{n}\l)\\\\
=&\dfrac{\T_{\underline{q}}\Big((-1)^{s-r}a_{1}\l\Big)}{\T_{\underline{q}}\Big((-1)^{s-r}\l\Big)}
\dfrac{1}{\T_{\underline{q}}\Big(\l\underline{q}/z\Big)}
\displaystyle \sum_{n\in \Z}\displaystyle \sum_{l=0}^{\infty}\left(\frac{a_{1}z}{\l}\right)^{n}\underline{q}^{-n(n-1)/2}f_{l}\underline{q}^{-ln}\l^{-l}.
\end{array}
$$

We apply now Fubini's Theorem to conclude that
$$\begin{array}{lll}
&\dfrac{\T_{\underline{q}}\Big((-1)^{s-r}a_{1}\l\Big)}{\T_{\underline{q}}\Big((-1)^{s-r}\l\Big)}
\dfrac{1}{\T_{\underline{q}}\Big(\l\underline{q}/z\Big)}
&\displaystyle \sum_{n\in \Z}\displaystyle \sum_{l=0}^{\infty}\left(\frac{a_{1}z}{\l}\right)^{n}\underline{q}^{-n(n-1)/2}f_{l}\underline{q}^{-ln}\l^{-l}\\\\
=&\dfrac{\T_{\underline{q}}\Big((-1)^{s-r}a_{1}\l\Big)}{\T_{\underline{q}}\Big((-1)^{s-r}\l\Big)}\dfrac{1}{\T_{\underline{q}}\Big(\l\underline{q}/z\Big)}&\displaystyle \sum_{l=0}^{\infty}\displaystyle \sum_{n\in \Z}\left(\frac{a_{1}z}{\l}\right)^{n}\underline{q}^{-n(n-1)/2}f_{l}\underline{q}^{-ln}\l^{-l}\\\\
=&\dfrac{\T_{\underline{q}}\Big((-1)^{s-r}a_{1}\l\Big)}{\T_{\underline{q}}\Big((-1)^{s-r}\l\Big)}\dfrac{1}{\T_{\underline{q}}\Big(\l\underline{q}/z\Big)}&\displaystyle \sum_{l=0}^{\infty}\T_{\underline{q}}\left(\frac{a_{1}z\underline{q}^{-l}}{\l}\right)   f_{l}\l^{-l}\\\\
=&\dfrac{\T_{\underline{q}}\Big((-1)^{s-r}a_{1}\l\Big)}{\T_{\underline{q}}\Big((-1)^{s-r}\l\Big)}\dfrac{\T_{\underline{q}}\Big(a_{1}z/\l\Big)}{\T_{\underline{q}}\Big(\l\underline{q}/z\Big)}&\displaystyle \sum_{l=0}^{\infty}f_{l}\underline{p}^{-l(l-1)/2}a_{1}^{-l}\underline{q}^{l}z^{-l}\\\\
=&\dfrac{\T_{\underline{q}}\Big((-1)^{s-r}a_{1}\l\Big)}{\T_{\underline{q}}\Big((-1)^{s-r}\l\Big)}\dfrac{\T_{\underline{q}}\Big(a_{1}z/\l\Big)}{\T_{\underline{q}}\Big(z/\l\Big)}&\displaystyle \sum_{l=0}^{\infty}g_{l}z^{-l}.
\end{array}$$
\end{proof}
We have proved:
\pagebreak[3]
\begin{theo}
Let~$d\not \equiv (r-s-1)\pi [2\pi]$ and let 
~$\mathbb{S}_{\underline{q}}^{[d]}\left(_{r}\varphi_{s}\right)$ be the function obtained applying successively~$\hat{B}_{\underline{q}}$ and 
~$L_{\underline{q}}^{[d]}$ to~$_{r}\varphi_{s}
\left(
\begin{array}{ll}
a_{1},\dots,a_{r}&\\
&;\underline{p},z\\
b_{1},\dots,b_{s}&
\end{array} \right)$.  Then
$$ \begin{array}{lll}
\mathbb{S}_{\underline{q}}^{[d]}\left(_{r}\varphi_{s}\right)=\displaystyle\sum_{j=1}^{r}
\dfrac{(a_{1},\dots,\widehat{a_{j}},\dots,a_{r},b_{1}/a_{j},\dots,b_{s}/a_{j};\underline{p})_{\infty}\T_{\underline{q}}\Big((-1)^{s-r}a_{j}(1-\underline{q})e^{id}\Big) \T_{\underline{q}}\Big(\frac{a_{j}z}{(1-\underline{q})e^{id}}\Big)}
{(b_{1},\dots,b_{s},a_{1}/a_{j},\dots,\widehat{a_{j}/a_{j}},\dots,a_{r}/a_{j};\underline{p})_{\infty}\T_{\underline{q}}\Big((-1)^{s-r}(1-\underline{q})e^{id}\Big)\T_{\underline{q}}\Big(\frac{z}{(1-\underline{q})e^{id}}\Big)}&&\\\\
\times\hbox{}_{s+2}\varphi_{r-1}
\left(
\begin{array}{ll}
a_{j},a_{j}\underline{p}/b_{1},\dots,a_{j}\underline{p}/b_{s},0&\\
&;\underline{p},\dfrac{(-1)^{s-r}a_{1}^{r-s-2}\prod_{i=1}^{s}b_{i}}{z\prod_{i=1}^{r}a_{i}}\\
a_{j}\underline{p}/a_{1},\dots,\widehat{a_{j}\underline{p}/a_{j}},\dots,a_{1}\underline{p}/a_{r}&
\end{array} \right).
\end{array}$$
\end{theo}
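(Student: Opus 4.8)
The plan is to assemble the three ingredients prepared above rather than to start a fresh computation. First, recall that the paper has already established that $\hat{B}_{\underline{q}}$ applied to $_r\varphi_s$ produces $h(\z)$, and that $h \in \mathbb{H}_{\underline{q},1}^d$ whenever $d\not\equiv(r-s-1)\pi\,[2\pi]$, so that $L_{\underline{q}}^{[d]}$ (see Definition~\ref{4defi6}) may legitimately be applied to it. The connection formula at infinity borrowed from \cite{GR}, after letting $b_{s+1},\dots,b_{r-1}$ tend to $0$, writes $h$ as a finite sum over $j=1,\dots,r$, each summand being a constant prefactor (an infinite product of $\underline{p}$-Pochhammer symbols, independent of $\z$) times a ratio of theta functions $\T_{\underline{q}}\big((-1)^{s-r}a_j\z\big)/\T_{\underline{q}}\big((-1)^{s-r}\z\big)$ times a convergent $_{s+1}\varphi_{r-1}$ series in the variable $\alpha/\z$. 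Thus the only remaining task is to transport $L_{\underline{q}}^{[d]}$ across this sum.

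Since $L_{\underline{q}}^{[d]}$ is a $\C$-linear operator (it is a series of weighted evaluations), I would apply it term by term, pulling the $\z$-independent Pochhammer prefactors outside the transform. This reduces the theorem to computing $L_{\underline{q}}^{[d]}$ on a single summand of the displayed shape, which is precisely the content of the preceding Lemma, stated there for the representative parameter $a_1$. By the symmetry of the connection formula under the relabeling $a_1\leftrightarrow a_j$, the Lemma gives simultaneously the Laplace transform of every summand, with $a_1$ replaced throughout by $a_j$. Summing the $r$ transformed summands then reproduces the asserted closed form. The one piece of bookkeeping to watch is the exponent of $a_j$ in the argument of the resulting $_{s+2}\varphi_{r-1}$: it drops by one (from $a_j^{r-s-1}$ to $a_j^{r-s-2}$) because the Laplace step contributes an extra factor $a_j^{-1}$, as one reads off from the last two lines of the Lemma's computation where $f_l$ is reindexed into $g_l$.

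The genuine obstacle is encapsulated in the Lemma, and inside it in the interchange of the double summation over $n\in\Z$ and $l\in\N$ effected by Fubini's theorem. To justify it rigorously I would verify absolute summability of $\sum_{n,l}\big(a_1 z/\lambda\big)^{n}\underline{q}^{-n(n-1)/2}f_l\,\underline{q}^{-ln}\lambda^{-l}$ on the relevant punctured neighborhood of $0$, exploiting the Gaussian decay $\underline{q}^{-n(n-1)/2}$ inherited from $\T_{\underline{q}}$ together with the convergence radius of $_{s+1}\varphi_{r-1}$ to dominate the sum. Once this interchange is secured, the inner sum over $n$ is recognized as a value of $\T_{\underline{q}}$ via the quasi-periodicity $\T_{\underline{q}}(\underline{q}^{k}z)=\underline{q}^{k(k-1)/2}z^{k}\T_{\underline{q}}(z)$, and the passage from the coefficients $f_l$ to $g_l$ is a direct manipulation of $\underline{p}$-Pochhammer symbols; both are routine. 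Hence, modulo the Fubini justification, the theorem follows by linearity and summation, which is exactly why the statement is introduced by ``We have proved''.
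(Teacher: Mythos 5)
Your proposal matches the paper's own argument: the theorem is stated there with ``We have proved'' precisely because it is obtained by taking the $q$-Borel transform $h$ of $_{r}\varphi_{s}$, decomposing $h$ via the Gasper--Rahman connection formula (with $b_{s+1},\dots,b_{r-1}\to 0$), applying $L_{\underline{q}}^{[d]}$ term by term by linearity, and invoking the preceding Lemma for each summand after relabeling $a_{1}\leftrightarrow a_{j}$. Your additional remark on justifying the Fubini interchange inside the Lemma is a reasonable strengthening of a step the paper takes for granted, but it does not change the route.
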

Let~$\a_{1},\dots,\a_{r},\b_{1},\dots,\b_{s}\in \C\setminus -\N$ with different images in~$\C/\Z$. We replace now~$a_{i}$ by~$\underline{p}^{\a_{i}}$,~$b_{i}$ by~$\underline{p}^{\b_{i}}$,~$z$ by~$x=z\left(1-\underline{p}\right)^{1+s-r}$ and consider the limit as~$\underline{p}$ goes to~$1$. It is clear that for all~$j\in \{1,\dots,r\}$, we have the uniform convergence on the compacts of~$\C^{*}$

$$\begin{array}{l}
\lim\limits_{\underline{p} \to 1}\hbox{} 
_{s+2}\varphi_{r-1}
\left(
\begin{array}{ll}
\underline{p}^{\a_{j}},\underline{p}^{\a_{j}-\b_{1}+1},\dots,\underline{p}^{\a_{j}-\b_{s}+1},0&\\
&;\underline{p},\dfrac{(-1)^{s-r}\underline{p}^{\a_{j}(r-s-2)+\b_{1}+\dots +\b_{s}}}{x\underline{p}^{\a_{1}+\dots +\a_{r}}}\\
\underline{p}^{\a_{j}-\a_{1}+1},\dots,\widehat{\underline{p}^{\a_{j}-\a_{j}+1}},\dots,\underline{p}^{\a_{j}-\a_{r}+1}&
\end{array} \right)\\\\
=_{s+1}F_{r-1}\left(
\begin{array}{ll}
\a_{j},\a_{j}-\b_{1}+1,\dots,\a_{j}-\b_{s}+1&\\
&;\frac{(-1)^{s-r}}{z}\\
\a_{j}-\a_{1}+1,\dots,\widehat{\a_{j}-\a_{j}+1},\dots,\a_{j}-\a_{r}+1&
\end{array} \right).
\end{array}
$$
\par 
 As we can see in \cite{Z02},~$\S 2.3$,
\begin{itemize}
\item  For~$\g\in \C$,
$$\lim\limits_{\underline{p} \to 1}\frac{(\underline{p}^{\g},\underline{p})_{\infty}(1-\underline{p})^{\g -1}}{(\underline{p},\underline{p})_{\infty}}=\G (\g)^{-1}.$$
\item We have$$\lim\limits_{\underline{p} \to 1}\frac{\T_{\underline{q}}(\underline{p}^{\g}u)}{\T_{\underline{q}}(u)}=u^{-\g},$$
uniformly on the compacts of~$\{z\in \C^{*}| \arg(-z)\neq \pi\}$.
\end{itemize}
We have proved: 
\pagebreak[3]
\begin{theo}\label{4theo8}
Let~$d\not \equiv (r-s-1)\pi [2\pi]$. Then,
$$ \begin{array}{lll}
\lim\limits_{\underline{p} \to 1}\mathbb{S}_{\underline{q}}^{[d]}\left(_{r}\varphi_{s}
\left(
\begin{array}{ll}
\underline{p}^{\a_{1}},\dots,\underline{p}^{\a_{r}}\\
&;\underline{p},x\\
\underline{p}^{\b_{1}},\dots,\underline{p}^{\b_{s}}&
\end{array}\right) \right)=\displaystyle\sum_{j=1}^{r}
\dfrac{\displaystyle\prod_{i=1}^{s}\Gamma(\b_{i})\displaystyle\prod_{\substack{i=1\\i\neq j}}^{r}\Gamma(\a_{i}-\a_{j})\Big((-1)^{s-r}z\Big)^{-\a_{j}}}
{\displaystyle\prod_{\substack{i=1\\i\neq j}}^{r}\Gamma(\a_{i})\displaystyle\prod_{i=1}^{s}\Gamma(\b_{i}-\a_{j})}&&\\\\
\times
_{s+1}F_{r-1}\left(
\begin{array}{ll}
\a_{j},\a_{j}-\b_{1}+1,\dots,\a_{j}-\b_{s}+1&\\
&;\frac{(-1)^{s-r}}{z}\\
\a_{j}-\a_{1}+1,\dots,\widehat{\a_{j}-\a_{j}+1},\dots,\a_{j}-\a_{r}+1&
\end{array} \right),
\end{array}$$
uniformly on the compacts of~$\{z\in \C^{*}| \arg(-z)\neq d\}$. 
\end{theo}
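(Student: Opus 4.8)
\emph{Proof plan.} The plan is to start from the closed-form expression for $\mathbb{S}_{\underline{q}}^{[d]}\left({}_r\varphi_s\right)$ established just above, perform the substitutions $a_i=\underline{p}^{\a_i}$, $b_i=\underline{p}^{\b_i}$ and $z\mapsto x=z(1-\underline{p})^{1+s-r}$, and then pass to the limit $\underline{p}\to 1$ inside the \emph{finite} sum over $j\in\{1,\dots,r\}$. As the sum is finite, it suffices to compute the limit of each summand, and each summand factors as (a ratio of $\underline{p}$-Pochhammer symbols) $\times$ (a product of four $\T_{\underline{q}}$-values) $\times$ (one convergent ${}_{s+2}\varphi_{r-1}$). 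I would treat these three groups separately and multiply the resulting limits.

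For the Pochhammer group the substitution turns the ratio into $\dfrac{\prod_{i\neq j}(\underline{p}^{\a_i};\underline{p})_\infty\prod_{i=1}^{s}(\underline{p}^{\b_i-\a_j};\underline{p})_\infty}{\prod_{i=1}^{s}(\underline{p}^{\b_i};\underline{p})_\infty\prod_{i\neq j}(\underline{p}^{\a_i-\a_j};\underline{p})_\infty}$, which has the same number $r-1+s$ of factors in numerator and denominator. I would insert matching powers of $(1-\underline{p})$ and of $(\underline{p};\underline{p})_\infty$ so as to apply $\frac{(\underline{p}^\g;\underline{p})_\infty(1-\underline{p})^{\g-1}}{(\underline{p};\underline{p})_\infty}\to\G(\g)^{-1}$ factor by factor. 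The $(\underline{p};\underline{p})_\infty$ cancel because the counts agree, and a bookkeeping of exponents shows the residual power of $(1-\underline{p})$ equals $-(r-s-1)\a_j$; hence this group tends to $(1-\underline{p})^{-(r-s-1)\a_j}\,\dfrac{\prod_{i=1}^{s}\G(\b_i)\prod_{i\neq j}\G(\a_i-\a_j)}{\prod_{i\neq j}\G(\a_i)\prod_{i=1}^{s}\G(\b_i-\a_j)}$, which is already the Gamma prefactor of the statement up to one surviving power of $(1-\underline{p})$.

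For the theta group I would combine the two quotients $\frac{\T_{\underline{q}}((-1)^{s-r}a_j\l)}{\T_{\underline{q}}((-1)^{s-r}\l)}$ and $\frac{\T_{\underline{q}}(a_jx/\l)}{\T_{\underline{q}}(x/\l)}$ (with $\l=(1-\underline{q})e^{id}$), each of the form $\frac{\T_{\underline{q}}(\underline{p}^{\a_j}u)}{\T_{\underline{q}}(u)}$. Applying $\frac{\T_{\underline{q}}(\underline{p}^\g u)}{\T_{\underline{q}}(u)}\to u^{-\g}$ to both and multiplying gives $(uv)^{-\a_j}$ with $u=(-1)^{s-r}\l$ and $v=x/\l$; the $\l$ cancels, so $uv=(-1)^{s-r}x$ and $(uv)^{-\a_j}=((-1)^{s-r}z)^{-\a_j}(1-\underline{p})^{(r-s-1)\a_j}$. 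Multiplying by the Pochhammer contribution, the competing powers $(1-\underline{p})^{\pm(r-s-1)\a_j}$ cancel, leaving exactly $((-1)^{s-r}z)^{-\a_j}$ times the Gamma prefactor; the last stated limit ${}_{s+2}\varphi_{r-1}\to{}_{s+1}F_{r-1}$, uniform on compacts of $\C^*$, supplies the hypergeometric factor, and summing over $j$ yields the claim.

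The main obstacle is the rigorous justification of the theta limits, since the arguments $u=(-1)^{s-r}\l$ and $v=x/\l$ do \emph{not} remain in a fixed compact of $\C^*$: as $\underline{p}\to 1$ one has $\l\to0$, whence $u\to0$ and $v\to\infty$, while $\frac{\T_{\underline{q}}(\underline{p}^\g u)}{\T_{\underline{q}}(u)}\to u^{-\g}$ is only uniform on compacts of $\{\arg(-z)\neq\pi\}$ bounded away from $0$ and $\infty$. To make the computation correct I would use the quasi-periodicity $\T_{\underline{q}}(\underline{q}^k w)=\underline{q}^{k(k-1)/2}w^{k}\T_{\underline{q}}(w)$ to replace $u$ and $v$ by representatives in a fixed fundamental annulus before taking the limit, carrying the automorphy factors along and checking that they recombine into the single power $(uv)^{-\a_j}$. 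It is precisely the position of $v$, which up to a positive real factor equals $z/((1-\underline{q})e^{id})$, relative to the pole spiral of $\T_{\underline{q}}$ that forces the excluded direction: the hypothesis $\arg(-z)\neq\pi$ in the theta limit, after rotation by $e^{id}$ and the sign of $1-\underline{q}$, becomes the restriction $\arg(-z)\neq d$ in the statement. Once this is settled, the remaining work of tracking the signs $(-1)^{s-r}$ and confirming the exact cancellation of the $(1-\underline{p})$-powers is routine.
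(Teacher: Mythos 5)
Your proposal follows the paper's proof essentially verbatim: both start from the closed-form expression for $\mathbb{S}_{\underline{q}}^{[d]}\left({}_{r}\varphi_{s}\right)$ established just before the theorem, substitute $a_{i}=\underline{p}^{\a_{i}}$, $b_{i}=\underline{p}^{\b_{i}}$, $z\mapsto x=z(1-\underline{p})^{1+s-r}$, and pass to the limit in each term of the finite sum using the confluence ${}_{s+2}\varphi_{r-1}\to{}_{s+1}F_{r-1}$, the limit $\frac{(\underline{p}^{\g};\underline{p})_{\infty}(1-\underline{p})^{\g-1}}{(\underline{p};\underline{p})_{\infty}}\to\G(\g)^{-1}$, and the theta-quotient limit $\frac{\T_{\underline{q}}(\underline{p}^{\g}u)}{\T_{\underline{q}}(u)}\to u^{-\g}$, with the powers $(1-\underline{p})^{\mp(r-s-1)\a_{j}}$ cancelling between the Pochhammer and theta groups exactly as you compute. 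Your explicit treatment of the degeneration of the theta arguments to $0$ and $\infty$ via quasi-periodicity, and of how $\arg(-z)\neq d$ arises from the cut $\arg(-v)\neq\pi$, is more careful than the paper, which simply invokes \cite{Z02}, $\S 2.3$, for these limits.
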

\pagebreak[3]
\begin{rem}
The right hand side of the limit equals to the function~$
\widetilde{S}^{d}\left(_{r}F_{s}\left(
\begin{array}{ll}
\a_{1},\dots,\a_{r}&\\
&;(-1)^{1+s-r}z\\
\b_{1},\dots,\b_{s}&
\end{array} 
\right) \right)$.
\end{rem}

\pagebreak[3]
\section{Application: Confluence of a basis of meromorphic solutions}\label{4sec8}
 We study a family of linear~$\dq$-equations that discretize a linear~$\d$-equation, and the behavior of the solutions as~$q$ goes to~$1$. 
After introducing some notations in~$\S \ref{4sec81}$, we prove in~$\S \ref{4sec82}$, that a basis of local formal solutions of the family of linear~$\dq$-equations converges to the Hukuhara-Turrittin solution of the differential equation in a sense that we are going to explain. 
We apply this and our main result, Theorem~\ref{4theo1}, to prove the convergence of the~$q$-Stokes matrices to the Stokes matrices of the linear differential equation in~$\S \ref{4sec83}$. In~$\S \ref{4sec84}$, we show how to find the monodromy matrices of the differential equation, as limit of~$q$-solutions when~$q$ tends to~$1$. When~$q$ is real, this extends the results in~$\S 4$ of \cite{S00} in the irregular singular case\footnote{Notice that the results of this section do not allow us to recover Sauloy's Theorem, but are to be considered as an analogous result in a different situation.}. 

\pagebreak[3]
\subsection{Notations}\label{4sec81}
Some of the notations below were already introduced before, see the introduction, but we recall them for the reader's convenience. For~$a\in \C^{*}$ and~$n\in \N^{*}$, let us consider~$\T_{q}(z)=\displaystyle \sum_{n \in \Z} q^{\frac{-n(n+1)}{2}}z^{n}$,~$l_{q}(z):=\dfrac{\d\left(\T_{q}(z)\right)} {\T_{q}(z)}$, ~${\L_{q,a}(z):=\frac{\T_{q}(z)}{\T_{q}(z/a)}}$,~$e_{q^{n}}\left(az^{n}\right)$ and~$e_{q^{n}}\left(az^{-n}\right)$. 
They satisfy the~$q$-difference equations: \\
\begin{itemize}
\item~$\sq \T_{q}(z)=z\T_{q}(z)$.\\
\item~$\sq l_{q}=l_{q} +1$.\\
\item~$\sq \L_{q,a}(z)=a\L_{q,a}(z)$.\\
\item~$\dq e_{q^{n}}\left(az^{n}\right)= a[n]_{q}z^{n}e_{q^{n}}\left(az^{n}\right)$.\\
\item~$\dq e_{q^{n}}\left(az^{-n}\right)= \dfrac{-a[n]_{q}q^{-n}z^{-n}}{1+(q-1)a[n]_{q}q^{-n}z^{-n}}e_{q^{n}}\left(az^{-n}\right)$.
\end{itemize}
Let~$A$ be an invertible matrix with complex coefficients and consider now the decomposition in Jordan normal form~$A=P(DN)P^{-1}$, where~$D:=\mathrm{Diag}(d_{i})$ is diagonal,~$N$ is a nilpotent upper triangular matrix with~$DN=ND$, and~$P$ is an invertible matrix with complex coefficients. Following \cite{S00}, we construct the matrix:
$$
\L_{q,A}:=P\left(\mathrm{Diag}\left(\L_{q,d_{i}}\right)e^{\log(N)l_{q}}\right)P^{-1}\in \mathrm{GL}_{m}\Big(\C\left( l_{q},\left(\L_{q,a}\right)_{a \in \C^{*}}\right)\Big)$$
that satisfies:
$$
\sq \L_{q,A}=A\L_{q,A}=\L_{q,A}A.$$
Let~$a\in \C^{*}$ and consider the corresponding matrix~$(a)\in \mathrm{GL}_{1}(\C)$. By construction, we have~$\L_{q,a}=\L_{q,(a)}$.\par
We now introduce the $q$-exponential of matrices. For $A\in \mathrm{M}_{m}\Big(\C(z)\Big)$, we define:
$$e_{q}(A):=\displaystyle \sum_{n\in \N} \frac{A^{n}}{[n]_{q}^{!}} \in \mathrm{GL}_{m}\Big(\mathcal{M}(\C^{*},0)\Big).$$
\pagebreak[3]
\subsection{Confluence of a basis of local formal solutions}\label{4sec82}
Formally, we have the convergence~$\lim\limits_{q \to 1}\dq =\d$. We want to prove the formal convergence of a basis of solutions of a family of linear~$\dq$-equations to the Hukuhara-Turrittin solution of the corresponding linear~$\d$-equation. First, we will consider the family of equations,
$$\left\{\begin{array}{lllllllll}
\D_{q}&:=&b_{m}(z,q)\d_{q}^{m}&+&b_{m-1}(z,q)\d_{q}^{m-1}&+&\dots&+&b_{0}(z,q)\\\\
\widetilde{\D}&:=&\widetilde{b}_{m}(z)\d^{m}&+&\widetilde{b}_{m-1}(z)\d^{m-1}&+&\dots&+&\widetilde{b}_{0}(z),
\end{array}\right.$$
that satisfies the following assumptions: 
\begin{trivlist}
\item \textbf{(H1)} For all~$i$, for all~$q$ close to~$1$,~$z\mapsto b_{i}(z,q),\widetilde{b}_{i}(z)\in \C[[z]]$.
\item \textbf{(H2)} For all~$i$,~$b_{i}(z,q)$ converges coefficientwise to~$\widetilde{b}_{i}(z)$ when~$q\to 1$.
\item \textbf{(H3)} Viewed as a linear~$\sq$-equation,~$\D_{q}$ has slopes that belongs to~$\Z$. For~$q$ close to~$1$, the Newton polygon of~$\D_{q}$ is independent of~$q$. 
\item \textbf{(H4)} The slopes of~$\widetilde{\D}$ belongs to~$\N$.
\end{trivlist}
We consider now the associated systems 
\begin{equation}\label{4eq7}
\left\{\begin{array}{lll}
\dq Y(z,q)&=&B(z,q)Y(z,q)\\\\
\d \widetilde{Y}(z)&=&\widetilde{B}(z)\widetilde{Y}(z),
\end{array}\right.
\end{equation}
with~$z\mapsto \mathrm{Id}+(q-1)B(z,q)\in\mathrm{GL}_{m}\Big(\C((z))\Big),\widetilde{B}(z)\in \mathrm{M}_{m}\Big(\C((z))\Big)$.
From Theorem~\ref{4theo5} and the Hukuhara-Turrittin theorem (see~$\S \ref{4sec1}$), we have the existence of
\begin{itemize}
\item~$z\mapsto \hat{H}(z,q),\widetilde{H}(z)\in \mathrm{GL}_{m}\Big(\C((z))\Big)$, such that the entries of the first row of~$\hat{H}(z,q)$ have~$z$-valuation equal to~$0$,
\item~$\mu_{i}\in \Z$, and matrices 
~$ B_{i}(q)\in \mathrm{GL}_{m'_{i}}(\C)$, which are of the form ~$\mathrm{Diag}_{l}\Big(T_{i,l}(q)\Big)$ where~$T_{i,l}(q)$ are upper triangular matrices with diagonal terms equal to the roots of the characteristic polynomial associated to the slope~$\mu_{i}$,
\item~$\widetilde{\l}_{i}(z)\in z^{-1}\C[z^{-1}]$,~$\widetilde{L}_{i}\in \mathrm{M}_{m_{i}}(\C)$,
\end{itemize}
 such that 
\begin{equation}\label{4eq11}
\left\{\begin{array}{lll}
\hat{H}(z,q)\left[\mathrm{Diag}\Big(B_{i}(q)z^{-\mu_{i}}\Big)\right]_{\displaystyle\sq}=\mathrm{Id}+(q-1)B(z,q)\\\\
\widetilde{H}(z)\left[\mathrm{Diag} \left(\widetilde{L}_{i}+\d\widetilde{\l}_{i}(z)\times\mathrm{Id}_{m_{i}}\right)\right]_{\displaystyle\d}=\widetilde{B}(z).
\end{array}\right.\end{equation}
We make two more assumptions:
\begin{trivlist}
\item \textbf{(H5)} For~$q$ close to~$1$,~$\mathrm{Diag}\Big(B_{i}(q)z^{-\mu_{i}}\Big)$ commutes with~$\mathrm{Diag} \left(\widetilde{L}_{i}+\d\widetilde{\l}_{i}(z)\times\mathrm{Id}_{m_{i}}\right)$.
\item \textbf{(H6)}
If~$\widetilde{H}'(z)$ is any formal matrix solution of the differential system of (\ref{4eq11}), then the entries of the first row of~$\widetilde{H}'(z)$ have necessarily ~$z$-valuation equal to~$0$.  Moreover, we assume that the term of lower degree of each entry of the first row of~$\hat{H}(z,q)$ converges as~$q$ goes to~$1$, to the term of lower degree of the corresponding entry of~$\widetilde{H}(z)$.
\end{trivlist}

\pagebreak[3]
\begin{rem}\label{4rem2}
\begin{trivlist}
\item (1) Assumptions \textbf{(H1)} to \textbf{(H4)} are satisfied if the~$b_{i}(z,q)\in \C[[z]]$ are independent of~$q$ and if the slopes of~$\D_{q}$, viewed as a linear~$\sq$-equation, belong to~$\Z$.
\item (2) As we can see in \cite{RSZ}, Theorem 2.2.1, up to a ramification, we may always reduce to the case where the slopes of~$\D_{q}$, viewed as a linear~$\sq$-equation, belong to~$\Z$. Up to a ramification, we may also reduce to the case where \textbf{(H4)} is satisfied.
\item (3) Assumption \textbf{(H5)} is satisfied if and only if, for all~$q$ close to~$1$, 
~$\mathrm{Diag}\Big(B_{i}(q)\Big)$ commutes with~$\mathrm{Diag} \left(\widetilde{L}_{i}\right)$.  If Assumption \textbf{(H5)} is satisfied, then the the blocks of~$\mathrm{Diag}\Big(B_{i}(q)z^{-\mu_{i}}\Big)$ and~$\mathrm{Diag} \left(\widetilde{L}_{i}+\d\widetilde{\l}_{i}(z)\times\mathrm{Id}_{m_{i}}\right)$ have the same size.
\item (4) If, for~$q$ close to~$1$, the~$B_{i}(q)$ and~$\widetilde{L}_{i}$ are diagonal, we may perform shearing transformations on the differential system (resp. a diagonal gauge transformation that depends only upon~$q$ on the~$q$-difference system), in order to change the entries~$\widetilde{l}_{i,j}$ of~$\widetilde{L}_{i}$  by~$\widetilde{l}_{i,j}+k_{i,j}$ where~$k_{i,j}\in \Z$ (resp. multiply to the right~$\hat{H}(z,q)$ by a diagonal complex matrix), and to reduce to the case where \textbf{(H6)} is satisfied. Notice that in this case, \textbf{(H5)} was already satisfied because of the point (3) of the remark. The~$B_{i}(q)$ and~$\widetilde{L}_{i}$ are diagonal if for~$q$ close to~$1$, the multiplicities of the slopes of~$\D_{q}$, viewed as a linear~$\sq$-equation, (resp. the multiplicities of the slopes of~$\widetilde{\D}$) are all equal to~$1$. A weaker condition for~$B_{i}(q)$ and~$\widetilde{L}_{i}$ being diagonal is to assume that~$\D_{q}$, viewed as a linear~$\sq$-equation, and~$\widetilde{\D}$ have exponents at~$0$ which are not resonant. 
\item (5) If Assumption \textbf{(H6)} is satisfied, then the~$\C$-vectorial subspace of~$\mathrm{M}_{m}\Big(\C((z))\Big)$ of solutions of the differential system of (\ref{4eq11}) has dimension~$1$. Remark that the converse in not true.
\item (6) The slopes of~$\widetilde{\D}$ and~$\D_{q}$, viewed as a linear~$\sq$-equation, may be different. We will make assumptions on the slopes in~$\S \ref{4sec83}$ and~$\S\ref{4sec84}$.
\end{trivlist} 
\end{rem}

\pagebreak[3]
\begin{defi}
 We say that the~$m\times m$ invertible square matrix~$F(z,q)$ belongs to~$\mathcal{O}_{m}^{*}$, if for~$q$ close to~$1$, the entries of~${z\mapsto F(z,q)}$ are meromorphic on~$\C^{*}$, and~$F(z,q)$ satisfies
\begin{itemize}
\item We have the uniform convergence~$\lim\limits_{q \to 1} \Big(\dq F(z,q)\Big)F(z,q)^{-1}=0,$ on the compacts of~$\C^{*}$.
\item We have the uniform convergence~$\lim\limits_{q \to 1} F(z,q)=\mathrm{Id},$
 on the compacts of~$\C^{*}$.
\end{itemize}
\end{defi}

\pagebreak[3]
\begin{rem}\label{4rem4}
Roughly speaking, the matrices ~$\hat{H}(z,q)\mathrm{Diag}\Big(\L_{q,B_{i}(q)}\T_{q}(z)^{-\mu_{i}}\Big)$ and 
~$\widetilde{H}(z)\mathrm{Diag}\left(e^{\log(z)\widetilde{L}_{i}}e^{\widetilde{\l}_{i}(z)\times\mathrm{Id}_{m_{i}}}\right)$ are fundamental solutions of the systems (\ref{4eq7}). Let us write~${\widetilde{\l}_{i}(z):=\displaystyle \sum_{j=1}^{k_{i}} \widetilde{\l}_{i,j}z^{-j}}$ with~${k_{i}\in \N}$. The next theorem says that there exists a fundamental solution of~${\dq Y(z,q)=B(z,q)Y(z,q)}$ of the form:
$$\hat{H}(z,q)F_{1}(z,q)F_{2}(z,q)\mathrm{Diag}_{i}\left(\L_{q,\mathrm{Id}+(q-1)\widetilde{L}_{i}}\displaystyle \prod_{j=1}^{k_{i}}e_{q^{j}}\left(\widetilde{\l}_{i,j}z^{-j}\times\mathrm{Id}_{m_{i}}\right)\right),$$
such that:
\begin{itemize}
\item~$z\mapsto F_{1}(z,q)\in \mathrm{GL}_{m}\Big(\C\{z\}\Big)$ and the matrix~$\hat{H}(z,q)F_{1}(z,q)\in \mathrm{GL}_{m}\Big(\C((z))\Big)$ converge entrywise and coefficientwise to~$\widetilde{H}(z)$ when~$q\to 1$.
\item The matrix~$F_{2}(z,q)$ belongs to~$\mathcal{O}_{m}^{*}$ and therefore, for~$z\in \C^{*}$,~$\lim\limits_{q \to 1} F_{2}(z,q)=\mathrm{Id}$.
\item Because of what is written in Page 1048 of \cite{S00} and Lemma~\ref{4lem7}, for all~${z\in \C^{*}\setminus \R_{<0}}$, we have the convergence 
$$\lim\limits_{q \to 1}\mathrm{Diag}_{i}\left(\L_{q,\mathrm{Id}+(q-1)\widetilde{L}_{i}}\displaystyle \prod_{j=1}^{k_{i}}e_{q^{j}}\left(\widetilde{\l}_{i,j}z^{-j}\times\mathrm{Id}_{m_{i}}\right)\right)=\mathrm{Diag}\left(e^{\log(z)\widetilde{L}_{i}}e^{\widetilde{\l}_{i}(z)\times\mathrm{Id}_{m_{i}}}\right).$$ 
\end{itemize}
In other words, the above fundamental solution of~$\dq Y(z,q)=B(z,q)Y(z,q)$ formally converges to the fundamental solution~$\widetilde{H}(z)\mathrm{Diag}\left(e^{\log(z)\widetilde{L}_{i}}e^{\widetilde{\l}_{i}(z)\times\mathrm{Id}_{m_{i}}}\right)$ of~$\d \widetilde{Y}(z)=\widetilde{B}(z)\widetilde{Y}(z)$ given by the Hukuhara-Turrittin Theorem. Of course, written like this, this statement is not rigorous since the matrices can not be multiplied among them, see~$\S \ref{4sec1}$.
\end{rem}

\pagebreak[3]
\begin{theo}\label{4theo2}
Let us consider the systems (\ref{4eq7}) that satisfies the assumptions \textbf{(H1)} to \textbf{(H6)}. Let~$B_{i}(q)$,~$\widetilde{L}_{i}$ and~${\widetilde{\l}_{i}(z)=\displaystyle \sum_{j=1}^{k_{i}} \widetilde{\l}_{i,j}z^{-j}}$ that come from (\ref{4eq11}).
\begin{trivlist}
\item  (1) There exist~$z\mapsto F_{1}(z,q)\in \mathrm{GL}_{m}\Big(\C\{z\}\Big)$,~$F_{2}(z,q)\in \mathcal{O}_{m}^{*}$,~$z\mapsto N(z,q)\in \mathrm{M}_{m}\Big(\C(z)\Big)$ such that 
$$F_{1}(z,q)\Big[\mathrm{Id}+(q-1)N(z,q)\Big]_{\displaystyle\sq}=\mathrm{Diag}\Big(B_{i}(q)z^{-\mu_{i}}\Big),$$
where~$N(z,q)$ satisfies:
$$\begin{array}{ll}
\dq \left(F_{2}(z,q)\mathrm{Diag}_{i}\left(\L_{q,\mathrm{Id}+(q-1)\widetilde{L}_{i}}\displaystyle \prod_{j=1}^{k_{i}}e_{q^{j}}\left(\widetilde{\l}_{i,j}z^{-j}\times\mathrm{Id}_{m_{i}}\right)\right)\right)&=\\\\
N(z,q)F_{2}(z,q)\mathrm{Diag}_{i}\left(\L_{q,\mathrm{Id}+(q-1)\widetilde{L}_{i}}\displaystyle \prod_{j=1}^{k_{i}}e_{q^{j}}\left(\widetilde{\l}_{i,j}z^{-j}\times\mathrm{Id}_{m_{i}}\right)\right).&
\end{array}$$
\item (2) The matrix~$\hat{H}(z,q)F_{1}(z,q)$ converges entrywise to~$\widetilde{H}(z)$ when~$q\to 1$. Moreover, there exists~$N\in \N$, such that for~$q$ close to~$1$,~$z\mapsto z^{N}\hat{H}(z,q)F_{1}(z,q)$ belongs to~$\mathrm{M}_{m}\Big(\C[[z]]\Big)$. 
\end{trivlist}
\end{theo}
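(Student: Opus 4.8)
The plan is to build, block by block, a fundamental solution of the $q$-difference system that manifestly confluences to the Hukuhara--Turrittin solution, and to read off $F_1,F_2,N$ from it. Write $\Theta(z,q):=\mathrm{Diag}_i\bigl(\L_{q,\mathrm{Id}+(q-1)\widetilde{L}_i}\prod_{j=1}^{k_i}e_{q^j}(\widetilde{\l}_{i,j}z^{-j}\times\mathrm{Id}_{m_i})\bigr)$ and $\Phi(z,q):=\mathrm{Diag}(\L_{q,B_i(q)}\T_q(z)^{-\mu_i})$. By the $q$-difference equations listed in \S\ref{4sec81} (for $\L_{q,\cdot}$, $\T_q$ and $e_{q^j}$), $\Phi$ is a fundamental solution of the model system $\sq W=\mathrm{Diag}(B_i(q)z^{-\mu_i})W$, while $\Theta$ is a fundamental solution of an explicit block-diagonal system $\dq\Theta=M(z,q)\Theta$ with $M\in\mathrm{M}_m(\C(z))$. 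First I would invoke \textbf{(H5)} together with Remark~\ref{4rem2}(3) to guarantee that the two block decompositions are compatible (same block sizes $m_i$), so that everything can be treated one diagonal block at a time.

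The core of part~(1) is the factorisation $G:=\Phi\Theta^{-1}=F_1F_2$ with $F_1\in\mathrm{GL}_m(\C\{z\})$ and $F_2\in\mathcal{O}_m^*$, after correcting $\Phi$ on the right by a suitable $\sq$-invariant (elliptic) matrix $C$ so that the product $\Phi C\,\Theta^{-1}$ splits as an analytic germ times a transcendental factor tending to the identity. Concretely I would use the product expansions of $\T_q$, $e_{q^j}$ and $\L_{q,a}$ recalled in the introduction to rewrite each diagonal block of $\Phi C\,\Theta^{-1}$ as a convergent invertible germ (its $F_1$ part, whose valuations are matched via \textbf{(H6)}) multiplied by a factor whose $\dq$-logarithmic derivative tends to $0$ and which tends to $\mathrm{Id}$ uniformly on compacts of $\C^*$ (its $F_2$ part, hence in $\mathcal{O}_m^*$), the latter being controlled by Lemma~\ref{4lem7} and the limits $\L_{q,\mathrm{Id}+(q-1)\widetilde{L}_i}\to e^{\log(z)\widetilde{L}_i}$, $e_{q^j}(\widetilde{\l}_{i,j}z^{-j})\to e^{\widetilde{\l}_{i,j}z^{-j}}$, and the theta-quotient limits of \S\ref{4sec7} and p.~1048 of \cite{S00}. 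Once $F_1F_2\Theta=\Phi C$ is arranged, the gauge equation $F_1[\mathrm{Id}+(q-1)N]_{\sq}=\mathrm{Diag}(B_i(q)z^{-\mu_i})$ is automatic with $N:=\dq(F_2\Theta)(F_2\Theta)^{-1}$, and the identity $N=(\dq F_2)F_2^{-1}+(\sq F_2)M F_2^{-1}$ shows $N\in\mathrm{M}_m(\C(z))$ provided $F_2$ has been chosen rational in $z$, which is possible once the transcendental mismatch between $\Theta$ and $\Phi C$ has been entirely pushed into the germ $F_1$.

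For part~(2), the entrywise convergence $\hat{H}(z,q)F_1(z,q)\to\widetilde{H}(z)$ follows by comparing the two fundamental solutions $\hat{H}F_1F_2\Theta$ and $\widetilde{H}\,\mathrm{Diag}(e^{\log(z)\widetilde{L}_i}e^{\widetilde{\l}_i(z)\times\mathrm{Id}_{m_i}})$: since $F_2\Theta$ confluences to the diagonal Hukuhara--Turrittin factor, while both $\hat{H}F_1$ and $\widetilde{H}$ are the formal gauges bringing the respective systems to their normal forms (essentially unique by \textbf{(H6)} and Remark~\ref{4rem2}(5)), the entries of $\hat{H}F_1$ satisfy in the limit the same recurrences as those of $\widetilde{H}$; I would make this rigorous by applying the coefficientwise-confluence Lemma~\ref{4lem9} to those entries. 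The uniform control of the $z$-valuations, again via \textbf{(H6)} (which fixes the lowest-degree term of the first row uniformly in $q$) together with $F_1\in\mathrm{GL}_m(\C\{z\})$, then yields an integer $N$ with $z^N\hat{H}F_1\in\mathrm{M}_m(\C[[z]])$ for $q$ near $1$.

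The hard part will be the factorisation in part~(1): correctly transferring the irregularity encoded on the $q$-side by the integer slope $\mu_i$ together with the possibly $0$- or $\infty$-tending eigenvalues of $B_i(q)$ into the $q$-exponentials $e_{q^j}(\widetilde{\l}_{i,j}z^{-j})$, and choosing the elliptic matrix $C$ so that what remains is genuinely the product of a convergent germ $F_1$ and an $\mathcal{O}_m^*$ factor $F_2$ with $N$ rational. This is exactly the point where the relation between the theta function $\T_q$ and the $q$-exponentials, and the hypotheses \textbf{(H5)}--\textbf{(H6)} relating the $q$-data $(B_i(q),\mu_i)$ to the differential data $(\widetilde{L}_i,\widetilde{\l}_i)$, do all the work.
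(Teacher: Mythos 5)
Your outline of part~(2) is essentially the paper's argument: derive the mixed equation satisfied by $\hat{H}F_{1}$, apply Lemma~\ref{4lem9} to the first-row entries (using \textbf{(H6)} for the $z$-valuation and the constant term, and Remark~\ref{4rem2}(5) for the one-dimensionality), then propagate to the other rows and extract the integer $N$. That part is fine. The problem is part~(1), where your proposal does not actually contain a proof: you posit the factorisation $\Phi C\,\Theta^{-1}=F_{1}F_{2}$ and defer its construction to "the hard part", but that factorisation \emph{is} the theorem. Worse, your chosen starting point $\Phi=\mathrm{Diag}\bigl(\L_{q,B_{i}(q)}\T_{q}(z)^{-\mu_{i}}\bigr)$ works against you: $\T_{q}$ and the $\L_{q,a}$ have entire $q$-spirals of zeros and poles accumulating at $0$ and $\infty$, so $\Phi\Theta^{-1}$ cannot be split into a convergent invertible germ at $0$ times a factor tending to $\mathrm{Id}$ on compacts of $\C^{*}$ without first eliminating the theta functions entirely; this is exactly the phenomenon the introduction illustrates with the $\T_{q}(z)^{-1}$-solution of $\dq y=(z^{-1}+1)y$, whose behaviour as $q\to1$ is unclear, and the reason the paper replaces theta-type solutions by $q$-exponentials. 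The paper's proof avoids $\Phi$ altogether: it builds the confluent fundamental solution of $\sq Y=\mathrm{Diag}(B_{i}(q)z^{-\mu_{i}})Y$ directly, block by block, from products of $q$-exponentials $e_{q^{d}}$ of positive and negative powers of $z$ (Lemma~\ref{4lem1}), which by construction split as $E_{1}\in\mathrm{GL}_{m}(\C\{z\})$ times $F_{2}\in\mathcal{O}_{m}^{*}$.

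A second, independent gap: any such construction requires quantitative control of $B_{i}(q)$ as $q\to1$. The paper proves, using the expansion of $\dq^{n}$ in powers of $\sq$ and the characteristic polynomials of the slopes, together with \textbf{(H6)} for the off-diagonal terms, that every entry of $B_{i}(q)$ is of the form $\a(q)(q-1)$ or $\a(q)$ with $\a(q)$ converging to a nonzero number, whence $B_{i}(q)^{-1}(q-1)^{2}\to0$ — precisely the hypothesis of Lemma~\ref{4lem1} that dictates which $q$-exponential factors to use. Your proposal never establishes this, yet without it the "transfer of irregularity from the $q$-side to the $q$-exponentials" you describe cannot be carried out. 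Finally, a smaller point: you require $F_{2}$ to be "rational in $z$" to conclude $N\in\mathrm{M}_{m}(\C(z))$; this is neither achievable nor needed — what matters is that $\sq(F_{2})F_{2}^{-1}$ is rational, which holds for the paper's $q$-exponential construction even though $F_{2}$ itself is transcendental.
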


Notice that the point (2) implies in particular that~$z^{N}\widetilde{H}(z)\in\mathrm{M}_{m}\Big(\C[[z]]\Big)$. Before proving the theorem, we state and prove a lemma:

\pagebreak[3]
\begin{lem}\label{4lem1}
Let us consider an invertible complex matrix that depends upon~$q$,~$A(q)$, and assume the existence of~$k\in \N^{*}$, such that we have the simple convergence~${\lim\limits_{q \to 1}A(q)^{-1}(q-1)^{k}=0\in \mathrm{M}_{m}(\C)}$. Let~$n\in \Z$. There exist 
\begin{itemize}
\item~$z \mapsto E_{1}(z,q)\in \mathrm{GL}_{m}\Big(\C\{z\}\Big)$
\item ~$ F_{2}(z,q)\in \mathcal{O}_{m}^{*}$
\end{itemize}
 such that$$\sq \Big(E_{1}(z,q)F_{2}(z,q)\Big)=z^{n}A(q)E_{1}(z,q)F_{2}(z,q)=E_{1}(z,q)F_{2}(z,q)A(q)z^{n}.$$
\end{lem}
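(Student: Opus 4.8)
The plan is to exhibit an explicit fundamental solution of $\sq Y = z^nA(q)Y$ and then peel off an analytic-at-$0$ gauge so that the surviving factor tends to the identity on every compact of $\C^*$. First I would check that a fundamental solution commuting with $A(q)$ is given by $Y_0(z,q):=\T_q(z)^n\L_{q,A(q)}(z)$: indeed $\sq\big(\T_q(z)^n\big)=z^n\T_q(z)^n$ and $\sq\L_{q,A}=A\L_{q,A}=\L_{q,A}A$, so $\sq Y_0=z^nAY_0=Y_0Az^n$. This commutation with $A$ is inherited by any factorization $Y_0=E_1F_2$ I produce, so it will yield the required double equality automatically, and the lemma reduces to the purely analytic task of writing $Y_0=E_1F_2$ with $E_1\in\mathrm{GL}_m(\C\{z\})$ and $F_2\in\mathcal{O}_m^*$.

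Next I would reduce $A(q)$ to Jordan form. A constant (in $z$, possibly $q$-dependent) conjugation $A=P(q)J(q)P(q)^{-1}$ only multiplies $E_1$ on the left by the constant invertible matrix $P(q)\in\mathrm{GL}_m(\C\{z\})$ and leaves the $\mathcal{O}_m^*$-factor unchanged, so I may assume $A(q)=a(q)(\mathrm{Id}+\mathcal N)$ is a single Jordan block, with $\mathrm{Id}+\mathcal N$ unipotent. Writing $\L_{q,A}=\L_{q,a}\,e^{\log(\mathrm{Id}+\mathcal N)\,l_q}$, the unipotent factor $e^{\log(\mathrm{Id}+\mathcal N)l_q}$ is polynomial in $l_q$; using $\sq l_q=l_q+1$ and a normalization at a base point I would absorb the part of it that is singular at $0$ into $E_1$, while checking, through the $q$-difference equation for $l_q$ and Lemma \ref{4lem7}, that the remainder contributes a factor tending to $\mathrm{Id}$ on compacts of $\C^*$. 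The heart of the matter is thus the scalar core $\T_q(z)^n\L_{q,a(q)}(z)$.

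For the scalar core I would use the product expansion $\T_q(z)=\prod_{j\ge0}(1-q^{-j-1})(1+q^{-j-1}z)(1+q^{-j}z^{-1})$ to split the core into a factor that is holomorphic and invertible at $0$ and a factor carrying the behaviour at the origin. The hypothesis $A(q)^{-1}(q-1)^k\to0$ bounds the order of the pole at the origin produced in the limit, forcing the relevant integer exponent to be at most $k$, and it lets me calibrate the eigenvalue $a(q)$ against powers of $q$ and against the $q$-exponentials $e_{q^j}$ of the introduction: after multiplying by a suitable $q$-exponential factor that is entire in $z$, hence a legitimate part of $E_1$, and reading off the remaining theta-quotient, Lemma \ref{4lem7} together with the convergence statements recalled in Remark \ref{4rem4} show that what is left tends to $\mathrm{Id}$ uniformly on compacts of $\C^*$. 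Finally I would reassemble the blocks and verify the three defining properties of $\mathcal{O}_m^*$ for $F_2$: meromorphy on $\C^*$ is clear from the construction, $\lim_{q\to1}F_2=\mathrm{Id}$ is exactly the calibration just described, and $\lim_{q\to1}(\dq F_2)F_2^{-1}=0$ follows by differentiating the explicit factors and invoking the $q$-difference equations they satisfy. I expect the main obstacle to be precisely this calibration of the eigenvalue growth in the third step, namely routing exactly the right entire $q$-exponential into $E_1$ so that the leftover theta-quotient degenerates to the identity rather than to a nontrivial power of $z$ or an exponential of $z^{-1}$; this is where the hypothesis on $A(q)^{-1}$ must be used in full force.
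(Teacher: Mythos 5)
Your starting point is correct as far as it goes: $\T_{q}(z)^{n}\L_{q,A(q)}$ is indeed a fundamental solution of $\sq Y=z^{n}A(q)Y$ commuting with $A(q)$, and the reduction to a single Jordan block is harmless. The gap is in the factorization step: this theta-based solution cannot be split as $E_{1}F_{2}$ with $E_{1}\in\mathrm{GL}_{m}\big(\C\{z\}\big)$ and $F_{2}\in\mathcal{O}_{m}^{*}$. First, $\T_{q}$ and the quotients $\L_{q,a}=\T_{q}(z)/\T_{q}(z/a)$ have zeros and poles on $q$-spirals $-q^{\Z}$ and $-aq^{\Z}$ accumulating at $z=0$, so no part of them is meromorphic at the origin and nothing can be routed into $E_{1}$; moreover $\T_{q}(z)$ itself diverges as $q\to1$ (already $\T_{q}(1)\to\infty$), so it cannot be discarded as a factor tending to $\mathrm{Id}$ either. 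Second, and decisively, whatever is left in $F_{2}$ inherits singularities on $q$-spirals which, as $q\to1$, densify onto the cut $\R_{<0}$ rather than escaping to $0$ or $\infty$; consequently $F_{2}$ can at best converge on compacts of $\C^{*}\setminus\R_{<0}$ (this is exactly the situation described in Remark~\ref{4rem4}, where the limit of $\L_{q,\mathrm{Id}+(q-1)\widetilde{L}}$ is only taken on $\C^{*}\setminus\R_{<0}$, and the limit is $z^{\widetilde{L}}$, not $\mathrm{Id}$), whereas membership in $\mathcal{O}_{m}^{*}$ requires uniform convergence to $\mathrm{Id}$ on \emph{every} compact of $\C^{*}$, including points of $\R_{<0}$. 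No calibration by entire $q$-exponentials can remove a spiral of zeros sitting near $\R_{<0}$, so the ``remaining theta-quotient'' you invoke in step three does not degenerate to the identity in the required sense.

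The paper's proof avoids theta functions entirely for precisely this reason. The key device is the identity $\frac{z^{d}}{(q-1)^{l}}=\bigl(1+\frac{z^{d}}{(q-1)^{l}}\bigr)\bigl(1+\frac{(q-1)^{l}}{z^{d}}\bigr)^{-1}$, which writes the multiplier as a ratio of two first-order multipliers of the form $\mathrm{Id}+(q-1)(\cdot)$: the first is solved by the entire $q$-exponential $e_{q^{d}}\bigl(\frac{z^{d}}{(q-1)^{l+1}[d]_{q}}\bigr)$, which goes into $E_{1}$, and the second by a $q$-exponential in $z^{-d}$ whose zeros lie on a spiral that collapses to the origin as $q\to1$, hence belongs to $\mathcal{O}_{1}^{*}$. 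The power $z^{n}$ is produced as a quotient $f_{l_{1},d_{1}}(f_{l_{2},d_{2}})^{-1}$ of two such scalar blocks with $d_{1}-d_{2}+1=n$, and the hypothesis $A(q)^{-1}(q-1)^{k}\to0$ is used exactly to ensure that the argument of $e_{q}\bigl(\frac{q(q-1)^{k}A^{-1}(q)}{z}\bigr)$ tends to $0$ locally uniformly on $\C^{*}$, so that this matrix factor lies in $\mathcal{O}_{m}^{*}$. You would need to replace your third step by this (or an equivalent $q$-exponential) construction; as written, the theta-quotient route cannot meet the definition of $\mathcal{O}_{m}^{*}$.
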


\pagebreak[3]
\begin{ex}
Let us solve~$\sq Y(z,q)=\frac{z}{(q-1)^{2}}Y(z,q)$ with solution in the same form as in the lemma. The trick of the proof of the lemma is the following identity that is valid for all~$z\in \C^{*}$:
$$\frac{z}{(q-1)^{2}}=\frac{1+\frac{z}{(q-1)^{2}}}{1+\frac{(q-1)^{2}}{z}}.$$
We may take~$E_{1}(z,q):=e_{q}\left(\frac{z}{(q-1)^{3}}\right)$ that satisfies~$\sq \left(\frac{z}{(q-1)^{3}}\right)=\left(1+\frac{z}{(q-1)^{2}}\right)e_{q}\left(\frac{z}{(q-1)^{3}}\right)$ and~${F_{2}(z,q):=e_{q}\left(\frac{q(q-1)}{z}\right)}$ that satisfies~$\sq e_{q}\left(\frac{q(q-1)}{z}\right)=\frac{1}{1+\frac{(q-1)^{2}}{z}}e_{q}\left(\frac{q(q-1)}{z}\right)$.
\end{ex}

\begin{proof}[Proof of Lemma~\ref{4lem1}]
For,~$l,d\in \N^{*}$ with~$l\geq 2$, let us define the function ${f_{l,d}:=e_{q^{d}}\left(\frac{z^{d}}{(q-1)^{l+1}[d]_{q}}\right)e_{q^{d}}\left(\frac{q^{d}(q-1)^{l-1}}{[d]_{q}z^{d}}\right)}$,
that satisfies:
$$\sq f_{l,d}=\frac{z^{d}}{(q-1)^{l}} f_{l,d}=f_{l,d}\frac{z^{d}}{(q-1)^{l}},$$
with ~$z \mapsto e_{q^{d}}\left(\frac{z^{d}}{(q-1)^{l+1}[d]_{q}}\right)\in \C\{z\}$ and 
~$e_{q^{d}}\left(\frac{q^{d}(q-1)^{l-1}}{[d]_{q}z^{d}}\right)\in \mathcal{O}_{1}^{*}$. 
Let us also consider~${z\mapsto e_{q}\left(\frac{zA(q)}{(q-1)^{k+2}}\right) \in \mathrm{GL}_{m}\Big(\C\{z\}\Big)}$ and~$e_{q}\left(\frac{q(q-1)^{k}A^{-1}(q)}{z}\right)\in \mathrm{GL}_{m}\Big(\C\{z^{-1}\}\Big)$. We can prove that they satisfy 
$$\sq e_{q}\left(\frac{zA(q)}{(q-1)^{k+2}}\right)=e_{q}\left(\frac{zA(q)}{(q-1)^{k+2}}\right)\left(\mathrm{Id}+\frac{zA(q)}{(q-1)^{k+1}}\right)
=\left(\mathrm{Id}+\frac{zA(q)}{(q-1)^{k+1}}\right)e_{q}\left(\frac{zA(q)}{(q-1)^{k+2}}\right)$$
and 
$$\begin{array}{lcl}
\sq e_{q}\left(\dfrac{q(q-1)^{k}A^{-1}(q)}{z}\right)&=&\left(\mathrm{Id}+\dfrac{(q-1)^{k+1}A^{-1}(q)}{z}\right)^{-1}e_{q}\left(\frac{q(q-1)^{k}A^{-1}(q)}{z}\right)\\\\
&=&e_{q}\left(\frac{q(q-1)^{k}A^{-1}(q)}{z}\right)\left(\mathrm{Id}+\dfrac{(q-1)^{k+1}A^{-1}(q)}{z}\right)^{-1}.
\end{array}$$
Hence, $e_{q}\left(\frac{q(q-1)^{k}A^{-1}(q)}{z}\right)\in \mathcal{O}_{m}^{*}$ and we have:
$$\begin{array}{ll}
&\sq \left(e_{q}\left(\dfrac{zA(q)}{(q-1)^{k+2}}\right)e_{q}\left(\dfrac{q(q-1)^{k}A^{-1}(q)}{z}\right)\right)\\\\
=&\dfrac{zA(q)}{(q-1)^{k+1}}e_{q}\left(\dfrac{zA(q)}{(q-1)^{k+2}}\right)e_{q}\left(\dfrac{q(q-1)^{k}A^{-1}(q)}{z}\right)\\\\
=&e_{q}\left(\dfrac{zA(q)}{(q-1)^{k+2}}\right)e_{q}\left(\dfrac{q(q-1)^{k}A^{-1}(q)}{z}\right)\dfrac{zA(q)}{(q-1)^{k+1}}.
\end{array}$$
Let us choose~$d_{1},d_{2},l_{1},l_{2}\in \N^{*}$ with~$l_{1},l_{2}\geq 2$, such that~$d_{1}-d_{2}+1=n$ and~$l_{1}+(k+1)=l_{2}$. 
Then,$$f_{l_{1},d_{1}}\left(f_{l_{2},d_{2}}\right)^{-1}e_{q}\left(\frac{zA(q)}{(q-1)^{k+2}}\right)e_{q}\left(\frac{q(q-1)^{k}A^{-1}(q)}{z}\right)
 ,$$
is solution of~$\sq Y(z,q)=z^{n}A(q)Y(z,q)=Y(z,q)A(q)z^{n}$ and admits a decomposition that has the required property.
\end{proof}

\begin{proof}[Proof of Theorem~\ref{4theo2}]
\begin{trivlist}
\item (1)
Let us define
$$W_{1}(z,q):=\mathrm{Diag}_{i}\left(\displaystyle \prod_{j=1}^{k_{i}}e_{q^{j}}\left(\frac{q^{j}z^{j}}{\widetilde{\l}_{i,j}(q-1)^{2}[j]_{q}^{2}}\times\mathrm{Id}_{m_{i}}\right)\right)$$
and 
$$W_{2}(z,q):= \mathrm{Diag}_{i}\left(\displaystyle \prod_{j=1}^{k_{i}}e_{q^{j}}\left(\widetilde{\l}_{i,j}z^{-j}\times\mathrm{Id}_{m_{i}}\right)\right),$$
which satisfy 
$$\sq \Big(W_{1}(z,q)W_{2}(z,q)\Big)=\mathrm{Diag}_{i}\left(
\displaystyle \prod_{j=1}^{k_{i}}\frac{q^{j}z^{j}}{(q-1)[j]_{q}\widetilde{\l}_{i,j}} \times\mathrm{Id}_{m_{i}}\right)W_{1}(z,q)W_{2}(z,q).$$
Because of (\ref{4eq11}),~$\mathrm{Diag}_{i}\left(
\displaystyle \prod_{j=1}^{k_{i}}\frac{q^{j}z^{j}}{(q-1)[j]_{q}\widetilde{\l}_{i,j}}\times\mathrm{Id}_{m_{i}} \right)$ commutes with~$\mathrm{Diag}\left(\L_{q,\mathrm{Id}+(q-1)\widetilde{L}_{i}}\right)$ and we obtain that:
$$
\begin{array}{l}
\sq \Bigg(\mathrm{Diag}\left(\L_{q,\mathrm{Id}+(q-1)\widetilde{L}_{i}}\right)W_{1}(z,q)W_{2}(z,q)\Bigg)=\\\\
\mathrm{Diag}_{i}\left(\left(\mathrm{Id}+(q-1)\widetilde{L}_{i}\right)\displaystyle \prod_{j=1}^{k_{i}}\frac{q^{j}z^{j}}{(q-1)[j]_{q}\widetilde{\l}_{i,j}}\right)\mathrm{Diag}\left(\L_{q,\mathrm{Id}+(q-1)\widetilde{L}_{i}}\right)W_{1}(z,q)W_{2}(z,q).
\end{array}
$$
Let
$$
\begin{array}{lll}
C(z,q)&:=&
\mathrm{Diag}\Big(B_{i}(q)z^{-\mu_{i}}\Big)\mathrm{Diag}_{i}\left(\left(\left(\mathrm{Id}+(q-1)\widetilde{L}_{i}\right)\displaystyle \prod_{j=1}^{ k_{i}}\frac{q^{j}z^{j}}{(q-1)[j]_{q}\widetilde{\l}_{i,j}}\right)^{-1}\right)\\\\
&=:&\mathrm{Diag}\Big(C_{i}(q)z^{n_{i}}\Big).
\end{array}$$
If we are able to construct ~$z\mapsto E_{1}(z,q)\in \mathrm{GL}_{m}\Big(\C\{z\}\Big)$ and~$ F_{2}(z,q)\in \mathcal{O}_{m}^{*}$, that commute with~$\mathrm{Diag}\Big(B_{i}(q)z^{-\mu_{i}}\Big)$ and are solution of$$\sq \Big(E_{1}(z,q)F_{2}(z,q)\Big)=C(z,q)E_{1}(z,q)F_{2}(z,q)=E_{1}(z,q)F_{2}(z,q)C(z,q),$$ then the following matrix would be a fundamental solution of the linear~$\sq$-equation~${\sq Y(z,q)=\mathrm{Diag}\Big(B_{i}(q)z^{-\mu_{i}}\Big)Y(z,q)}$:
\begin{equation}\label{4eq26}
E_{1}(z,q)F_{2}(z,q)\mathrm{Diag}\left(\L_{q,\mathrm{Id}+(q-1)\widetilde{L}_{i}}\right)W_{1}(z,q)W_{2}(z,q).
\end{equation}
Let us construct the matrices~$E_{1}$ and~$F_{2}$ using Lemma~\ref{4lem1} applied on each block~$C_{i}(q)$.
Let us check that the matrices~$q\mapsto C_{i}(q)$ satisfies the assumptions of Lemma~\ref{4lem1}. Since the matrices~$\left(\left(\mathrm{Id}+(q-1)\widetilde{L}_{i}\right)\displaystyle \prod_{j=1}^{k_{i}}\frac{q^{j}z^{j}}{(q-1)[j]_{q}\widetilde{\l}_{i,j}}\right)^{-1}$ satisfy the assumptions of Lemma~\ref{4lem1}, it is sufficient to prove that the matrices~$B_{i}(q)$ satisfy the assumptions of Lemma~\ref{4lem1}. Using Theorem~\ref{4theo5}, the~$B_{i}(q)$ are of the form~$\mathrm{Diag}_{l}\Big(T_{i,l}(q)\Big)$ where~$T_{i,l}(q)$ are upper triangular matrices with diagonal terms equal to the roots of the characteristic polynomial associated to the slope~$\mu_{i}$. We recall that the linear~$\dq$-equation is$$\D_{q}:=b_{m}(z,q)(\d_{q})^{m}+b_{m-1}(z,q)(\d_{q})^{m-1}+\dots+b_{0}(z,q),$$
where the~$b_{i}$ converge coefficientwise when~$q\to 1$. Since for all~$n\in \N$,$$\dq^{n}=(q-1)^{-n}\sum_{k=0}^{n}\binom{n}{k}(-1)^{n-k}\sq^{k},$$ a straightforward computation shows that each root of the characteristic polynomial associated to a slope different from zero (resp. to the slope zero) is of the form~$\a(q)(q-1)$ (resp. $\a(q)$), where~$\a(q)$ converges to a non zero complex number. Therefore, each diagonal term of a~$B_{i}(q)$ is of the form $\a(q)(q-1)$ or $\a(q)$, where~$\a(q)$ converges to a non zero complex number. We recall, see (\ref{4eq11}), that the matrix $\hat{H}(z,q)$ satisfies 
$$\sq \left(\hat{H}(z,q)\right)\mathrm{Diag}\Big(B_{i}(q)z^{-\mu_{i}}\Big)=\Big(\mathrm{Id}+(q-1)B(z,q)\Big)\hat{H}(z,q).$$
Using the convergence of the constant terms of the entries in the first row of $\hat{H}(z,q)$, see the assumption \textbf{(H6)}, and the behavior of the diagonal terms of the $B_{i}(q)$, we find that each non diagonal term of a triangular matrix $B_{i}(q)$ is of the form $\a(q)(q-1)$ or $\a(q)$, where~$\a(q)$ converges to a non zero complex number. Hence, for all~$i$,~$B_{i}(q)^{-1}(q-1)^{2}$ simply converges to~$0$ as~$q$ goes to~$1$.\par
Applying Lemma~\ref{4lem1}, there exist~$z\mapsto E_{1}(z,q)\in \mathrm{GL}_{m}\Big(\C\{z\}\Big)$ and~$ F_{2}(z,q)\in \mathcal{O}_{m}^{*}$
that satisfy
$$\sq \Big(E_{1}(z,q)F_{2}(z,q)\Big)=\mathrm{Diag}\Big(C_{i}(q)z^{n_{i}}\Big)E_{1}(z,q)F_{2}(z,q)=E_{1}(q)F_{2}(z,q)\mathrm{Diag}\Big(C_{i}(q)z^{n_{i}}\Big).$$
Because of \textbf{(H5)} and the construction of~$E_{1}(z,q)$ and~$F_{2}(z,q)$ (see the proof of  Lemma~\ref{4lem1}), we obtain that they commute with~$\mathrm{Diag}\Big(B_{i}(q)z^{-\mu_{i}}\Big)$.\par 
We have proved that the matrix (\ref{4eq26}) is a fundamental solution of the system $${\sq Y(z,q)=\mathrm{Diag}\Big(B_{i}(q)z^{-\mu_{i}}\Big)Y(z,q).}$$
We have the following relation:
$$\begin{array}{l}
\sq \left(\mathrm{Diag}
\left(\L_{q,\mathrm{Id}+(q-1)\widetilde{L}_{i}}\right)W_{2}(z,q)\right)=\\\\
\mathrm{Diag}_{i}\left(\left(\mathrm{Id}+(q-1)\widetilde{L}_{i}\right)\displaystyle \prod_{j=1}^{k_{i}}\left(1+\frac{q^{j}z^{j}}{(q-1)[j]_{q}\widetilde{\l}_{i,j}} \right)\right) \mathrm{Diag}
\left(\L_{q,\mathrm{Id}+(q-1)\widetilde{L}_{i}}\right)W_{2}(z,q).
\end{array}$$
Using \textbf{(H5)} and the construction of~$F_{2}(z,q)$, we find that~$F_{2}(z,q)$ commutes with~$\mathrm{Diag}_{i}\left(\left(\mathrm{Id}+(q-1)\widetilde{L}_{i}\right)\displaystyle \prod_{j=1}^{k_{i}}\left(1+\frac{q^{j}z^{j}}{(q-1)[j]_{q}\widetilde{\l}_{i,j}}\right)\right)$. From the construction of~$F_{2}(z,q)$, we find also that~$\sq \Big(F_{2}(z,q)\Big)F_{2}(z,q)^{-1}\in \mathrm{GL}_{m}\Big(\C(z)\Big)$.
Let 
$$U(z,q):=F_{2}(z,q)\mathrm{Diag}
\left(\L_{q,\mathrm{Id}+(q-1)\widetilde{L}_{i}}W_{2}(z,q)\right).$$
From what is preceding, we obtain the existence of~${z\mapsto N(z,q)\in \mathrm{M}_{m}\Big(\C(z)\Big)}$, such that~${\dq U(z,q)=N(z,q)U(z,q)}$.\par 
Because of (\ref{4eq11}),~$W_{1}(z,q)$ commutes with~$\mathrm{Diag}\left(\L_{q,\mathrm{Id}+(q-1)\widetilde{L}_{i}}\right)$. Because of \textbf{(H5)}, and the construction of~$F_{2}(z,q)$,~$W_{1}(z,q)$ commutes also with~$F_{2}(z,q)$. Let~${F_{1}(z,q):=E_{1}(z,q)W_{1}(z,q)}$.
 Then, by construction,
$$F_{1}(z)\Big[\mathrm{Id}+(q-1)N(z,q)\Big]_{\displaystyle\sq}=\mathrm{Diag}\Big(B_{i}(q)z^{-\mu_{i}}\Big),$$
and the matrices~$N(z,q)$,~$F_{1}(z,q)$ and~$F_{2}(z,q)$ have entries in the good fields. \\
\item (2)
We recall that the matrix~$U(z,q)$ satisfies the linear~$\dq$-equation:
$$\dq U(z,q)=N(z,q)U(z,q).$$
Let~$\widetilde{N}(z):=\mathrm{Diag}\left(\widetilde{L}_{i}+\d \widetilde{\l}_{i}(z)\times\mathrm{Id}_{m_{i}}\right)$ which satisfies$$\d\left(\mathrm{Diag}\left(e^{\log(z)\widetilde{L}_{i}}e^{\widetilde{\l}_{i}(z)\times\mathrm{Id}_{m_{i}}}\right)\right)=\widetilde{N}(z)\mathrm{Diag}\left(e^{\log(z)\widetilde{L}_{i}}e^{\widetilde{\l}_{i}(z)\times\mathrm{Id}_{m_{i}}}\right).$$
\par 
 From what is preceding, we deduce the following relations:
$$\begin{array}{rcl}
\d \widetilde{H}(z)&=&\widetilde{B}(z)\widetilde{H}(z)-\widetilde{H}(z)\widetilde{N}(z)\\
\sq \left(\hat{H}(z,q)F_{1}(z,q)\right)\Big(\mathrm{Id}+(q-1)N(z,q)\Big)&=&\Big(\mathrm{Id}+(q-1)B(z,q)\Big)\left(\hat{H}(z,q)F_{1}(z,q)\right). 
\end{array}$$
This implies that
$$
\dq \left( \hat{H}(z,q)F_{1}(z,q)\right)=B(z,q)\hat{H}(z,q)F_{1}(z,q)-\sq\left(\hat{H}(z,q)F_{1}(z,q)\right)N(z,q) ,$$
and finally 
\begin{equation}\label{4eq1}
\dq\left( \hat{H}(z,q)F_{1}(z,q)\right)\Big( \mathrm{Id}+(q-1)N(z,q)\Big)=\widetilde{B}(z)\hat{H}(z,q)F_{1}(z,q)
-\hat{H}(z,q)F_{1}(z,q)N(z,q).
\end{equation}\\
We are going now to prove that the entries that belong to the first row of~$\hat{H}(z,q)F_{1}(z,q)$ converge coefficientwise to the corresponding entries of~$\widetilde{H}(z)$ when~$q\to 1$.\par 
 Let~${\hat{h}(z,q):=\displaystyle\sum_{n=0}^{\infty} \hat{h}_{n}(q)z^{n}}$ be an entry of the first row of~$\hat{H}(z,q)F_{1}(z,q)$ and let~${\widetilde{h}(z):=\displaystyle \sum_{n=0}^{\infty} \widetilde{h}_{n}z^{n}}$ be the corresponding entry of~$\widetilde{H}(z)$. 
We want to use Lemma~\ref{4lem9} to prove that for all~$n\in \N$,~$\hat{h}_{n}(q)$ converges as~$q$ goes to~$1$ to~$\widetilde{h}_{n}$. We are going to prove now that the assumptions of Lemma~\ref{4lem9} are satisfied.\par
\begin{itemize}
\item The matrices~$B(z,q)$ and~$N(z,q)$ converge entrywise and coefficientwise to~$\widetilde{B}(z)$ and~$\widetilde{N}(z)$ when~$q\to 1$. Therefore, using additionally (\ref{4eq1}), we find that there exists a~$\dq$-equation with coefficient in~$\C[[z]]$ that is satisfied by~$\hat{h}(z,q)$, with~$z$-coefficients that converge to the~$z$-coefficients of a~$\d$-equation with coefficient in~$\C[[z]]$, that is satisfied by~$\widetilde{h}(z)$.
\item As we can see in Remark~\ref{4rem2} (5), the vector space of Lemma~\ref{4lem9} has dimension~$1$.
\item By construction,~$F_{1}(z,q)$ is of the form~${\mathrm{Id}+zG_{1}(z,q)}$, where ${z\mapsto G_{1}(z,q)\in \mathrm{M}_{m}\Big(\C\{z\}\Big)}$. Hence for~$q$ close to~$1$,  the entries  of the first row of~$\hat{H}(z,q)F_{1}(z,q)$ have~$z$-valuation equal to the entries  of the first row of~$\hat{H}(z,q)$, which are~$0$ (see the paragraph just below \textbf{(H4)}). Due to \textbf{(H6)}, the entries  of the first row of~$\widetilde{H}(z)$ have~$z$-valuation equal to~$0$.
\item Let us prove the convergence of~$\hat{h}_{0}(q)$  to~$\widetilde{h}_{0}$. Since~$F_{1}(z,q)$ is of the form~${\mathrm{Id}+zG_{1}(z,q)}$, it is sufficient to prove that the constant term of the entries of the first row of~$\hat{H}(z,q)$ converges to the constant term of the corresponding entry of~$\widetilde{H}(z)$. This is guaranteed by \textbf{(H6)}.
\end{itemize}
We can apply Lemma~\ref{4lem9}, which gives that the first row of~$\hat{H}(z,q)F_{1}(z,q)$ converges entrywise and coefficientwise to the first row of~$\widetilde{H}(z)$ when~$q\to 1$.\\\par 
Let us prove now the convergence of the other rows. Let~${\hat{h}(z,q)}$ be an entry of~$\hat{H}(z,q)F_{1}(z,q)$ and let~${\widetilde{h}(z)}$ be the corresponding entry of~$\widetilde{H}(z)$. Let ~${\hat{h}_{1}(z,q),\dots,\hat{h}_{m}(z,q)}$ be the entries of the first row of~$\hat{H}(z,q)F_{1}(z,q)$ and let~${\widetilde{h}_{1}(z),\dots,\widetilde{h}_{m}(z)}$ be the corresponding entries of~$\widetilde{H}(z)$. From (\ref{4eq1}), we find that there exist~$r\in \N$,~${z\mapsto \Big(d_{i,j}(z,q)\Big)_{i\leq r,j\leq m},\left(\widetilde{d}_{i,j}\right)_{i\leq r,j\leq m}\in \C((z))}$, such that:

\begin{equation}\label{4eq8}
\left\{\begin{array}{lllllllll}
\sum_{i,j}d_{i,j}(z,q)\d_{q}^{i}\Big(\hat{h}_{j}(z,q)\Big)&=&\hat{h}(z,q)\\\\
\sum_{i,j}\widetilde{d}_{i,j}(z)\d^{i}\left(\widetilde{h}_{j}(z)\right)&=&\widetilde{h}(z),
\end{array}\right.
\end{equation}
and such that for all~$i,j$,~$d_{i,j}(z,q)$ converges entrywise to ~$\widetilde{d}_{i,j}(z)$ when~$q\to 1$. The entrywise convergence of~$\hat{h}(z,q)$ to ~$\widetilde{h}(z,q)$ when~$q\to 1$ follows immediately from the case of the first row. \par 
Using (\ref{4eq8}) and the fact that for all~$q$ close to~$1$, the~$z$-valuation of the entry of the first row of~$\hat{H}(z,q)F_{1}(z,q)$ are~$0$, we obtain the existence of~$N'\in \N$, such that for all~$q$ close to~$1$,~${z\mapsto z^{N'}\hat{h}(z,q)\in\C[[z]]}$. We apply the same reasoning on the other entries of~$\hat{H}(z,q)F_{1}(z,q)$ to conclude the existence of~$N\in \N$, such that for~$q$ close to~$1$,~${z\mapsto z^{N}\hat{H}(z,q)F_{1}(z,q)\in\mathrm{M}_{m}\Big(\C[[z]]\Big)}$.
\end{trivlist}
\end{proof}
 
\pagebreak[3]
\subsection{Confluence of the Stokes matrices}\label{4sec83}
In this subsection, we combine Theorems~\ref{4theo1} and \ref{4theo2}, to prove the convergence of a basis of meromorphic solutions of a family of linear~$\dq$-equations to a basis of meromorphic solutions of the corresponding linear~$\d$-equation. We consider the family of equations 
$$\left\{\begin{array}{lllllllllll}
\D_{q}&:=&b_{m}(z,q)\d_{q}^{m}&+&b_{m-1}(z,q)\d_{q}^{m-1}&+&\dots&+&b_{0}(z,q)\\\\
\widetilde{\D}&:=&\widetilde{b}_{m}(z)\d^{m}&+&\widetilde{b}_{m-1}(z)\d^{m-1}&+&\dots&+&\widetilde{b}_{0}(z),
\end{array}\right.$$
and assume that they satisfy the assumptions \textbf{(H2)} to \textbf{(H6)} of~$\S \ref{4sec82}$ and the two following assumptions:\\
\begin{trivlist}
\item \textbf{(H1')} For all~$i\leq m$,~$z\mapsto b_{i}(z,q),\widetilde{b}_{i}(z)\in \C[z]$.
\item \textbf{(H7)}  Every entry~$\hat{h}$ of the matrix~$z^{N}\hat{H}(z,q)F_{1}(z,q)$ given by Theorem~\ref{4theo2} (resp. every entry~$\widetilde{h}$ of the matrix~$z^{N}\widetilde{H}(z)$), satisfies a~family of $\dq$-equations (resp.~$\d$-equation) that verifies the assumptions \textbf{(A2)}  and \textbf{(A3)} detailed~$\S \ref{4sec42}$. 
\end{trivlist}

As in~$\S\ref{4sec82}$, we consider the associated systems:
$$\left\{\begin{array}{lll}
\dq Y(z,q)&=&B(z,q)Y(z,q)\\\\
\d \widetilde{Y}(z)&=&\widetilde{B}(z)\widetilde{Y}(z).
\end{array}\right.$$

The next lemma gives a sufficient condition for the assumption \textbf{(H7)} to be satisfied. See Remark~\ref{4rem2} for the discussion about the cases where the other assumptions are satisfied. 
\pagebreak[3]
\begin{lem}\label{4lem2}
If the~$b_{i}(z,q)$ are independent of~$q$, and if \textbf{(H1')}, \textbf{(H2)} to \textbf{(H6)} hold, then \textbf{(H7)} is satisfied.
\end{lem}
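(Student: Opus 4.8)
The plan is to exploit the hypothesis that the $b_i$ do not depend on $q$ in order to reduce the whole statement to a confluence estimate for the normal-form matrix $N(z,q)$ produced by Theorem~\ref{4theo2}. First I would record the crucial simplification: if $b_i(z,q)=b_i(z)$ for all $q$, then by \textbf{(H2)} we have $\widetilde b_i=b_i$, and the companion matrices attached to $\D_q$ and $\widetilde\D$ coincide, so that $B(z,q)=\widetilde B(z)$ \emph{exactly} — the map $y\mapsto(y,\dq y,\dots,\dq^{m-1}y)$ produces a companion matrix whose entries $-b_i/b_m$ are independent of $q$, and the same entries arise for the $\d$-operator. Consequently, in the relations (\ref{4eq1}) and (\ref{4eq8}) the only genuinely $q$-dependent data are $N(z,q)$ and the explicit factors $(q-1)$, and verifying \textbf{(A3)} becomes the problem of bounding $N(z,q)-\widetilde N(z)$.

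For \textbf{(A3)} I would compute $N(z,q)=\bigl(\dq U(z,q)\bigr)U(z,q)^{-1}$ block by block from the explicit factorisation $U=F_2\,\mathrm{Diag}_i\bigl(\L_{q,\mathrm{Id}+(q-1)\widetilde L_i}W_2\bigr)$ of Theorem~\ref{4theo2}. The logarithmic $\dq$-derivative of $\L_{q,\mathrm{Id}+(q-1)\widetilde L_i}$ is \emph{identically} $\widetilde L_i$, since $\sq\L_{q,A}=A\L_{q,A}$ yields $(\dq\L_{q,A})\L_{q,A}^{-1}=(A-\mathrm{Id})/(q-1)$; the logarithmic $\dq$-derivative of each $e_{q^j}(\widetilde\l_{i,j}z^{-j})$ equals $-\widetilde\l_{i,j}[j]_q q^{-j}z^{-j}/(1+(q-1)\widetilde\l_{i,j}[j]_q q^{-j}z^{-j})$, which, using $[j]_q=j+O(q-1)$ and $q^{-j}=1+O(q-1)$, differs from its limit $-j\widetilde\l_{i,j}z^{-j}$, the logarithmic $\d$-derivative of $e^{\widetilde\l_{i,j}z^{-j}}$, by a term of size $O(q-1)$ on every compact of $\C^*$; and the contribution of $F_2\in\mathcal O_m^*$ tends to $0$ with an explicit rate $O(q-1)$ read off from the $e_{q^d}$, $e_q$ building blocks in the proof of Lemma~\ref{4lem1} (each of which carries a power of $(q-1)$). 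Assembling these gives $N(z,q)=\widetilde N(z)+(q-1)R(z,q)$ with $R$ rational in $z$ and bounded in $q$ near $1$. Feeding $B=\widetilde B$ and this estimate into the elimination that turns the matrix relations (\ref{4eq1})--(\ref{4eq8}) into scalar $\dq$- and $\d$-equations for the chosen entries, the coefficients are rational functions of the entries of $B,N$ and of $(q-1)$; clearing denominators $q$-uniformly and using \textbf{(H1')} to retain polynomial $z$-dependence, one obtains coefficients $b_i(z,q)\in\C[z]$ with $|b_i(z,q)-\widetilde b_i(z)|<(q-1)c_1(|\widetilde b_i(z)|+1)$, which is \textbf{(A3)}.

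For \textbf{(A2)} I would argue that the positive slopes are carried entirely by the exponential factors. In the differential normal form the $i$-th block contributes $e^{\widetilde\l_i(z)}$ with $\widetilde\l_i=\sum_{j=1}^{k_i}\widetilde\l_{i,j}z^{-j}$, so the slopes of the $\d$-equation satisfied by an entry of $z^N\widetilde H$ are the pole orders $j$ occurring in the various $\widetilde\l_i$ and their differences, all in $\N$ by \textbf{(H4)}. In the $q$-difference normal form the \emph{same} block, aligned by \textbf{(H5)}, contributes $\prod_j e_{q^j}(\widetilde\l_{i,j}z^{-j})$, and each $e_{q^j}(\cdot\,z^{-j})$ solves a $\dq$-equation whose Newton polygon has slope $j$; hence the positive slopes of the $\dq$-equation satisfied by the corresponding entry of $z^N\hat H F_1$ are again exactly these integers. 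The remaining factors $\L_{q,\mathrm{Id}+(q-1)\widetilde L_i}$ (the $q$-analogue of $z^{\widetilde L_i}$) and the $\T_q$-type denominators contribute only slope $0$ on the $q$-side and have no positive-slope differential counterpart, so they are invisible to a comparison that by definition concerns only positive slopes. Independence of the slopes on $q$ follows because the block sizes, the integers $k_i$, $\mu_i$, and the alignment furnished by \textbf{(H5)} are fixed for $q$ near $1$; this establishes \textbf{(A2)} and completes the verification of \textbf{(H7)}.

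The main obstacle will be \textbf{(A2)}: the slopes on the two sides are defined through genuinely different Newton polygons (a $\sq$-polygon versus a $\d$-polygon), so the heart of the argument is to show that the slope-$j$ contribution of the discrete exponential $e_{q^j}(\cdot\,z^{-j})$ matches the slope-$j$ contribution of $e^{\widetilde\l_{i,j}z^{-j}}$, while the purely $q$-difference slope-$0$ data ($\L_q$ and $\T_q$) are correctly quarantined. Keeping the estimate \textbf{(A3)} uniform after the $q$-dependent clearing of denominators is the secondary technical point.
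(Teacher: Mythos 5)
Your skeleton for \textbf{(A3)} is essentially the paper's: since the $b_i$ do not depend on $q$ we have $B(z,q)=\widetilde B(z)$, the relation (\ref{4eq9}) derived from (\ref{4eq1}) reduces everything to comparing $N(z,q)$ with $\widetilde N(z)$, and the $(q-1)$-rate is read off from the explicit building blocks of Theorem~\ref{4theo2} before clearing denominators. The paper states this more tersely (it simply invokes (\ref{4eq9}) and asserts the existence of the $d_i(z,q)$ with the required bound), so your block-by-block computation is a legitimate, more detailed version of the same step.

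The problem is \textbf{(A2)}. You read the positive slopes off the factors $e_{q^j}\left(\widetilde\l_{i,j}z^{-j}\right)$, but these are not the slope-carrying data of the $q$-difference module: the scalar equation satisfied by an entry of $z^N\hat H F_1$ comes from the ``Hom'' system between $\mathrm{Id}+(q-1)B$ and $\mathrm{Id}+(q-1)N$, and since $\mathrm{Id}+(q-1)N$ is equivalent over $\C\{z\}$ (via $F_1$) to $\mathrm{Diag}\big(B_i(q)z^{-\mu_i}\big)$, its positive slopes are differences of the integers $\mu_i$ --- not the pole orders $j$ of the grafted $q$-exponentials (each $e_{q^j}(az^{-j})$ in fact generates a rank-one module of slope $-j$ in the paper's conventions, since its $\sq$-multiplier has $z$-valuation $+j$). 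Identifying $\{\mu_i-\mu_{i'}\}_{>0}$ with the pole orders of the differences $\widetilde\l_i-\widetilde\l_{i'}$ is precisely what \textbf{(A2)} asserts and is not automatic: Remark~\ref{4rem2}~(6) warns that the slopes of $\D_q$ and $\widetilde\D$ may differ, and the hypergeometric series of $\S\ref{4sec7}$ show that \textbf{(A2)} genuinely fails when the coefficients depend on $q$. Indeed, your paragraph on \textbf{(A2)} never uses the hypothesis that the $b_i$ are independent of $q$, which is the whole content of the lemma. The paper's route avoids all of this: because $b_i(z,q)=\widetilde b_i(z)$, the $z$-valuations of the coefficients of the scalar equations obtained from (\ref{4eq9}) are independent of $q$ and equal on both sides, and the positive slopes are determined by these valuations alone. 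You should replace your normal-form argument by this valuation count.
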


\begin{proof}
The matrix~$z^{N}\widetilde{H}(z)$ satisfies the equation
$$\d \left(z^{N}\widetilde{H}(z)\right)=\widetilde{B}(z)z^{N}\widetilde{H}(z)-z^{N}\widetilde{H}(z)\left(\widetilde{N}(z)-N\times \mathrm{Id}\right),$$
where~${\widetilde{N}(z)=\mathrm{Diag}\left(\widetilde{L}_{i}+\d \widetilde{\l}_{i}(z)\times\mathrm{Id}_{m_{i}}\right)}$ has entries in~$\C[z^{-1}]$.
From (\ref{4eq1}), we obtain 
\begin{equation}\label{4eq9}
\begin{array}{ll}
&\dq\left( z^{N}\hat{H}(z,q)F_{1}(z,q)\right)\Big( \mathrm{Id}+(q-1)N(z,q)\Big)\\\\
=&q^{N}\widetilde{B}(z)z^{N}\hat{H}(z,q)F_{1}(z,q)
-z^{N}\hat{H}(z,q)F_{1}(z,q)\Big(q^{N}N(z,q)-[N]_{q}\times \mathrm{Id}\Big),
\end{array}\end{equation}
where~$N(z,q)$ converges to~$\widetilde{N}(z)$.
Let~$\hat{h}(z,q)$ be an entry of~$z^{N}\hat{H}(z,q)F_{1}(z,q)$  and let~$\widetilde{h}(z)$ be the corresponding entry of~$z^{N}\widetilde{H}(z)$. Using (\ref{4eq9}), we obtain the existence of~${r\in \N^{*}}$,~${z\mapsto d_{1}(z,q),\dots,d_{r}(z,q),\widetilde{d}_{1},\dots,\widetilde{d}_{r}\in \C[z]}$,~$c>0$, such that for all~$i\leq r$, for all~$q>1$ sufficiently close to~$1$,~$\left|d_{i}(z,q)-\widetilde{d}_{i}(z)\right|<(q-1)c\left(\left|\widetilde{d}_{i}(z)\right|+1\right)$,  and such that 
$$
\left\{\begin{array}{lllllllll}
\displaystyle\sum_{i\leq r}d_{i}(z,q)\d_{q}^{i}\Big(\hat{h}(z,q)\Big)&=&0\\\\
\displaystyle\sum_{i\leq r}\widetilde{d}_{i}(z)\d^{i}\left(\widetilde{h}(z)\right)&=&0.
\end{array}\right.$$
In particular,~$\hat{h}$ satisfies the assumptions \textbf{(A1)} and \textbf{(A3)}, with formal limit the formal power series~$\widetilde{h}(z)$. \par 
Moreover, the~$z$-valuations of the~$b_{i}(z,q)$ are independent of~$q$ and are equal to the~$z$-valuations of the~$\widetilde{b}_{i}(z)$. Therefore, the~$z$-valuations of the~$d_{i}(z,q)$ are independent of~$q$ and are equal to the~$z$-valuations of the~$\widetilde{d}_{i}(z)$. Since the slopes of the equation depend only on the~$z$-valuation, we obtain that~$\hat{h}$ satisfies the assumption \textbf{(A2)}, with formal limit the formal power series~$\widetilde{h}(z)$.
\end{proof}

We recall that if~$\widetilde{D}(z)\in \mathrm{M}_{m}\Big(\C(z)\Big)$, we define ~$ \mathbf{S}^{1} \left(\widetilde{D}(z)\right)$ as the union of the~$\R_{\geq 1}x_{i}$, where~$x_{i}$ are the poles of~$\widetilde{D}(z)$.
Let~$\Sigma_{\widetilde{H}}$ be the union of the~$\Sigma_{\widetilde{h}_{i,j}}$, that have been defined in~$\S \ref{4sec62}$, Step 1, where~$\widetilde{h}_{i,j}$ are the entries of~$\widetilde{H}$. Due to \textbf{(H7)}, we may apply Theorem~\ref{4theo1} to the divergent entries of 
~$z^{N}\hat{H}(z,q)F_{1}(z,q)$ and~$z^{N}\widetilde{H}(z)$. Using additionally Remark~\ref{4rem7},~$(2)$, and the reasoning in $\S \ref{4sec62}$, Step~$4$, we may prove a similar result for the convergent entries, and we find the existence of~$k\in \N^{*}$, such that for all~$d\in \R\setminus\Sigma_{\widetilde{H}}$,
$$\lim\limits_{q \to 1} \mathcal{S}_{q}^{[d]}\left(z^{N}\hat{H}F_{1}\right) 
=\widetilde{\mathcal{S}}^{d}\left(z^{N}\widetilde{H}\right),$$
uniformly on the compacts of~${\overline{S}\left(d-\frac{\pi}{2k},d+\frac{\pi}{2k}\right)\setminus   \mathbf{S}^{1}\left(\widetilde{B}(z)\right)}$. 
From Theorem~\ref{4theo1} and Theorem~\ref{4theo2}, there exists~$F_{2}(z,q)\in \mathcal{O}_{m}^{*}$, such that
$$
\begin{array}{r}
\P_{0}^{[d]}(z,q):=z^{-N}\mathcal{S}_{q}^{[d]}\left(z^{N}\hat{H}F_{1}\right) F_{2}(z,q)\mathrm{Diag}_{i}\left(\L_{q,\mathrm{Id}+(q-1)\widetilde{L}_{i}}\displaystyle \displaystyle \prod_{j=1}^{k_{i}} e_{q^{j}}\left(\widetilde{\l}_{i,j}z^{-j}\times\mathrm{Id}_{m_{i}}\right)\right)\\\\
\in \mathrm{GL}_{m}\Big(\mathcal{M}(\C^{*},0)\Big),\end{array}$$ 
 is a fundamental solution of~$\dq Y(z,q)=B(z,q)Y(z,q)$. From~$\S \ref{4sec1}$, we recall that 
$$\widetilde{\P}_{0}^{d}(z):=z^{-N}\widetilde{\mathcal{S}}^{d}\left(z^{N}\widetilde{H}(z)\right)\mathrm{Diag}\left(e^{\widetilde{L}_{i}\log(z)+\widetilde{\l}_{i}(z)\times\mathrm{Id}_{m_{i}}}\right)
\in \mathcal{A}\left(d-\frac{\pi}{2k},d+\frac{\pi}{2k}\right)$$
is a fundamental solution of~$\d \widetilde{Y}(z)=\widetilde{B}(z)\widetilde{Y}(z)$.
\pagebreak[3]
\begin{lem}\label{4lem3}
We have$$\lim\limits_{q \to 1}\P_{0}^{[d]}(z,q)=\widetilde{\P}_{0}^{d}(z),$$
uniformly on the compacts of~$\overline{S}\left(d-\frac{\pi}{2k},d+\frac{\pi}{2k}\right)\setminus   \left(\mathbf{S}^{1}\left(\widetilde{B}(z)\right)\bigcup \{\R_{<0}\}\right)$.
\end{lem}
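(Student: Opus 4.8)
The plan is to read off $\P_{0}^{[d]}(z,q)$ as a product of three matrix factors and to establish the uniform convergence of each factor separately on the relevant domain, the convergence of the product then following by a routine estimate. Concretely, I would write
$$\P_{0}^{[d]}(z,q)=\Big(z^{-N}\mathcal{S}_{q}^{[d]}\left(z^{N}\hat{H}F_{1}\right)\Big)F_{2}(z,q)\,\mathrm{Diag}_{i}\left(\L_{q,\mathrm{Id}+(q-1)\widetilde{L}_{i}}\prod_{j=1}^{k_{i}}e_{q^{j}}\left(\widetilde{\l}_{i,j}z^{-j}\times\mathrm{Id}_{m_{i}}\right)\right),$$
and compare it factor by factor with the analogous decomposition $\widetilde{\P}_{0}^{d}(z)=\Big(z^{-N}\widetilde{\mathcal{S}}^{d}(z^{N}\widetilde{H})\Big)\cdot\mathrm{Id}\cdot\mathrm{Diag}\left(e^{\widetilde{L}_{i}\log(z)+\widetilde{\l}_{i}(z)\times\mathrm{Id}_{m_{i}}}\right)$, where $F_{2}$ is the matrix provided by Theorem~\ref{4theo2}.

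For the first factor, the convergence displayed just before the statement of the lemma gives $\mathcal{S}_{q}^{[d]}(z^{N}\hat{H}F_{1})\to\widetilde{\mathcal{S}}^{d}(z^{N}\widetilde{H})$ uniformly on the compacts of $\overline{S}\left(d-\frac{\pi}{2k},d+\frac{\pi}{2k}\right)\setminus\mathbf{S}^{1}(\widetilde{B}(z))$; since $0$ is not a point of $\widetilde{\C}$, every such compact is bounded away from $0$, so multiplication by the fixed function $z^{-N}$ preserves the uniform convergence. For the second factor, $F_{2}(z,q)\in\mathcal{O}_{m}^{*}$ converges uniformly to $\mathrm{Id}$ on the compacts of $\C^{*}$. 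For the third factor, Remark~\ref{4rem4} (together with Lemma~\ref{4lem7} and the computation on Page~1048 of \cite{S00} quoted there) gives the uniform convergence to $\mathrm{Diag}\left(e^{\widetilde{L}_{i}\log(z)+\widetilde{\l}_{i}(z)\times\mathrm{Id}_{m_{i}}}\right)$ on the compacts of $\C^{*}\setminus\R_{<0}$.

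Finally I would combine these on a compact $K$ of the intersection of the three domains, which is precisely $\overline{S}\left(d-\frac{\pi}{2k},d+\frac{\pi}{2k}\right)\setminus\left(\mathbf{S}^{1}(\widetilde{B}(z))\cup\{\R_{<0}\}\right)$. On $K$ each of the three limit factors is continuous, hence bounded, so for $q$ close to $1$ each factor is itself uniformly bounded on $K$; writing the difference $\P_{0}^{[d]}-\widetilde{\P}_{0}^{d}$ as a telescoping sum (replacing one factor at a time by its limit) and using the uniform bounds together with the three uniform convergences yields $\lim_{q\to 1}\P_{0}^{[d]}(z,q)=\widetilde{\P}_{0}^{d}(z)$ uniformly on $K$, as claimed.

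The only genuine point to watch, rather than a serious obstacle, is the bookkeeping of the three domains and the origin of the excluded set: the sector and the half-lines $\mathbf{S}^{1}(\widetilde{B}(z))$ come from the first factor (the $q$-Borel--Laplace summation of Theorem~\ref{4theo1}), whereas the exclusion of $\R_{<0}$ is forced by the third factor, whose theta-function part $\L_{q,\cdot}$ degenerates along the $q$-spiral $-q^{\Z}$ that accumulates on $\R_{<0}$ as $q\to 1$. Once the three convergences are placed on their correct domains and uniform boundedness on $K$ is recorded, the product estimate is entirely routine.
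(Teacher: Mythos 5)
Your proposal is correct and follows essentially the same route as the paper: the paper likewise reduces the claim to the convergence of the three factors separately, invoking the preceding discussion for the summed factor, the definition of $\mathcal{O}_{m}^{*}$ for $F_{2}$, and Sauloy's computation together with Lemma~\ref{4lem7} for the $\L_{q,\cdot}$ and $q$-exponential part. The only difference is that you spell out the telescoping product estimate that the paper leaves implicit.
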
 
\begin{proof}
Due to the preceding discussion and the definition of $\mathcal{O}_{m}^{*}$, we only have to prove the convergence$$\lim\limits_{q \to 1}\mathrm{Diag}_{i}\left(\L_{q,\mathrm{Id}+(q-1)\widetilde{L}_{i}}\displaystyle \displaystyle \prod_{j=1}^{k_{i}} e_{q^{j}}\left(\widetilde{\l}_{i,j}z^{-j}\times\mathrm{Id}_{m_{i}}\right)\right)
=\mathrm{Diag}\left(e^{\widetilde{L}_{i}\log(z)+\widetilde{\l}_{i}(z)\times\mathrm{Id}_{m_{i}}}\right).$$  The fact that$$\lim\limits_{q \to 1}\mathrm{Diag}\left(\L_{q,\mathrm{Id}+(q-1)\widetilde{L}_{i}}\right)=\mathrm{Diag}\left(e^{\widetilde{L}_{i}\log(z)}\right),$$
uniformly on the compacts of a convenient domain has been proved in a more generalize case in Page 1048 of \cite{S00}. See Lemma~\ref{4lem7}, for the convergence of the~$q$-exponential part. 
\end{proof}

Let~$d^{-}<d^{+}$ with~$d^{\pm}\in \R\setminus\Sigma_{\widetilde{H}}$, so that we can define~$\P_{0}^{[d^{\pm}]}(z,q)$. We define the~$q$-Stokes matrix~${ST^{[d^{-}],[d^{+}]}(z,q)\in \mathrm{GL}_{m}\Big(\mathcal{M}_{\mathbb{E}}\Big)}$ (we recall that $\mathcal{M}_{\mathbb{E}}$ is the field of functions invariant under the action of~$\sq$, see the introduction) as follows:
$$\P_{0}^{[d^{+}]}(z,q)=\P_{0}^{[d^{-}]}(z,q)ST^{[d^{-}],[d^{+}]}(z,q).$$
Let~$d-\frac{\pi}{2k}<d^{-}<d<d^{+}<d+\frac{\pi}{2k}$ such that$$\Big(\left[d^{-},d\right[ \displaystyle\bigcup \left]d,d^{+}\right]\Big)\bigcap \Sigma_{\widetilde{H}}=\varnothing.$$ Let us recall that by construction,~$\Sigma_{\widetilde{H}}$ contains~$\widetilde{\Sigma}_{\widetilde{H}}$, the set of singular directions that has been defined in Proposition \ref{4propo5}.  Therefore, following~$\S \ref{4sec1}$, we may define the Stokes matrix in the direction~$d$,~${\widetilde{ST}^{d}\in\mathrm{GL}_{m}(\C)}$, as follows:
$$\widetilde{\P_{0}}^{d^{+}}(z)=\widetilde{\P_{0}}^{d^{-}}(z)\widetilde{ST}^{d}.$$
\pagebreak[3]
\begin{rem}
If~$d$ is not a singular direction (see Proposition~\ref{4propo5}), then although~${\widetilde{ST}^{d}=\mathrm{Id}}$, the entries of~$\widetilde{H}(z)$ might be divergent. In fact, see \cite{VdPS} Page~247, the entries of~$\widetilde{H}(z)$ are convergent if and only if~$\widetilde{ST}^{d}=\mathrm{Id}$ for all~$d\in \R$. On the other hand, the principle of analytic continuation implies that if~$ST^{[d^{-}],[d^{+}]}(z,q_{0})=  \mathrm{Id}$ for some~$d^{-}<d^{+}$ and for some~${q_{0}>1}$, then~${z\mapsto z^{N}\hat{H}(z,q_{0})F_{1}(z,q_{0})\in \mathrm{M}_{m}\Big(\C\{z\}\Big)}$.
\end{rem}
Using Lemma~\ref{4lem3}, we prove:
\pagebreak[3]
\begin{theo}\label{4theo4}
Let~$d-\frac{\pi}{2k}<d^{-}<d<d^{+}<d+\frac{\pi}{2k}$ such that$$\Big(\Big[d^{-},d\Big[ \displaystyle\bigcup \Big]d,d^{+}\Big]\Big)\bigcap \Sigma_{\widetilde{H}}=\varnothing.$$ Then, for~$q$ close to~$1$, we can define~$ST^{[d^{-}],[d^{+}]}(z,q)$ and we have$$\lim\limits_{q \to 1}ST^{[d^{-}],[d^{+}]}(z,q)=\widetilde{ST}^{d},$$
uniformly on the compacts of~$\overline{S}\left(d-\frac{\pi}{2k},d+\frac{\pi}{2k}\right)\setminus   \left(\mathbf{S}^{1}\left(\widetilde{B}(z)\right)\bigcup \{\R_{<0}\}\right)$.
\end{theo}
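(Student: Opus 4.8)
The plan is to reduce the statement entirely to Lemma~\ref{4lem3} by writing both Stokes matrices as quotients of fundamental solutions. By the very definition of the $q$-Stokes matrix we have $ST^{[d^-],[d^+]}(z,q)=\big(\P_0^{[d^-]}(z,q)\big)^{-1}\P_0^{[d^+]}(z,q)$, and in the differential case $\widetilde{ST}^{d}=\big(\widetilde{\P}_0^{d^-}(z)\big)^{-1}\widetilde{\P}_0^{d^+}(z)$, the right-hand side being constant in $z$. (For $q$ close to $1$ these expressions make sense because $\P_0^{[d^\pm]}(z,q)\in\mathrm{GL}_m\big(\mathcal{M}(\C^*,0)\big)$ are genuine fundamental solutions, as established in $\S\ref{4sec83}$.) Thus it suffices to pass to the limit $q\to 1$ in each of the three factors and invoke the continuity of matrix inversion and of multiplication.

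First I would secure the domain on which both $\P_0^{[d^\pm]}$ converge simultaneously. Applied to the direction $d^-$, Lemma~\ref{4lem3} yields convergence on compacts of $\overline{S}\big(d^--\tfrac{\pi}{2k},d^-+\tfrac{\pi}{2k}\big)\setminus\big(\mathbf{S}^{1}(\widetilde{B}(z))\cup\R_{<0}\big)$. Since $[d^-,d[$ meets no element of $\Sigma_{\widetilde{H}}$, the summations $\mathcal{S}_q^{[d^-]}$ and $\widetilde{\mathcal{S}}^{d^-}$ are independent of the chosen direction as it runs through $[d^-,d[$; hence $\P_0^{[d^-]}$ and $\widetilde{\P}_0^{d^-}$ continue analytically (resp. meromorphically) and the convergence persists on $\overline{S}\big(d^--\tfrac{\pi}{2k},d+\tfrac{\pi}{2k}\big)$. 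Symmetrically, since $]d,d^+]\cap\Sigma_{\widetilde{H}}=\varnothing$, the matrix $\P_0^{[d^+]}(z,q)$ converges to $\widetilde{\P}_0^{d^+}(z)$ on $\overline{S}\big(d-\tfrac{\pi}{2k},d^++\tfrac{\pi}{2k}\big)$. The intersection of these two sectors is precisely $\overline{S}\big(d-\tfrac{\pi}{2k},d+\tfrac{\pi}{2k}\big)$, which is non-empty because the hypotheses force $d^+-d^-<\tfrac{\pi}{k}$.

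It then remains to carry out the limit. Fix a compact $K\subset\overline{S}\big(d-\tfrac{\pi}{2k},d+\tfrac{\pi}{2k}\big)\setminus\big(\mathbf{S}^{1}(\widetilde{B}(z))\cup\R_{<0}\big)$. On $K$ the limit $\widetilde{\P}_0^{d^-}(z)$ is a fundamental solution, so its determinant is bounded away from $0$; consequently, for $q$ close to $1$, $\P_0^{[d^-]}(z,q)$ is invertible on $K$ and, by continuity of $A\mapsto A^{-1}$ together with the uniform convergence $\P_0^{[d^-]}(z,q)\to\widetilde{\P}_0^{d^-}(z)$, one gets $\big(\P_0^{[d^-]}(z,q)\big)^{-1}\to\big(\widetilde{\P}_0^{d^-}(z)\big)^{-1}$ uniformly on $K$. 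Multiplying this uniformly bounded, uniformly convergent family by the uniformly convergent family $\P_0^{[d^+]}(z,q)\to\widetilde{\P}_0^{d^+}(z)$ gives $ST^{[d^-],[d^+]}(z,q)\to\widetilde{ST}^{d}$ uniformly on $K$, which is the assertion. Note that this is consistent with $ST^{[d^-],[d^+]}(z,q)\in\mathrm{GL}_m(\mathcal{M}_{\mathbb{E}})$ degenerating to the constant $\widetilde{ST}^{d}\in\mathrm{GL}_m(\C)$.

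I expect the only genuine difficulty to be the domain bookkeeping of the second paragraph: one must guarantee that the two families $\P_0^{[d^\pm]}(z,q)$, furnished a priori by Theorem~\ref{4theo1} and Lemma~\ref{4lem3} only on the basic sectors centred at $d^\pm$, are defined and convergent on the common sector centred at $d$, and that neither the pole locus $\mathbf{S}^{1}(\widetilde{B}(z))$ nor the cut $\R_{<0}$ (inherited from the $\L_{q,\cdot}$-factors in Lemma~\ref{4lem3}) obstructs the overlap. This is exactly what the hypothesis $\big([d^-,d[\,\cup\,]d,d^+]\big)\cap\Sigma_{\widetilde{H}}=\varnothing$ is designed to provide, through the direction-independence of Borel--Laplace summation away from singular directions; once the overlap is established, the algebra of inverses and products is routine.
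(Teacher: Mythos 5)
Your overall architecture is exactly the paper's: the proof given there is literally ``Using Lemma~\ref{4lem3}'', i.e.\ write $ST^{[d^{-}],[d^{+}]}(z,q)=\big(\P_{0}^{[d^{-}]}(z,q)\big)^{-1}\P_{0}^{[d^{+}]}(z,q)$ and $\widetilde{ST}^{d}=\big(\widetilde{\P}_{0}^{d^{-}}(z)\big)^{-1}\widetilde{\P}_{0}^{d^{+}}(z)$, apply Lemma~\ref{4lem3} in the two directions $d^{\pm}$, and conclude by continuity of inversion and multiplication on compacts where the limit is invertible. That part of your argument is fine and is what the author intends.

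There is, however, one assertion in your domain bookkeeping that is false in the $q$-setting: you claim that $\mathcal{S}_{q}^{[d']}$ is ``independent of the chosen direction as it runs through $[d^{-},d[$'' because no singular direction is crossed. This is true for the classical summation $\widetilde{\mathcal{S}}^{d'}$ (and the paper quotes the corresponding extension $\widetilde{S}^{d}\big(\widetilde{h}\big)\in\mathcal{A}\big(d_{l}-\frac{\pi}{2k_{r}},d_{l+1}+\frac{\pi}{2k_{r}}\big)$ from \cite{VdPS}), but it fails for the discrete $q$-Laplace transformation: $\mathcal{L}_{q,1}^{[d']}$ is a Jackson sum over the discrete spiral $q^{\Z}e^{id'}$, and two distinct directions $d'\neq d''$ produce genuinely different meromorphic functions, with poles on the distinct $q$-spirals $(q-1)[d'+\pi]$ and $(q-1)[d''+\pi]$, even when no singular direction separates them. (This non-locally-constant dependence on the spiral is precisely why the $q$-Stokes matrices live in $\mathrm{GL}_{m}(\mathcal{M}_{\mathbb{E}})$ rather than being locally constant.) Your use of this claim is load-bearing: without it you only obtain the simultaneous convergence of $\P_{0}^{[d^{-}]}$ and $\P_{0}^{[d^{+}]}$ on the intersection $\overline{S}\big(d^{+}-\frac{\pi}{2k},d^{-}+\frac{\pi}{2k}\big)$, which is strictly smaller than the sector $\overline{S}\big(d-\frac{\pi}{2k},d+\frac{\pi}{2k}\big)$ asserted in the theorem. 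The correct repair is different in nature: for fixed $q$ no angular continuation is needed at all, since by Theorem~\ref{4theo1} and Lemma~\ref{4lem4} the entries of $\P_{0}^{[d^{\pm}]}(z,q)$ are already meromorphic on a punctured neighborhood of $0$ in $\C^{*}$; what must be enlarged is the domain of \emph{uniform convergence}, and that is obtained by re-running the propagation argument of Proposition~\ref{4propo3} (as in Step 4 of $\S\ref{4sec62}$) on a compact of the larger sector, not by invoking direction-independence of the $q$-summation. To be fair, the paper itself elides this point entirely, so your proposal is no less complete than the published argument --- but the specific justification you offer for the overlap is the one step that does not survive scrutiny.
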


\pagebreak[3]
\subsection{Confluence to the monodromy}\label{4sec84}
In this subsection, we show how a basis of meromorphic solutions of a family of linear~$\dq$-equations at~$0$ and at~$\infty$ can help us to find the monodromy matrices of the corresponding differential equation. We consider the family of equations 
$$\left\{\begin{array}{lllllllll}
\D_{q}&:=&b_{m}(z,q)\d_{q}^{m}&+&b_{m-1}(z,q)\d_{q}^{m-1}&+&\dots&+&b_{0}(z,q)\\\\
\widetilde{\D}&:=&\widetilde{b}_{m}(z)\d^{m}&+&\widetilde{b}_{m-1}(z)\d^{m-1}&+&\dots&+&\widetilde{b}_{0}(z),
\end{array}\right.$$
that satisfies the assumptions \textbf{(H1')},\textbf{(H2)} to \textbf{(H7)} of~$\S \ref{4sec82}$,~$\S\ref{4sec83}$ and the following assumptions:\\

\begin{trivlist}
\item \textbf{(H8)} The zeros of~$\widetilde{b}_{m}(z)$ have different arguments and there is no zero which has an argument equal to~$\pi$. 
\item \textbf{(H9)} The assumptions \textbf{(H1')},\textbf{(H2)} to \textbf{(H8)} are satisfied with the linear~$\dq$ and~$\d$-equation at infinity, obtained by considering~$z\mapsto z^{-1}$. 
\end{trivlist}

As in~$\S\ref{4sec82}$,~$\S\ref{4sec83}$, we consider the associated systems:
$$\left\{\begin{array}{lll}
\dq Y(z,q)&=&B(z,q)Y(z,q)\\\\
\d \widetilde{Y}(z)&=&\widetilde{B}(z)\widetilde{Y}(z).
\end{array}\right.$$
\par 
Let~$d\in \R\setminus\Sigma_{\widetilde{H}}$. Due to Lemma~\ref{4lem3}, there exists~$k\in \N^{*}$ such that for we have $$\lim\limits_{q \to 1}\P_{0}^{[d]}(z,q)=\widetilde{\P}_{0}^{d}(z),$$ 
uniformly on the compacts of$$\widetilde{\Omega}_{0}:=\overline{S}\left( d-\frac{\pi}{2k},d+\frac{\pi}{2k}\right)\setminus \left(\mathbf{S}^{1}\left(\widetilde{B(z)}\right)\bigcup \R_{<0}\right).$$ 
\par 
We are now interested in the domain of definition of the fundamental solution~$\P_{0}^{[d]}(z,q)$ for~$q$ close to~$1$ fixed.  We recall that if~$D(z)\in \mathrm{GL}_{m}\Big(\C(z)\Big)$, we define~$ \mathbf{S}^{q} (D(z))$ as the union of the~$q^{\N^{*}}x_{i}$, where~$x_{i}$ is a pole of~$D(z)$ or~$D^{-1}(z)$.
Following Page~1035 in \cite{S00}, we obtain that~$\L_{q,\mathrm{Id}+(q-1)\mathrm{Diag}\left(\widetilde{L}_{i}\right)}$ has poles contained in a finite number of~$q$-discrete spiral of the form~$q^{\Z}\b_{i}(q)$, that converge to the spiral~$\R_{<0}$ as~$q$ tends to~$1$. By construction, for~$q$ fixed, the domain of definition of the matrices~$\mathcal{S}_{q}^{[d]}\left(z^{N}\hat{H}F_{1}\right)$,~$F_{2}(z,q)$ and~$\mathrm{Diag}_{i}\left(\displaystyle \displaystyle \prod_{j=1}^{k_{i}} e_{q^{j}}\left(\widetilde{\l}_{i,j}z^{-j}\times\mathrm{Id}_{m_{i}}\right)\right)$, intersected with ~$ \overline{S}\left( d-\frac{\pi}{2k},d+\frac{\pi}{2k}\right)$ is~${ \overline{S}\left( d-\frac{\pi}{2k},d+\frac{\pi}{2k}\right)\setminus \left(\mathbf{S}^{q}\left(\mathrm{Id}+(q-1)\widetilde{B}(z)\right)\right)}$. Notice that,~$\mathbf{S}^{q}\left(\mathrm{Id}+(q-1)\widetilde{B}(z)\right)$ tends to~$\mathbf{S}^{1}\left(\widetilde{B}(z)\right)$ as~$q$ goes to~$1$.
We have proved that for~$q$ fixed close to~$1$, the domain of definition of~$\P_{0}^{[d]}(z,q)$ intersected with ~$ \overline{S}\left( d-\frac{\pi}{2k},d+\frac{\pi}{2k}\right)$ is:$$ \overline{S}\left( d-\frac{\pi}{2k},d+\frac{\pi}{2k}\right)\setminus \left(\mathbf{S}^{q}\left(\mathrm{Id}+(q-1)\widetilde{B}(z)\right)\bigcup q^{\Z}\b_{i}(q)\right).$$ \par
\begin{figure}[ht]
\begin{center}
\includegraphics[width=1\linewidth]{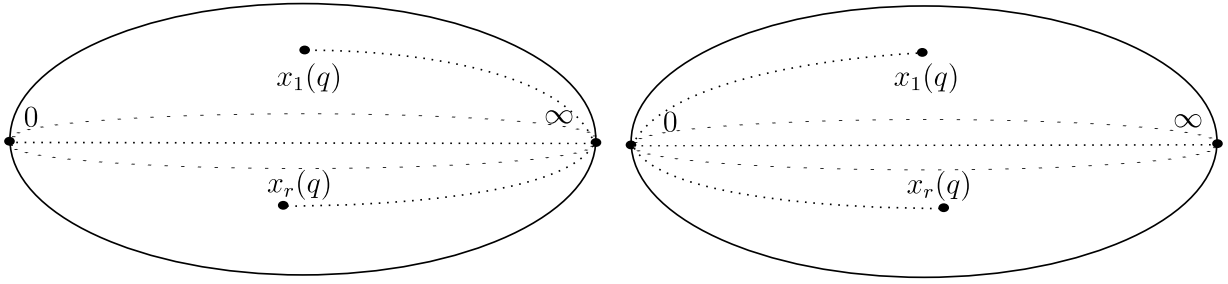}
\caption{Intersection of~$\overline{S}\left(d-\frac{\pi}{2k},d+\frac{\pi}{2k}\right)$ and the domain of definition of~$\P_{0}^{[d]}(z,q)$ (left) and~$\P_{\infty}^{[d]}(z,q)$ (right). \label{4figure2}}
\end{center}
\end{figure}

We consider now the singularity at~$\infty$ putting~$z\mapsto z^{-1}$. After taking a larger set finite modulo~$2\pi\Z$,~$\Sigma_{\widetilde{H}}\subset \R$, we may assume that for all~$d\notin \Sigma_{\widetilde{h}}$, we can also compute a fundamental solution at infinity~$\P_{\infty}^{[d]}(z,q)$ in the same way than~$\P_{0}^{[d]}(z,q)$.
Let~$p=q^{-1}$. Similarly to~$\widetilde{\Omega}_{0}$, let us define~$\widetilde{\Omega}_{\infty}$, such that 
$$
\lim\limits_{q \to 1}\P_{\infty}^{[d]}(z,q)=\widetilde{\P}_{\infty}^{d}(z),$$
uniformly on the compacts of 
~$\widetilde{\Omega}_{\infty}$, 
where~$\widetilde{\P}_{\infty}^{d}(z)$ is the fundamental solution of the linear~$\d$-system at infinity computed with Borel and Laplace transformations. More precisely, there exists~${k'\in \N^{*}}$, such that 
~${\widetilde{\Omega}_{\infty}:=\overline{S}\left(d-\frac{\pi}{2k'},d+\frac{\pi}{2k'}\right)\setminus\Big\{\R_{<0}, t\widetilde{x}_{1},\dots,t\widetilde{x}_{r}\Big|t\in ]0,1]\Big\}}$, where the~$\widetilde{x}_{i}$ satisfies 
~${\widetilde{\Omega}_{0}=\overline{S}\left(d-\frac{\pi}{2k},d+\frac{\pi}{2k}\right)\setminus\left\{\R_{<0},\R_{\geq 1}\widetilde{x}_{1},\dots,\R_{\geq 1}\widetilde{x}_{r}\right\}}$. If we restrict the domain of convergence, we may assume that~$k=k'$.\par 
The Birkhoff matrix~$\left(\P_{\infty}^{[d]}(z,q)\right)^{-1}\P_{0}^{[d]}(z,q)$ is invariant under the action of~$\sq$ and tends to$$\lim\limits_{q \to 1} \left(\P_{\infty}^{[d]}(z,q)\right)^{-1}\P_{0}^{[d]}(z,q)
=\left(\widetilde{\P}_{\infty}^{d}(z)\right)^{-1}\widetilde{\P}_{0}^{d}(z)=:\widetilde{P}^{d},$$
uniformly on the compacts of~$\widetilde{\Omega}_{\infty}\cap \widetilde{\Omega}_{0}$.\par 
Since~$\left(\P_{\infty}^{[d]}(z,q)\right)^{-1}\P_{0}^{[d]}(z,q)$ is invariant under the action of~$\sq$, we obtain that~$\widetilde{P}^{d}$ is locally constant.

\begin{figure}[ht]
\begin{center}
\includegraphics[width=1\linewidth]{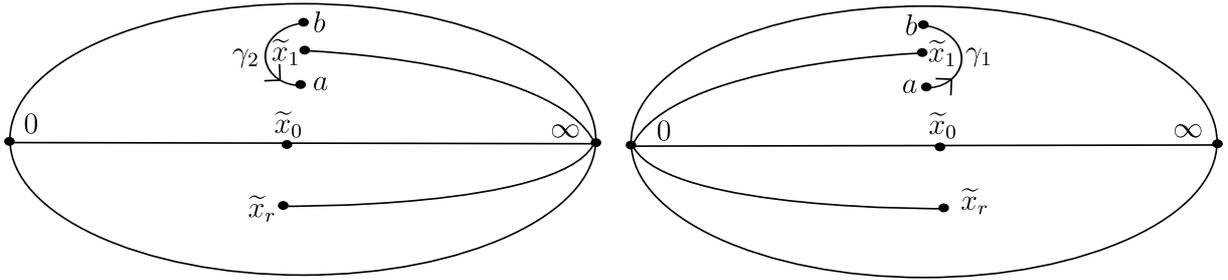}
\caption{Domain of definition of~$\widetilde{\P}_{0}^{d}(z)$ (left) and~$\widetilde{\P}_{\infty}^{d}(z)$ (right).}
\end{center}
\end{figure}

Let~$\widetilde{x}_{0}=-1$. We order the~$\widetilde{x}_{i}$  by increasing arguments in~$\left]d-\frac{\pi}{2k},d+\frac{\pi}{2k}\right[$. The connected component of the domain of definition of~$\widetilde{P}^{d}$ are the~$\widetilde{U}_{j}$, where$$\widetilde{U}_{j}:=\overline{S}\left(d-\frac{\pi}{2k},d+\frac{\pi}{2k} \right)\bigcap \overline{S}\Big(\arg(\widetilde{x}_{j}),\arg(\widetilde{x}_{j+1})\Big) .$$ 
\begin{figure}[ht]
\begin{center}
\includegraphics[width=0.5\linewidth]{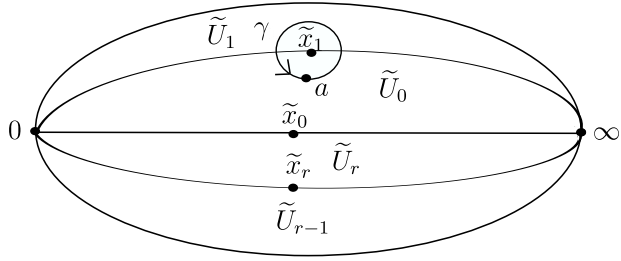}
\caption{Domain of definition of~$\widetilde{P}^{d}$. \label{4figure4}}
\end{center}
\end{figure}
Let~$\widetilde{P}_{j}^{d}\in \mathrm{GL}_{m}(\C)$ be the value of~$\widetilde{P}^{d}$ in~$\widetilde{U}_{j}$. Let us chose~$\widetilde{x}_{j}$ such that~${\widetilde{x}_{j} \in \overline{S}\left(d-\frac{\pi}{2k},d+\frac{\pi}{2k}\right)}$. Let us consider a little positive  path~$\g$ around~$\widetilde{x}_{j}$ starting from~$a\in\widetilde{U}_{j-1}$. We may choose~$\g$ such that we can decompose~$\g$ into~$\g_{1}$ and~$\g_{2}$ such that~$\g_{1}$ comes from~$a$ to~$b\in\widetilde{U}_{j}$ in~$\widetilde{\Omega}_{\infty}$ and~$\g_{2}$ comes from~$b$ to~$a$ in~$\widetilde{\Omega}_{0}$. The analytic continuation along~$\g_{1}$ transforms~$\widetilde{\P}_{0}^{d}(z)$ into~$\widetilde{\P}_{\infty}^{d}(z)\widetilde{P}_{j-1}^{d}$, and the analytic continuation along~$\g_{2}$ transforms~$\widetilde{\P}_{\infty}^{d}(z)$ into~$\widetilde{\P}_{0}^{d}(z)\left(\widetilde{P}_{j}^{d}\right)^{-1}$. We have proved the following theorem, which extends when~$q$ is real, the theorem of the~$\S 4$ in \cite{S00} in the non-Fuchsian case:
\pagebreak[3]
\begin{theo}\label{4theo3}
The monodromy matrix of the~$\d$-equation~$\d \widetilde{Y}(z)=\widetilde{B}(z)\widetilde{Y}(z)$ in the basis~$\widetilde{\P}_{0}^{d}(z)$ around the singularity~$\widetilde{x}_{j}$ is~$\left(\widetilde{P}_{j}^{d}\right)^{-1}\widetilde{P}_{j-1}^{d}$.
\end{theo}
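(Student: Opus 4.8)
The plan is to compute the monodromy by exploiting the two fundamental solutions $\widetilde{\P}_{0}^{d}(z)$ and $\widetilde{\P}_{\infty}^{d}(z)$ together with the fact, just established, that their connection matrix $\widetilde{P}^{d}=\left(\widetilde{\P}_{\infty}^{d}(z)\right)^{-1}\widetilde{\P}_{0}^{d}(z)$ is locally constant, taking the value $\widetilde{P}_{j}^{d}\in\mathrm{GL}_{m}(\C)$ on each connected component $\widetilde{U}_{j}$. Equivalently, on $\widetilde{U}_{j}$ one has $\widetilde{\P}_{0}^{d}(z)=\widetilde{\P}_{\infty}^{d}(z)\widetilde{P}_{j}^{d}$. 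Both matrices solve the same linear $\d$-system with single-valued meromorphic coefficients, so any analytic continuation of either remains a fundamental solution and differs from a fixed one by a constant invertible matrix; this is the bookkeeping device I would use throughout.

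First I would fix a small positively oriented loop $\g$ around the intermediate singularity $\widetilde{x}_{j}$, based at a point $a\in\widetilde{U}_{j-1}$, and split it as $\g=\g_{2}\cdot\g_{1}$, where $\g_{1}$ runs from $a$ to a point $b\in\widetilde{U}_{j}$ inside $\widetilde{\Omega}_{\infty}$ and $\g_{2}$ runs from $b$ back to $a$ inside $\widetilde{\Omega}_{0}$. The decomposition is legitimate because $\widetilde{\Omega}_{0}$ removes the outer ray $\R_{\geq 1}\widetilde{x}_{j}$ while $\widetilde{\Omega}_{\infty}$ removes the inner segment $\{t\widetilde{x}_{j}\mid t\in\,]0,1]\}$: near $\widetilde{x}_{j}$ the two cut loci are complementary, so each half of the loop can be chosen in the domain where the relevant solution is single-valued and analytic.

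Next I would carry out the continuation piecewise. Along $\g_{1}$ the matrix $\widetilde{\P}_{\infty}^{d}$ is single-valued, so starting from the identity $\widetilde{\P}_{0}^{d}=\widetilde{\P}_{\infty}^{d}\widetilde{P}_{j-1}^{d}$ valid at $a$ (local constancy on $\widetilde{U}_{j-1}$), the analytic continuation of $\widetilde{\P}_{0}^{d}$ stays equal to $\widetilde{\P}_{\infty}^{d}(z)\widetilde{P}_{j-1}^{d}$ and in particular reaches $b$. Along $\g_{2}$ the matrix $\widetilde{\P}_{0}^{d}$ is single-valued, and at $b\in\widetilde{U}_{j}$ we have $\widetilde{\P}_{\infty}^{d}=\widetilde{\P}_{0}^{d}\left(\widetilde{P}_{j}^{d}\right)^{-1}$, so continuing transforms $\widetilde{\P}_{\infty}^{d}(z)\widetilde{P}_{j-1}^{d}$ into $\widetilde{\P}_{0}^{d}(z)\left(\widetilde{P}_{j}^{d}\right)^{-1}\widetilde{P}_{j-1}^{d}$. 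Evaluating back at $a$ shows that the loop sends $\widetilde{\P}_{0}^{d}$ to $\widetilde{\P}_{0}^{d}\left(\widetilde{P}_{j}^{d}\right)^{-1}\widetilde{P}_{j-1}^{d}$, i.e.\ the monodromy matrix in the basis $\widetilde{\P}_{0}^{d}$ is $\left(\widetilde{P}_{j}^{d}\right)^{-1}\widetilde{P}_{j-1}^{d}$, as claimed.

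The hard part will be justifying the geometric input rather than the algebra: one must check that $\widetilde{x}_{j}$ is a genuine connection singularity across which $\widetilde{\P}_{\infty}^{d}$ (resp.\ $\widetilde{\P}_{0}^{d}$) extends without its own monodromy, so that the sole obstruction to single-valuedness is the jump of the locally constant matrix $\widetilde{P}^{d}$. Here I would invoke assumption \textbf{(H8)}, which forces the zeros of $\widetilde{b}_{m}$ to have pairwise distinct arguments, none equal to $\pi$; this guarantees that near the direction $\arg(\widetilde{x}_{j})$ exactly one pair of complementary cuts is relevant, that the adjacent sectors $\widetilde{U}_{j-1}$ and $\widetilde{U}_{j}$ are well defined and meet along $\arg(\widetilde{x}_{j})$, and that $\g$ can be shrunk close enough to $\widetilde{x}_{j}$ to avoid every other singular ray.
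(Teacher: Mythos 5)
Your argument is correct and coincides with the paper's own proof: the paper likewise splits a small positive loop around $\widetilde{x}_{j}$ into an arc $\g_{1}$ from $\widetilde{U}_{j-1}$ to $\widetilde{U}_{j}$ inside $\widetilde{\Omega}_{\infty}$ and an arc $\g_{2}$ back inside $\widetilde{\Omega}_{0}$, and uses the local constancy of $\widetilde{P}^{d}$ with values $\widetilde{P}_{j-1}^{d}$ and $\widetilde{P}_{j}^{d}$ to conclude that $\widetilde{\P}_{0}^{d}$ is sent to $\widetilde{\P}_{0}^{d}\left(\widetilde{P}_{j}^{d}\right)^{-1}\widetilde{P}_{j-1}^{d}$. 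Your additional remarks on the complementarity of the cut loci and the role of \textbf{(H8)} only make explicit what the paper leaves implicit.
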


\pagebreak[3]
\appendix
\section{Confluence of a ``continuous''~$q$-Borel-Laplace summation.}

The goal of this appendix is to prove the equivalent of Theorem~\ref{4theo1} for a ``continuous''~$q$-Borel-Laplace summation. We introduce now the ``continuous''~$q$-Laplace transformation. See~$\S\ref{4sec3}$ for the notations.
\pagebreak[3]
\begin{defi}\label{4defi2} Let~$k\in \Q_{>0}$ and let~$d\in \R$. As we can see in \cite{DVZ},~$\S 4.2$, the following maps are defined and we call them the ``continuous''~$q$-Laplace transformation of order~$1$ and~$k$:

$$
\begin{array}{llll}
\mathcal{L}_{q,1}^{d}:& \mathbb{H}_{q,1}^{d}&\longrightarrow&\mathcal{A}(d-\pi,d+\pi)\\
&f&\longmapsto&\frac{q-1}{\log(q)}\displaystyle \int_{0}^{\infty e^{id}}\frac{f(\z)}{ze_{q}\left(\frac{q\z}{z} \right)}d\z,\\\\
\mathcal{L}_{q,k}^{d}:& \mathbb{H}_{q,k}^{d}&\longrightarrow&\displaystyle\bigcup_{\nu=0}^{k-1}\mathcal{A}\left(\frac{2\pi\nu(d-\pi)}{k},\frac{2\pi(\nu+1)(d-\pi)}{k}\right)\\
&g&\longmapsto&\r_{k}\circ \mathcal{L}^{[d]}_{q,1}\circ \r_{1/k}(g).
\end{array}
$$
\end{defi} 

\pagebreak[3]
\begin{rem}
We say that the $q$-Laplace transformation is ``continuous'' because it is defined with a ``continuous'' integral, in opposition to the $q$-Laplace transformation of $\S \ref{4sec3}$, which involves a ``discrete'' Jackson integral. Notice that the term ``continuous''~$q$-Borel-Laplace summation is an abuse of language since the $q$-Borel transformation we use in this summation process is the same as in the ``discrete''~$q$-Borel-Laplace summation.
\end{rem} 
Theorem 4.14 of \cite{DVZ} compares the ``discrete'' and the ``continuous''~$q$-Borel-Laplace summation for the case of formal power series solutions of a linear~$\sq$-equation with coefficients in~$\C(\{z\})$ with only slope~$1$.
The next proposition is the analogue of Proposition~\ref{4propo2} of the present paper. 
\pagebreak[3]
\begin{propo}
Let~$g\in  \mathbb{H}_{q,1}^{d}$. Then 
\begin{itemize}
\item~$ \mathcal{L}_{q,1}^{d}\left(\dq g\right)= \dq\mathcal{L}_{q,1}^{d}\left( g\right)$. \\
\item~$z\mathcal{L}_{q,1}^{d}\left( \dq g\right)= p\mathcal{L}_{q,1}^{d}(\z g)-pz\mathcal{L}_{q,1}^{d}(g)$.
\end{itemize}
\end{propo}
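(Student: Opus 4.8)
The plan is to follow the proof of Proposition~\ref{4propo2}, exploiting the fact that the ``continuous'' kernel $\frac{1}{ze_{q}(q\z/z)}$ interacts with the dilation $\z\mapsto q\z$ through the functional equation of the $q$-exponential, so that both identities collapse to elementary changes of variable in the integral. Throughout I would work on the subdomain of $\overline{S}(d-\pi,d+\pi)$ on which $\mathcal{L}_{q,1}^{d}(g)$ is defined (Definition~\ref{4defi2}, following \cite{DVZ}), and I would use that, since $q>1$ is real, the ray $\R_{>0}e^{id}$ is invariant under the substitutions $\z\mapsto q\z$ and $\z\mapsto q^{-1}\z$.

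For the first identity I would prove the slightly stronger statement that $\mathcal{L}_{q,1}^{d}$ commutes with $\sq$ acting on the variable $z$, from which commutation with $\dq=\frac{\sq-\mathrm{Id}}{q-1}$ is immediate and yields the first bullet. Writing out $\mathcal{L}_{q,1}^{d}(\sq g)(z)$ and setting $\u=q\z$ turns the kernel $e_{q}(q\z/z)$ into $e_{q}(\u/z)$ and produces a factor $1/q$; comparing this with $\sq\mathcal{L}_{q,1}^{d}(g)(z)=\mathcal{L}_{q,1}^{d}(g)(qz)$, which by inspection equals the same expression, gives $\mathcal{L}_{q,1}^{d}(\sq g)=\sq\mathcal{L}_{q,1}^{d}(g)$.

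For the second identity I would start from $z\mathcal{L}_{q,1}^{d}(\dq g)(z)=\frac{1}{\log q}\int_{0}^{\infty e^{id}}\frac{g(q\z)-g(\z)}{e_{q}(q\z/z)}\,d\z$, split it into two integrals, and in the one carrying $g(q\z)$ set $\u=q\z$ and rename $\u$ back to $\z$, so that this piece becomes $\frac{1}{q}\int\frac{g(\z)}{e_{q}(\z/z)}\,d\z$. The key algebraic input is the functional equation $e_{q}(q\z/z)=\bigl(1+(q-1)\z/z\bigr)e_{q}(\z/z)$, equivalently $\dq e_{q}=ze_{q}$, which lets me express $1/e_{q}(\z/z)$ through $1/e_{q}(q\z/z)$ and hence write both integrals over the single kernel $1/e_{q}(q\z/z)$. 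Collecting them, the integrand acquires the factor $\frac{1}{q}\bigl(1+(q-1)\z/z\bigr)-1=\frac{q-1}{q}\bigl(\z/z-1\bigr)$, and, together with the prefactor $\frac{1}{\log q}$, this is precisely $p\mathcal{L}_{q,1}^{d}(\z g)-pz\mathcal{L}_{q,1}^{d}(g)$.

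These computations are elementary once the functional equation is at hand, so the only genuine point requiring care---the main obstacle---is the analytic justification: that $g\in\mathbb{H}_{q,1}^{d}$ together with the defining growth/decay conditions of the transformation makes every integral absolutely convergent on the relevant domain, so that splitting the integral and performing the substitutions $\z\mapsto q^{\pm1}\z$ are legitimate. Since these are exactly the integrability conditions that make $\mathcal{L}_{q,1}^{d}$ well defined in the first place, I would invoke them directly (via Definition~\ref{4defi2} and \cite{DVZ}) rather than re-deriving the estimates.
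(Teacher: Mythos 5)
Your proposal is correct and follows essentially the same route as the paper: the first identity is obtained by showing $\mathcal{L}_{q,1}^{d}$ commutes with $\sq$ via the substitution $\z\mapsto q\z$, and the second by the same substitution combined with the functional equation $e_{q}(q\z/z)=\bigl(1+(q-1)\z/z\bigr)e_{q}(\z/z)$, which produces exactly the factor $p(\z/z-1)$ appearing in the paper's one-line computation. The analytic justification you defer to Definition~\ref{4defi2} and \cite{DVZ} is likewise all the paper invokes.
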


\begin{proof}
To prove the first equality, it is sufficient to prove that the ``continuous''~$q$-Laplace transformation commutes with~$\sq$. To do this, we just have to perform the variable change~$\z\mapsto q\z$ in the integral.\par 
 Let us prove the last equality. We recall that~$\sq\left(e_{q}\left(\frac{q\z }{z}\right)\right)=\frac{e_{q}\left(\frac{q \z}{z}\right)}{1+(q-1)\z/z}$. Let~$p=1/q$. Then, 
$$\begin{array}{llll}
z\mathcal{L}_{q,1}^{d}(\dq g)&=& z\displaystyle \int_{0}^{\infty e^{id}}\dfrac{g(\z)}{ze_{q}\left(\frac{q\z}{z}\right)}\frac{p-1+\frac{(q-1)\z}{qz}}{q-1}d\z \\
&=& \displaystyle \int_{0}^{\infty e^{id}}\dfrac{g(\z)}{e_{q}\left(\frac{q \z}{z}\right)}(-p+p\z/z)\\
&=&p\mathcal{L}_{q,1}^{d}(\z g)-pz\mathcal{L}_{q,1}^{d}(g).
\end{array}$$
\end{proof}

Let~$k\in \N^{*}$. If we consider~$\hat{f}\in \C\left[\left[z^{k}\right]\right]$, solution of a linear~$\dq$-equation with coefficients in~$\C\left[z^{k}\right]$, with~${\hat{\mathcal{B}}_{q,k}\left(\widetilde{f}\right)\in \mathbb{H}_{q,k}^{d}}$, then we have:
$$\dq\left(\mathcal{L}_{q,k}^{d}\circ\hat{\mathcal{B}}_{q,k}\left( \hat{f}\right)\right)=\mathcal{L}_{q,k}^{d}\circ\hat{\mathcal{B}}_{q,k}\left(\dq \hat{f}\right)
\hbox{ and }
\dq \left(z^{k}\mathcal{L}_{q,k}^{d}\circ\hat{\mathcal{B}}_{q,k}\left( \hat{f}\right)\right)=\mathcal{L}_{q,k}^{d}\circ\hat{\mathcal{B}}_{q,k}\left(\dq \left(z^{k}\hat{f}\right)\right) .
$$
Hence,~$\mathcal{L}_{q,k}^{d}\circ\hat{\mathcal{B}}_{q,k}\left(\hat{f}\right)$ is solution of the same linear~$\dq$-equation than~$\hat{f}$.  But in general, if~$\hat{f}\in \C\left[\left[z\right]\right]$ is solution of a linear~$\dq$-equation with coefficients in~$\C\left[z\right]$, we will have to apply successively several~$q$-Borel and ``continuous''~$q$-Laplace transformations in order to compute an analytic solution of the same equation than~$\hat{f}$. See Theorem~\ref{4theo9}.\\\par 
As in~$\S \ref{4sec42}$, let~$z\mapsto\hat{h}(z,q)\in \C[[z]]$  that converges coefficientwise to~$\widetilde{h}(z)\in \C[[z]]$ when~$q\to 1$. We make the following assumptions:
\begin{itemize}
\item There exists
$$z\mapsto b_{0}(z,q),\dots,b_{m}(z,q)\in\C[z],$$ with~$z$-coefficients that converge as~$q$ goes to~$1$, such that for all~$q$ close to~$1$,~$\hat{h}(z,q)$ is solution of:
$$
b_{m}(z,q)\d_{q}^{m}\left(\hat{h}(z,q)\right)+\dots+b_{0}(z,q)\hat{h}(z,q)=0.
$$
Let~$\widetilde{b}_{0}(z),\dots,\widetilde{b}_{m}(z)\in\C[z]$ be the limit as~$q$ tends to~$1$ of the~$b_{0}(z,q),\dots,b_{m}(z,q)$. Notice that the series~$\widetilde{h}(z)$ is solution of:
$$
\widetilde{b}_{m}(z)\d^{m}\left(\widetilde{h}(z)\right)+\dots+\widetilde{b}_{0}(z)\widetilde{h}(z)=0.
$$
\item For~$q$ close to~$1$, the  slopes of the linear $q$-difference equation satisfied by $\hat{h}$ are independent of~$q$, and the set of slopes of the latter that are positive coincides with the set of slopes of the linear differential equation satisfied by $\widetilde{h}$.
\item There exists~$c_{1}>0$, such that for all~$i\leq m$ and~$q$ close to~$1$:$$\left|b_{i}(z,q)-\widetilde{b}_{i}(z)\right|<(q-1)c_{1}\left(\left|\widetilde{b}_{i}(z)\right|+1\right).$$ 
\item The differential equation has at least one slope strictly bigger than~$0$.
\end{itemize}
Let $d_{0}:=\max\left(2,\deg \left(\widetilde{b}_{0}\right),\dots, \deg\left( \widetilde{b}_{m}\right)\right)$.
Let~$k_{1}<\dots<k_{r-1}$ be the slopes of (\ref{4eq4}) different from $0$, let~$k_{r}$ be an integer strictly bigger than~$k_{r-1}$ and~$d_{0}$, and set~$k_{r+1}:=+\infty$. 
Let~$(\k_{1},\dots,\k_{r})$ defined as:
$$\k_{i}^{-1}:=k_{i}^{-1}-k_{i+1}^{-1}.$$  
 As in Proposition~\ref{4propo5}, we define the~$(\widetilde{\k_{1}},\dots,\widetilde{\k_{s}})$ as follows:
we take~$(\k_{1},\dots,\k_{r})$ and for~$i=1,...,i=r$, we replace successively~$\k_{i}$ by~$\a_{i}$ terms~$\a_{i}\k_{i}$, where~$\a_{i}$ is the smallest integer such that~$\a_{i}\k_{i}$ is greater or equal than~$d_{0}$. See Example~\ref{4ex1}. Therefore, by construction, each of the~$\widetilde{\k_{i}}$ are rational number greater than~$d_{0}$.
 Let~$\b\in \N^{*}$ be minimal, such that for all~$i\in \{1,\dots,s\}$,~$\b/\widetilde{\k_{i}}\in \N^{*}$. 
Let us write~$\hat{h}(z,q)=:\displaystyle \sum_{n=0}^{\infty}\hat{h}_{n}(q)z^{n}$ and for~$l\in \{0,\dots,\b-1\}$, let~$\hat{h}^{(l)}(z,q):=\displaystyle \sum_{n=0}^{\infty} \hat{h}_{l+n\b}(q)z^{n\b}$. \par 
\pagebreak[3]
\begin{theo}\label{4theo9}
There exists~$\Sigma_{\widetilde{h}}\subset \R$ finite modulo~$2\pi \Z$, such that if~$d\in  \R\setminus \Sigma_{\widetilde{h}}$ then for all~${l\in\{0,\dots,\b-1\}}$, 
the series~$g_{1,l}:=\hat{\mathcal{B}}_{q,\widetilde{\k_{1}}}\circ\dots\circ\hat{\mathcal{B}}_{q,\widetilde{\k_{s}}}\left(\hat{h}^{(l)}\right)$  converges and belongs to~$\overline{\mathbb{H}}_{\widetilde{\k_{1}}}^{d}$ (see Definition~\ref{4defi1}).\par 
Moreover, for~$j=2$ (resp.~$j=3$,~$\dots$, resp.~$j=r$), ~$g_{j,l}:=\mathcal{L}_{q,\widetilde{\k_{j-1}}}^{d}(g_{j-1,l})$ belongs to~$\overline{\mathbb{H}}_{\widetilde{\k_{j}}}^{d}$.
 Let~${S_{q}^{d}\left(\hat{h}^{(l)}\right):=\mathcal{L}_{q,\widetilde{\k_{s}}}^{d}(g_{r,l})}$. 
The function$$S_{q}^{d}\left(\hat{h}\right):=\displaystyle \sum_{l=0}^{\b-1} z^{l}S_{q}^{d}\left(\hat{h}^{(l)}\right)
\in\mathcal{A}\left(d-\frac{\pi}{k_{r}},d+\frac{\pi}{k_{r}}\right),$$ is solution of (\ref{4eq3}). 
Furthermore, we have$$\lim\limits_{q \to 1}S_{q}^{d}\left(\hat{h}\right)=\widetilde{S}^{d}\left(\widetilde{h}\right),$$
uniformly on the compacts of~$\overline{S}\left(d-\frac{\pi}{2k_{r}},d+\frac{\pi}{2k_{r}}\right)\setminus \displaystyle\bigcup \R_{\geq 1}\a_{i}$, where~$\a_{i}$ are the roots of~${\widetilde{b}_{m}\in \C[z]}$ and~$\widetilde{S}^{d}\left(\widetilde{h}\right)$ is the asymptotic solution of the same linear~$\d$-equation than~$\widetilde{h}$ that has been defined in Proposition~\ref{4propo5}.
\end{theo}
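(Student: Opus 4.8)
The plan is to retrace, almost verbatim, the proof of Theorem~\ref{4theo1} carried out in $\S\ref{4sec5}$ and $\S\ref{4sec6}$, replacing the ``discrete'' $q$-Laplace transformation $\mathcal{L}_{q,k}^{[d]}$ everywhere by the ``continuous'' one $\mathcal{L}_{q,k}^{d}$ of Definition~\ref{4defi2}. The decisive point is that most of that argument does not see which $q$-Laplace transformation is used. Indeed, the lemmas on meromorphic solutions of $\S\ref{4sec5}$, namely Lemma~\ref{4lem4} and Proposition~\ref{4propo3}, concern only the $\dq$- and $\d$-systems and the confluence $\mathrm{Id}+(q-1)D(z,q)\to\widetilde{D}(z)$; they involve neither Borel nor Laplace transformations and may be invoked word for word. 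Likewise the $q$-Borel transformation $\hat{\mathcal{B}}_{q,k}$ is the same object in both settings, so the construction of $\Sigma_{\widetilde{h}}$ and the coefficientwise convergence of $\hat{\mathcal{B}}_{q,\widetilde{\k_{1}}}\circ\dots\circ\hat{\mathcal{B}}_{q,\widetilde{\k_{s}}}(\hat{h})$ to $\hat{\mathcal{B}}_{\widetilde{\k_{1}}}\circ\dots\circ\hat{\mathcal{B}}_{\widetilde{\k_{s}}}(\widetilde{h})$ on a small disc (Steps~1 and~2 of $\S\ref{4sec62}$, via Lemma~\ref{4lem11}) transfer unchanged.

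The one new ingredient I would prove is a continuous analogue of Lemma~\ref{4lem5}: if $f\in\overline{\mathbb{H}}_{k}^{d}$ (Definition~\ref{4defi1}), if $\widetilde{f}\in\widetilde{\mathbb{H}}_{k}^{d}$ (Definition~\ref{4defi3}), and if $\lim_{q\to 1}f=\widetilde{f}$ uniformly on the compacts of $\overline{S}(d-\e,d+\e)$, then $\lim_{q\to 1}\mathcal{L}_{q,k}^{d}(f)=\mathcal{L}_{k}^{d}(\widetilde{f})$ uniformly on the compacts of $\{z\mid|z|<1/L\}\cap\overline{S}(d-\frac{\pi}{2k},d+\frac{\pi}{2k})$. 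Exactly as in Lemma~\ref{4lem5}, the factorizations $\mathcal{L}_{q,k}^{d}=\rho_{k}\circ\mathcal{L}_{q,1}^{d}\circ\rho_{1/k}$ and $\mathcal{L}_{k}^{d}=\rho_{k}\circ\mathcal{L}_{1}^{d}\circ\rho_{1/k}$ reduce matters to $k=1$, and the rotation $\z\mapsto\z e^{-id}$ reduces to $d=0$. Pointwise convergence of the integrand is immediate: the prefactor $\frac{q-1}{\log(q)}\to 1$, the denominator $e_{q}(q\z/z)\to\exp(\z/z)$ by Lemma~\ref{4lem7}, and $f\to\widetilde{f}$ by hypothesis; so the only task is to pass to the limit under the genuine integral $\int_{0}^{\infty}\frac{f(\z)}{z\,e_{q}(q\z/z)}\,d\z$ by dominated convergence.

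The main obstacle is the construction of a majorant that is integrable and uniform in $q$ near $1$ and in $z$ on a compact. Writing $\z=t>0$, the growth bound $|f(t,q)|<J\,e_{q}(Lt)$ of Definition~\ref{4defi1} together with $e_{q}(Lt)\leq\exp(Lt)$ from Lemma~\ref{4lem7} controls the numerator, while the denominator calls for a uniform lower bound on $|e_{q}(qt/z)|$. Using the identity $e_{q}(w)e_{p}(-w)=1$ established in the proof of Proposition~\ref{4propo2} (so that $e_{q}(qt/z)^{-1}=e_{p}(-qt/z)$ with $p=1/q$) and the infinite-product expansions of $e_{q}$ and $e_{p}$, I would estimate the ratio $e_{q}(Lt)/|e_{q}(qt/z)|$ factor by factor and produce, for $z$ in a compact of $\{\,|z|<1/L,\ \arg(z)\neq\pi\,\}$ and $q$ close to $1$, an integrable dominating function with geometric-type decay in $t$ --- the continuous counterpart of the estimates (\ref{4eq19}) and (\ref{4eq13}) used in Lemma~\ref{4lem5}. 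The hypothesis $|z|<1/L$ is precisely what lets the decay of $e_{q}(qt/z)^{-1}$ overcome the growth $e_{q}(Lt)$ of $f$, and $\arg(z)\neq\pi$ keeps the denominator away from its zeros.

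Granting this confluence lemma, the rest is a transcription. In Step~3 of $\S\ref{4sec62}$ one divides $f_{1}$ by $e_{q^{\widetilde{\k_{1}}}}(B_{1}\z^{\widetilde{\k_{1}}})$ and applies Proposition~\ref{4propo3} to the resulting $\dq$- and $\d$-systems; its hypotheses are checked through assumption \textbf{(A3)}, Proposition~\ref{4propo4} and the commutation identities of the proposition stated just above the theorem (the continuous analogue of Proposition~\ref{4propo2}), and the new lemma then delivers the convergence of each $\mathcal{L}_{q,\widetilde{\k_{j}}}^{d}$ on a small domain. Step~4 (global convergence on $\overline{S}(d-\frac{\pi}{2k_{r}},d+\frac{\pi}{2k_{r}})\setminus\bigcup\R_{\geq 1}\a_{i}$) uses Proposition~\ref{4propo3} once more and is unchanged, and the reduction of the general case to $\hat{h},\widetilde{h}\in\C[[z^{\b}]]$ by ramification ($\S\ref{4sec63}$) is identical. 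Finally, that $S_{q}^{d}(\hat{h})$ solves (\ref{4eq3}) follows from the remark preceding the theorem, in the same way Remark~\ref{4rem3} is used for the discrete summation.
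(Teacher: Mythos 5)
Your plan coincides with the paper's own proof in architecture: the paper literally says the argument for Theorem~\ref{4theo1} carries over and that the only new ingredient is a ``continuous'' analogue of Lemma~\ref{4lem5}, which it then states and proves by dominated convergence, exactly as you propose. The one point where you diverge is the construction of the integrable majorant for $\z\mapsto f(\z,q)/\bigl(z\,e_{q}(q\z/z)\bigr)$. The paper's device is to split the integral at a point $S$ beyond which the integrand is decreasing in $\z$ and to bound $\int_{S}^{\infty}$ by the left Riemann sum $(q-1)\sum_{l\geq 0}\bigl|\frac{q^{l}SJ}{z}\frac{e_{q}(Lq^{l}S)}{e_{q}(q^{l+1}S/z)}\bigr|$ attached to the partition $(q^{l}S)_{l}$ --- which is precisely the quantity already bounded uniformly in the proof of Lemma~\ref{4lem5} --- so that no new estimate on $q$-exponentials is needed. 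Your alternative, estimating $e_{q}(Lt)/|e_{q}(qt/z)|$ factor by factor in the infinite products, is workable but has a pitfall you should be aware of: pairing the $n$-th factors forces a lower bound of the form $|1+w|\geq c\,(1+|w|)$ with $c<1$ when $\arg(t/z)\neq 0$, and this constant is lost on each of the roughly $\log t/\log q$ ``active'' indices; the accumulated loss $t^{\log(1/c)/\log q}$ competes with the gain $t^{\log(L|z|)/\log q}$, so the net exponent is negative only where $L|z|<c$, a proper subset of the stated domain $\{|z|<1/L\}$. The estimate that avoids this is the consecutive-term ratio of (\ref{4eq19})--(\ref{4eq13}), where both compared arguments are large, the additive $1$'s are negligible, and the single-step ratio tends to $L|z|<1$ with no angular loss; since you name these as your model you would presumably end up there. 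In any case the discrepancy is harmless for the theorem itself, because Step~3 of $\S\ref{4sec62}$ only requires convergence of each $q$-Laplace transformation on some disk $\{|z|<L_{0}\}$ with $L_{0}>0$, Step~4 then globalizing via Proposition~\ref{4propo3}; a lossy constant in the confluence lemma still suffices.
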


The proof of this theorem is basically the same as the proof of Theorem~\ref{4theo1}. The only difference is that we can not use Lemma~\ref{4lem5}, so we state and prove a similar result for the ``continuous'' summation. \par 
Let~$d\in \R$, let~$k\in \Q_{>0}$ and let~$f$ be a function that belongs to~$\overline{\mathbb{H}}_{k}^{d}$. By definition (see Definition~\ref{4defi1}), there exist~$\e>0$, constants~$J,L>0$, such that for all~$q$ close to~$1$,~$\z\mapsto f(\z,q)$ is analytic on~$\overline{S}(d-\e,d+\e)$, and for all~$\z\in \R_{>0}$:
$$\left|f(e^{id}\z,q)\right|<Je_{q}\left(L\z^{k}\right).$$

\pagebreak[3]
\begin{lem}
In the notations introduced above, let us assume that~$\lim\limits_{q \to 1}f:=\widetilde{f}\in \widetilde{\mathbb{H}}_{k}^{d}$
uniformly on the compacts of~$\overline{S}(d-\e,d+\e)$. 
Then, we have$$\lim\limits_{q \to 1}\mathcal{L}_{q,k}^{d}\Big(f\Big)(z)=\mathcal{L}_{k}^{d}\left(\widetilde{f}\right)(z),$$
uniformly on the compacts of ~${\Big\{z\in \overline{S}\left(d-\frac{\pi}{2k\pi},d+\frac{\pi}{2k\pi}\right)\Big||z|<1/L\Big\}}$.
\end{lem}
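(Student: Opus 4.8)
The plan is to imitate the proof of Lemma~\ref{4lem5}, the only structural change being that the Jackson sum is replaced by a genuine integral, so that the Riemann-sum/mesh-refinement step disappears and is absorbed into the scalar factor $\frac{q-1}{\log(q)}$, which tends to $1$ as $q\to1$. First I would reduce to $k=1$ and $d=0$. Since $\mathcal{L}_{q,k}^{d}=\r_{k}\circ\mathcal{L}_{q,1}^{d}\circ\r_{1/k}$ and $\mathcal{L}_{k}^{d}=\r_{k}\circ\mathcal{L}_{1}^{d}\circ\r_{1/k}$, and the maps $\r_{k},\r_{1/k}$ neither involve $q$ nor affect uniform convergence on compacts, it is enough to treat $k=1$, the passage to general $k$ being provided by this conjugation (cf. Definitions~\ref{4defi2} and~\ref{4defi3}). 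The substitution $\z\mapsto\z e^{-id}$ then reduces to $d=0$, and the problem becomes to prove that $\frac{q-1}{\log(q)}\int_{0}^{\infty}\frac{f(\z,q)}{ze_{q}(q\z/z)}\,d\z$ converges to $\int_{0}^{\infty}\frac{\widetilde{f}(\z)}{z}e^{-\z/z}\,d\z$, uniformly on an arbitrary compact $K$ of the stated domain.

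The statement would then follow from the dominated convergence theorem. The scalar $\frac{q-1}{\log(q)}$ tends to $1$ and may be factored out, so it suffices to control $\int_{0}^{\infty}\frac{f(\z,q)}{ze_{q}(q\z/z)}\,d\z$. For each fixed $\z>0$ and $z\in K$, the integrand converges pointwise to $\frac{\widetilde{f}(\z)}{z}e^{-\z/z}$: indeed $f(\z,q)\to\widetilde{f}(\z)$ by hypothesis, while $e_{q}(q\z/z)\to\exp(\z/z)$ by Lemma~\ref{4lem7}. It remains to produce a function of $\z$, integrable on $(0,\infty)$ and independent of both $q$ close to $1$ and $z\in K$, that dominates $\big|f(\z,q)/(ze_{q}(q\z/z))\big|$; once this is done, the dominated convergence theorem applied to $\int_{0}^{\infty}$ gives the uniform limit, and multiplying by $\frac{q-1}{\log(q)}\to1$ concludes.

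The construction of this dominant is the main obstacle, and it is carried out exactly as in the proof of Lemma~\ref{4lem5}. Using $f\in\overline{\mathbb{H}}_{1}^{0}$ and Lemma~\ref{4lem7}, one has $\big|f(\z,q)\big|<Je_{q}(L\z)\leq J\exp(L\z)$ for $\z>0$, so that $\big|f(\z,q)/(ze_{q}(q\z/z))\big|\leq H(\z):=\frac{J}{|z|}\frac{e_{q}(L\z)}{|e_{q}(q\z/z)|}$. I would split $\int_{0}^{\infty}=\int_{0}^{R}+\int_{R}^{\infty}$ at a suitable threshold $R>0$. On $[0,R]$ the quotient $e_{q}(L\z)/|e_{q}(q\z/z)|$ is bounded uniformly in $q\in\,]1,q_{0}[$ and $z\in K$, since $e_{q}(L\z)$ is bounded and $1/|e_{q}(q\z/z)|$ is bounded on compacts avoiding the zero-spiral of $e_{q}$, which is the continuous counterpart of estimate~(\ref{4eq12}). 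On $[R,\infty)$ I would use the functional equation $e_{q}(qw)=(1+(q-1)w)e_{q}(w)$, coming from the product expansion of $e_{q}$, to compute $\frac{H(q\z)}{H(\z)}=\frac{1+(q-1)L\z}{\big|1+(q-1)q\z/z\big|}$, whose large-$\z$ limit is $\frac{L|z|}{q}$; this is the continuous analogue of~(\ref{4eq13}), and for $R$ large enough it is bounded by some $M_{1}<1$ uniformly in $q$ close to $1$ and $z\in K$. Hence $H$ decays geometrically along the $q$-spiral $q^{\Z}\z$, which by continuity of $H$ on $[R,qR]$ yields a uniform integrable bound of the form $H(\z)\leq C\z^{-\g}$ with $\g>1$ for $q$ sufficiently close to $1$. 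The delicate point, precisely as in Lemma~\ref{4lem5}, is to secure the decay constant $M_{1}<1$ together with a $q$-independent threshold $R$ uniformly over the compact $K$; this is exactly where the size restriction on $z$ is needed.
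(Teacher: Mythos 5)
Your proposal follows the paper's proof essentially step for step: the same reduction to $k=1$ and $d=0$, the same appeal to the dominated convergence theorem via a dominant that is integrable and uniform in $q$ close to $1$ and $z$ in the compact, the same splitting of the integral at a threshold with the head handled by compactness away from the zero spiral of $e_{q}$, and the same multiplicative ratio estimate coming from the functional equation of $e_{q}$ to control the tail. The only (immaterial) difference is the final packaging of the tail bound: the paper uses monotonicity of the integrand beyond the threshold to dominate $\int_{S}^{\infty}$ by the Jackson sum already bounded in the proof of Lemma~\ref{4lem5}, whereas you re-derive the geometric decay along the $q$-spirals directly and convert it into a power-law dominant --- the underlying estimate is exactly the analogue of~(\ref{4eq13}), including its delicate uniformity in $q$ over a $q$-independent threshold, which you correctly single out as the crux.
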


\begin{proof}
For the same reasons as in the proof of Lemma~\ref{4lem5}, we may assume that~$d=0$ and~$k=1$.\par 
 Let us fix a compact~$K$ of ~${\left\{z\in \overline{S}\left(-\frac{\pi}{2\pi},+\frac{\pi}{2\pi}\right)\Big||z|<1/L\right\}}$.  
Using the dominated convergence theorem, it is sufficient to prove the existence of a positive integrable function~$h$, such that for all~$q$ close to~$1$,~${\z\in \R_{>0}}$ and~$z\in K$,~$\left|\frac{f(\z,q)}{ze_{q}\left(\frac{q\z}{z}\right)}\right|<h(\z)$.\par  
 Let~$J>0$ be the constant that comes from Definition~\ref{4defi1} and let~$z\in K$.
By definition of the ``continuous''~$q$-Laplace transformation
$$\left|\mathcal{L}_{q,k}^{d}(f)(z)\right|\leq \displaystyle \int_{0}^{\infty} \left|\frac{J}{z}\frac{e_{q}(L\z)}{e_{q}(q\z/z)}\right|d\z
.$$
Let us fix~$q_{0}>1$. Let~$S\in \R$, such that for all~$z\in K$,~$q>1$ and~$\z>S$,~$\z\mapsto \left|\frac{J}{z}\frac{e_{q}(L\z)}{e_{q}(q\z/z)}\right|$ is decreasing. The convergence~${\lim\limits_{q \to 1}\displaystyle\int_{0}^{S}\frac{f(\z,q)}{ze_{q}\left(\frac{q\z}{z} \right)}d\z=\int_{0}^{S}\frac{f(\z)}{z\exp\left(\frac{q\z}{z} \right)}d\z}$ is clear. 
Moreover, we have for all~$q\in]1,q_{0}[$ and~$z\in K$:
$$\int_{S}^{\infty} \left|\frac{J}{z}\frac{e_{q}(L\z)}{e_{q}(q\z/z)}\right|d\z\leq (q-1)\displaystyle \sum_{l=0}^{\infty} \left|\frac{q^{l}SJ}{z}\frac{e_{q}\left(Lq^{l}S\right)}{e_{q}\left(q^{l+1}S/z\right)}\right|.$$
We have seen in the proof of Lemma~\ref{4lem5}, than we can bound this latter quantity uniformly in~$q$ and~$z\in K$. This yields the result.
\end{proof}
\fussy

\nocite{AAR,Ad,And,Ca,DSK,DR,DVH,DV02,DV09,DVRSZ,Ro06,Ro08,Ro11,Trj}
\pagebreak[3]
\bibliographystyle{alpha}
\bibliography{C:/Users/thomas.dreyfus/Dropbox/Maths/biblio}
\end{document}